\numberwithin{equation}{section}
\numberwithin{figure}{section}
\theoremstyle{plain}
\newtheorem{thm}{\protect\theoremname}[section]
  \theoremstyle{plain}
  \newtheorem{lem}[thm]{\protect\lemmaname}
  \theoremstyle{remark}
  \newtheorem{rem}[thm]{\protect\remarkname}
  \theoremstyle{plain}
  \newtheorem{prop}[thm]{\protect\propositionname}
  \theoremstyle{definition}
  \newtheorem{defn}[thm]{\protect\definitionname}
  \theoremstyle{plain}
  \newtheorem{conjecture}[thm]{\protect\conjecturename}
\newcounter{myparagraph}[subsection]
\newcommand{\myparagraph}{\refstepcounter{myparagraph}}
\renewcommand{\themyparagraph}{{\arabic{section}.\arabic{subsection}.\alph{myparagraph}}}
\newcommand*{\para}[1]{\vskip0.3cm\noindent\hspace{-3pt}\myparagraph{\bf \themyparagraph.{{\,#1.}}}}
  \providecommand{\conjecturename}{Conjecture}
  \providecommand{\definitionname}{Definition}
  \providecommand{\lemmaname}{Lemma}
  \providecommand{\propositionname}{Proposition}
  \providecommand{\remarkname}{Remark}
\providecommand{\theoremname}{Theorem}
\begin{document}

\title{K3 surfaces from configurations of six lines in $\mathbb{P}^{2}$
and mirror symmetry II ~\\---~$\lambda_{K3}$-functions ~---}

\author{Shinobu Hosono, Bong Lian and Shing-Tung Yau}
\begin{abstract}
We continue our study on the hypergeometric system $E(3,6)$ which
describes period integrals of the double cover family of K3 surfaces.
Near certain special boundary points in the moduli space of the K3
surfaces, we construct the local solutions and determine the so-called
mirror maps expressing them in terms of genus two theta functions.
These mirror maps are the K3 analogues of the elliptic $\lambda$-function.
We find that there are two non-isomorphic definitions of the lambda
functions corresponding to a flip in the moduli space. We also discuss
mirror symmetry for the double cover K3 surfaces and their higher
dimensional generalizations. A follow up paper will describe more
details of the latter.
\end{abstract}

\maketitle
\tableofcontents{}

\section{\textbf{\textup{Introduction}}}

\vskip0.2cm

Consider elliptic curves given as double covers over $\mathbb{P}^{1}$
branched along four points in general positions. These curves define
a family of elliptic curves over the configuration space $\mathcal{M}_{4}$
of four points in $\mathbb{P}^{1}$, which is called Legendre family.
The elliptic lambda function is a modular function associated to this
family. This gives the uniformization of the period map defined as
a multi-valued function from $\mathcal{M}_{4}$ to the upper-half
plane $\mathbb{H}_{+}$. In this paper we will define a generalization
of this elliptic lambda function for a certain family of K3 surfaces.

We will consider double covers of $\mathbb{P}^{2}$ branched along
six lines in general positions which are singular at fifteen intersection
points of the lines. Blowing-up at the singularities gives smooth
K3 surfaces over the configuration space $\mathcal{M}_{6}$ of six
lines, which we called \textit{double cover family of K3 surfaces
}in the previous work \cite{HLTYpartI}. This family has been studied
in many contexts (see \cite{YoshidaBook} for example) as a natural
generalization of the Legendre family over $\mathcal{M}_{4}$. In
particular, in \cite{YoshidaEtal}, monodromy property of the period
map has been determined completely. However since the moduli space
$\mathcal{M}_{6}$ is singular, we need to find suitable resolutions
to study throughly the analytic properties of the period maps. In
\cite{HLTYpartI}, we have found nice resolutions $\widetilde{\mathcal{M}}_{6}$
and $\widetilde{\mathcal{M}}_{6}^{+}$ from the viewpoint of mirror
symmetry and Picard-Fuchs differential equations of period integrals.
The aim of this paper is to define K3 analogues to the elliptic lambda
function based on these resolutions. 

Let us recall that, for the definition of the the elliptic lambda
function, the hypergeometric series 

\begin{equation}
\omega_{0}(z)=\sum_{n\geq0}\frac{1}{\Gamma(\frac{1}{2})^{2}}\frac{\Gamma(n+\frac{1}{2})^{2}}{\Gamma(n+1)^{2}}z^{n}\label{eq:w0-elliptic}
\end{equation}
and the differential equation (Picard-Fuchs equation) satisfied by
it plays a central role. In this case, Picard-Fuchs differential equation
is given by Gauss's hypergeometric differential equation, and its
solutions determine the period integrals of the Legendre family. The
period map $\mathcal{P}:\mathcal{M}_{4}\to\mathbb{H}_{+}$ is basically
given by the ratio of the solutions with the monodromy group the congruence
subgroup $\Gamma(2)$ of $\Gamma=PSL(2,\mathbb{Z})$. The elliptic
lambda function is the inverse map $\mathbb{H}_{+}/\Gamma(2)\to\mathcal{M}_{4}$
with suitable boundary properties near the cusps. The following explicit
forms for the lambda function and the hypergeometric series are well-known:
\begin{equation}
\lambda(\tau)=\frac{\vartheta_{2}(\tau)^{4}}{\vartheta_{3}(\tau)^{4}},\;\;\omega_{0}(\lambda(\tau))^{2}=\vartheta_{3}(\tau)^{4}\label{eq:elliptic-lambda}
\end{equation}
where $\tau\in\mathbb{H}_{+}$. See Section \ref{para:theta-ell-def}
for the definitions of theta functions.

The generalization to a family of K3 surfaces has been studied extensively
in the '90s \cite{YoshidaEtal,Matsu93,YoshidaBook}. However, it was
not clear how to resolve the moduli space $\mathcal{M}_{6}$ to construct
analogues of the expressions (\ref{eq:elliptic-lambda}). In \cite{HLTYpartI},
we have found natural resolutions $\widetilde{\mathcal{M}}_{6}$ and
$\widetilde{\mathcal{M}}_{6}^{+}$ of $\mathcal{M}_{6}$ which are
related by a four dimensional flip. In this paper, corresponding to
these resolutions, we will construct two definitions for K3 analogues
of the elliptic lambda function; they differ in their behaviors near
the exceptional divisors of the resolutions. We call these analogues
\textit{K3 lambda functions} $\lambda_{k}$ and $\lambda_{k}^{+}$,
respectively. These might be called \textit{K3 lambda maps} precisely,
but we continue to use the word ``function'' to indicate the generalization
of elliptic lambda function. 

The K3 lambda functions are naturally identified with the so-called
mirror map \cite{HKTY,HLY} for the family of K3 surfaces. In connection
to this, we will also discuss mirror symmetry of the family; we will
find that the mirror geometry is a (singular) K3 surface which is
given as a double cover of a del Pezzo surface $Bl_{3}\mathbb{P}^{2}$,
a three point blow-up of $\mathbb{P}^{2}$. 

Below we summarize the K3 lambda functions and hypergeometric series
which we shall formulate in this paper. 

~

\noindent$\bullet$ \uline{K3 lambda function $\lambda_{k}$ }:
The mirror map is given by $z_{k}=\lambda_{k}$ with

\begin{equation}
\begin{alignedat}{5}\lambda_{1} & = &  & \frac{\Theta_{3}^{2}+\Theta_{9}^{2}-\omega_{0}^{2}}{\omega_{0}^{2}-\Theta_{7}^{2}} & ,\quad & \lambda_{2} & = &  & \frac{\Theta_{3}^{2}+\Theta_{9}^{2}-\omega_{0}^{2}}{\omega_{0}^{2}-\Theta_{9}^{2}} & ,\\
\lambda_{3} & = &  & \frac{(\omega_{0}^{2}-\Theta_{7}^{2})(\omega_{0}^{2}-\Theta_{9}^{2})}{\omega_{0}^{2}(\Theta_{4}^{2}+\Theta_{9}^{2}-\omega_{0}^{2})} & ,\quad & \lambda_{4} & = &  & \frac{\Theta_{4}^{2}+\Theta_{9}^{2}-\omega_{0}^{2}}{\Theta_{3}^{2}+\Theta_{9}^{2}-\omega_{0}^{2}}
\end{alignedat}
\label{eq:intro-lambdaK3}
\end{equation}

\[
\omega_{0}(\lambda_{1},\lambda_{2},\lambda_{3},\lambda_{4})^{2}=\frac{1}{2\Theta_{8}^{2}}\left\{ \Theta_{7}^{2}\Theta_{8}^{2}-\Theta_{10}^{2}\Theta_{5}^{2}+\Theta_{6}^{2}\Theta_{9}^{2}-\widetilde{\Theta}\right\} 
\]
where 
\[
\omega_{0}(z)=\sum_{n_{1},n_{2},n_{3},n_{4}\geq0}c(n_{1},n_{2},n_{3},n_{4})z_{1}^{n_{1}}z_{2}^{n_{2}}z_{3}^{n_{3}}z_{4}^{n_{4}}
\]
with $c(n)=c(n_{1},n_{2},n_{3},n_{4})$ given by 
\[
c(n):=\frac{1}{\Gamma(\frac{1}{2})^{3}}\frac{\Gamma(n_{1}+\frac{1}{2})\Gamma(n_{2}+\frac{1}{2})\Gamma(n_{3}+\frac{1}{2})}{\Pi_{_{i=1}}^{3}\Gamma(n_{4}-n_{i}+1)\cdot\Pi_{1\leq j<k\leq3}\Gamma(n_{j}+n_{k}-n_{4}+1)},
\]
and 

\[
\widetilde{\Theta}=\frac{2^{2}}{3\cdot5}\Theta=-64(q_{3}-q_{4})\left\{ \frac{q_{1}q_{2}(1-q_{3}q_{4})}{q_{3}q_{4}}+\cdots\right\} 
\]
is the weight four theta function, see Appendix \ref{sec:App-Theta-fn}.

\noindent$\bullet$ \uline{K3 lambda function $\lambda_{k}^{+}$
}: The mirror map is given by $\tilde{z}_{k}=\lambda_{k}^{+}$ with

\begin{equation}
\begin{alignedat}{4}\lambda_{1}^{+} & = &  & \frac{\Theta_{3}^{2}+\Theta_{9}^{2}-\omega_{0}^{2}}{\omega_{0}^{2}-\Theta_{6}^{2}} & ,\quad & \lambda_{2}^{+} & = & \frac{\Theta_{4}^{2}+\Theta_{9}^{2}-\omega_{0}^{2}}{\omega_{0}^{2}-\Theta_{6}^{2}},\\
\lambda_{3}^{+} & = &  & \frac{\omega_{0}^{2}-\Theta_{6}^{2}}{\omega_{0}^{2}-\Theta_{9}^{2}} & ,\quad & \lambda_{4}^{+} & = & \frac{(\omega_{0}^{2}-\Theta_{6}^{2})^{2}(\omega_{0}^{2}-\Theta_{9}^{2})}{\omega_{0}^{2}(\Theta_{3}^{2}+\Theta_{9}^{2}-\omega_{0}^{2})(\Theta_{4}^{2}+\Theta_{9}^{2}-\omega_{0}^{2})}
\end{alignedat}
\label{eq:intro-lambdaK3plus}
\end{equation}
\[
\omega_{0}(\lambda_{1}^{+},\lambda_{2}^{+},\lambda_{3}^{+},\lambda_{4}^{+})^{2}=\frac{1}{2\Theta_{8}^{2}}\left\{ \Theta_{7}^{2}\Theta_{8}^{2}-\Theta_{10}^{2}\Theta_{5}^{2}+\Theta_{6}^{2}\Theta_{9}^{2}-\widetilde{\Theta}\right\} 
\]
where 
\[
\omega_{0}(\tilde{z})=\sum_{n_{1},n_{2},n_{3},n_{4}\geq0}\tilde{c}(n_{1},n_{2},n_{3},n_{4})\tilde{z}_{1}^{n_{1}}\tilde{z}_{2}^{n_{2}}\tilde{z}_{3}^{n_{3}}\tilde{z}_{4}^{n_{4}}
\]
with $\tilde{c}(n)=\tilde{c}(n_{1},n_{2},n_{3},n_{4})$ given by 
\[
\tilde{c}(n):=\frac{1}{\Gamma(\frac{1}{2})^{3}}\frac{\Gamma(n_{1}+n_{2}-n_{3}-n_{4}+\frac{1}{2})\Gamma(n_{3}+\frac{1}{2})\Gamma(n_{4}+\frac{1}{2})}{\Pi_{i=1,2}\Pi_{j=3,4}\Gamma(n_{i}-n_{j}+1)\cdot\Pi_{i=1,2}\Gamma(n_{3}+n_{4}-n_{i}+1)}.
\]

~

As is the case for the relation $\omega_{0}(\lambda(\tau))^{2}=\vartheta_{3}(\tau)^{4}$,
the above equalities for $\omega_{0}(\lambda_{1},...,\lambda_{4})^{2}$
and $\omega_{0}(\lambda_{1}^{+},...,\lambda_{4}^{+})^{2}$ are local
expressions, which will be multiplied by suitable weight factors under
the monodromy transformations (or, equivalently, under the modular
transformations). However the forms of lambda functions $\lambda_{k}$
and $\lambda_{k}^{+}$ given above are global functions defined over
the resolutions $\widetilde{\mathcal{M}}_{6}$ and $\widetilde{\mathcal{M}}_{6}^{+}$,
respectively. 

The construction of this paper is as follows. In Section \ref{sec:elliptic-lambda},
we will describe the Legendre family in a form which generalizes to
the double cover family of K3 surfaces. In particular, we describe
in detail the well-known action on $\mathcal{M}_{4}\simeq\mathbb{P}^{1}$
of the symmetric group $S_{3}$. Based on the commutative diagram
(\ref{eq:E24diagram}), which is equivariant under $S_{3}\simeq\Gamma/\Gamma(2)$,
we shall characterize the lambda function and the property of the
hypergeometric series (\ref{eq:elliptic-lambda}). In Section \ref{sec:Master-Equation},
we summarize known-results about the double cover family of K3 surfaces
including the results in our previous work \cite{HLTYpartI}. We will
then use them to formulate a \textit{master equation} for our definition
of the lambda functions. In Section \ref{sec:Gen-Frobenius-method},
we summarize the generalized Frobenius method \cite{HKTY,HLY} which
describes the local solutions near certain special boundary points
called \textit{large complex structure limit points} (LCSLs). We also
present an explicit form of period integrals of the family which is
valid near the LCSLs. In Section \ref{sec:Solving-Master-Eqs}. we
will describe the period map using local solutions near the special
boundary points. We will find a consistent form of the master equation
with the local expression of the period maps. We then solve the master
equation algebraically to obtain the K3 analogues of the lambda function.
In Section \ref{sec:Mirror-symmetr}, we will discuss the mirror geometry
of the double cover family of K3 surfaces. Each section relies on
previous results scattered in many works. We will present these in
appendices. In Appendix \ref{sec:App-representation-S6}, we present
some explicit formulas for the representation $S_{6}\to\mathrm{Aut}(\widetilde{\mathcal{M}}_{6})$
which is a generalization of the well-known representation $S_{3}\to\mathrm{Aut}(\mathcal{M}_{4})\simeq\mathrm{Aut}(\mathbb{P}^{1})$.
This is a byproduct of our arguments, but should be of some interest
in its own right. 

\vskip0.3cm

\textbf{Acknowledgements:} S.H. would like to thank for the warm hospitality
at the CMSA at Harvard University where progress was made. S.H. is
supported in part by Grant-in Aid Scientific Research (C 16K05105,
S 17H06127, A 18H03668 S.H.). B.H.L and S.-T. Yau are supported by
the Simons Collaboration Grant on Homological Mirror Symmetry and
Applications 2015--2019.

~

~

\newpage

\section{\textbf{\textup{The elliptic $\lambda$-function \label{sec:elliptic-lambda}}}}

\subsection{Legendre family }

The double cover family of K3 surfaces shares many properties with
the corresponding family of elliptic curves, i.e. the Legendre family.
It is helpful to summarize the well-known results of the Legendre
family in the forms which generalize to the double cover family of
K3 surfaces. 

\para{The configuration space of four points in $\mathbb{P}^1$} The
Legendre family is a family of elliptic curves given as double covers
of $\mathbb{P}^{1}$ branched at four points in general position.
To describe the family, let us introduce a data given by 
\[
A=\left(\begin{matrix}a_{01} & a_{02} & a_{03} & a_{04}\\
a_{11} & a_{02} & a_{13} & a_{14}
\end{matrix}\right)\in M_{2,4},
\]
where $M_{2,4}$ is the set of $2\times4$ complex matrices. We denote
its open dense subset by 
\[
M_{2,4}^{o}=\left\{ A\in M_{2,4}\mid[i_{1}\,i_{2}]\not=0\;(1\leq i_{1},i_{2}\leq4)\right\} 
\]
with $[i_{1}\,i_{2}]=\left|\begin{smallmatrix}a_{0i_{1}} & a_{0i_{2}}\\
a_{1i_{1}} & a_{1i_{2}}
\end{smallmatrix}\right|$. For $A\in M_{2,4}^{o}$, we consider an elliptic curve branched
at four points specified by $A$: 
\[
{\tt y}^{2}=\prod_{i=1}^{4}(a_{0i}{\tt x_{0}}+a_{1i}{\tt x}_{1}).
\]
Isomorphism classes of these elliptic curves are parametrized by the
quotient space $GL(2,\mathbb{C})\diagdown M_{2,4}^{o}\diagup(\mathbb{C}^{*})^{4}$.
This quotient is naturally compactified by the GIT quotient \cite{DoOrt,Reuv}
which is called the configuration space $\mathcal{M}_{4}$ of four
points on $\mathbb{P}^{1}$. 

It is easy to see the isomorphism $\mathcal{M}_{4}\simeq\mathbb{P}^{1}$.
In fact, in the quotient, any matrix $A\in M_{2,4}^{o}$ can be transformed
into the form $\left(\begin{smallmatrix}1 & 0 & 1 & 1\\
0 & 1 & 1 & z
\end{smallmatrix}\right)$ with 
\[
z=\frac{[2\,3][1\,4]}{[1\,3][2\,4]},
\]
which can be identified with the cross ratio of four points. 

\para{Perid integrals and Picard-Fuchs equation} The period integrals
over cycles in $H_{2}(X,\mathbb{Z})$ are given by 
\begin{equation}
\bar{\omega}_{C}(a)=\int_{C}\frac{d\mu}{\sqrt{\prod_{i=1}^{4}(a_{0i}\mathtt{x}_{0}+a_{1i}\mathtt{x}_{1})}}\qquad(d\mu=i_{E}d\mathtt{x}_{0}\wedge d\mathtt{x}_{1},\,\,C\in H_{2}(X,\mathbb{Z}),\label{eq:periodE24-bar}
\end{equation}
where $i_{E}$ is the contraction with the Euler vector field $E=\mathtt{x}_{0}\frac{\partial\;}{\partial\mathtt{x}_{0}}+\mathtt{x}_{1}\frac{\partial\;}{\partial\mathtt{x}_{1}}.$
They are solutions to the Picard-Fuchs equation, which is given by
the hypergeometric system $E(2,4)$, i.e. the hypergometric system
on Grassmannian $G(2,4)$ \cite{Ao,GelfandGraev}. The hypergeometric
system $E(2,4)$ reduces locally to the so-called GKZ (Gel'fand-Kapranov-Zelevinski)
system \cite{GKZ1} when we represent an equivalence class $[A]\in GL(2,\mathbb{C})\diagdown M_{2,4}^{o}/(\mathbb{C}^{*})^{4}$
by 
\begin{equation}
A=\left(\begin{matrix}1 & 0 & a_{1} & b_{0}\\
0 & 1 & a_{0} & b_{1}
\end{matrix}\right).\label{eq:A-E24-E2X}
\end{equation}
This reduces the $GL(2,\mathbb{C})\times(\mathbb{C}^{*})^{4}$ action
on $M_{2,4}^{o}$ to the torus actions of the form $(\mathbb{C}^{*})^{2}\diagdown M_{2,4}^{o}\diagup(\mathbb{C}^{*})^{4}$
which preserve the above form of the matrix $A$, i.e., 
\begin{equation}
T=\left\{ (g,t)\in GL(2,\mathbb{C})\times(\mathbb{C}^{*})^{4}\mid g\left(\begin{matrix}1\,\,0\,*\,*\\
0\,\,1\,*\,*
\end{matrix}\right)t=\left(\begin{matrix}1\,\,0\,*\,*\\
0\,\,1\,*\,*
\end{matrix}\right)\right\} \diagup\sim,\label{eq:T-group-M4}
\end{equation}
where $(g,t)\sim(\lambda g,\lambda^{-1}t)\;(\lambda\in\mathbb{C}^{*})$(see
\cite[Sect.2.4]{HLTYpartI} for more details). The GKZ system is described
by the affine parameters $(a_{0,}b_{0},a_{1},b_{1})\in\mathbb{C}^{4}$,
and is defined on a natural toric compactification $\mathcal{M}_{SecP}$
of the parameter space. Following Sect. 3 of \cite{HLTYpartI}, it
is easy to see $\mathcal{M}_{4}\simeq\mathcal{M}_{SecP}\simeq\mathbb{P}^{1}$.
In particular, we arrive at the cross ratio 
\begin{equation}
z=\frac{[2\,3][1\,4]}{[1\,3][2\,4]}=\frac{a_{1}b_{1}}{a_{0}b_{0}},\label{eq:cross-ratio}
\end{equation}
as an affine coordinate of $\mathcal{M}_{SecP}$. We write this coordinate
as a monomial $z=\mathtt{a}^{\ell}$ by introducing $\mathtt{a}=(-a_{0},-b_{0},a_{1},b_{1})$
and $\ell=(-1,-1,1,1)$. After scaling $\bar{\omega}_{C}(a)$ by the
factor $(a_{0}b_{0})^{\frac{1}{2}}$, it is easy to see that the period
integral 
\begin{equation}
\omega(z)=\int_{C}\frac{\sqrt{a_{0}b_{0}}}{\sqrt{(a_{0}+a_{1}\frac{1}{\mathtt{x}_{1}})(b_{0}+b_{1}\mathtt{x}_{1}})}\frac{d\mathtt{x_{1}}}{\mathtt{x}_{1}}\label{eq:periodE24-normalized}
\end{equation}
satisfies the following differential equation, Picard-Fuchs equation,
\begin{equation}
\mathcal{D}_{z}\omega(z)=\big\{\theta_{z}^{2}+z(\theta_{z}+\frac{1}{2})^{2}\big\}\omega(z)=0,\label{eq:PF-ellip}
\end{equation}
with $\theta_{z}:=z\frac{d\;}{dz}$ (cf. \cite[Sect.3]{HLTYpartI}).
This differential equation has three regular singularities at $\left\{ 0,1,\infty\right\} $,
and the local solutions around $z=0$ are generated by the standard
Frobenius method;
\begin{equation}
\omega_{0}(z)=\omega(z,\rho)\vert_{\rho=0},\;\;\quad\omega_{1}(z)=\frac{2}{2\pi i}\frac{\partial\;}{\partial\rho}\omega(z,\rho)\vert_{\rho=0}\label{eq:w0-w1-E24}
\end{equation}
where $\omega(z,\rho):=\sum_{n\geq0}c(n+\rho)z^{n+\rho}$ with $c(n)=\frac{1}{\Gamma(\frac{1}{2})^{2}}\frac{\Gamma(n+\frac{1}{2})^{2}}{\Gamma(n+1)}$.
Here the constant factors $\frac{2}{2\pi i}$ and $\frac{1}{\Gamma(\frac{1}{2})^{2}}$
are fixed to have integral monodromies for the analytic continuations
of the solutions $\,^{t}(\omega_{1}(z),\omega_{0}(z))$ over $\mathbb{P}^{1}\setminus\left\{ 0,1,\infty\right\} $.
The ratio of the period integral $\tau:=\frac{\omega_{1}(z)}{\omega_{0}(z)}$
defines the multi-valued period map $\mathcal{P}:\mathcal{M}_{4}\to\mathbb{H}_{+}$,
where $\mathbb{H}_{+}$ is the upper half plane. The inverse of the
period map $z=z(\tau)$ is one of the simplest example of the so-called
mirror map. In the present case, this mirror map $z(\tau)$ coincides
with the elliptic lambda function $\lambda(\tau)$ which is a modular
function on the level two subgroup $\Gamma(2)$ of $\Gamma:=PSL(2,\mathbb{Z})$.

\subsection{Theta functions and semi-invariants }

Using the local solutions of (\ref{eq:PF-ellip}), we can describe
the mirror map locally, for example, in terms of the $q$-expansion
with $q:=e^{\pi i\tau}$. For global properties, we use modular forms
on $\Gamma(2)$, whose ring of even weights are known to be generated
by classical theta functions $\theta_{2}(\tau)^{4},\theta_{3}(\tau)^{4}$
and $\theta_{4}(\tau)^{4}$. It is useful to summarize the relation
to the period map in the following diagram:
\begin{equation}
\xymatrix{\mathcal{M}_{4}\;\;\ar[dr]_{\Phi_{Y}}\ar[rr]^{\mathcal{P}} &  & \;\;\mathbb{H}_{+}\ar[dl]^{\Phi}\\
 & \mathbb{P}^{2},
}
{\color{red}}\label{eq:E24diagram}
\end{equation}
where $\mathcal{P}$ is the period map and $\Phi(\tau):=[\theta_{2}(\tau)^{4},\theta_{3}(\tau)^{4},\theta_{4}(\tau)^{4}]$.
The map $\Phi_{Y}:\mathcal{M}_{4}\to\mathbb{P}^{2}$ is defined by
semi-invariants of the GIT quotient which we describe in detail below. 

\para{Theta functions} \label{para:theta-ell-def}We follow the standard
definition of the theta functions: $\theta_{2}(\tau)=\sum_{n\in\mathbb{Z}}q^{(n+\frac{1}{2})^{2}}$,
$\theta_{3}(\tau)=\sum_{n\in\mathbb{Z}}q^{n^{2}}$ and $\theta_{4}(\tau)=\sum_{n\in\mathbb{Z}}(-1)^{n}q^{n^{2}}$
which satisfy one linear relation $\theta_{2}(\tau)^{4}+\theta_{4}(\tau)^{4}-\theta_{3}(\tau)^{4}=0$.
To a parallel formula with the K3 case, we associate the theta functions
to certain partitions as follows:
\[
\Theta\left(\begin{matrix}1\,2\\
3\,4
\end{matrix}\right)(\tau)=\theta_{4}(\tau)^{2},\;\,\Theta\left(\begin{matrix}1\,3\\
2\,4
\end{matrix}\right)(\tau)=\theta_{3}(\tau)^{2},\;\,\Theta\left(\begin{matrix}1\,4\\
2\,3
\end{matrix}\right)(\tau)=\theta_{2}(\tau)^{2}.
\]
They have the (anti-)symmetry properties $\Theta\left(\begin{matrix}i\,j\\
k\,l
\end{matrix}\right)=\Theta\left(\begin{matrix}k\,l\\
i\,j
\end{matrix}\right)$ and 
\[
\Theta\left(\begin{matrix}m\,n\\
r\,s
\end{matrix}\right)^{2}=\mathrm{sgn}\left(\begin{matrix}m\,n\\
i\,j
\end{matrix}\right)\mathrm{sgn}\left(\begin{matrix}r\,s\\
k\,l
\end{matrix}\right)\Theta\left(\begin{matrix}i\,j\\
k\,l
\end{matrix}\right)^{2}.
\]
Using these, the linear relation $\theta_{2}(\tau)^{4}+\theta_{4}(\tau)^{4}-\theta_{3}(\tau)^{4}=0$
becomes 
\begin{equation}
\Theta\left(\begin{matrix}1\,2\\
3\,4
\end{matrix}\right)^{2}-\Theta\left(\begin{matrix}1\,3\\
2\,4
\end{matrix}\right)^{2}+\Theta\left(\begin{matrix}1\,4\\
2\,3
\end{matrix}\right)^{2}=0.\label{eq:theta-relation}
\end{equation}

\para{Semi-invariants} According to geometric invariant theory \cite{DoOrt},
the map $\Phi_{Y}:\mathcal{M}_{4}\to\mathbb{P}^{2}$ is defined by
the ring generators of semi-invariants of the $GL(2,\mathbb{C})\times(\mathbb{C}^{*})^{4}$
actions on $M_{2,4}$. Concretely, it is given by $\Phi_{Y}([A])=[Y_{0},Y_{1},Y_{2}]$
with 
\[
Y_{0}=[1\,2][3\,4],\;\;Y_{1}=[1\,3][2\,4],\;\;Y_{2}=[1\,4][2\,3],
\]
where $[i\,j]$ represent the $2\times2$ minors of $A$. These $Y_{k}$'s
satisfy the Pl\"ucker relation $Y_{0}-Y_{1}+Y_{2}=0$ which corresponds
to (\ref{eq:theta-relation}), and the period map $\mathcal{P}$ makes
the diagram (\ref{eq:E24diagram}) commute. 

\para{Affine coordinates from the level two structure} \label{para:affine-coordinate-ell}
The period map $\mathcal{P}:\mathcal{M}_{4}\to\mathbb{H}_{+}$ is
in fact a multi-valued map with its monodromy group $\Gamma(2)$ giving
the isomorphism $\mathcal{M}_{4}\simeq\overline{\Gamma(2)\diagdown\mathbb{H}_{+}}$.
The symmetric group of order three $S_{3}\simeq\Gamma/\Gamma(2)$
acts naturally on $\overline{\Gamma(2)\diagdown\mathbb{H}_{+}}$ as
its aoutomorphisms. These come from the right actions of $S_{4}$
on $\mathcal{M}_{4}$ by $4\times4$ permutation matrices, which induce
the following actions on the cross ratio (\ref{eq:cross-ratio});
\[
z\mapsto z^{\sigma}=\varphi_{\sigma}(z)=\frac{[\sigma(2)\sigma(3)][\sigma(1)\sigma(4)]}{[\sigma(1)\sigma(3)][\sigma(2)\sigma(4)]}\;(\sigma\in S_{4}).
\]
Because of the non-trivial isotropy group $H$, the $S_{4}$ group
action actually reduces to the factor group $S_{3}\simeq S_{4}/H$.
We will identify this factor group with the subgroup $S_{3}=\left\{ \sigma\in S_{4}\mid\sigma(4)=4\right\} $.
The explicit forms of the automorphisms $\varphi_{\sigma}:\mathcal{M}_{4}\to\mathcal{M}_{4}$
are summarized in the following table:

\begin{equation}
\begin{matrix}\sigma & : & e & (12) & (23) & (23)(12) & (12)(23) & (13)\\
z^{\sigma} & : & z & \frac{1}{z} & \frac{z}{z-1} & 1-\frac{1}{z} & \frac{1}{1-z} & 1-z
\end{matrix}\label{eq:table-zSigma}
\end{equation}

In what follows, we shall read the above automorphisms $z^{\sigma}=\varphi_{\sigma}(z)$
as the coordinate transformations between different affine charts
which cover $\mathcal{M}_{4}\simeq\mathbb{P}^{1}$. 
\begin{lem}
\label{lem:AsigmaB}Let $p\in\mathcal{M}_{4}$ be any point represented
by $2\times4$ matrix $A$. Then the following properties hold: \end{lem}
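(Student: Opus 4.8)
The plan is to reduce every assertion to elementary linear algebra of the $2\times 4$ matrix $A$ and its six $2\times 2$ minors $[i\,j]$, keeping careful track of the residual freedom inside the $GL(2,\mathbb{C})\times(\mathbb{C}^{*})^{4}$-action so that the cross ratio (\ref{eq:cross-ratio}) is seen to be a genuine function on $\mathcal{M}_{4}$ and the $\varphi_{\sigma}$ of the table (\ref{eq:table-zSigma}) are recognized as transition maps between affine charts.

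First I would put $A$ into normal form. Since $A\in M_{2,4}^{o}$ all six minors are nonzero; in particular $[1\,2]\neq 0$, so a left $GL(2,\mathbb{C})$-action turns the first two columns into $e_{1},e_{2}$, and because $[1\,3],[2\,3],[1\,4],[2\,4]$ are nonzero both entries of the transformed third and fourth columns are nonzero. Rescaling the third column to $(1,1)^{t}$ and the first entry of the fourth column to $1$ by the torus, and compensating on $e_{1},e_{2}$ by a diagonal $GL(2,\mathbb{C})$-factor, brings $A$ to $\left(\begin{smallmatrix}1&0&1&1\\0&1&1&z\end{smallmatrix}\right)$. A direct minor computation in this form gives $z=[2\,3][1\,4]/([1\,3][2\,4])$, i.e. (\ref{eq:cross-ratio}); moreover $z\in\mathbb{P}^{1}\setminus\{0,1,\infty\}$ precisely because the three relevant minor-products are nonzero (with $z=1$ excluded by $[3\,4]\neq0$). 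The one point that needs care here is that the only freedom left after these three rescalings is the simultaneous scaling $(g,t)\mapsto(\lambda g,\lambda^{-1}t)$, which is exactly the equivalence $\sim$ of (\ref{eq:T-group-M4}); granting this, the normal form is canonically attached to $p$ and $z$ is a well-defined coordinate on $\mathcal{M}_{4}$.

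Next I would treat the permutation action. For $\sigma\in S_{4}$ the matrix $A^{\sigma}:=AP_{\sigma}$ obtained by permuting columns has minors $[i\,j]_{A^{\sigma}}=\pm[\sigma(i)\,\sigma(j)]_{A}$, so plugging into the cross-ratio formula gives $z(p^{\sigma})=\varphi_{\sigma}(z(p))$ with $\varphi_{\sigma}$ as displayed; a one-line check shows the Klein four-group $V_{4}=\{e,(12)(34),(13)(24),(14)(23)\}$ leaves the cross ratio fixed, so the homomorphism $S_{4}\to\mathrm{Aut}(\mathcal{M}_{4})$ factors through $S_{4}/V_{4}\simeq S_{3}=\{\sigma\in S_{4}:\sigma(4)=4\}$, and evaluating $\varphi_{\sigma}$ on $(12),(23),(13)$ and their products (using the normal form) reproduces (\ref{eq:table-zSigma}) verbatim.

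Finally, each $\varphi_{\sigma}$ is a fractional linear transformation, hence an automorphism of $\mathbb{P}^{1}$ permuting $\{0,1,\infty\}$; since $z^{(12)}=1/z$ is regular and nonzero where $z=\infty$ and $z^{(13)}=1-z$ is regular and nonzero at $z=1$, the six functions $z^{\sigma}$ give affine charts covering $\mathcal{M}_{4}\simeq\mathbb{P}^{1}$ with the $\varphi_{\sigma}$ as their transition maps, and by construction this covering is equivariant for $S_{3}\simeq\Gamma/\Gamma(2)$. I expect the only genuinely delicate step to be the bookkeeping in the second paragraph — verifying that no spurious one-parameter freedom survives the normalization, so that $z$ really descends to $\mathcal{M}_{4}$; everything else is a short computation with $2\times2$ determinants.
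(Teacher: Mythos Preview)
Your write-up is a clean account of the cross-ratio normal form, the $S_4/V_4\simeq S_3$ reduction, and the $\varphi_\sigma$ as transition maps --- but that is the material \emph{surrounding} the lemma, not the lemma itself. The actual content of Lemma~\ref{lem:AsigmaB} consists of two enumerated claims that you have not addressed:
\begin{enumerate}
\item For \emph{any} $p=[A]\in\mathcal{M}_4$ (not just $A\in M_{2,4}^o$), there exists $\sigma\in S_4$ and $B(\sigma)\in GL(2,\mathbb{C})$ with $A\sigma=B(\sigma)\bigl(\begin{smallmatrix}1&0&a_1^\sigma&b_0^\sigma\\0&1&a_0^\sigma&b_1^\sigma\end{smallmatrix}\bigr)$ and $a_0^\sigma b_0^\sigma\neq 0$.
\item If the representative $A$ is replaced by $gAt$ with $(g,t)\in T$, then the same $\sigma$ works and $B(\sigma)$ becomes $gB(\sigma)h^{-1}$, where $h$ is the unique diagonal matrix with $(h,\sigma^{-1}t\sigma)\in T$.
\end{enumerate}

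Your opening line ``Since $A\in M_{2,4}^o$ all six minors are nonzero'' already misses the point of (1): the GIT compactification $\mathcal{M}_4$ contains strictly semi-stable points where two of the four points coincide and some $[ij]$ vanishes, so one cannot simply assume $[12]\neq 0$ or $[13][24]\neq 0$. The paper's argument for (1) is that semi-stability forbids three points from coinciding, and from this one extracts a permutation $\sigma$ making the required minors nonzero. For (2) the paper writes $gAt\,\sigma=gA\sigma(\sigma^{-1}t\sigma)$, inserts $h^{-1}h$ with $h$ determined by $(h,\sigma^{-1}t\sigma)\in T$, and reads off the new $B$-factor; your discussion of ``no spurious one-parameter freedom'' gestures at this but does not carry out the computation or identify $h$. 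So as written your proposal proves correct statements from \S2.1--2.2, but leaves both assertions of the lemma unproved.
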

\begin{enumerate}
\item There is a right action by $\sigma\in S_{4}$ which brings $A$ into
$A\sigma$ of the form: 
\begin{equation}
A\sigma=B(\sigma)\left(\begin{matrix}1\,\,\,0\,\\
0\,\,\,1\,
\end{matrix}\;\begin{matrix}a_{1}^{\sigma}\,\,b_{0}^{\sigma}\\
a_{0}^{\sigma}\,\,b_{1}^{\sigma}
\end{matrix}\right),\,a_{0}^{\sigma}b_{0}^{\sigma}\not=0,\label{eq:AsigmaB}
\end{equation}
where $B(\sigma)$ is a $2\times2$ regular matrix.
\item When we change the representative of $p=[A]$ to $gAt$ by $(g,t)\in T$,
the same $\sigma$ brings $gAt$ to the form (\ref{eq:AsigmaB}) with
$gB(\sigma)h^{-1}$, where $h$ is determined uniquely by the condition
$(h,\sigma^{-1}t\sigma)\in T$ (see (\ref{eq:T-group-M4}) for the
definition of $T$). \end{enumerate}
\begin{proof}
(1) The moduli space $\mathcal{M}_{4}$ parametrizes the equivalence
classes of semi-stable configurations of four points in $\mathbb{P}^{1}$.
The claim follows from the fact that no three points coincide for
a semi-stable configuration represented by $A$. (2) Suppose $A\sigma$
has the form (\ref{eq:AsigmaB}). Then we have 
\[
\begin{matrix}\begin{aligned}gAt\,\sigma=gA\sigma\,(\sigma^{-1}t\sigma) & = & gB_{2}(\sigma)\left(\begin{matrix}1\,\,\,0\,\\
0\,\,\,1\,
\end{matrix}\;\begin{matrix}a_{1}^{\sigma}\,\,b_{0}^{\sigma}\\
a_{0}^{\sigma}\,\,b_{1}^{\sigma}
\end{matrix}\right)(\sigma^{-1}t\sigma)\qquad\quad\\
 & = & gB_{2}(\sigma)h^{-1}\cdot h\left(\begin{matrix}1\,\,\,0\,\\
0\,\,\,1\,
\end{matrix}\;\begin{matrix}a_{1}^{\sigma}\,\,b_{0}^{\sigma}\\
a_{0}^{\sigma}\,\,b_{1}^{\sigma}
\end{matrix}\right)(\sigma^{-1}t\sigma),
\end{aligned}
\end{matrix}
\]
where $h$ is unique by the condition $(h,\sigma^{-1}t\sigma)\in T$
with $T\simeq\mathbb{C}^{*}$ given in (\ref{eq:T-group-M4}). Since
$(h,\sigma^{-1}t\sigma)\in T$ acts on the matrix entries by $\mathbb{C}^{*}$
actions, the condition $a_{0}^{\sigma}b_{0}^{\sigma}\not=0$ is retained.
\end{proof}
Let us introduce the following notation for $A=(a_{ij})$; 
\[
z(A):=\frac{[23][14]}{[13][24]},\;\;z^{\sigma}(A):=z(A\sigma)=\frac{[\sigma(2)\sigma(3)][\sigma(1)\sigma(4)]}{[\sigma(1)\sigma(3)][\sigma(2)\sigma(4)]}.
\]
Based on Lemma \ref{lem:AsigmaB}, we define for $\sigma\in S_{4}$
the subset of $\mathcal{M}_{4}$ by 
\begin{equation}
M_{\sigma}:=\left\{ [A]\in\mathcal{M}_{4}\mid A\sigma\text{ has the form (\ref{eq:AsigmaB})}\right\} .\label{eq:Czsigma-M4}
\end{equation}
Then we have $z^{\sigma}(A)=\frac{a_{1}^{\sigma}b_{1}^{\sigma}}{a_{0}^{\sigma}b_{0}^{\sigma}}$
for $[A]\in M_{\sigma}$. This shows that $M_{\sigma}\simeq\mathbb{C}$
and $z^{\sigma}$ is an affine coordinate on it. We will denote by
$\mathbb{C}_{z^{\sigma}}$ this affine open set $M_{\sigma}$ with
its coordinate function $z^{\sigma}$. Now, it is easy to see that
we have the covering of $\mathcal{M}_{4}$ by these affine open sets:
\begin{equation}
\mathcal{M}_{4}=\bigcup_{\sigma\in S_{3}}\mathbb{C}_{z^{\sigma}}.\label{eq:M4-sigma-Union}
\end{equation}

When we have $z=z(A)$ for a configuration $[A]\in\mathbb{C}_{z}\cap\mathbb{C}_{z^{\sigma}}$,
the coordinate function $z^{\sigma}$ of $\mathbb{C}_{z^{\sigma}}$
evaluates the same point by $z^{\sigma}(A)=z(A\sigma)$. By definition,
these two values are related by $z^{\sigma}(A)=\varphi_{\sigma}(z(A))$. 
\begin{rem}
$\varphi_{\sigma}$'s are anti-homomorphisms, $\varphi_{\sigma\tau}=\varphi_{\tau}\circ\varphi_{\sigma}$,
since $z^{\sigma\tau}(A)=z^{\tau}(A\sigma)$. 
\end{rem}

\subsection{Transformation properties of semi-invariants}

Let us recall that the semi-invariants $Y_{k}=Y_{k}(A)$ are homogeneous
polynomials of matrix elements of $A$. We will express these semi-invariants
as some polynomials in the affine coordinate of $\mathbb{C}_{z}$,
and describe the transformation properties of these polynomials under
the coordinate changes $z^{\sigma}=\varphi_{\sigma}(z)$. This simply
reproduces the well-known properties of the elliptic lambda function
for the Legendre family. However, this will become our guiding principle
to define the K3 analogues of the elliptic lambda functions. 

\para{Polynomials $P_I$} \label{para:elliptic-def-P}It is convenient
to write $Y_{k}(A)\,(k=0,1,2)$ as 
\[
Y_{I}(A)=[i\,j][k\,l]
\]
introducing the ordered set $I=\{\{i,j\},\{k,l\}\}$. Assume $A$
has a special form $A_{0}=\left(E_{2}\,X\right)=\left(\begin{smallmatrix}1 & 0\\
0 & 1
\end{smallmatrix}\begin{smallmatrix}a_{1} & b_{0}\\
a_{0} & b_{1}
\end{smallmatrix}\right)$ with $a_{0}b_{0}\not=0$. For such $A_{0}$, we define 
\[
P_{I}:=\frac{1}{a_{0}b_{0}}Y_{I}(A_{0}).
\]
It is easy to verify that $P_{I}$'s are polynomials of $z=z(A_{0})=\frac{a_{1}b_{1}}{a_{0}b_{0}}$;
and they are given by 
\begin{equation}
P_{I}(z)=\;z-1,\;\;-1,\;\;-z,\label{eq:tildeYI-E24}
\end{equation}
for $I=\{\{1,2\},\{3,4\}\},\{\{1,3\},\{2,4\}\}$ and $\{\{1,4\},\{2,3\}\}$,
respectively. 

\para{Semi-invariants in affine coordinates} We can express the semi-invariants
$Y_{I}(A)$ for general $A$ in terms of the polynomial $P_{I}$ given
in (\ref{eq:tildeYI-E24}). Let us first note that, by definition,
we have the following relation for $A=(\bm{a}_{1}\,\bm{a}_{2}\,\bm{a}_{3}\,\bm{a}_{4})$:
\begin{equation}
Y_{I}(A\sigma)=\det(\bm{a}_{\sigma(i)}\,\bm{a}_{\sigma(j)})\,\det(\bm{a}_{\sigma(k)}\,\bm{a}_{\sigma(l)})=Y_{\sigma(I)}(A),\label{eq:Asigam-Y}
\end{equation}
where $\sigma(I)=\{\{\sigma(i),\sigma(j)\},\{\sigma(k),\sigma(l)\}\}$. 
\begin{prop}
\label{prop:Y-P-rel-E24}For $A\in M_{2,4}$ such that $[A]\in\mathbb{C}_{z}\cap\mathbb{C}_{z^{\sigma}}$,
we have 
\begin{equation}
\begin{aligned}Y_{I}(A) & =(\det B_{2}(e))^{2}a_{0}^{e}b_{0}^{e}\cdot P_{I}(z(A))\\
 & =(\det B_{2}(\sigma))^{2}a_{0}^{\sigma}b_{0}^{\sigma}\cdot P_{\sigma^{-1}(I)}(z^{\sigma}(A)).
\end{aligned}
\label{eq:YA-by-affineY}
\end{equation}
\end{prop}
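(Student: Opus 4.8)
The plan is to reduce the statement to two elementary observations about the quantities involved: (i) since the semi-invariant $Y_I(A)=[ij][kl]$ is a product of two $2\times 2$ minors of $A$, it is multiplied by $(\det g)^2$ under a left multiplication $A\mapsto gA$ with $g\in GL(2,\mathbb{C})$; and (ii) the cross ratio $z(A)=[23][14]/[13][24]$ is unchanged under such a left multiplication, the factors $(\det g)^2$ from numerator and denominator cancelling. Granting these, together with the defining formula for $P_I$ and the index identity (\ref{eq:Asigam-Y}), the two displayed equalities follow by a short chain of substitutions; no real obstacle is expected.

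First I would prove the first equality. Since $[A]\in\mathbb{C}_z=M_e$, Lemma~\ref{lem:AsigmaB}(1) applied with $\sigma=e$ produces $A=B_2(e)A_0$ with $A_0=\left(\begin{smallmatrix}1&0&a_1^e&b_0^e\\0&1&a_0^e&b_1^e\end{smallmatrix}\right)$ and $a_0^e b_0^e\neq 0$ (concretely $B_2(e)$ is the invertible submatrix formed by the first two columns of $A$, so $A_0=B_2(e)^{-1}A$). By (i), $Y_I(A)=(\det B_2(e))^2\,Y_I(A_0)$; by the definition of $P_I$, $Y_I(A_0)=a_0^e b_0^e\,P_I(z(A_0))$; and by (ii), $z(A_0)=z(A)$. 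Chaining these gives $Y_I(A)=(\det B_2(e))^2\,a_0^e b_0^e\,P_I(z(A))$.

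For the second equality I would run the same argument with $A$ replaced by $A\sigma$. Since $[A]\in\mathbb{C}_{z^\sigma}=M_\sigma$, Lemma~\ref{lem:AsigmaB}(1) gives $A\sigma=B_2(\sigma)A_0^\sigma$ with $A_0^\sigma=\left(\begin{smallmatrix}1&0&a_1^\sigma&b_0^\sigma\\0&1&a_0^\sigma&b_1^\sigma\end{smallmatrix}\right)$, $a_0^\sigma b_0^\sigma\neq 0$; exactly as above, for any ordered index set $J$ one gets $Y_J(A\sigma)=(\det B_2(\sigma))^2\,a_0^\sigma b_0^\sigma\,P_J(z^\sigma(A))$, using $z(A_0^\sigma)=z(A\sigma)=z^\sigma(A)$ and that $P_J$ is the same universal polynomial of (\ref{eq:tildeYI-E24}) regardless of which normalized matrix it is read off from. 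Now take $J=\sigma^{-1}(I)$ and invoke (\ref{eq:Asigam-Y}) in the form $Y_{\sigma^{-1}(I)}(A\sigma)=Y_{\sigma(\sigma^{-1}(I))}(A)=Y_I(A)$; this yields $Y_I(A)=(\det B_2(\sigma))^2\,a_0^\sigma b_0^\sigma\,P_{\sigma^{-1}(I)}(z^\sigma(A))$.

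The only points needing a little attention---rather than a genuine difficulty---are the bookkeeping between $\sigma(I)$ and $\sigma^{-1}(I)$ in (\ref{eq:Asigam-Y}), so that the subscript emerges as $P_{\sigma^{-1}(I)}$ and not $P_{\sigma(I)}$, and the remark that the choice of representative $A$ does not matter: under $A\mapsto gAt$ with $(g,t)\in T$, both sides of each equality scale by the same character (this is precisely Lemma~\ref{lem:AsigmaB}(2)), so the identity descends to $\mathcal{M}_4$; for a fixed representative it is just the chain of elementary equalities above.
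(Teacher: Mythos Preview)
Your proof is correct and follows essentially the same route as the paper: factor $A=B_2(e)A_0$ (respectively $A\sigma=B_2(\sigma)A_0^\sigma$), use that $Y_I$ scales by $(\det g)^2$ under left multiplication, apply the definition $Y_I(A_0)=a_0b_0\,P_I(z(A_0))$, and for the second line invoke the index identity $Y_{\sigma^{-1}(I)}(A\sigma)=Y_I(A)$. Your additional remarks on cross-ratio invariance and representative independence are correct but not needed for the identity itself.
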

\begin{proof}
By the definition of $B_{2}(e)$, we have $A=Ae=B_{2}(e)\,A_{0}$,
from which we obtain $Y_{I}(A)=(\det B_{2}(e))^{2}\,Y_{I}(A_{0})$.
Now, the first equality of (\ref{eq:YA-by-affineY}) follows from
the definition $P_{I}(z(A_{0}))=\frac{1}{a_{0}b_{0}}Y_{I}(A_{0})$.
For the second equality, we use (\ref{eq:Asigam-Y}) to have 
\[
Y_{I}(A)=Y_{\sigma^{-1}(I)}(A\sigma)=(\det B_{2}(\sigma))^{2}\cdot Y_{\sigma^{-1}(I)}((E_{2}\,X_{\sigma})),
\]
where $X_{\sigma}:=\left(\begin{matrix}a_{1}^{\sigma}\,\,b_{0}^{\sigma}\\
a_{0}^{\sigma}\,\,b_{1}^{\sigma}
\end{matrix}\right)$. Noting that 
\[
z^{\sigma}(A)=z(A\sigma)=z((E_{2}\,X_{\sigma}))=\frac{a_{1}^{\sigma}b_{1}^{\sigma}}{a_{0}^{\sigma}b_{0}^{\sigma}},
\]
and $Y_{I}(A_{0})=a_{0}b_{0}P_{I}(z(A_{0}))$, we have $Y_{\sigma^{-1}(I)}((E_{2}\,X_{\sigma}))=a_{0}^{\sigma}b_{0}^{\sigma}\,P_{\sigma^{-1}(I)}(z^{\sigma}(A))$
and obtain the second equality. \end{proof}
\begin{defn}
\label{def:twist-G-E24}For $A\in M_{2,4}$ such that $[A]\in\mathbb{C}_{z}\cap\mathbb{C}_{z^{\sigma}}$,
we define the ratio of the factors in (\ref{eq:YA-by-affineY}) by
\begin{equation}
G(\sigma,e):=\frac{(\det B_{2}(\sigma))^{2}\,a_{0}^{\sigma}b_{0}^{\sigma}}{(\det B_{2}(e))^{2}\,a_{0}^{e}b_{0}^{e}}\;\left(=\frac{P_{I}(z(A))}{P_{\sigma^{-1}(I)}(z^{\sigma}(A))}\right),\label{eq:gaugeE24}
\end{equation}
and call it the \textit{twist factor} (or \textit{gauge factor}) for
the transition from $\mathbb{C}_{z}$ to $\mathbb{C}_{z^{\sigma}}$. 
\end{defn}
Explicitly, we calculate the twist factors $G(\sigma,e)$ in terms
of $z(A)=z$ for $[A]\in\mathbb{C}_{z}\cap\mathbb{C}_{z^{\sigma}}$
as follows: 
\begin{equation}
\begin{matrix}\sigma & : & 1 & (12) & (23) & (23)(12) & (12)(23) & (13)\\
G(\sigma,e) & : & 1 & z & 1-z & -z & z-1 & -1
\end{matrix}\label{eq:table-Gsigma}
\end{equation}

\begin{rem}
\label{rem:transf-period}The meaning of the twist factor becomes
clear in the definitions of period integrals (\ref{eq:periodE24-bar})
and (\ref{eq:periodE24-normalized}). Let us write the period integral
$\bar{\omega}_{C}(a)$ (\ref{eq:periodE24-bar}) by $\bar{\omega}(A)$.
Then, it is easy to see that the normalized period integral $\omega(z)$
in (\ref{eq:periodE24-normalized}) related to $\bar{\omega}(A)$
in general by 
\begin{equation}
\bar{\omega}(A)=\frac{1}{\det B_{2}(e)}\bar{\omega}\big((\begin{smallmatrix}1 & 0\\
0 & 1
\end{smallmatrix}\begin{smallmatrix}a_{1}^{e} & b_{0}^{e}\\
a_{0}^{e} & b_{1}^{e}
\end{smallmatrix})\big)=\frac{1}{\det B_{2}(e)}\frac{1}{\sqrt{a_{0}^{e}b_{0}^{e}}}\omega(z(A)).\label{eq:normalized-omega-A}
\end{equation}
We leave the derivations of the above relations for the reader. \end{rem}
\begin{lem}
\label{lem:omega-Sigma-omega}For $A\in M_{2,4}$ such that $[A]\in\mathbb{C}_{z}\cap\mathbb{C}_{z^{\sigma}}$,
the following relation holds 
\[
\omega(z^{\sigma}(A))=\sqrt{G(\sigma,e)}\,\omega(z(A))
\]
for the normalized period integral (\ref{eq:periodE24-normalized}). \end{lem}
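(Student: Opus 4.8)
The strategy is to reduce the claimed identity $\omega(z^{\sigma}(A)) = \sqrt{G(\sigma,e)}\,\omega(z(A))$ to the transformation rule for the unnormalized period integral $\bar\omega(A)$ under the right $S_4$-action, using Remark~\ref{rem:transf-period} twice — once for the chart $\mathbb{C}_z$ and once for the chart $\mathbb{C}_{z^\sigma}$ — and then comparing the two expressions via Lemma~\ref{lem:AsigmaB}(1). First I would observe that the integrand defining $\bar\omega_C(a)$ in \eqref{eq:periodE24-bar} is built from the symmetric product $\prod_{i=1}^4(a_{0i}\mathtt{x}_0 + a_{1i}\mathtt{x}_1)$, which is manifestly invariant under permuting the columns of $A$; hence permuting columns does not change $\bar\omega$ at all, i.e. $\bar\omega(A\sigma) = \bar\omega(A)$ for $\sigma \in S_4$ (up to the choice of cycle, which is absorbed into the monodromy and is irrelevant for the local relation being asserted). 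Equivalently, the differential form $d\mu/\sqrt{\prod(a_{0i}\mathtt{x}_0+a_{1i}\mathtt{x}_1)}$ is column-permutation invariant.

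Next, applying \eqref{eq:normalized-omega-A} to $A$ directly gives $\bar\omega(A) = \frac{1}{\det B_2(e)}\,\frac{1}{\sqrt{a_0^e b_0^e}}\,\omega(z(A))$. On the other hand, applying the same formula to the matrix $A\sigma$: since $[A\sigma]$ lies in $\mathbb{C}_z$ (as $z(A\sigma) = z^\sigma(A)$, and $A\sigma$ is brought to the form \eqref{eq:AsigmaB} with the matrix $B(\sigma)$ and entries $a_0^\sigma, b_0^\sigma$, etc., by Lemma~\ref{lem:AsigmaB}), the analogue of \eqref{eq:normalized-omega-A} for $A\sigma$ reads $\bar\omega(A\sigma) = \frac{1}{\det B_2(\sigma)}\,\frac{1}{\sqrt{a_0^\sigma b_0^\sigma}}\,\omega(z(A\sigma)) = \frac{1}{\det B_2(\sigma)}\,\frac{1}{\sqrt{a_0^\sigma b_0^\sigma}}\,\omega(z^\sigma(A))$. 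Here I am using the fact that the data $(B_2(\sigma), a_0^\sigma, b_0^\sigma, a_1^\sigma, b_1^\sigma)$ is precisely what puts $A\sigma$ in standard form, exactly as in Proposition~\ref{prop:Y-P-rel-E24}.

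Equating the two expressions for the common value $\bar\omega(A) = \bar\omega(A\sigma)$ yields
\[
\frac{1}{\det B_2(e)}\,\frac{1}{\sqrt{a_0^e b_0^e}}\,\omega(z(A)) = \frac{1}{\det B_2(\sigma)}\,\frac{1}{\sqrt{a_0^\sigma b_0^\sigma}}\,\omega(z^\sigma(A)),
\]
so that
\[
\omega(z^\sigma(A)) = \frac{\det B_2(\sigma)\,\sqrt{a_0^\sigma b_0^\sigma}}{\det B_2(e)\,\sqrt{a_0^e b_0^e}}\,\omega(z(A)) = \sqrt{\frac{(\det B_2(\sigma))^2\, a_0^\sigma b_0^\sigma}{(\det B_2(e))^2\, a_0^e b_0^e}}\,\,\omega(z(A)) = \sqrt{G(\sigma,e)}\,\,\omega(z(A)),
\]
the last equality being the definition \eqref{eq:gaugeE24} of the twist factor. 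Conceptually, the only nontrivial input is the permutation-invariance of $\bar\omega$; everything else is the bookkeeping already set up in Remark~\ref{rem:transf-period} and Proposition~\ref{prop:Y-P-rel-E24}. The main subtlety to handle carefully is the branch of the square root: the relation is local (valid on $\mathbb{C}_z \cap \mathbb{C}_{z^\sigma}$), the sign of $\sqrt{G(\sigma,e)}$ depends on the choice of branch of the period and of the cycle $C$, and under analytic continuation over $\mathbb{P}^1\setminus\{0,1,\infty\}$ this is exactly the ambiguity that gets recorded by the $\Gamma(2)$-monodromy. I would state the identity as holding for a fixed compatible choice of branches, consistent with the normalization fixed in \eqref{eq:w0-w1-E24}, and leave the detailed sign analysis (which matches the explicit table \eqref{eq:table-Gsigma}) as a routine check the reader can verify against \eqref{eq:table-zSigma}.
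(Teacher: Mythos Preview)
Your proof is correct and follows essentially the same approach as the paper: both use the permutation invariance $\bar\omega(A\sigma)=\bar\omega(A)$, apply the normalization relation \eqref{eq:normalized-omega-A} in the two charts, and then identify the resulting ratio of prefactors with $\sqrt{G(\sigma,e)}$ via Definition~\ref{def:twist-G-E24}. The paper's version is terser and omits your discussion of the square-root branch, but the argument is the same.
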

\begin{proof}
By symmetry, the period integral $\bar{\omega}(A)$ in (\ref{eq:periodE24-bar})
satisfies the obvious relation $\bar{\omega}(A\sigma)=\bar{\omega}(A)$.
We can calculate $\bar{\omega}(A\sigma)=\bar{\omega}\big(B_{2}(\sigma)(\begin{smallmatrix}1 & 0\\
0 & 1
\end{smallmatrix}\begin{smallmatrix}a_{1}^{\sigma} & b_{0}^{\sigma}\\
a_{0}^{\sigma} & b_{1}^{\sigma}
\end{smallmatrix})\big)$ in the same way as (\ref{eq:normalized-omega-A}). Then the claimed
relation follows from $\bar{\omega}(A\sigma)=\bar{\omega}(A)$ and
the definition of the twist factor $G(\sigma,e)$ in (\ref{eq:gaugeE24}). \end{proof}
\begin{prop}
The Picard-Fuchs equation $\mathcal{D}_{z}\omega(z)=0$ transforms
to 
\begin{equation}
\mathcal{D}_{z^{\sigma}}\omega(z^{\sigma})=\bigg\{\theta_{z^{\sigma}}^{2}+z^{\sigma}(\theta_{z^{\sigma}}+\frac{1}{2})^{2}\bigg\}\omega(z^{\sigma})=0\label{eq:PF-ellip-zigma}
\end{equation}
under the twist $\omega(z^{\sigma})=\sqrt{G(\sigma,e)}\,\omega(z)$. \end{prop}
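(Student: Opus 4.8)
The plan is to read off the statement from Lemma~\ref{lem:omega-Sigma-omega} once we observe that $\omega(z^{\sigma})$, the normalized period integral written in the chart $\mathbb{C}_{z^{\sigma}}$, is again a period integral of \emph{exactly} the shape (\ref{eq:periodE24-normalized}), only with the affine parameters $(a_{0},b_{0},a_{1},b_{1})$ replaced by $(a_{0}^{\sigma},b_{0}^{\sigma},a_{1}^{\sigma},b_{1}^{\sigma})$ and $z$ replaced by $z^{\sigma}=a_{1}^{\sigma}b_{1}^{\sigma}/(a_{0}^{\sigma}b_{0}^{\sigma})$. Indeed, for $[A]\in\mathbb{C}_{z}\cap\mathbb{C}_{z^{\sigma}}$ the representative $A\sigma=B_{2}(\sigma)(E_{2}\,X_{\sigma})$ satisfies $a_{0}^{\sigma}b_{0}^{\sigma}\neq0$, so the same manipulation that led from (\ref{eq:periodE24-bar}) to (\ref{eq:periodE24-normalized}) gives
\[
\omega(z^{\sigma})=\int_{C}\frac{\sqrt{a_{0}^{\sigma}b_{0}^{\sigma}}}{\sqrt{(a_{0}^{\sigma}+a_{1}^{\sigma}\frac{1}{\mathtt{x}_{1}})(b_{0}^{\sigma}+b_{1}^{\sigma}\mathtt{x}_{1})}}\,\frac{d\mathtt{x}_{1}}{\mathtt{x}_{1}}.
\]
Since the derivation of the Picard--Fuchs operator in (\ref{eq:PF-ellip}) used only this functional form of the integrand (the reduced GKZ operator depends on the torus-invariant coordinate alone), applying that derivation verbatim to $\omega(z^{\sigma})$ yields $\mathcal{D}_{z^{\sigma}}\omega(z^{\sigma})=0$, which is (\ref{eq:PF-ellip-zigma}).

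It then remains to substitute Lemma~\ref{lem:omega-Sigma-omega}, $\omega(z^{\sigma}(A))=\sqrt{G(\sigma,e)}\,\omega(z(A))$, into $\mathcal{D}_{z^{\sigma}}\omega(z^{\sigma})=0$: this shows that the gauge-transformed function $\sqrt{G(\sigma,e)}\,\omega(z)$ lies in the kernel of $\mathcal{D}_{z^{\sigma}}$, which is precisely the claim. Moreover this is equivalent to the asserted operator identity, because the two independent local solutions $\omega_{0},\omega_{1}$ of (\ref{eq:w0-w1-E24}) span the solution space of $\mathcal{D}_{z}$ and multiplication by $\sqrt{G(\sigma,e)}$ is invertible; hence $\mathcal{D}_{z^{\sigma}}$ and the operator obtained from $\mathcal{D}_{z}$ by the substitution $z^{\sigma}=\varphi_{\sigma}(z)$ followed by the gauge $\omega\mapsto\sqrt{G(\sigma,e)}\,\omega$ are two second-order operators with the same two-dimensional kernel and the same normalization, so they coincide.

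Alternatively one can argue without periods: conjugate $\mathcal{D}_{z}=\theta_{z}^{2}+z(\theta_{z}+\tfrac{1}{2})^{2}$ by the change of variable $z^{\sigma}=\varphi_{\sigma}(z)$ (expressing $\theta_{z}$ through $\theta_{z^{\sigma}}$) and by multiplication by $\sqrt{G(\sigma,e)}$, read from the table (\ref{eq:table-Gsigma}), and check the result equals $\theta_{z^{\sigma}}^{2}+z^{\sigma}(\theta_{z^{\sigma}}+\tfrac{1}{2})^{2}$. Since $S_{3}=\langle(12),(23)\rangle$ and the twist factors obey a cocycle relation compatible with the anti-homomorphism $\varphi_{\sigma\tau}=\varphi_{\tau}\circ\varphi_{\sigma}$ (immediate from (\ref{eq:gaugeE24})), it is enough to verify the two generating cases $z^{\sigma}=1/z$ with $G=z$, and $z^{\sigma}=z/(z-1)$ with $G=1-z$; each of these is one of the classical Kummer/Pfaff transformation identities for ${}_{2}F_{1}(\tfrac{1}{2},\tfrac{1}{2};1;\,\cdot\,)$. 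The only delicate bookkeeping in either approach is choosing branches of the square roots $\sqrt{G(\sigma,e)}$ and $\sqrt{a_{0}b_{0}}$ consistently, but this is harmless for the present statement since the two branches of $\sqrt{G(\sigma,e)}$ differ only by sign and both yield solutions of (\ref{eq:PF-ellip-zigma}); I expect this, rather than any substantive difficulty, to be the main thing to be careful about.
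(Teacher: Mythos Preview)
Your proposal is correct and follows essentially the same approach as the paper: the paper's proof simply says the claim is a consequence of Lemma~\ref{lem:omega-Sigma-omega}, and adds that one may alternatively verify it explicitly using the tables (\ref{eq:table-zSigma}) and (\ref{eq:table-Gsigma}). You have spelled out both routes in more detail than the paper does, including the observation that $\omega(z^{\sigma})$ has the same integral form as $\omega(z)$ and hence satisfies the same operator, and the reduction of the explicit check to the two $S_{3}$ generators; this elaboration is fine and the remark on square-root branches is harmless, as you note.
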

\begin{proof}
This is a consequence of Lemma \ref{lem:omega-Sigma-omega}. Also,
it is straightforward to verify the claim explicitly using $G(\sigma,e)$
and $z^{\sigma}=\varphi_{\sigma}(z)$ given in the tables (\ref{eq:table-Gsigma})
and (\ref{eq:table-zSigma}).
\end{proof}
It should be noted in the above proposition that the local solutions
about $z^{\sigma}=0$ have the same form for all three singularities.
In particular, the origins $z^{\sigma}=0$ are the so-called maximally
unipotent monodromy points (or LCSLs), which correspond to the cusps
in $\overline{\Gamma(2)\diagdown\mathbb{H}_{+}}$. This property comes
from the fact that the $\mathcal{D}$-modules of the Picard-Fuchs
equation around three singularities are all isomorphic. We will see
that similar properties hold for the double cover family of K3 surfaces
although the relevant $\mathcal{D}$-module becomes more complicated
(cf. Proposition \ref{prop:PF-o1}).

\subsection{The elliptic lambda function\label{sub:The-elliptic-lambda}}

We describe the elliptic lambda function (\ref{eq:elliptic-lambda})
by extending the projective relation 
\[
\Phi_{Y}([A])=\Phi\circ\mathcal{P}([A])\;\;\text{in }\mathbb{P}^{2}
\]
to an affine relation in $\mathbb{C}^{3}$. We will be brief since
the subject is more or less classical. However, for our definition
of K3 lambda functions, the corresponding affine relations will play
a central role.

\para{Transformation properties of theta functions} The theta functions
introduced in (\ref{para:theta-ell-def}) are modular forms of weight
two on $\Gamma(2)$. Let $S:\tau\to-1/\tau$, $T:\tau\to\tau+1$ be
the standard generators of $\Gamma=PSL(2,\mathbb{Z})$. The congruence
subgroup $\Gamma(2)$ is generated by $T^{2}$ and $ST^{2}S$. For
$g\in\Gamma$, which is given by composite of $S$ and $T$, we denote
its action on $\tau$ and $\theta_{k}(\tau)^{4}$ by $g\cdot\tau$
and $g\cdot\theta_{k}(\tau)^{4}=\theta_{k}(g\cdot\tau)^{4}$, respectively.
Then the transformation properties of the theta functions are determined
by 
\[
S\cdot\left(\begin{matrix}\theta_{2}(\tau)^{4}\\
\theta_{3}(\tau)^{4}\\
\theta_{4}(\tau)^{4}
\end{matrix}\right)=\tau^{2}\left(\begin{matrix}-\theta_{4}(\tau)^{4}\\
-\theta_{3}(\tau)^{4}\\
-\theta_{2}(\tau)^{4}
\end{matrix}\right),\;\;\;T\cdot\left(\begin{matrix}\theta_{2}(\tau)^{4}\\
\theta_{3}(\tau)^{4}\\
\theta_{4}(\tau)^{4}
\end{matrix}\right)=\left(\begin{matrix}-\theta_{2}(\tau)^{4}\\
\theta_{4}(\tau)^{4}\\
\theta_{3}(\tau)^{4}
\end{matrix}\right).
\]
 Denote by $\sigma_{g}$ the corresponding element of $g$ under a
group isomorphism $\Gamma/\Gamma(2)\simeq S_{3}$. When we fix the
isomorphism by $\sigma_{S}=(13)$ and $\sigma_{T}=(23)$, we can verify
that the above transformation properties become
\begin{equation}
\Theta\left(\begin{matrix}i\,\,j\\
k\,\,l
\end{matrix}\right)^{2}(g\cdot\tau)=(c\tau+d)^{2}\,\Theta\left(\begin{matrix}\sigma_{g}(i)\,\,\sigma_{g}(j)\\
\sigma_{g}(k)\,\,\sigma_{g}(l)
\end{matrix}\right)^{2}(\tau),\label{eq:Theta-sigma-tau}
\end{equation}
in the notation of Subsection \ref{para:theta-ell-def}, for $g=\left(\begin{smallmatrix}a & b\\
c & d
\end{smallmatrix}\right)$. We also use the inverse relation $\sigma\mapsto g_{\sigma}$ of
the isomorphism $\Gamma/\Gamma(2)\simeq S_{3}$. 

\para{The elliptic lambda function from the affine relation} The
period integral $\bar{\omega}(A)$ plays an important role in the
following arguments. 
\begin{prop}
\label{prop:Y-by-Ytilde-E24}For $A\in M_{2,4}$ such that $[A]\in\mathbb{C}_{z}\cap\mathbb{C}_{z^{\sigma}}$,
we have 
\begin{equation}
Y_{I}(A)\,\bar{\omega}(A)^{2}=P_{I}(z)\,\omega(z)^{2}=P_{\sigma^{-1}(I)}(z^{\sigma})\,\omega(z^{\sigma})^{2},\label{eq:z-to-zsigma-E24}
\end{equation}
 where $\bar{\omega}(A)$ is the period integral (\ref{eq:periodE24-bar})
and we set $z=z(A)$, $z^{\sigma}=z^{\sigma}(A)$.\end{prop}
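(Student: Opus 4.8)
The statement to prove is the chain of equalities
\[
Y_{I}(A)\,\bar{\omega}(A)^{2}=P_{I}(z)\,\omega(z)^{2}=P_{\sigma^{-1}(I)}(z^{\sigma})\,\omega(z^{\sigma})^{2},
\]
and the natural strategy is to assemble it from pieces already established. First I would handle the leftmost equality. By Remark \ref{rem:transf-period}, the period integrals are related by
\[
\bar{\omega}(A)=\frac{1}{\det B_{2}(e)}\frac{1}{\sqrt{a_{0}^{e}b_{0}^{e}}}\,\omega(z(A)),
\]
so that
\[
\bar{\omega}(A)^{2}=\frac{1}{(\det B_{2}(e))^{2}\,a_{0}^{e}b_{0}^{e}}\,\omega(z)^{2}.
\]
On the other hand, the first equality of Proposition \ref{prop:Y-P-rel-E24} gives
\[
Y_{I}(A)=(\det B_{2}(e))^{2}\,a_{0}^{e}b_{0}^{e}\cdot P_{I}(z(A)).
\]
Multiplying these two relations, the factor $(\det B_{2}(e))^{2}a_{0}^{e}b_{0}^{e}$ cancels and we get $Y_{I}(A)\,\bar{\omega}(A)^{2}=P_{I}(z)\,\omega(z)^{2}$, which is the first equality.

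For the second equality I would argue in two parallel ways, matching the two expressions on the right of Proposition \ref{prop:Y-P-rel-E24} and Remark \ref{rem:transf-period}. Since $[A]\in\mathbb{C}_{z^{\sigma}}$ as well, the \emph{same} computation applied to the representative $B_{2}(\sigma)(E_{2}\,X_{\sigma})$ of $A$ yields
\[
\bar{\omega}(A)^{2}=\frac{1}{(\det B_{2}(\sigma))^{2}\,a_{0}^{\sigma}b_{0}^{\sigma}}\,\omega(z^{\sigma})^{2},
\]
and the second equality of Proposition \ref{prop:Y-P-rel-E24} gives
\[
Y_{I}(A)=(\det B_{2}(\sigma))^{2}\,a_{0}^{\sigma}b_{0}^{\sigma}\cdot P_{\sigma^{-1}(I)}(z^{\sigma}(A)).
\]
Again the prefactors cancel upon multiplication, leaving $Y_{I}(A)\,\bar{\omega}(A)^{2}=P_{\sigma^{-1}(I)}(z^{\sigma})\,\omega(z^{\sigma})^{2}$. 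Comparing with the first equality completes the chain. Alternatively, one can deduce the equality $P_{I}(z)\,\omega(z)^{2}=P_{\sigma^{-1}(I)}(z^{\sigma})\,\omega(z^{\sigma})^{2}$ directly by combining Definition \ref{def:twist-G-E24}, which says $P_{I}(z)/P_{\sigma^{-1}(I)}(z^{\sigma})=G(\sigma,e)$, with Lemma \ref{lem:omega-Sigma-omega}, which says $\omega(z^{\sigma})^{2}=G(\sigma,e)\,\omega(z)^{2}$; the two factors of $G(\sigma,e)$ cancel.

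The only genuine subtlety — and the main thing to be careful about — is that $\bar{\omega}(A)$ is \emph{canonically} attached to the matrix $A$ (it is $S_4$-invariant in the sense $\bar{\omega}(A\sigma)=\bar{\omega}(A)$, as used in the proof of Lemma \ref{lem:omega-Sigma-omega}), whereas $\omega(z)$ depends on the choice of affine chart. So one must verify that the relation in Remark \ref{rem:transf-period} is being applied consistently: for the chart $\mathbb{C}_{z}$ one writes $A=B_{2}(e)(E_{2}\,X)$, while for $\mathbb{C}_{z^{\sigma}}$ one writes $A\sigma=B_{2}(\sigma)(E_{2}\,X_{\sigma})$ and uses $\bar{\omega}(A)=\bar{\omega}(A\sigma)$. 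Once this bookkeeping is in place the cancellations are purely formal, and indeed the statement is essentially a restatement of the compatibility between Proposition \ref{prop:Y-P-rel-E24} and Remark \ref{rem:transf-period}. No hard estimate or new idea is needed; the proof is a two-line computation after the correct inputs are cited.
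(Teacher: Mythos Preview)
Your proof is correct and matches the paper's own argument essentially line for line: the first equality is obtained by combining the first line of Proposition~\ref{prop:Y-P-rel-E24} with the normalization relation (\ref{eq:normalized-omega-A}) from Remark~\ref{rem:transf-period}, and the second equality from Lemma~\ref{lem:omega-Sigma-omega} together with the twist-factor identity (\ref{eq:gaugeE24}). Your ``alternative'' route for the second equality is in fact the route the paper takes; your primary route (redoing the first computation in the chart $\mathbb{C}_{z^{\sigma}}$ via $\bar{\omega}(A\sigma)=\bar{\omega}(A)$) is also valid and amounts to re-deriving Lemma~\ref{lem:omega-Sigma-omega} on the spot.
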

\begin{proof}
The first equality follows from the first line of (\ref{eq:YA-by-affineY})
and the relation (\ref{eq:normalized-omega-A}). The second equality
follows from Lemma \ref{lem:omega-Sigma-omega} and (\ref{eq:gaugeE24})
.
\end{proof}
Note that this formal argument indicates that the product $Y_{I}(A)\,\bar{\omega}(A)^{2}$
depends only on the class $[A]\in\mathcal{M}_{4}$ and defines a holomorphic
function on $\mathcal{M}_{4}=\cup_{\sigma}\mathbb{C}_{z^{\sigma}}$.
It should be noted however that the product $Y_{I}(A)\,\bar{\omega}(A)^{2}$
is a multi-valued function which depends on the monodromy of the period
integral $\omega(z)^{2}$. More precisely, we can use the equality
(\ref{eq:z-to-zsigma-E24}) repeatedly from one chart to the other,
but after the analytic continuation along a closed path coming back
to $z\in\mathbb{C}_{z}$, we do not necessarily have the original
value $P_{I}(z)\omega(z)^{2}$ because of the monodromy of the period
integral $\omega(z)$. 

The monodromy of the hypergeometric series $\omega_{0}(z)$ (\ref{eq:w0-w1-E24})
has a particular form 
\[
\omega_{0}(z)\;\mapsto\;c\,\omega_{1}(z)+d\,\omega_{0}(z)=(c\,\tau+d)\,\omega_{0}(z)
\]
under the analytic continuation along a closed path $\mathcal{T}_{C}\left(\begin{smallmatrix}\omega_{1}(z)\\
\omega_{0}(z)
\end{smallmatrix}\right)=\left(\begin{smallmatrix}a & b\\
c & d
\end{smallmatrix}\right)\left(\begin{smallmatrix}\omega_{1}(z)\\
\omega_{0}(z)
\end{smallmatrix}\right)$ with $\left(\begin{smallmatrix}a & b\\
c & d
\end{smallmatrix}\right)\in\Gamma(2)$. We will not go into the detail, but only remark that this property
comes from the fact that $\omega_{0}(z)$ is a section of the Hodge
bundle over $\mathcal{M}_{4}$. 
\begin{prop}
\label{prop:solving-MS-E24}Let $\mathcal{P}_{z}=\mathcal{P}\vert_{\mathbb{C}_{z}}$
be the period map the $\mathcal{P}:\mathcal{M}_{4}\to\mathbb{H}_{+}$
restricted to $\mathbb{C}_{z}\subset\mathcal{M}_{4}$. Then the following
equality holds for $[A]\in\mathbb{C}_{z}$ : 
\begin{equation}
P_{I}(z(A))\,\omega_{0}(z(A))^{2}=-(-1)^{2}\Theta\left(\begin{matrix}i\,\,j\\
k\,\,l
\end{matrix}\right)^{2}(\mathcal{P}_{z}(A)),\label{eq:Affine-Key-relE24}
\end{equation}
where $\mathcal{P}_{z}(A)=\frac{\omega_{1}(z(A))}{\omega_{0}(z(A))}$
in terms of the hypergeometric series given in $($\ref{eq:w0-w1-E24}$)$
. This equality is analytically continued to the other chart $\mathbb{C}_{z^{\sigma}}\subset\mathcal{M}_{4}$,
giving the affine extension of the projective relation $\Phi_{Y}(A)=\Phi\circ\mathcal{P}(A)$
in (\ref{eq:E24diagram}). \end{prop}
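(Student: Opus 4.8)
The plan is to establish \eqref{eq:Affine-Key-relE24} at a single well-chosen cusp, then propagate it to all charts by the equivariance already built up in the previous subsections, and finally check the normalization constant.

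\medskip

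\noindent\textbf{Step 1: Reduce to one cusp.} First I would work on the chart $\mathbb{C}_{z}$ near the LCSL $z=0$. By the standard Frobenius method recalled in \eqref{eq:w0-w1-E24}, the ratio $\mathcal{P}_{z}(A)=\omega_{1}(z)/\omega_{0}(z)=\tau$ has a $q$-expansion with $q=e^{\pi i\tau}$, and the inverse mirror map $z=z(\tau)$ is a power series in $q^{2}$ with leading term $16q^{2}+O(q^{4})$ (this is the classical expansion of $\lambda(\tau)$). On the other side, the theta quotients $\Theta\!\left(\begin{smallmatrix}i\,j\\k\,l\end{smallmatrix}\right)^{2}(\tau)=\theta_{\bullet}(\tau)^{2}$ and the product $\omega_{0}(z)^{2}$ each have explicit $q$-expansions. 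For the distinguished index $I=\{\{1,3\},\{2,4\}\}$, where $P_{I}(z)\equiv -1$ by \eqref{eq:tildeYI-E24}, the claimed identity reduces to the classical $\omega_{0}(\lambda(\tau))^{2}=\vartheta_{3}(\tau)^{4}=\Theta\!\left(\begin{smallmatrix}1\,3\\2\,4\end{smallmatrix}\right)^{2}(\tau)^{2}$ from \eqref{eq:elliptic-lambda}, i.e.\ exactly the content of the second relation in \eqref{eq:elliptic-lambda}; comparing the two sides order by order in $q$ verifies the sign $-(-1)^{2}=-1$ and the normalization. The other two values of $I$ then follow from the linear (Pl\"ucker/theta) relation $P_{\{12\}\{34\}}-P_{\{13\}\{24\}}+P_{\{14\}\{23\}}=0$ paired with \eqref{eq:theta-relation}, together with the matching single extra identity $\omega_{0}^{2}=\vartheta_{3}^{4}$ is not enough — one also needs $P_{\{14\}\{23\}}(z)\,\omega_{0}(z)^{2}=-z\,\omega_{0}(z)^{2}$ to equal $\theta_{2}(\tau)^{4}$, which is again the classical $z(\tau)=\lambda(\tau)=\theta_{2}^{4}/\theta_{3}^{4}$.

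\medskip

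\noindent\textbf{Step 2: Analytic continuation / equivariance.} Once \eqref{eq:Affine-Key-relE24} holds on $\mathbb{C}_{z}$, I would continue it to $\mathbb{C}_{z^{\sigma}}$ for each $\sigma\in S_{3}$. On the domain side, Proposition~\ref{prop:Y-by-Ytilde-E24} gives
\[
P_{I}(z)\,\omega(z)^{2}=P_{\sigma^{-1}(I)}(z^{\sigma})\,\omega(z^{\sigma})^{2},
\]
so the left-hand side of \eqref{eq:Affine-Key-relE24} transforms exactly as required when we relabel $I\mapsto\sigma^{-1}(I)$. On the target side, the period ratio transforms by the element $g_{\sigma}\in\Gamma$ via $\mathcal{P}_{z^{\sigma}}(A)=g_{\sigma}\cdot\mathcal{P}_{z}(A)$ (the monodromy statement recorded just before this Proposition: $\omega_{0}\mapsto (c\tau+d)\omega_{0}$, $\begin{psmallmatrix}\omega_1\\\omega_0\end{psmallmatrix}\mapsto g\begin{psmallmatrix}\omega_1\\\omega_0\end{psmallmatrix}$), and then \eqref{eq:Theta-sigma-tau} says
\[
\Theta\!\left(\begin{smallmatrix}i\,j\\k\,l\end{smallmatrix}\right)^{2}(g_{\sigma}\cdot\tau)=(c\tau+d)^{2}\,\Theta\!\left(\begin{smallmatrix}\sigma(i)\,\sigma(j)\\\sigma(k)\,\sigma(l)\end{smallmatrix}\right)^{2}(\tau).
\]
The automorphic factors $(c\tau+d)^{2}$ on the two sides must cancel; this is where one uses that $\omega_{0}$ is a section of the Hodge bundle, equivalently Lemma~\ref{lem:omega-Sigma-omega} together with the compatibility $G(\sigma,e)=(c\tau+d)^{-2}$ (up to the root of unity tracked by $\mathrm{sgn}$) between the twist factor and the weight-two modular factor. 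So after the continuation the identity on $\mathbb{C}_{z^{\sigma}}$ reads $P_{\sigma^{-1}(I)}(z^{\sigma})\,\omega_{0}(z^{\sigma})^{2}=-\Theta\!\left(\begin{smallmatrix}i\,j\\k\,l\end{smallmatrix}\right)^{2}(\mathcal{P}_{z^{\sigma}}(A))$, which is precisely \eqref{eq:Affine-Key-relE24} on that chart with $I$ replaced by $\sigma^{-1}(I)$. Since $\bigcup_{\sigma\in S_{3}}\mathbb{C}_{z^{\sigma}}=\mathcal{M}_{4}$ by \eqref{eq:M4-sigma-Union}, and since $P_{I}(z)\,\bar\omega(A)^{2}=P_{I}(z)a_0^eb_0^e(\det B_2(e))^{-2}\omega(z)^2$ via \eqref{eq:normalized-omega-A} reassembles to the invariant product $Y_{I}(A)\,\bar\omega(A)^{2}$ of Proposition~\ref{prop:Y-by-Ytilde-E24}, dividing by this common factor exhibits \eqref{eq:Affine-Key-relE24} as the affine lift of the projective identity $\Phi_{Y}(A)=\Phi\circ\mathcal{P}(A)$, whose projectivization is the classical statement already encoded in diagram \eqref{eq:E24diagram}.

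\medskip

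\noindent\textbf{Step 3: Fixing constants and consistency.} The remaining point is bookkeeping: that the single overall constant $-1=-(-1)^{2}$ in \eqref{eq:Affine-Key-relE24} is the \emph{same} for all three $I$ and is compatible with the $S_{3}$-action — this is guaranteed because the continuation in Step~2 maps the identity for $I$ to the identity for $\sigma^{-1}(I)$ without introducing new scalars (the $\mathrm{sgn}$ factors in the (anti)symmetry rules for $\Theta$ and in $P_{I}$ match by construction of the labelling in \eqref{para:elliptic-def-P} and \eqref{para:theta-ell-def}). I expect the main obstacle to be Step~1, specifically pinning down the precise constant and sign by the $q$-expansion comparison: one must be careful that the normalizations $\tfrac{2}{2\pi i}$ and $\tfrac{1}{\Gamma(1/2)^2}$ chosen in \eqref{eq:w0-w1-E24} to enforce integral monodromy are exactly the ones that make $\omega_{0}(z(\tau))^{2}$ equal $\theta_{3}(\tau)^{4}$ on the nose rather than up to a constant, and that $q=e^{\pi i\tau}$ (not $e^{2\pi i\tau}$) is the correct uniformizing parameter at the cusp of $\Gamma(2)$. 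Everything after that is forced by the equivariance machinery already assembled.
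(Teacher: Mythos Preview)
Your proposal is correct and follows essentially the same route as the paper: first reduce \eqref{eq:Affine-Key-relE24} on $\mathbb{C}_z$ to the classical identities $z=\lambda(\tau)=\theta_2^4/\theta_3^4$ and $\omega_0(\lambda(\tau))^2=\theta_3(\tau)^4$, then propagate to the other charts $\mathbb{C}_{z^\sigma}$ via the $S_3$-equivariance of diagram \eqref{eq:E24diagram} together with \eqref{eq:Theta-sigma-tau} and Proposition~\ref{prop:Y-by-Ytilde-E24}. One small clarification in Step~2: the ``monodromy statement recorded just before this Proposition'' concerns closed loops and lands in $\Gamma(2)$, whereas what you actually need (and what the paper uses) is the $S_3\simeq\Gamma/\Gamma(2)$ action permuting the cusps, implemented concretely by evaluating the relation at $[A\sigma]\in\mathbb{C}_{z^{\sigma^{-1}}}$ and using $z^{\sigma^{-1}}(A\sigma)=z(A)$; with that adjustment your cancellation of automorphy factors goes through as written.
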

\begin{proof}
The first claim is immediate writing (\ref{eq:Affine-Key-relE24})
explicitly. By definitions, we obtain three independent equations;
\[
(z-1)\omega_{0}^{2}=-\theta_{4}(\tau)^{4},\,\,\,(-1)\,\omega_{0}^{2}=-\theta_{3}(\tau)^{4},\,\,\,-z\,\omega_{0}^{2}=-\theta_{2}(\tau)^{4},
\]
where we $\tau:=\frac{\omega_{1}(z(A))}{\omega_{0}(z(A))}$. Solving
these equations for $z,\omega_{0}^{2}$, we obtain $z=\frac{\theta_{2}(\tau)^{4}}{\theta_{3}(\tau)^{4}}$
and $\omega_{0}^{2}(z)=\theta_{3}(\tau)^{4}$. The former is nothing
but the $\lambda(\tau)$ which is defined by the inverse relation
of $\tau:=\frac{\omega_{1}(z(A))}{\omega_{0}(z(A))}$. For the latter
equality to be consistent, we must have the identity 
\[
\omega_{0}(\lambda(\tau))^{2}=\theta_{3}(\tau)^{4},
\]
which is a well-known relation in the classical theory of hypergeometric
series. See \cite[Sect.5.4]{Zagier} for a modern formulation. 

The second claim follows the transformation property described in
(\ref{eq:z-to-zsigma-E24}). Assume $[A]\in\mathbb{C}_{z}$, then
$[A\sigma]\in\mbox{\ensuremath{\mathbb{C}}}_{z^{\sigma^{-1}}}$ and
we have $g_{\sigma}\cdot\tau=\mathcal{P}(A\sigma)$ since the diagram
(\ref{eq:E24diagram}) is equivariant under $\Gamma/\Gamma(2)\simeq S_{3}$
action. Now for $[A\sigma]\in\mbox{\ensuremath{\mathbb{C}}}_{z^{\sigma^{-1}}}$,
the affine relation (\ref{eq:Affine-Key-relE24}) is written by 
\[
P_{\sigma(I)}(z^{\sigma^{-1}}(A\sigma))\,\omega_{0}(z^{\sigma^{-1}}(A\sigma))^{2}=-\Theta\left(\begin{matrix}i\,\,j\\
k\,\,l
\end{matrix}\right)^{2}(g_{\sigma}\cdot\tau).
\]
Using $z^{\sigma^{-1}}(A\sigma)=z(A)$ and (\ref{eq:Theta-sigma-tau}),
we obtain 
\[
P_{\sigma(I)}(z(A))\,\omega_{0}(z(A))^{2}=-\Theta\left(\begin{matrix}\sigma(i)\,\,\sigma(j)\\
\sigma(k)\,\,\sigma(l)
\end{matrix}\right)^{2}(\tau),
\]
which is the relation imposed already on $\mathbb{C}_{z}$. Note that
the set of points of the form $[A\sigma]$ $([A]\in\mathbb{C}_{z})$
is a Zariski open subset of $\mathbb{C}_{z^{\sigma^{-1}}}$. Therefore
setting up the equations (\ref{eq:Affine-Key-relE24}) around $z=0$
($\mathrm{Im}\,\tau\gg1)$ automatically produces the corresponding
equations for all other affine chart $\mathbb{C}_{z^{\nu}}\,(\nu\in S_{3})$. 
\end{proof}
The affine relation (\ref{eq:Affine-Key-relE24}) is the one which
we will generalize to define the K3 lambda functions in the next section. 

~

~

\section{\textbf{\textup{The master equation for the $\lambda_{K3}$ functions}}
\label{sec:Master-Equation}}

\subsection{Double cover family of K3 surfaces }

Let us briefly recall the definition of a family of K3 surfaces branched
along six lines in general position in $\mathbb{P}^{2}$, which we
called \textit{double cover family of K3 surfaces} in \cite{HLTYpartI}.
We denote six lines in $\mathbb{P}^{2}$ by $\ell_{i}(i=1,...,6)$
with the following linear forms: 
\[
\ell_{i}(x,y,z):=a_{0i}z+a_{1i}x+a_{2i}y\;\;(i=1,...,6).
\]
When these lines are in general position, the double cover $\overline{X}\to\mathbb{P}^{2}$
branched along these six lines defines a singular K3 surface with
$A_{1}$ singularities at each 15 intersection points $P_{ij}:=\ell_{i}\cap\ell_{j}$.
Blowing-up these 15 $A_{1}$ singularities, we have a smooth K3 surface
$X$ of Picard number 16 generated by the hyperplane class $H$ from
$\mathbb{P}^{2}$ and the $(-2)$ curves of the exceptional divisors
$E_{ij}$ of the blow-up. The double cover family of K3 surfaces is
a (four dimensional) family of K3 surfaces over the configuration
space of six lines. The period integrals of this family and also their
monodromy properties were studied extensively in a paper \cite{YoshidaEtal}
by studying hypergeometric system $E(3,6)$. Also the configuration
space of six lines in $\mathbb{P}^{2}$ is a classical object in moduli
problems. It is known that the compactification via geometric invariant
theory \cite{DoOrt} is isomorphic to Baily-Borel-Satake compactification
\cite{vanDerGeer,Hunt}. We will denote this isomorphic compactified
moduli space by $\mathcal{M}_{6}$.

\subsection{Period integrals}

The double cover family of K3 surfaces is a natural generalization
of the Legendre family of elliptic curves. Corresponding to the period
integrals of the Legendre family, we have the period integrals of
holomorphic two forms 
\begin{equation}
\bar{\omega}_{C}(a)=\int_{C}\frac{d\mu}{\sqrt{\prod_{i=1}^{6}\ell_{i}(x,y,z)}},\label{eq:peridIntI}
\end{equation}
where $d\mu=xdy\wedge dz-ydx\wedge dz+zdx\wedge dy$, and $C$ are
integral (transcendental) cycles in $H_{2}(X,\mathbb{Z}$). The lattice
of transcendental cycles is known \cite{YoshidaEtal} to be 
\begin{equation}
T_{X}\simeq U(2)\oplus U(2)\oplus A_{1}\oplus A_{1},\label{eq:Tx}
\end{equation}
where $\mbox{\ensuremath{U}}$ represents the hyperbolic lattice of
rank 2, and $A_{1}=\langle-2\rangle$ is the root lattice of $sl(2,\mathbb{C})$.
The period integrals $\bar{\omega}_{C}(a)$ are parametrized by $3\times6$
matrix $A$ representing six lines in general positions as follows:
\[
A=\left(\begin{matrix}a_{01} & a_{02} & a_{03} & a_{04} & a_{05} & a_{06}\\
a_{11} & a_{12} & a_{13} & a_{14} & a_{15} & a_{16}\\
a_{21} & a_{22} & a_{23} & a_{24} & a_{25} & a_{26}
\end{matrix}\right).
\]
As in the preceding section, making the dependence on the cycles $C$
implicit, we often write the period integral simply by $\bar{\omega}(A)$.
Let $M_{3,6}$ be the affine space of all $3\times6$ matrices, and
set 
\[
M_{3,6}^{o}:=\left\{ A\in M_{3,6}\mid[i_{1}\,i_{2}\,i_{3}]\not=0\,(1\leq i_{1}<i_{2}<i_{3}\leq6)\right\} 
\]
 with $[i_{1},i_{2},i_{3}]$ representing $3\times3$ minors of $A$.
Then, under the genericity assumption, the configurations of six lines
are parametrized by 
\[
P(3,6):=GL(3,\mathbb{C})\diagdown M_{3,6}^{o}\diagup(\mathbb{C}^{*})^{6},
\]
where $(\mathbb{C}^{*})^{6}$ represents the diagonal $\mathbb{C}^{*}$-actions. 

Period integrals over the cycles define a multi-valued map, period
map, from $P(3,6)$ to the period domain 
\[
\mathcal{D}_{K3}=\left\{ [\omega]\in\mathbb{P}((U(2)^{\oplus2}\oplus A_{1}^{\oplus2})\otimes\mathbb{C}\mid\omega.\omega=0,\omega.\bar{\omega}>0\right\} ^{+},
\]
where $+$ represents one of the connected components. The period
map naturally extends to the compactified moduli space $\mathcal{M}_{6}$
of $P(3,6)$. In \cite{YoshidaEtal}, the monodromy group of the period
map has been determined to be the congruence subgroup $\mathcal{G}(2):=\left\{ g\in\mathcal{G}\mid g=E_{6}\text{ mod }2\right\} $
of 
\[
\mathcal{G}=\left\{ g\in PGL(6,\mathbb{Z})\mid\,^{t}gGg=G,H(g)>0\right\} ,
\]
where $G=\left(\begin{smallmatrix}0 & 2\\
2 & 0
\end{smallmatrix}\right)^{\oplus2}\oplus(-2)^{\oplus2}$ and $H(g)=(g_{11}+g_{12})(g_{23}+g_{34})-(g_{13}+g_{14})(g_{31}+g_{32})$.
The group $\mathcal{G}$ is a discrete subgroup of $\mathrm{Aut}(\mathcal{D}_{K3})$.
It is known \cite[Prop.\,2.8.2]{YoshidaEtal} that $\mathcal{G}/\mathcal{G}(2)\simeq S_{6}\times\mathbb{Z}_{2}$
for the quotient, where $S_{6}$ is the symmetric group of degree
six.

\subsection{Moduli space $\mathcal{M}_{6}$ and the period map}

The moduli space $\mathcal{M}_{6}$ is a well-studied object in many
contexts. We refer to \cite[Sect.2.3]{HLTYpartI} for a brief summary
on this space and references. Here we summarize some properties of
the moduli space and the period map of the family. 

\para{Baily-Borel-Satake compactification ${\mathcal M}_6$ and the period map}
The Baily-Borel-Satake compactification $\mathcal{M}_{6}$ is described
by an arithmetic quotient of the domain 
\[
\mathbb{H}_{2}=\left\{ W\in Mat(2,\mathbb{C})\mid(W^{\dagger}-W)/2i>0\right\} 
\]
where $W^{\dagger}:=\,^{t}\overline{W}$ . The Siegel half space $\frak{h}_{2}\subset\mathbb{H}_{2}$
is defined by $\,^{t}W=W$. Given a matrix $W\in\mathbb{H}_{2}$,
we have ten theta functions $\Theta_{i}(W)$ with even spin structures
(see Appendix \ref{sec:App-Theta-fn} for their explicit forms). With
these theta functions we define a map 
\[
\Phi:\mathbb{H}_{2}\to\mathbb{P}^{9},\;\;W\mapsto[\Theta_{1}(W)^{2},\Theta_{2}(W)^{2},\cdots,\Theta_{10}(W)^{2}]
\]
using the same letter $\Phi$ as in (\ref{eq:E24diagram}). These
squares of theta functions are modular forms of weight two on the
modular subgroup $\Gamma_{M}(1+i)(\simeq\mathcal{G}(2))$ of the discrete
subgroup $\Gamma_{T}(\simeq\mathcal{G})$ of $\mathrm{Aut}(\mathbb{H}_{2})(\simeq\mbox{\ensuremath{\mathrm{Aut}}}(\mathcal{D}_{K3}))$.
See \cite[Sect.\,3]{Matsu93} for more details. On the other hand,
using the semi-invariants $Y_{k}(A)$$(k=1,2,...,10)$ for the left
$GL(3,\mathbb{C})$ action on $3\times6$ matrices $A$, we have a
natural map $\Phi_{Y}:\mathcal{M}_{6}\to\mathbb{P}^{9}$ which gives
the following commutative diagram \cite[Thm.\,4.4.1]{Matsu93}: 
\begin{equation}
\xymatrix{\mathcal{M}_{6}\;\;\ar[dr]_{\Phi_{Y}}\ar[rr]^{\mathcal{P}} &  & \;\;\mathbb{H}_{2}\ar[dl]^{\Phi}\\
 & \mathbb{P}^{9}.
}
\label{eq:diagram-M6-H2-P9}
\end{equation}
As before, we code the semi-invarinats by the ordered partitions $I=\{\{i,j,k\},\{l,m,n\}\}$
of $\{1,2,...,6\}$ so that we have 
\begin{equation}
Y_{I}(A)=[ijk][lmn],\label{eq:def-Yijklmn}
\end{equation}
where the bracket $[i\,j\,k]$ represents the $3\times3$ minor of
$3\times6$ matrix of $A$ with the specified columns. We assume the
same sign changes of $Y_{I}$ under the permutations of $i,j,...,n$
as the r.h.s of (\ref{eq:def-Yijklmn}). Just as in the case of the
Legendre family, we shall take the relation 
\[
\Phi_{Y}([A])=\Phi\circ\mathcal{P}([A])\text{ in }\mathbb{P}^{9}
\]
as the guiding equation to define the K3 analogue of the lambda function.
One might expect that the same arguments as the elliptic lambda function
given in Section \ref{sub:The-elliptic-lambda} hold for the double
cover family of K3 surfaces. However, a crucial difference is that
the moduli space $\mathcal{M}_{6}$ is not smooth like $\mathcal{M}_{4}$.
To define the K3 lambda functions, we need to find suitable resolutions
of the singularity of $\mathcal{M}_{6}$ which we have done in \cite{HLTYpartI}. 

\para{Singularities of ${\mathcal M}_6$} It is known that $\mathcal{M}_{6}$
is singular along 15 lines of $A_{1}$ singularities. These lines
intersect at 15 points, each of which is given as a transversal intersection
of three lines. The configuration of these 15 lines is shown in Fig.
5 of \cite{HLTYpartI}. From the 15 lines, we can select a maximal
set of non-intersecting lines. Constructing the maximal set explicitly,
we see that every maximal set consists of 5 lines, and furthermore,
there are six possibilities for the maximal sets. 
\begin{prop}
The following properties hold:\end{prop}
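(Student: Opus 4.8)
The plan is to reduce the statement to elementary combinatorics of the complete graph $K_{6}$ -- the ``duads, synthemes and synthematic totals'' of a six-element set -- and then to read off the enumeration; I read the proposition as asserting (at least) that the maximum families of pairwise non-intersecting lines among the fifteen have cardinality five and that there are exactly six of them, with whatever incidence numerology and $S_{6}$-equivariance accompanies this. The first step is to set up the combinatorial model of the singular locus: from the description of the degeneration loci in \cite{HLTYpartI} (the fifteen lines and fifteen triple points of its Fig.\,5) one labels each of the $15$ lines of $A_{1}$-singularities of $\mathcal{M}_{6}$ by a partition of $\{1,\dots,6\}$ into three pairs -- a \emph{syntheme}, i.e.\ a perfect matching of $K_{6}$ -- and each of the $15$ triple points by a $2$-element subset -- a \emph{duad} -- in such a way that the three lines through the point $\{a,b\}$ are exactly the three synthemes containing $\{a,b\}$. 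This identifies the fifteen lines and fifteen points with the self-dual $(15_{3})$ Cremona--Richmond configuration, i.e.\ the incidence geometry of the generalized quadrangle $GQ(2,2)$; in particular two distinct singular lines meet, and then in a single point, precisely when the two matchings of $K_{6}$ share an edge. Hence a family of pairwise non-intersecting lines is exactly a family of pairwise edge-disjoint perfect matchings of $K_{6}$, and the ``non-intersection graph'' on the fifteen lines is (isomorphic to) the triangular graph $T(6)$.

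Next, the enumeration. If $M_{1},\dots,M_{k}$ are pairwise edge-disjoint perfect matchings of $K_{6}$, then $M_{1}\cup\cdots\cup M_{k}$ is a $k$-regular spanning subgraph (one edge at each vertex from each $M_{i}$), so $k\le 5$, with equality exactly when $M_{1}\cup\cdots\cup M_{5}=K_{6}$, i.e.\ when $\{M_{1},\dots,M_{5}\}$ is a $1$-factorization of $K_{6}$ (a \emph{synthematic total}); conversely every $1$-factorization yields five pairwise non-intersecting lines. So the maximum families are precisely the $1$-factorizations of $K_{6}$, each of cardinality five. That there are exactly six of them I would get from the action of $S_{6}$ (acting on $\ell_{1},\dots,\ell_{6}$, hence on synthemes, hence on $1$-factorizations): this action is transitive (classical), and the stabilizer of a factorization is the transitive copy of $\mathrm{PGL}_{2}(\mathbb{F}_{5})\cong S_{5}$ on $\mathbb{P}^{1}(\mathbb{F}_{5})$, of order $120$, so orbit--stabilizer gives $6!/120=6$ factorizations; alternatively one enumerates them directly from the ``round-robin'' factorization, or invokes Sylvester. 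This produces the six maximum families and simultaneously the $S_{6}$-action permuting them; compared with the tautological $S_{6}$-action on $\{\ell_{1},\dots,\ell_{6}\}$ the two are related by Sylvester's outer automorphism of $S_{6}$, which is exactly the combinatorial ingredient behind the representation $S_{6}\to\mathrm{Aut}(\widetilde{\mathcal{M}}_{6})$ of Appendix \ref{sec:App-representation-S6}. Any numerical parts of the statement (each line on three of the fifteen points, each point on three lines, no triangles) are immediate from the $GQ(2,2)$ structure, and if the proposition attaches each maximum family to a specific line $\ell_{i}$ or to a specific LCSL used in building $\widetilde{\mathcal{M}}_{6}$, this too follows by unwinding the labelling.

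The real work, I expect, is in the first step -- making the geometric dictionary precise enough that ``non-intersection of two singular lines of $\mathcal{M}_{6}$'' coincides with ``the two synthemes have no common pair'' -- which requires the explicit analysis of \cite{HLTYpartI}. One should also be careful with the word ``maximal'': besides the six families of cardinality five there are inclusion-maximal families consisting of only three pairwise non-intersecting lines (the edge-disjoint triples decomposing a copy of $K_{3,3}$ in $K_{6}$, i.e.\ the triangle cliques of $T(6)$), so the relevant notion here is that of families of \emph{maximum} cardinality. Once these two points are in place, what remains is finite bookkeeping with $K_{6}$ and $S_{6}$.
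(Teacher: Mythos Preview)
Your combinatorial argument is correct and pleasant, but it addresses only part of the proposition. The statement has three clauses: (1) the group-theoretic isomorphism $\Gamma_{T}/\Gamma_{M}(1+i)\simeq\mathcal{G}/\mathcal{G}(2)\simeq S_{6}\times\mathbb{Z}_{2}$, (2) that $S_{6}$ acts on the six maximal sets of non-intersecting lines, and (3) that $S_{6}$ acts transitively on the fifteen singular points. You have supplied a self-contained proof of (2) and implicitly (3) via the syntheme/duad model, and you have also reproved the enumerative facts (five lines per maximal set, six such sets) that the paper actually states in the paragraph \emph{before} the proposition rather than in it. What you have not touched at all is clause (1), which is an arithmetic statement about the monodromy group $\mathcal{G}$ and the modular group $\Gamma_{T}$ acting on $\mathbb{H}_{2}$; this does not reduce to $K_{6}$ combinatorics and is the part the paper defers to \cite[Prop.\,2.8.2]{YoshidaEtal} and \cite[Prop.\,1.5.1]{Matsu93}.

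For comparison, the paper's own proof is pure citation: (1) to Matsumoto--Sasaki--Yoshida and Matsumoto, and (2),(3) to van der Geer \cite[Prop.\,(1.1)]{vanDerGeer}. So for (2) and (3) your approach is genuinely more informative---you exhibit the Cremona--Richmond $(15_{3})$ configuration and the Sylvester outer automorphism explicitly, whereas the paper just points to the literature. Your caveat about ``maximal'' versus ``maximum'' is well taken: the paper's phrasing is loose, and your observation that there are inclusion-maximal triples (the $K_{3,3}$ decompositions) is correct; the intended meaning is indeed maximum cardinality. The ``real work'' you flag---matching the combinatorial incidence with the actual geometry of $\mathrm{Sing}(\mathcal{M}_{6})$---is exactly what \cite{HLTYpartI} and \cite{vanDerGeer} supply, so on that point you and the paper agree that one must borrow from elsewhere.
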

\begin{enumerate}
\item $\Gamma_{T}/\Gamma_{M}(1+i)\simeq\mathcal{G}/\mathcal{G}(2)\simeq S_{6}\times\mathbb{Z}_{2}$.
\item The group $S_{6}$ acts on the six maximal set of non-intersecting
lines.
\item The group $S_{6}$ acts transitively on the 15 singular points.\end{enumerate}
\begin{proof}
We refer \cite[Prop.\,2.8.2]{YoshidaEtal} and also \cite[Prop.\,1.5.1]{Matsu93}
for (1). The properties (2),(3) are known in \cite[Prop.\,(1.1)]{vanDerGeer}. \end{proof}
\begin{prop}
The symmetric group $S_{6}$ in the preceding proposition is identified
with the natural $S_{6}$ action on $\mathcal{M}_{6}$ coming from
the action on $3\times6$ matrix $A$ from the right. Under this identification,
the diagram (\ref{eq:diagram-M6-H2-P9}) is $S_{6}$ equivariant. \end{prop}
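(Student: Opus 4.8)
The plan is to identify the two group actions and then check equivariance of the diagram chart by chart, exactly mirroring the elliptic case worked out in Section \ref{sub:The-elliptic-lambda}. First I would make precise the ``natural $S_6$ action on $\mathcal{M}_6$'': for $\sigma$ a permutation of $\{1,\dots,6\}$, acting on $A\in M_{3,6}^o$ by the column permutation $A\mapsto A\sigma$ commutes with the left $GL(3,\mathbb{C})$ action and with the diagonal $(\mathbb{C}^*)^6$ action (the latter gets permuted among itself), so it descends to $P(3,6)$ and, by functoriality of the GIT/Baily--Borel compactification, to an automorphism of $\mathcal{M}_6$. The key input is that this action permutes the $15$ lines of $A_1$-singularities and the six maximal non-intersecting sets in the same way as the abstract $S_6$ of the previous proposition: this pins down the isomorphism, since both the combinatorial $S_6$ of $\mathcal{G}/\mathcal{G}(2)$ (via its action on the $15$ singular points / six maximal sets, cf.\ \cite{vanDerGeer,Matsu93}) and the column-permutation $S_6$ act on these finite sets by the \emph{same} faithful permutation representation, and $S_6$ is determined up to conjugacy in $\mathrm{Aut}$ by such data (here I would use that $\mathrm{Out}(S_6)$-ambiguity is resolved precisely because the two descriptions are forced to match on the concrete $3\times6$ matrix picture).

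Next I would verify $S_6$-equivariance of (\ref{eq:diagram-M6-H2-P9}). On the semi-invariant side this is immediate from the transformation law $Y_I(A\sigma)=Y_{\sigma(I)}(A)$, the exact analogue of (\ref{eq:Asigam-Y}), so $\Phi_Y(A\sigma)$ is obtained from $\Phi_Y(A)$ by the induced permutation of the ten coordinates $[ijk][lmn]$ of $\mathbb{P}^9$. On the period side, the period integral $\bar\omega_C(a)$ in (\ref{eq:peridIntI}) is manifestly invariant under permuting the six linear forms $\ell_i$, so $\mathcal{P}(A\sigma)=g_\sigma\cdot\mathcal{P}(A)$ for the element $g_\sigma\in\mathcal{G}/\mathcal{G}(2)$ corresponding to $\sigma$ (this is where the first proposition's isomorphism $\mathcal{G}/\mathcal{G}(2)\simeq S_6\times\mathbb{Z}_2$ enters: column permutations realize exactly the $S_6$ factor, acting trivially on the $\mathbb{Z}_2$). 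Finally, the transformation law of the ten theta-squares $\Theta_i(W)^2$ under $\Gamma_T/\Gamma_M(1+i)$ — the genus-two analogue of (\ref{eq:Theta-sigma-tau}) — must intertwine the $S_6$-action on $\mathbb{P}^9$ with the one coming from $\Phi_Y$; but this compatibility is already encoded in the commutativity of (\ref{eq:diagram-M6-H2-P9}) itself (a point $[A]$ fixed to vary generically forces the two $\mathbb{P}^9$-valued maps to carry the same $S_6$-structure), so once $\mathcal{P}$ and $\Phi_Y$ are each equivariant and the diagram commutes for all $[A]$, the three actions on $\mathbb{P}^9$ coincide.

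The main obstacle I anticipate is not the equivariance computations, which are formal once set up, but rigorously matching the two copies of $S_6$ — i.e.\ ruling out that the column-permutation $S_6$ differs from the $\mathcal{G}/\mathcal{G}(2)$-combinatorial $S_6$ by the outer automorphism of $S_6$. I would handle this by exhibiting a single transposition, say $(i\,j)$, and computing both its action on the $15$ singular points of $\mathcal{M}_6$ (via the explicit configuration of $15$ lines in Fig.\ 5 of \cite{HLTYpartI}, where swapping columns $i,j$ visibly transposes a definite pair of the lines and fixes the incidence pattern) and the corresponding action prescribed by \cite[Prop.\,(1.1)]{vanDerGeer}; since a transposition is \emph{not} sent to a triple-transposition by $\mathrm{Out}(S_6)$ in a way compatible with fixing this much incidence data, the two identifications agree. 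For the remainder I would simply cite \cite[Thm.\,4.4.1]{Matsu93} for commutativity of (\ref{eq:diagram-M6-H2-P9}) at the level of the maps, and reduce equivariance to the two elementary permutation-invariance statements noted above.
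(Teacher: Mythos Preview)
Your proposal is far more detailed than the paper's own argument, which consists entirely of the sentence ``The claims are shown in \cite{YoshidaEtal,Matsu93}.'' In other words, the paper treats this proposition as a black box imported from the literature, whereas you have sketched what a self-contained proof would look like: descending the column-permutation action to $\mathcal{M}_6$, checking equivariance of $\Phi_Y$ via the transformation law $Y_I(A\sigma)=Y_{\sigma(I)}(A)$, equivariance of $\mathcal{P}$ via invariance of the period integrand, and resolving the $\mathrm{Out}(S_6)$ ambiguity by tracking a single transposition on the incidence combinatorics. This is essentially an outline of what those cited references establish (indeed you invoke \cite[Thm.\,4.4.1]{Matsu93} and \cite[Prop.\,(1.1)]{vanDerGeer} yourself), so your route is not genuinely different --- it is the unpacked version of the paper's citation.

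One small caution: your step ``$\bar\omega_C(a)$ is manifestly invariant under permuting the six linear forms, so $\mathcal{P}(A\sigma)=g_\sigma\cdot\mathcal{P}(A)$'' is not quite immediate. The integrand is symmetric in the $\ell_i$, but the period map records the integrals over a \emph{basis of cycles}, and column permutations act nontrivially on $H_2(X,\mathbb{Z})$; the element $g_\sigma$ arises precisely from this action on the transcendental lattice, which is the content of the monodromy computation in \cite{YoshidaEtal}. So at that point you are, like the paper, leaning on the cited reference rather than giving an independent argument. Apart from this, your sketch is sound and your attention to the outer-automorphism issue is a genuine subtlety that the paper (and, one presumes, the cited works) handle only implicitly.
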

\begin{proof}
The claims are shown in \cite{YoshidaEtal,Matsu93}. \end{proof}
\begin{lem}
\label{lem:local-X-sing}The singularities near the 0-dimensional
boundary points are locally isomorphic to the singularity near the
origin of
\[
\mathcal{X}:=\left\{ xyz-uv=0\right\} \subset\mathbb{C}^{5}.
\]
\end{lem}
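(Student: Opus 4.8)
The plan is to produce, for each $0$-dimensional boundary stratum, an explicit local chart of $\mathcal{M}_6$ near that point and exhibit an isomorphism of the local ring with that of $\mathcal{X}=\{xyz-uv=0\}$ at the origin. Since $S_6$ acts transitively on the $15$ singular points (by the preceding Proposition), and $\mathcal{M}_6$ and the diagram $(\ref{eq:diagram-M6-H2-P9})$ are $S_6$-equivariant, it suffices to treat one such point; the rest follow by transport of structure. I would pick the $0$-dimensional stratum that sits on three of the $15$ lines of $A_1$-singularities, i.e.\ a transversal triple intersection point, since Lemma~\ref{lem:local-X-sing} is really a statement about the germ at such a triple point.

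First I would choose an affine slice of $P(3,6)=GL(3,\mathbb{C})\diagdown M_{3,6}^o\diagup(\mathbb{C}^*)^6$ realizing a neighborhood of the chosen boundary point: normalize the first four columns of $A$ to a standard frame (as in the reduction to GKZ form used for $\mathcal{M}_4$, and as carried out in \cite[Sect.~2.4]{HLTYpartI} for the six-line case), so that the remaining matrix entries give affine coordinates on an open chart of $\mathcal{M}_6$, with the boundary stratum lying at the origin. In these coordinates the three lines of $A_1$-singularities passing through the point become three coordinate hyperplanes (or, after the GIT quotient, the loci where three specified brackets $[ijk]$ vanish), and the defining equation of $\mathcal{M}_6$ inside the ambient affine space — or, equivalently, the single relation among the relevant semi-invariants $Y_I$ that is not a consequence of monomial rescalings — takes the shape ``product of three coordinates $=$ product of two coordinates.'' I would then match this to $xyz-uv=0$ by identifying $x,y,z$ with the three ``line'' parameters and $u,v$ with the two transverse parameters, checking that the Jacobian of the change of variables is invertible at the origin so that the identification is an analytic (indeed algebraic) local isomorphism of germs.

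Concretely, the key steps in order are: (i) fix the standard representative of the configuration degenerating to the chosen boundary point and read off $5$ affine coordinates on a chart of $\mathcal{M}_6$; (ii) express the $15$ semi-invariants $Y_I$ (equivalently the $3\times3$ minors $[ijk]$) in these coordinates near the origin; (iii) identify the three coordinate functions whose vanishing cuts out the three $A_1$-lines through the point, and the $\leq 2$ remaining transverse directions; (iv) extract from the Plücker/GIT relations among the $Y_I$ the single quadric-versus-cubic relation, put it in the normal form $xyz=uv$ by an invertible linear (or unit-times-monomial) substitution, and verify invertibility of the Jacobian at $0$; (v) invoke $S_6$-transitivity to conclude the same for all $15$ points. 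I expect step (iv) — pinning down exactly which combination of the Plücker relations is the governing local equation, and checking that no extra components or embedded structure appear so that the germ is genuinely $\{xyz-uv=0\}$ and not some non-reduced or higher-codimension thickening — to be the main obstacle; the cleanest route is probably to compute the tangent cone at the origin and match it with the tangent cone $\{xyz-uv=0\}$ (a rank-issue on the quadratic part $uv$ plus the cubic $xyz$), and to count dimensions to rule out spurious branches, rather than to do the full change of variables by hand.
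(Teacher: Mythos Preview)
The paper does not prove this here; its proof is a bare citation to \cite[Props.~4.4, 6.5]{HLTYpartI}, so the real comparison is with Part~I. Your outline is sound but takes a harder road than the cited argument. From the surrounding text (the covering (\ref{eq:M6-sigma-Union}) and the role of $\mathcal{M}_{3,3}$), the route in \cite{HLTYpartI} is to pass to the birational toric hypersurface $\mathcal{M}_{3,3}\subset\mathbb{P}^5$, which in homogeneous coordinates is the cubic $X_0X_1X_2=X_3X_4X_5$; dehomogenizing at a coordinate vertex gives the affine equation $x_0x_1x_2=x_3x_4$ on the nose, and the remaining work is to check that the birational map $\mathcal{M}_{3,3}\dashrightarrow\mathcal{M}_6$ is a local isomorphism near these boundary points. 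This bypasses your step~(iv) entirely --- the relation $xyz=uv$ is built into the toric model from the start rather than having to be fished out of the Pl\"ucker relations among the $Y_I$ --- which you rightly flag as the crux of a direct attack. Your approach would work, but the toric shortcut is what buys the clean one-line local model. (A minor slip: the paper's affine slice, e.g.\ (\ref{eq:E3-gaugeA}) and (\ref{eq:Asigma-E3}), normalizes the first \emph{three} columns to $E_3$, not four; the four-column projective-frame normalization is also legitimate but is not what feeds into the GKZ/toric description used here.)
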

\begin{proof}
This is proved in \cite[Props.4.4, 6.5]{HLTYpartI}
\end{proof}
In \cite{HLTYpartI}, we have described a resolution $\widetilde{\mathcal{X}}\to\mathcal{X}$
of the singularity, and also its (anti-)flip $\widetilde{\mathcal{X}}^{+}\to\mathcal{X}$.
The $E(3,6)$ system expressed by the local coordinates of these two
resolutions has a particularly nice property; there are LCSLs where
we can define the mirror maps, i.e., the lambda functions. We refer
to \cite{HLTYpartI} for more details of the resolutions.
\begin{prop}
\label{prop:S6-action-on-Mresol}The $S_{6}$ action on $\mathcal{M}_{6}$
extends to the resolutions $\widetilde{\mathcal{M}}_{6}$ and $\widetilde{\mathcal{M}}_{6}^{+}$. \end{prop}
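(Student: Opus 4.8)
The plan is to reduce the statement to a purely local question using Lemma~\ref{lem:local-X-sing}, and then check that the $S_6$ action permutes the data defining the resolutions constructed in \cite{HLTYpartI}. First I would recall that $S_6$ already acts on $\mathcal{M}_6$ via right multiplication on $3\times 6$ matrices $A$, and that by Proposition above this action is compatible with the diagram (\ref{eq:diagram-M6-H2-P9}); in particular it permutes the $15$ singular lines, permutes the $15$ triple points, and (by Proposition, part (3)) acts transitively on the $15$ boundary points. So it suffices to show that for each $g\in S_6$ and each triple point $p$, the induced isomorphism of analytic germs $g\colon(\mathcal{M}_6,p)\xrightarrow{\ \sim\ }(\mathcal{M}_6,g\cdot p)$ lifts to the chosen resolution $\widetilde{\mathcal{M}}_6\to\mathcal{M}_6$ (and similarly for $\widetilde{\mathcal{M}}_6^{+}$).

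The key steps, in order: (i) Away from the $15$ singular lines $\mathcal{M}_6$ is already smooth and the resolution map is an isomorphism there, so the $S_6$ action lifts trivially over that locus; the only issue is along the exceptional loci. (ii) By Lemma~\ref{lem:local-X-sing} each $0$-dimensional boundary point has a neighborhood analytically isomorphic to the germ of $\mathcal{X}=\{xyz-uv=0\}\subset\mathbb{C}^5$ at the origin, and in \cite{HLTYpartI} the resolution $\widetilde{\mathcal{X}}\to\mathcal{X}$ (and its flip $\widetilde{\mathcal{X}}^{+}$) is constructed torically/explicitly from the three ``coordinate'' divisors $\{x=0\},\{y=0\},\{z=0\}$ on $\mathcal{X}$ — concretely as a blow-up along one of the planes $\{x=u=0\}$ or $\{x=v=0\}$, equivalently a small resolution determined by an ordering/choice among $x,y,z$. (iii) I would then identify these three local divisors with (the germs of) the three singular lines through the corresponding triple point of $\mathcal{M}_6$: the $S_6$ action permutes the $15$ lines, hence permutes the three branches through each triple point, hence acts on the local model $\mathcal{X}$ by permuting $x,y,z$ (and possibly swapping $u\leftrightarrow v$), up to a local analytic automorphism fixing the resolution datum. (iv) Since the resolution $\widetilde{\mathcal{X}}$ was built canonically from the \emph{unordered} triple $\{x,y,z\}$ together with the pair $\{u,v\}$ — i.e.\ it is invariant under the group of monomial symmetries of $\mathcal{X}$ generated by the $S_3$ permuting $x,y,z$ and the transposition $u\leftrightarrow v$ — every element of $S_6$ acts on the local model by a symmetry preserving the resolution, so it lifts to $\widetilde{\mathcal{X}}$. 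Patching these local lifts with the (unique) lift over the smooth locus and using that the resolution morphism is proper birational — so a lift, if it exists locally, is unique and therefore the local lifts automatically agree on overlaps — produces the global extension of the $S_6$ action to $\widetilde{\mathcal{M}}_6$. The same argument with $\widetilde{\mathcal{X}}^{+}$ in place of $\widetilde{\mathcal{X}}$ handles $\widetilde{\mathcal{M}}_6^{+}$.

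The point I expect to be the main obstacle is step (iii)–(iv): verifying that the local analytic isomorphism furnished by Lemma~\ref{lem:local-X-sing} can be chosen $S_6$-equivariantly, i.e.\ that the abstract germ isomorphism $(\mathcal{M}_6,p)\cong(\mathcal{X},0)$ intertwines the $S_6$-action on branches through $p$ with the monomial permutation action on $\{x,y,z\}$ (and not with some more exotic automorphism of $\mathcal{X}$ that fails to preserve the small resolution — recall the resolution of $xyz-uv=0$ is genuinely non-canonical, there being several non-isomorphic small resolutions related by flops, which is precisely the source of the flip producing $\widetilde{\mathcal{M}}_6^{+}$). This requires going back to the explicit toric/coordinate description of the resolutions in \cite{HLTYpartI} and checking, for at least one representative triple point (transitivity then handles the rest), that the stabilizer in $S_6$ of that point acts through the monomial symmetry group of $\mathcal{X}$ that stabilizes the chosen resolution chamber. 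A clean way to organize this is to note that the resolution in \cite{HLTYpartI} was in fact defined \emph{globally} on $\mathcal{M}_6$ by a secondary-fan / blow-up construction invariant under $S_6$ by its very definition; then equivariance of the lift is immediate and only the local lemma is needed to confirm that the construction indeed resolves the singularities $S_6$-equivariantly. I would present the proof in the latter, cleaner form, citing \cite{HLTYpartI} for the global description and Lemma~\ref{lem:local-X-sing} for the local check.

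\begin{proof}
The $S_6$-action on $\mathcal{M}_6$ is induced by right multiplication of permutation matrices on $3\times6$ matrices $A$, and by the preceding propositions it permutes the $15$ singular lines of $\mathcal{M}_6$, hence the $15$ transversal triple points among them, and acts transitively on the latter. Over the smooth locus $\mathcal{M}_6^{\mathrm{sm}}=\mathcal{M}_6\setminus(\text{15 lines})$ the resolution morphisms $\widetilde{\mathcal{M}}_6\to\mathcal{M}_6$ and $\widetilde{\mathcal{M}}_6^{+}\to\mathcal{M}_6$ are isomorphisms, so the action lifts uniquely there; it remains to lift across the exceptional loci.

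By Lemma~\ref{lem:local-X-sing}, each triple point has an analytic neighborhood identified with the germ $(\mathcal{X},0)$, $\mathcal{X}=\{xyz-uv=0\}$, in such a way that the three local branches of the singular locus correspond to the three planes $\{x=u=0\},\{y=u=0\},\{z=u=0\}$ (equivalently the images of $\{x=y=0\}$ etc.); the resolutions $\widetilde{\mathcal{X}}$ and $\widetilde{\mathcal{X}}^{+}$ constructed in \cite{HLTYpartI} are obtained from this data by the explicit (toric) blow-up recipe recorded there. Since the resolution construction in \cite{HLTYpartI} is defined globally on $\mathcal{M}_6$ by a blow-up/secondary-fan procedure that uses only the configuration of the $15$ lines — and that configuration is manifestly $S_6$-invariant — the morphisms $\widetilde{\mathcal{M}}_6\to\mathcal{M}_6$ and $\widetilde{\mathcal{M}}_6^{+}\to\mathcal{M}_6$ are $S_6$-equivariantly defined. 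Concretely, for $g\in S_6$ the isomorphism of germs $g\colon(\mathcal{M}_6,p)\xrightarrow{\sim}(\mathcal{M}_6,g\cdot p)$ carries the three branches at $p$ to the three branches at $g\cdot p$, hence under the local model corresponds to a monomial symmetry of $\mathcal{X}$ permuting $\{x,y,z\}$ and possibly exchanging $u\leftrightarrow v$; such symmetries preserve the local resolution datum of \cite{HLTYpartI}, so $g$ lifts over the germ of $p$.

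Finally, since the resolution morphisms are proper and birational, any lift of $g$ is determined by its restriction to the dense open $\mathcal{M}_6^{\mathrm{sm}}$; hence the local lifts constructed above, together with the unique lift over $\mathcal{M}_6^{\mathrm{sm}}$, automatically patch to a global automorphism $\tilde g$ of $\widetilde{\mathcal{M}}_6$ (resp.\ of $\widetilde{\mathcal{M}}_6^{+}$). Uniqueness also gives $\widetilde{gh}=\tilde g\,\tilde h$, so $g\mapsto\tilde g$ is a group homomorphism, i.e.\ the $S_6$-action extends to both resolutions.
\end{proof}
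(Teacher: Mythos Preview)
Your overall strategy is correct and matches the paper's: the $S_6$ action extends because the resolution is built from $S_6$-invariant data. The paper's proof is considerably terser than yours. It simply recalls from \cite{HLTYpartI} that $\widetilde{\mathcal{M}}_6$ is obtained by first blowing up $\mathcal{M}_6$ along the $15$ singular lines and then blowing up at the resulting $2\times 15$ points; since $S_6$ permutes these centers as sets, the action lifts step by step through the blow-ups. For $\widetilde{\mathcal{M}}_6^{+}$ the paper notes that the (anti-)flip $\widetilde{\mathcal{X}}\to\widetilde{\mathcal{X}}^{+}$ is performed on all $15$ isomorphic local geometries simultaneously and identically, hence $S_6$-equivariantly. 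Your more elaborate argument (lifting over the smooth locus, analyzing the stabilizer action on the local model, uniqueness of lifts forcing the patching and the group law) is sound and supplies justifications the paper leaves implicit.

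One inaccuracy worth correcting: in your plan you describe $\widetilde{\mathcal{X}}\to\mathcal{X}$ as a \emph{small} resolution obtained by blowing up a plane such as $\{x=u=0\}$, with the flip then being one of the flops among such small resolutions. That is not the construction here. As recalled in Fig.~1 of the paper, $\widetilde{\mathcal{X}}$ is obtained by blowing up the three coordinate axes of $\mathcal{X}$ (giving divisorial exceptional loci $E_x,E_y,E_z$) and then blowing up two further points $p_1,p_2$. This misdescription does not damage your proof proper, which wisely just cites \cite{HLTYpartI} for the recipe, but your worry about ``choosing a resolution chamber'' and whether the stabilizer respects it is largely misplaced: the construction is visibly symmetric in $x,y,z$ from the outset, which is exactly why the paper can dispatch the proposition in two sentences.
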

\begin{proof}
The two resolution $\widetilde{\mathcal{M}}_{6}$ has been constructed
by blowing-up along the 15 lines of the singularity followed by blow-ups
at $2\times15$ points. Since the blowing-up at points are local,
they are compatible with the $S_{6}$ action. The (anti-)flip $\widetilde{\mathcal{M}}_{6}\to\widetilde{\mathcal{M}}_{6}^{+}$
is made by (anti-)flipping the local resolution $\widetilde{\mathcal{X}}\to\widetilde{\mathcal{X}}^{+}$
for all 15 isomorphic local geometry at one time. Hence, the resulting
flip $\widetilde{\mathcal{M}}_{6}^{+}$ retains the $S_{6}$ action
from $\widetilde{\mathcal{M}}_{6}$. 
\end{proof}
Let us recall the following covering property \cite{HLTYpartI}: 
\begin{equation}
\mathcal{M}_{6}=\bigcup_{\sigma\in S_{6}}\phi_{\sigma}(\mathcal{M}_{3,3}\setminus D_{0}),\label{eq:M6-sigma-Union}
\end{equation}
where $\mathcal{M}_{3,3}$ is a toric hypersurface in $\mathbb{P}^{5}$
which is birational to $\mathcal{M}_{6}$, and $D_{0}$ is a divisor
in $\mathcal{M}_{3,3}$. The toric hypersurface $\mathcal{M}_{3,3}$
is singular along 9 lines of $A_{1}$ singularity, and these lines
intersect at 6 points (cf. Lemma \ref{lem:local-X-sing}). 

We will define our lambda functions, first locally, by the mirror
maps given in the form of $q$-expansions near the LCSLs in the local
resolutions $\widetilde{\mathcal{X}}$ (or $\widetilde{\mathcal{X}}^{+}$)
of $\mathcal{X}$. Then we will show that these local definitions
actually extend to a global definition. To ensure that, we use Proposition
\ref{prop:S6-action-on-Mresol} and the transformation property of
some local expressions under the $S_{6}$ action. This is exactly
parallel to the one we presented in the preceding section for the
elliptic lambda function.

\subsection{Defining $\lambda_{K3}$ functions }

Recall that the equation (\ref{eq:Affine-Key-relE24}) comes from
the commutative diagram (\ref{eq:E24diagram}). We generalize this
for the corresponding diagram (\ref{eq:diagram-M6-H2-P9}).

\para{LCSLs in $\widetilde{\mathcal{X}}$ and $\widetilde{\mathcal{X}}^+$}
For simplicity, let us write 
\[
\phi_{\sigma}(\mathcal{M}_{3,3}^{D_{0}}):=\phi_{\sigma}(\mathcal{M}_{3,3}\setminus D_{0})
\]
in the decomposition (\ref{eq:M6-sigma-Union}) of $\mathcal{M}_{6}$.
Since the component $\phi_{\sigma}(\mathcal{M}_{3,3}^{D_{0}})\subset\mathcal{M}_{6}$
is isomorphic to a Zariski open subset of a toric variety $\mathcal{M}_{3,3}$,
a general point $p\in\phi_{\sigma}(\mathcal{M}_{3,3}^{D_{0}})$ is
represented by a $3\times6$ matrix $A$ having the properties 
\begin{equation}
A\sigma=B_{3}(\sigma)\left(\;\;E_{3}\;\;\;\;\begin{matrix}a_{2}^{\sigma} & b_{1}^{\sigma} & c_{0}^{\sigma}\\
a_{0}^{\sigma} & b_{2}^{\sigma} & c_{1}^{\sigma}\\
a_{1}^{\sigma} & b_{0}^{\sigma} & c_{2}^{\sigma}
\end{matrix}\right),\;\;\prod_{i=0}^{2}a_{i}^{\sigma}b_{i}^{\sigma}c_{i}^{\sigma}\not=0,\label{eq:Asigma-E3}
\end{equation}
for a unique $B_{3}(\sigma)\in GL(3,\mathbb{C})$. The open subset
$\phi_{e}(\mathcal{M}_{3,3}^{D_{0}})$ contains six copies of the
local geometry $\mathcal{X}=\left\{ xyz=uv\right\} $. We will identify
one of them with $\mathcal{X}$, and denote it by $\mathcal{X}_{e}$.
We denote its resolutions by $\widetilde{\mathcal{X}}_{e}\to\mathcal{X}_{e}$
and $\widetilde{\mathcal{X}}_{e}^{+}\to\mathcal{X}_{e}$. 

\begin{figure}[h] 
\resizebox{8.5cm}{!}{
\includegraphics[trim=0cm 1.5cm 0cm 2.5cm,
clip=true]{./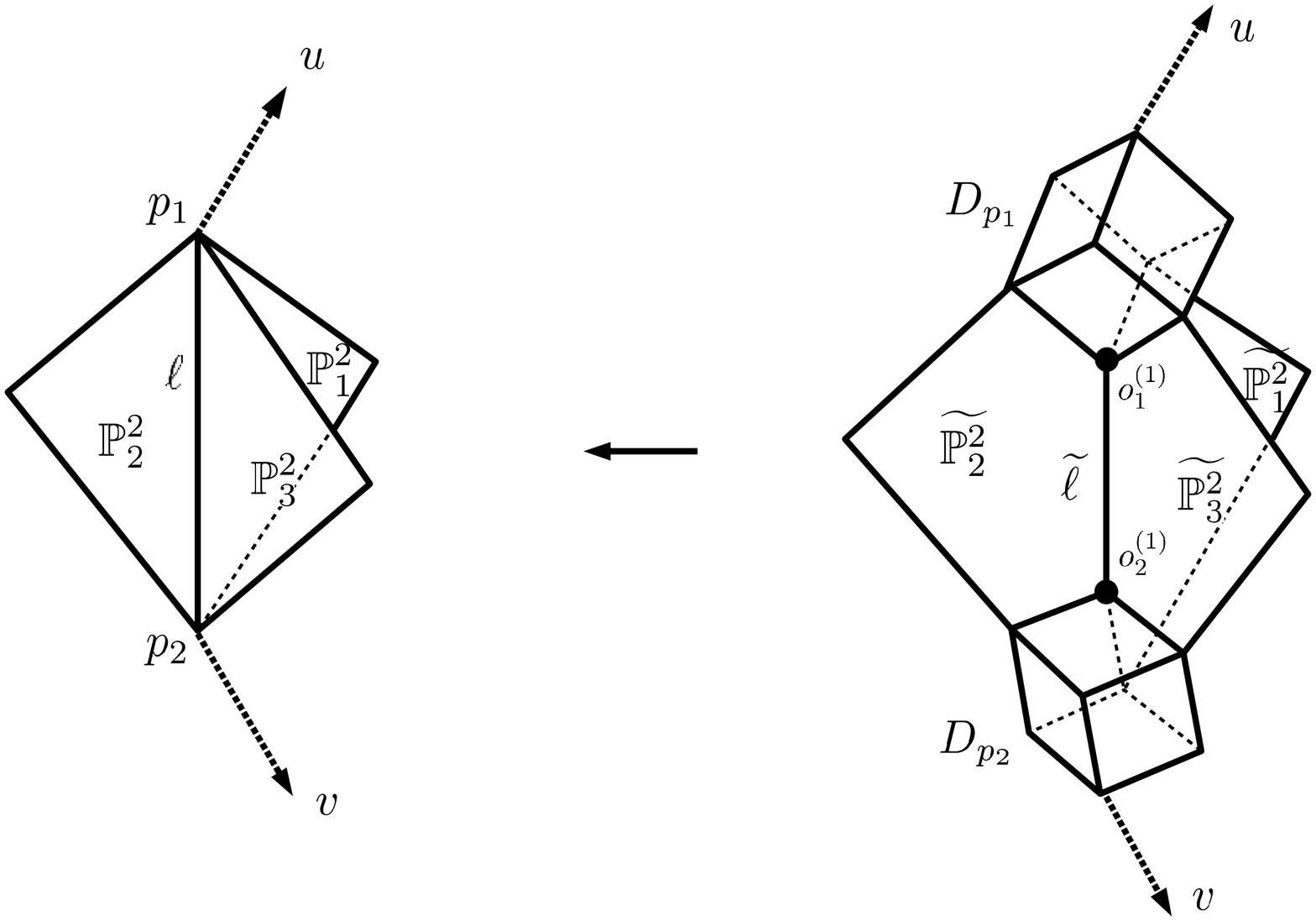}}
\caption{{\bf Fig.1} The blow-up of $\mathcal{X}'$ at $p_1$, $p_2$. The two points $o_1^{(1)},o_2^{(2)}$ are LCSLs. }
\end{figure}In Fig.1, it is shown that the resolution $\widetilde{\mathcal{X}}$
contains two LCSLs, $o_{i}^{(1)}(i=1,2)$. The left figure of Fig.1
is the blow-up $\mathcal{X}'\to\mathcal{X}$ along three coordinate
axes that introduces corresponding exceptional divisors $E_{x},E_{y},E_{z}$.
The right figure represents the resolution $\widetilde{\mathcal{X}}=\widetilde{\mathcal{X}}_{e}$
by the blow-up at two points $p_{1},p_{2}$ which introduces the exceptional
divisors $D_{p_{k}}(k=1,2)$. The two LCSLs are given by the intersections
$o_{k}^{(1)}=\widetilde{E}_{x}\cap\widetilde{E}_{y}\cap\widetilde{E}_{z}\cap D_{p_{k}}$
of $D_{p_{k}}$ and the proper transforms of the three exceptional
divisors. We introduce local coordinate $z_{k}(o_{1})(k=1,...,4)$
near the point $o_{1}^{(1)}$ so that 
\[
z_{1}(o_{1})=0,\;\;z_{2}(o_{1})=0,\;\;z_{3}(o_{1})=0\text{ and }\;z_{4}(o_{1})=0
\]
are the local equations for the divisors $\widetilde{E}_{x},\widetilde{E}_{y},\widetilde{E}_{z}$
and $D_{p_{1}}$, respectively. Near the other point $o_{2}^{(1)},$
we introduce local coordinate $z_{k}(o_{2})(k=1,...,4)$ in a similar
way except that $z_{4}(o_{2})=0$ represents the divisor $D_{p_{2}}$. 

The transforms $\phi_{\sigma}(\mathcal{X})\subset\phi_{\sigma}(\mathcal{M}_{3,3}^{D_{0}})$
of the local geometry $\phi(\mathcal{X})\subset\phi(\mathcal{M}_{3,3}^{D_{0}})$
by the automorphisms $\varphi_{\sigma}\in\mathrm{Aut}(\mathcal{M}_{6})$,
where $\phi_{\sigma}:=\varphi_{\sigma}\circ\phi$ \cite[Def. 6.8]{HLTYpartI},
are all isomorphic. We set $\mathcal{X}_{\sigma}:=\phi_{\sigma}(\mathcal{X})$
and denote by $\widetilde{\mathcal{X}}_{\sigma}\to\mathcal{X}_{\sigma}$
the resolution which is isomorphic to the resolution $\widetilde{\mathcal{X}}\to\mathcal{X}$.
Similarly for the other resolution $\widetilde{\mathcal{X}}_{\sigma}^{+}\to\mathcal{X}_{\sigma}$.
We denote by $z_{k}^{\sigma}(o_{i})$ and $z_{k}^{\sigma}(o_{i}^{+})$,
respectively, the corresponding local coordinates of the resolutions
$\widetilde{\mathcal{X}}_{\sigma}\to\mathcal{X}_{\sigma}$ and $\widetilde{\mathcal{X}}_{\sigma}^{+}\to\mathcal{X}_{\sigma}$.
When $\sigma=e$, we often omit the superscript, e.g., $z_{k}^{e}=z_{k}$. 
\begin{prop}
\label{prop:zk-O1}The coordinate functions $z_{k}^{\sigma}(o_{1})$
evaluate the general points $p\in\phi_{\sigma}(\mathcal{M}_{3,3}^{D_{0}})$
by 
\begin{equation}
z_{1}^{\sigma}(o_{1})=-\frac{a_{1}^{\sigma}c_{1}^{\sigma}}{a_{0}^{\sigma}c_{2}^{\sigma}},\;\;z_{2}^{\sigma}(o_{1})=-\frac{a_{1}^{\sigma}b_{1}^{\sigma}}{a_{2}^{\sigma}b_{0}^{\sigma}},\;\;z_{3}^{\sigma}(o_{1})=-\frac{b_{1}^{\sigma}c_{1}^{\sigma}}{b_{2}^{\sigma}c_{0}^{\sigma}},\;\;z_{4}^{\sigma}(o_{1})=\frac{a_{2}^{\sigma}b_{2}^{\sigma}c_{2}^{\sigma}}{a_{1}^{\sigma}b_{1}^{\sigma}c_{1}^{\sigma}},\label{eq:zk-o2}
\end{equation}
where $a_{i}^{\sigma},b_{i}^{\sigma},c_{i}^{\sigma}$ are determined
by (\ref{eq:Asigma-E3}) from $p$ by choosing a matrix $A$ such
that $p=[A]$. These are independent of the representative $A$ of
$p$. \end{prop}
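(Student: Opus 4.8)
The plan is to reduce to the case $\sigma=e$ using $S_{6}$-equivariance, to compute the four local coordinates $z_{k}(o_{1})$ directly in the blow-up charts underlying Fig.~1 via the toric description of $\mathcal{M}_{3,3}$ and $\widetilde{\mathcal{M}}_{6}$ from \cite{HLTYpartI}, and then to verify both the formulas (\ref{eq:zk-o2}) and the asserted independence of the representative $A$ by a short invariance computation under the $GL(3,\mathbb{C})\times(\mathbb{C}^{*})^{6}$ action.

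\emph{Reduction to $\sigma=e$.} By construction $\phi_{\sigma}=\varphi_{\sigma}\circ\phi$, and the resolution $\widetilde{\mathcal{X}}_{\sigma}\to\mathcal{X}_{\sigma}$ together with its coordinates $z_{k}^{\sigma}(o_{1})$ is, by definition, the transport of $\widetilde{\mathcal{X}}_{e}\to\mathcal{X}_{e}$ and $z_{k}(o_{1})$ along the automorphism $\varphi_{\sigma}$ of $\widetilde{\mathcal{M}}_{6}$ furnished by Proposition~\ref{prop:S6-action-on-Mresol}; hence $z_{k}^{\sigma}(o_{1})([A])=z_{k}(o_{1})([A\sigma])$. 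Since $[A]\in\phi_{\sigma}(\mathcal{M}_{3,3}^{D_{0}})$ exactly when $[A\sigma]\in\phi(\mathcal{M}_{3,3}^{D_{0}})$, and the normal form (\ref{eq:Asigma-E3}) applied to $A\sigma$ (with $\sigma$ there replaced by $e$) reads $A\sigma=B_{3}(e)(E_{3}\ M^{\sigma})$ with $M^{\sigma}$ the matrix whose entries are exactly the $a_{i}^{\sigma},b_{i}^{\sigma},c_{i}^{\sigma}$, it suffices to establish (\ref{eq:zk-o2}) — and the independence assertion — for $\sigma=e$, i.e. to evaluate the $z_{k}(o_{1})$ at $[A]$ with $A=B_{3}(e)(E_{3}\ M)$, $\prod a_{i}b_{i}c_{i}\neq 0$, $M$ as in (\ref{eq:Asigma-E3}).

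\emph{The blow-up computation.} Here I would invoke the toric picture of \cite{HLTYpartI}: $\phi(\mathcal{M}_{3,3}^{D_{0}})$ is a Zariski open subset of the toric hypersurface $\mathcal{M}_{3,3}\subset\mathbb{P}^{5}$ whose affine coordinates are monomials in the $3\times3$ minors of $(E_{3}\ M)$, and a minor built from two columns of $E_{3}$ and one column of $M$ is $\pm$ an entry of $M$, so in the relevant chart these are monomials in the $a_{i},b_{i},c_{i}$; near the $0$-dimensional boundary point of Lemma~\ref{lem:local-X-sing} this identifies a neighbourhood with a neighbourhood of $0\in\mathcal{X}=\{xyz-uv=0\}$, the three lines of $A_{1}$-singularities becoming the three coordinate axes. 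Blowing those up gives $\mathcal{X}'$ with exceptional divisors $E_{x},E_{y},E_{z}$; a further blow-up at $p_{1}$ gives $D_{p_{1}}$; and $o_{1}=\widetilde{E}_{x}\cap\widetilde{E}_{y}\cap\widetilde{E}_{z}\cap D_{p_{1}}$. On the affine chart of $\widetilde{\mathcal{X}}$ containing $o_{1}$ the iterated blow-up substitution writes $x,y,z,u,v$ as monomials in coordinates $z_{1},\dots,z_{4}$ whose vanishing cuts out $\widetilde{E}_{x},\widetilde{E}_{y},\widetilde{E}_{z},D_{p_{1}}$ respectively; inverting this monomial substitution expresses each $z_{k}(o_{1})$ as a Laurent monomial in $x,y,z,u,v$, hence in the $a_{i},b_{i},c_{i}$, and matching with the monomials recorded in \cite{HLTYpartI} yields precisely the right-hand sides of (\ref{eq:zk-o2}), signs on $z_{1},z_{2},z_{3}$ included.

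\emph{Independence of the representative.} Replacing $A$ by $gA$ with $g\in GL(3,\mathbb{C})$ changes $B_{3}(e)$ to $gB_{3}(e)$ but leaves $M$, hence every $a_{i},b_{i},c_{i}$, unchanged, so (\ref{eq:zk-o2}) is unaffected. Replacing $A$ by $At$ with $t=\mathrm{diag}(r_{1},r_{2},r_{3},s_{1},s_{2},s_{3})$ gives $(E_{3}\ M)t=\mathrm{diag}(r_{i})\,(E_{3}\ \mathrm{diag}(r_{i})^{-1}M\,\mathrm{diag}(s_{j}))$, so $M_{ij}$ is rescaled to $r_{i}^{-1}s_{j}M_{ij}$ while $\mathrm{diag}(r_{i})$ is absorbed into $B_{3}(e)$. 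Upon substituting the entries of $M$, each expression in (\ref{eq:zk-o2}) becomes a Laurent monomial $\prod M_{ij}^{e_{ij}}$ for which every row-sum $\sum_{j}e_{ij}$ and every column-sum $\sum_{i}e_{ij}$ vanishes — for instance $z_{1}(o_{1})=-M_{31}M_{23}/(M_{21}M_{33})$ uses rows $2,3$ and columns $1,3$ once each above and below the line, and similarly for $z_{2},z_{3},z_{4}$ — so the rescaling factor $\prod_{i}r_{i}^{-\sum_{j}e_{ij}}\prod_{j}s_{j}^{\sum_{i}e_{ij}}$ equals $1$. Hence the $z_{k}^{\sigma}(o_{1})$ are well-defined functions on $\phi_{\sigma}(\mathcal{M}_{3,3}^{D_{0}})\subset\mathcal{M}_{6}$.

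The only genuinely laborious step is the middle one: recovering from \cite{HLTYpartI} the exact monomial dictionary between $x,y,z,u,v$ and the $a_{i},b_{i},c_{i}$, and tracking which affine chart of the iterated blow-up contains $o_{1}$ together with the signs produced by the blow-up substitutions, so that the four coordinates emerge in precisely the normalized form (\ref{eq:zk-o2}). The reduction to $\sigma=e$ and the torus-invariance check are routine.
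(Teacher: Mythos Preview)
Your proposal is correct and follows essentially the same approach as the paper. Both arguments defer the explicit monomial formulas (\ref{eq:zk-o2}) to the toric constructions of \cite{HLTYpartI} and establish independence of the representative via torus invariance; the paper works with general $\sigma$ directly, citing the coordinate ring $\mathbb{C}[(\sigma_{1}^{(1)})^{\vee}\cap L]$ from \cite[Lemma~3.3]{HLTYpartI} and the definitions of \cite[Sect.~3.2.b]{HLTYpartI}, whereas you first reduce to $\sigma=e$ and phrase the resolution in iterated blow-up language before matching monomials, and you make the row/column-sum invariance check explicit where the paper simply invokes the torus action $T$ of \cite[Sect.~2.4.a]{HLTYpartI}---these are cosmetic rearrangements of the same proof.
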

\begin{proof}
The coordinate functions $z_{k}^{\sigma}(o_{1})$ are determined in
Lemma 3.3 of \cite{HLTYpartI} as the generators of the coordinate
ring $\mathbb{C}[(\sigma_{1}^{(1)})^{\vee}\cap L]$ of the affine
coordinate around $o_{1}^{(1)}$ of the resolution $\widetilde{\mathcal{X}}_{\sigma}$.
For a matrix $A$ such that $p=[A]$, assume the matrix $A\sigma$
has the form (\ref{eq:Asigma-E3}), then the claimed form (\ref{eq:zk-o2})
follows by the definitions given in \cite[Sect.\,3.2.b]{HLTYpartI}.
Let us write $A\sigma=B_{3}(\sigma)(E_{3}\,X_{\sigma})$. If we change
$A$ to $gAt$ by $(g,t)\in GL(3,\mathbb{C})\times(\mathbb{C}^{*})^{6}$,
then we have $(gAt)\sigma=gB_{3}(\sigma)(E_{3}\,X_{\sigma})t_{\sigma}$
with $t_{\sigma}:=\sigma^{-1}t\sigma$. It is easy to find a diagonal
matrix $h\in GL(3,\mathbb{C})$ satisfying $h(E_{3}\,X_{\sigma})t_{\sigma}=(E_{3}\,X_{\sigma}')$,
and we have $(gAt)\sigma=gB_{3}(\sigma)h^{-1}(E_{3}\,X_{\sigma}')$.
Since $h(E_{3}\,X_{\sigma})t_{\sigma}=(E_{3}\,X_{\sigma}')$ is the
torus action on $X_{\sigma}$ described in \cite[Sect.\,2.4.a]{HLTYpartI}
(see also Sect.$\,$\ref{sub:GKZ-Def} below), the values of the coordinate
functions $z_{k}^{\sigma}(o_{1})$ do not change for $X_{\sigma}$
and $X_{\sigma}'$. \end{proof}
\begin{prop}
The coordinate functions $z_{k}^{\sigma}(o_{2})$ are related to $z_{k}^{\sigma}(o_{1})$
by 
\begin{equation}
z_{k}^{\sigma}(o_{2})=z_{k}^{\sigma}(o_{1})z_{4}^{\sigma}(o_{1})\,\,(k=1,2,3)\text{ and }z_{4}^{\sigma}(o_{2})=\frac{1}{z_{4}^{\sigma}(o_{1})}.\label{eq:z-o2-by-o1}
\end{equation}
\end{prop}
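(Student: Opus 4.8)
The plan is to derive the relation \eqref{eq:z-o2-by-o1} directly from the geometric description of the resolution $\widetilde{\mathcal{X}}$ given in Fig.~1 and the explicit toric data recalled from \cite{HLTYpartI}. Recall that $\widetilde{\mathcal{X}}=\widetilde{\mathcal{X}}_e$ is obtained from $\mathcal{X}'\to\mathcal{X}$ by blowing up the two points $p_1$ and $p_2$, and the two LCSLs $o_1^{(1)}$, $o_2^{(1)}$ sit on the two exceptional divisors $D_{p_1}$, $D_{p_2}$ respectively, each being the intersection with the three proper transforms $\widetilde{E}_x,\widetilde{E}_y,\widetilde{E}_z$. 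In the toric picture, the affine chart around $o_1^{(1)}$ corresponds to a maximal cone $\sigma_1^{(1)}$ in the fan of $\widetilde{\mathcal{X}}$, and the chart around $o_2^{(1)}$ to a neighboring maximal cone $\sigma_2^{(1)}$; these two cones share the wall spanned by the rays of $\widetilde{E}_x,\widetilde{E}_y,\widetilde{E}_z$, and differ precisely by which of the two rays $D_{p_1}$ or $D_{p_2}$ they contain.

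The key computation is therefore a change of chart in the toric variety. First I would write down the dual cones $(\sigma_1^{(1)})^\vee$ and $(\sigma_2^{(1)})^\vee$ and their generators in the lattice $M=L^\vee$, using the data of Lemma~3.3 of \cite{HLTYpartI} already invoked in the proof of Proposition~\ref{prop:zk-O1}. The coordinates $z_k^e(o_1)$ are the images of a chosen set of generators of $(\sigma_1^{(1)})^\vee\cap M$ under the monomial map, and similarly $z_k^e(o_2)$ for $(\sigma_2^{(1)})^\vee\cap M$. Since the two cones are related by a single "blow-down/blow-up at a point" elementary transformation — equivalently the rays $D_{p_1}$ and $D_{p_2}$ satisfy a single linear relation $v_{p_1}+v_{p_2}=v_{\widetilde{E}_x}+v_{\widetilde{E}_y}+v_{\widetilde{E}_z}$ (or some such relation dictated by the blow-up geometry of $p_1,p_2$) — the generators of the two dual cones differ in a controlled way. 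Concretely, the generator dual to $D_{p_1}$ in $(\sigma_1^{(1)})^\vee$ and the generator dual to $D_{p_2}$ in $(\sigma_2^{(1)})^\vee$ become reciprocal monomials, giving $z_4^e(o_2)=1/z_4^e(o_1)$, while the generators dual to $\widetilde{E}_x,\widetilde{E}_y,\widetilde{E}_z$ get multiplied by the $D_{p_1}$-generator, giving $z_k^e(o_2)=z_k^e(o_1)z_4^e(o_1)$ for $k=1,2,3$.

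An alternative, and perhaps cleaner, route is to simply substitute the explicit formulas \eqref{eq:zk-o2} of Proposition~\ref{prop:zk-O1} for the $z_k^\sigma(o_1)$ in terms of the matrix entries $a_i^\sigma,b_i^\sigma,c_i^\sigma$, and the analogous formulas for $z_k^\sigma(o_2)$ (which follow from the same Lemma~3.3 of \cite{HLTYpartI} applied to the chart around $o_2^{(1)}$, with $z_4^\sigma(o_2)$ now representing $D_{p_2}$), and then verify the four identities \eqref{eq:z-o2-by-o1} as straightforward algebraic identities among Laurent monomials in the matrix entries. For instance $z_1^\sigma(o_1)z_4^\sigma(o_1)=\bigl(-\tfrac{a_1^\sigma c_1^\sigma}{a_0^\sigma c_2^\sigma}\bigr)\bigl(\tfrac{a_2^\sigma b_2^\sigma c_2^\sigma}{a_1^\sigma b_1^\sigma c_1^\sigma}\bigr)=-\tfrac{a_2^\sigma b_2^\sigma}{a_0^\sigma b_1^\sigma}$, which one checks equals the corresponding expression for $z_1^\sigma(o_2)$, and similarly for $k=2,3$ and for the reciprocal relation in $z_4$. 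This reduces the whole statement to bookkeeping, provided one has the $o_2^{(1)}$-chart formulas at hand.

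The main obstacle is not conceptual but is locating and correctly transcribing the toric data from \cite{HLTYpartI}: one must be careful about the precise generators chosen for each dual cone (they are only well-defined up to the choice of a basis of the monoid, so there is a genuine normalization to pin down so that the clean form \eqref{eq:z-o2-by-o1} emerges rather than some GL-twisted version of it), and about signs, since the $z_k$ carry the signs visible in \eqref{eq:zk-o2}. Once the generators are fixed consistently with Lemma~3.3 of \cite{HLTYpartI}, the verification is immediate. I would structure the written proof as: (i) recall the two affine charts of $\widetilde{\mathcal{X}}$ around $o_1^{(1)}$ and $o_2^{(1)}$ and the single linear relation between the rays $D_{p_1}$, $D_{p_2}$; (ii) read off the generators of the two dual cones; (iii) observe that the monomial change-of-coordinates between the two charts is exactly \eqref{eq:z-o2-by-o1}; and (iv) note that, by the representative-independence already established in Proposition~\ref{prop:zk-O1} and the $S_6$-equivariance of the construction in Proposition~\ref{prop:S6-action-on-Mresol}, the same relation holds for every $\sigma$.
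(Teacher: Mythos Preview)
Your proposal is correct and follows essentially the same approach as the paper: the paper's proof simply invokes Lemma~3.3 of \cite{HLTYpartI}, reading off the generators of $(\sigma_1^{(1)})^\vee\cap L$ and $(\sigma_2^{(1)})^\vee\cap L$ and observing that the resulting coordinate functions are related by \eqref{eq:z-o2-by-o1}, which is exactly your steps (ii)--(iii). Your additional geometric discussion of the wall-crossing and your alternative direct verification via the explicit monomials in $a_i^\sigma,b_i^\sigma,c_i^\sigma$ are correct elaborations, but the paper dispenses with them and states the result as an immediate consequence of the cited lemma.
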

\begin{proof}
This follows directly from Lemma 3.3 of \cite{HLTYpartI}. Using the
definitions there, the generators of the cone $(\sigma_{1}^{(1)})^{\vee}\cap L$\textcolor{red}{{}
}determine the coordinate functions $z_{k}(o_{1})$. We read the coordinate
functions $z_{k}^{\sigma}(o_{2})$ from the generators of the cone
$(\sigma_{2}^{(1)})^{\vee}\cap L$. \end{proof}
\begin{rem}
Clearly, the decomposition (\ref{eq:M6-sigma-Union}) is a generalization
of the corresponding decomposition (\ref{eq:M4-sigma-Union}) of $\mathcal{M}_{4}$.
By similar arguments done for the coordinate functions on $\mathbb{C}_{\sigma}\subset\mathcal{M}_{4}$,
the coordinate functions $z_{k}^{\sigma}(o_{i})$ and $z_{k}(o_{i})$
are related by $\varphi_{\sigma}:=\phi_{\sigma}\circ\phi_{e}^{-1}\in\mathrm{Aut}(\mathcal{M}_{6})$
for $[A]\in\phi_{\sigma}(\mathcal{M}_{3,3}^{D_{0}})\cap\phi_{e}(\mathcal{M}_{3,3}^{D_{0}})$
(see Subsection~\ref{para:affine-coordinate-ell}). This generalizes
the classical representation (\ref{eq:table-zSigma}) of $S_{3}$
on $\mathrm{Aut}(\mathcal{M}_{4})\simeq\mathrm{Aut}(\mathbb{P}^{1})$.
Unfortunately, the relations $z_{k}^{\sigma}(o_{i})=\varphi_{\sigma}(z_{1}(o_{i}),...,z_{4}(o_{i}))$
are not simple enough to list them in a table. In Appendix \ref{sec:App-representation-S6},
we show them explicitly for some $\sigma\in S_{6}$. 
\end{rem}
The flipped resolution $\widetilde{\mathcal{X}}_{e}^{+}\to\mathcal{X}_{e}$
contains three LCSLs, $o_{i}^{+}(i=1,2,3)$ which arises as the transversal
intersections of four divisors. We will denote the corresponding coordinates
by $z_{k}(o_{1}^{+}),z_{k}(o_{2}^{+})$ and $z_{k}(o_{3}^{+})$ for
$o_{i}^{+}(i=1,2,3)$. See Appendix \ref{sec:App.PF-equation-Bo1}
for the explicit descriptions of these local coordinates. 

\para{Semi-invariants in the affine coordinates $z_k$} In what follow,
we will focus on the boundary points given by $z_{1}^{\sigma}(o_{1})=\cdots=z_{4}^{\sigma}(o_{1})=0$.
For simplicity, we write $z_{k}^{\sigma}(o_{1})$ by $z_{k}^{\sigma}$
unless otherwise stated. Also we write $z_{k}^{e}$ by $z_{k}$. Then,
for a general matrix $A$, the expression $z_{k}(A)=z_{k}^{e}(A)$
represents the ratios (\ref{eq:zk-o2}) defined by making the matrix
$A=Ae$ into the form (\ref{eq:Asigma-E3}). As in the table (\ref{eq:table-zSigma})
in the preceding section, we have 
\[
z_{k}^{\sigma}(A)=z_{k}(A\sigma)=\varphi_{\sigma}(z_{1}(A),...,z_{4}(A)).
\]

\begin{defn}
Take a special form $A_{0}=\left(\;\;E_{3}\;\;\;\begin{smallmatrix}a_{2} & b_{1} & c_{0}\\
a_{0} & b_{2} & c_{1}\\
a_{1} & b_{0} & c_{2}
\end{smallmatrix}\right)$ with $a_{i},b_{i},c_{i}\in\mathbb{C}^{*}$. Using this, we define
affine semi-invariants $P_{I}$ by 
\[
P_{I}:=\frac{1}{a_{0}b_{0}c_{0}}Y_{I}(A_{0}),
\]
where $Y_{I}$ is the semi-invariants in (\ref{eq:diagram-M6-H2-P9}). 
\end{defn}
The above definition is parallel to the case of $\mathcal{M}_{4}$.
It is straightforward to find that these affine semi-invariants are
polynomial functions of $z_{k}$ (defined for $\sigma=e$). See Appendix
\ref{sec:App-P-Q} for their explicit expressions. 
\begin{prop}
For a general matrix $A$ such that $[A]\in\phi_{e}(\mathcal{M}_{3,3}^{D_{0}})\cap\phi_{\sigma}(\mathcal{M}_{3,3}^{D_{0}})$,
the following equalities hold:
\[
\begin{aligned}Y_{I}(A) & =(\det B_{3}(e))^{2}a_{0}^{e}b_{0}^{e}c_{0}^{e}\cdot P_{I}(z(A))\\
 & =(\det B_{3}(\sigma))^{2}a_{0}^{\sigma}b_{0}^{\sigma}c_{0}^{\sigma}\cdot P_{\sigma^{-1}(I)}(z^{\sigma}(A)).
\end{aligned}
\]
\end{prop}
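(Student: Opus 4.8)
The plan is to mimic, almost verbatim, the proof of Proposition \ref{prop:Y-P-rel-E24} (the $\mathcal{M}_4$ analogue), replacing the rank-two combinatorics by rank-three combinatorics. The two ingredients are: (i) the transformation law $Y_I(A\sigma) = Y_{\sigma(I)}(A)$ for the semi-invariants under permutation of columns, which is the rank-three version of (\ref{eq:Asigam-Y}) and follows at once from the multiplicativity of $3\times 3$ minors in $Y_I(A) = [ijk][lmn]$; and (ii) the equivariance of $Y_I$ under the left $GL(3,\mathbb{C})$ action, namely $Y_I(gA) = (\det g)^2\, Y_I(A)$, since $Y_I$ is a product of two maximal minors and each minor scales by $\det g$.

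\textbf{Key steps.} First I would establish the first equality. By the defining property (\ref{eq:Asigma-E3}) with $\sigma = e$, we may write $A = Ae = B_3(e)\,A_0$ where $A_0 = (E_3\; X)$ has the special normalized form appearing in the definition of $P_I$, with entries $a_i^e,b_i^e,c_i^e$. Applying the $GL(3,\mathbb{C})$-equivariance gives $Y_I(A) = (\det B_3(e))^2\, Y_I(A_0)$, and then the definition $P_I(z(A_0)) = \frac{1}{a_0^e b_0^e c_0^e}\,Y_I(A_0)$ yields the first line. Here one must also note, as in Proposition \ref{prop:zk-O1}, that $z(A_0) = z(A)$ since the coordinate functions $z_k$ are invariant under the left $GL(3,\mathbb{C})$ (and diagonal torus) action. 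For the second equality, apply (i) to get $Y_I(A) = Y_{\sigma^{-1}(I)}(A\sigma)$; then by (\ref{eq:Asigma-E3}) we have $A\sigma = B_3(\sigma)(E_3\; X_\sigma)$ with $X_\sigma$ the matrix built from $a_i^\sigma, b_i^\sigma, c_i^\sigma$, so $GL(3,\mathbb{C})$-equivariance gives $Y_{\sigma^{-1}(I)}(A\sigma) = (\det B_3(\sigma))^2\, Y_{\sigma^{-1}(I)}((E_3\; X_\sigma))$. Finally, observing that $z^\sigma(A) = z(A\sigma)$ and $(E_3\; X_\sigma)$ is already in the normalized special form (up to the torus rescaling that leaves both $z$ and the ratio $P_{\sigma^{-1}(I)}$ invariant), the definition of $P$ gives $Y_{\sigma^{-1}(I)}((E_3\; X_\sigma)) = a_0^\sigma b_0^\sigma c_0^\sigma\, P_{\sigma^{-1}(I)}(z^\sigma(A))$, completing the proof.

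\textbf{Main obstacle.} The only genuine subtlety — as opposed to bookkeeping — is the normalization issue in the second step: the matrix $(E_3\; X_\sigma)$ produced by (\ref{eq:Asigma-E3}) need not literally have the exact special form $A_0$ used to define $P_I$ (its last three columns may differ from the reference form by a diagonal torus element). One must check that $P_{\sigma^{-1}(I)}$, being a ratio $\frac{1}{a_0 b_0 c_0}Y_{\sigma^{-1}(I)}$, is genuinely a function of the coordinates $z_k$ alone and hence insensitive to this residual torus freedom — precisely the point already verified in the proof of Proposition \ref{prop:zk-O1} via the torus action of \cite[Sect.\,2.4.a]{HLTYpartI}. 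Granting that invariance, the identity is then just two applications of minor-multiplicativity and one tracking of determinant factors, exactly paralleling Proposition \ref{prop:Y-P-rel-E24}.
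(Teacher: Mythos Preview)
Your proposal is correct and follows exactly the route the paper intends: the paper's own proof simply states that the derivations are parallel to Proposition~\ref{prop:Y-P-rel-E24} and omits them, and your write-up is precisely that parallel argument. One small remark: your ``main obstacle'' is slightly overstated, since the matrix $(E_3\;X_\sigma)$ produced by (\ref{eq:Asigma-E3}) is literally of the special form $A_0$ used to define $P_I$, so no residual torus adjustment is needed --- what is genuinely required (and what the paper asserts just after the definition of $P_I$) is that $\frac{1}{a_0b_0c_0}Y_I(A_0)$ is a polynomial in the $z_k$ alone.
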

\begin{proof}
Since the derivations are parallel to Proposition \ref{prop:Y-P-rel-E24},
we omit them here.\end{proof}
\begin{defn}
\label{def:twist-G-M6}For a general matrix $A$ such that $[A]\in\phi_{e}(\mathcal{M}_{3,3}^{D_{0}})\cap\phi_{\sigma}(\mathcal{M}_{3,3}^{D_{0}})$,
we define 
\[
G(\sigma,e):=\frac{(\det B_{3}(\sigma))^{2}\,a_{0}^{\sigma}b_{0}^{\sigma}c_{0}^{\sigma}}{(\det B_{3}(e))^{2}\,a_{0}^{e}b_{0}^{e}c_{0}^{e}}\;\left(=\frac{P_{I}(z(A))}{P_{\sigma^{-1}(I)}(z^{\sigma}(A))}\right),
\]
and call this a\textit{ twist factor} (cf. Definition \ref{def:twist-G-E24}).
We also set $G(\sigma,\tau):=G(\sigma,e)/G(\tau,e)$. 
\end{defn}
The following definition coincides with the normalized period integral
\cite[(3.4)]{HLTYpartI} which corresponds to (\ref{eq:periodE24-normalized}). 
\begin{defn}
For a general matrix $A$ such that $[A]\in\phi_{\sigma}(\mathcal{M}_{3,3}^{D_{0}})$,
we define the normalized period integral 
\begin{equation}
\omega(z^{\sigma}(A))=\det\,B_{3}(\sigma)\sqrt{a_{0}^{\sigma}b_{0}^{\sigma}c_{0}^{\sigma}}\,\bar{\omega}(A).\label{eq:period-E36-normalized}
\end{equation}

\end{defn}
We leave the reader to show that the right hand side of (\ref{eq:period-E36-normalized})
is a function of $z^{\sigma}(A)$ (cf. \cite[Sect.\,4]{HLTYpartI}). 

\para{The master equation for the $\lambda_{K3}$ functions} We introduce
the master equation by which we define the $\lambda_{K3}$ functions. 
\begin{prop}
For a general matrix $A$ such that $[A]\in\phi_{e}(\mathcal{M}_{3,3}^{D_{0}})\cap\phi_{\sigma}(\mathcal{M}_{3,3}^{D_{0}})$,
we have 
\begin{equation}
Y_{I}(A)\,\bar{\omega}(A)^{2}=P_{I}(z)\,\omega(z)^{2}=P_{\sigma^{-1}(I)}(z^{\sigma})\,\omega(z^{\sigma})^{2},\label{eq:Y-PI-E36}
\end{equation}
 where $\bar{\omega}(A)$ is the period integral (\ref{eq:peridIntI})
and we set $z_{k}=z_{k}(A)$, $z_{k}^{\sigma}=z_{k}^{\sigma}(A)$.\end{prop}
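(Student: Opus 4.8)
The statement is the exact $E(3,6)$ analogue of Proposition \ref{prop:Y-by-Ytilde-E24}, so the plan is to mimic that proof line by line, using the building blocks already assembled in this section. The first equality, $Y_I(A)\,\bar\omega(A)^2 = P_I(z)\,\omega(z)^2$, is essentially a definition unwound: by Proposition (the $E(3,6)$ analogue of Proposition \ref{prop:Y-P-rel-E24}) applied with $\sigma = e$ we have $Y_I(A) = (\det B_3(e))^2\, a_0^e b_0^e c_0^e\, P_I(z(A))$, while the normalization (\ref{eq:period-E36-normalized}) with $\sigma = e$ gives $\omega(z(A)) = \det B_3(e)\sqrt{a_0^e b_0^e c_0^e}\,\bar\omega(A)$, hence $\omega(z(A))^2 = (\det B_3(e))^2 a_0^e b_0^e c_0^e\,\bar\omega(A)^2$. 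Dividing the first relation by the second (the common nonzero factor $(\det B_3(e))^2 a_0^e b_0^e c_0^e$ cancels) yields $Y_I(A)\bar\omega(A)^2 = P_I(z)\,\omega(z)^2$, as claimed.

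\textbf{The transition equality.} For the second equality I would run the identical argument with $\sigma$ in place of $e$: Proposition (the analogue of \ref{prop:Y-P-rel-E24}) also gives $Y_I(A) = (\det B_3(\sigma))^2\, a_0^\sigma b_0^\sigma c_0^\sigma\, P_{\sigma^{-1}(I)}(z^\sigma(A))$, and (\ref{eq:period-E36-normalized}) gives $\omega(z^\sigma(A))^2 = (\det B_3(\sigma))^2\, a_0^\sigma b_0^\sigma c_0^\sigma\,\bar\omega(A)^2$ — here one uses the $S_6$-symmetry $\bar\omega(A\sigma)=\bar\omega(A)$ of the period integral (\ref{eq:peridIntI}), which holds because permuting the six linear forms $\ell_i$ does not change $\prod_i \ell_i$, so that $\bar\omega$ evaluated on $A\sigma = B_3(\sigma)(E_3\,X_\sigma)$ reproduces the normalized $\omega(z^\sigma(A))$. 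Dividing as before removes the $\sigma$-dependent prefactor and leaves $Y_I(A)\bar\omega(A)^2 = P_{\sigma^{-1}(I)}(z^\sigma)\,\omega(z^\sigma)^2$. Alternatively — and this is the route Proposition \ref{prop:Y-by-Ytilde-E24} actually takes — once the first equality is in hand, the second follows from it together with the analogue of Lemma \ref{lem:omega-Sigma-omega}, i.e. $\omega(z^\sigma(A))^2 = G(\sigma,e)\,\omega(z(A))^2$, and the defining property $G(\sigma,e) = P_I(z(A))/P_{\sigma^{-1}(I)}(z^\sigma(A))$ from Definition \ref{def:twist-G-M6}.

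\textbf{Where the subtlety lies.} The formal manipulation is routine; the one point that needs care is that all of $\det B_3(e)$, $\det B_3(\sigma)$, and the products $a_0^e b_0^e c_0^e$, $a_0^\sigma b_0^\sigma c_0^\sigma$ are nonzero, so that the divisions and the choice of square roots in (\ref{eq:period-E36-normalized}) are legitimate. Nonvanishing of the products is built into the definition of $\phi_\sigma(\mathcal{M}_{3,3}^{D_0})$ — a point of this open set is represented by $A$ with $A\sigma$ of the form (\ref{eq:Asigma-E3}), where $\prod_i a_i^\sigma b_i^\sigma c_i^\sigma \neq 0$ — and invertibility of $B_3(\sigma) \in GL(3,\mathbb{C})$ is part of that same normal form; the hypothesis $[A]\in\phi_e(\mathcal{M}_{3,3}^{D_0})\cap\phi_\sigma(\mathcal{M}_{3,3}^{D_0})$ guarantees both normal forms simultaneously. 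One should also note that the equalities are understood as identities of multivalued analytic functions: $\bar\omega(A)$ and hence $\omega(z)$ carry the monodromy of the period integral, so $Y_I(A)\bar\omega(A)^2$ is well-defined as a section only up to that monodromy, exactly as in the remark following Proposition \ref{prop:Y-by-Ytilde-E24}. Since the derivation is entirely parallel to that of Proposition \ref{prop:Y-by-Ytilde-E24}, I would simply state that the proof follows from the $E(3,6)$-analogues of Proposition \ref{prop:Y-P-rel-E24}, Lemma \ref{lem:omega-Sigma-omega}, and Definition \ref{def:twist-G-M6}, and omit the repeated computation. I do not anticipate a genuine obstacle here — the real work of the paper is in evaluating these $P_I$ and $\omega$ near the LCSLs and solving the resulting master equation, not in establishing the master equation itself.
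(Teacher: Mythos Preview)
Your proposal is correct and matches the paper's approach exactly: the paper's proof consists of the single sentence ``Derivations are parallel to Proposition \ref{prop:Y-by-Ytilde-E24},'' and you have faithfully spelled out what that parallel derivation is, invoking precisely the $E(3,6)$ analogues of Proposition \ref{prop:Y-P-rel-E24}, the normalization (\ref{eq:period-E36-normalized}), and Definition \ref{def:twist-G-M6}. Your additional remarks on nonvanishing of the prefactors and the multivalued nature of the equalities are accurate and simply make explicit what the paper leaves implicit.
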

\begin{proof}
Derivations are parallel to Proposition \ref{prop:Y-by-Ytilde-E24}.
\end{proof}
Now we extend the projective relation $\Phi_{Y}([A])=\Phi\circ\mathcal{P}([A])$
in (\ref{eq:diagram-M6-H2-P9}) to 
\begin{defn}[\textbf{Master Equation}]
 For a general matrix $A$ such that $[A]\in\phi_{e}(\mathcal{M}_{3,3}^{D_{0}})$,
we define\textcolor{red}{{} }
\begin{equation}
P_{I}(z(A))\,\omega(z(A))^{2}=-(-1)^{3}\Theta^{\sigma}\left(\begin{matrix}i\,\,j\,\,k\\
l\,\,m\,\,n
\end{matrix}\right)^{2}(\mathcal{P}([A])),\label{eq:Master-Eq}
\end{equation}
where $\Theta^{\sigma}\left(\begin{smallmatrix}i\,j\,k\\
l\,m\,n
\end{smallmatrix}\right):=\Theta\left(\begin{smallmatrix}\sigma(i)\,\sigma(j)\,\sigma(k)\\
\sigma(l)\,\sigma(m)\,\sigma(n)
\end{smallmatrix}\right)$ represents a possible permutation of the labels from the $\Theta$
defined in \cite{Matsu93} (see also Appendix \ref{sec:App-Theta-fn}). 
\end{defn}
\vskip0.3cm

The master equation (\ref{eq:Master-Eq}) generalizes the equation
(\ref{eq:Affine-Key-relE24}) which characterizes the elliptic lambda
function $\lambda(\tau)$ together with the classical relation $\omega_{0}(\lambda(\tau))^{2}=\theta_{3}(\tau)^{4}$
for the hypergeometric series (\ref{eq:w0-elliptic}). In the following
sections, we will find that the mirror map and the unique (up to constant)
hypergeometric series $\omega(z)=\omega_{0}(z)$ near the LCSL satisfy
the above master equation (Subsection \ref{sub:masterEq-1} and Subsection
\ref{sub:masterEq-2}). 
\begin{rem}
\label{rem:Y-Y'-Aphase}When we use the local coordinates $z_{k}(o_{2})$
for the other LCSL in the resolution $\widetilde{\mathcal{X}}\subset\widetilde{\mathcal{M}}_{6}$,
we will have the master equation in the same form as above. However,
the polynomials of $P_{I}'=\frac{1}{a_{0}b_{0}c_{0}}Y_{I}(A_{0})$
by the coordinate $z_{k}(o_{2})$ differ from those given above. Namely,
we have 
\begin{equation}
\frac{1}{a_{0}b_{0}c_{0}}Y_{I}(A_{0})=P_{I}(z(o_{1}))=P_{I}'(z(o_{2}))\label{eq:Pz=00003DP'zo2}
\end{equation}
with different polynomials $P_{I}(t)$ and $P_{I}'(t)$ for the coordinates
$z_{k}=z_{k}(o_{1})$ and $z_{k}(o_{2})$, respectively. It is straightforward
to see that the following simple relation holds: 
\begin{equation}
P_{I}'(z(o_{2}))=P_{\alpha(I)}(z(o_{2}))\text{ with }\alpha=(16)(24)(35).\label{eq:P'=00003DPalpha}
\end{equation}

~

~
\end{rem}

\section{\textbf{\textup{Generalized Frobenius method for local solutions
\label{sec:Gen-Frobenius-method}}}}

As studied in \cite{HKTY,HLY}, the GKZ hypergeometric systems in
mirror symmetry are resonant and the mirror correspondence is encoded
in the special form of local solutions expressed using Frobenius method,
which generalizes the classical method for ordinary hypergeometric
differential equations to GKZ hypergeometric systems of multi-variables.
Since several new features can be observed in this generalization,
e.g. Remark \ref{rem:Remark-GKZ-Frob} below, we will call it the
\textit{generalized Frobenius method} when we emphasize them.

\subsection{GKZ hypergeometric system from $E(3,6)$\label{sub:GKZ-Def}}

At least locally, following \cite{HKTY,HLY}, we can describe the
period map $\mathcal{P}\to\mathbb{H}_{2}$ by using local solutions
of Picard-Fuchs equation near the LCSL $z_{1}(o_{1})=\cdots=z_{4}(o_{1})=0.$
As in the preceding section, restricting our attentions to the neighborhood
of $o_{1}$, we simply write by $z_{k}$ for $z_{k}(o_{1})$. 

The Picard-Fuchs differential operators have been determined from
the GKZ system associated to the $E(3,6)$ system \cite{HLTYpartI}.
The GKZ system arises form the $E(3,6)$ system by taking the following
special form of a matrix $A\in M_{3,6}$: 
\begin{equation}
A_{0}:=\left(\begin{matrix}1 & 0 & 0 & a_{2} & b_{1} & c_{0}\\
0 & 1 & 0 & a_{0} & b_{2} & c_{1}\\
0 & 0 & 1 & a_{1} & b_{0} & c_{2}
\end{matrix}\right)=:(E_{3}\,\bm{a}\,\bm{b}\,\bm{c}).\label{eq:E3-gaugeA}
\end{equation}
This form reduces the left $GL(3,\mathbb{C})$ action on $A$ to the
residual subgroup action of the diagonal tori $(\mathbb{C}^{*})^{3}\subset GL(3,\mathbb{C})$.
Taking into account the $(\mathbb{C}^{*})^{6}$ action from the right,
we define 
\[
T:=\left\{ (g,t)\in GL(3,\mathbb{C})\times(\mathbb{C}^{*})^{6}\mid g\left(E_{3}\begin{smallmatrix}* & * & *\\
* & * & *\\
* & * & *
\end{smallmatrix}\right)t=\left(E_{3}\begin{smallmatrix}* & * & *\\
* & * & *\\
* & * & *
\end{smallmatrix}\right)\right\} /\sim,
\]
where $(g,t)\sim(g\lambda,\lambda^{-1}t)$ with $(\lambda\in\mathbb{C}^{*}).$
For the matrix $A_{0}$, we have the following form of the period
integral: 

\begin{equation}
\begin{aligned}\bar{\omega}(A_{0}) & =\int\frac{dx\wedge dy}{\sqrt{xy(a_{2}+a_{0}x+a_{1}y)(b_{1}+b_{2}x+b_{0}y)(c_{0}+c_{1}x+c_{2}y)}}\\
 & =\int\frac{1}{\sqrt{\big(a_{0}+a_{1}\frac{y}{x}+\frac{a_{2}}{x}\big)\big(b_{0}+\frac{b_{1}}{y}+b_{2}\frac{x}{y}\big)\big(c_{0}+c_{1}x+c_{2}y\big)}}\frac{dx\wedge dy}{xy}.
\end{aligned}
\label{eq:1-over-f1f2f3}
\end{equation}
Recognizing a striking similarities of (\ref{eq:1-over-f1f2f3}) with
the equations we encountered in \cite{HKTY}, we observed that the
period integral satisfies GKZ $\mathcal{A}$-hypergeometric system
with a suitable choice of the finite set $\mathcal{A}$ (see \cite[Prop.3.1]{HLTYpartI}). 

For a general matrix $A\in\phi_{e}(\mathcal{M}_{3,3}^{D_{0}})$, it
holds that $\det\,B_{3}(e)\not=0$ and $\bar{\omega}(A_{0})=\det\,B_{3}(e)\,\bar{\omega}(A)$.
The normalized period integral (\ref{eq:period-E36-normalized}) is
given by 
\[
\omega(z)=\sqrt{a_{0}b_{0}c_{0}}\,\bar{\omega}(A_{0})=\det\,B_{3}(e)\sqrt{a_{0}b_{0}c_{0}}\,\bar{\omega}(A),
\]
where we should identify $a_{0},b_{0},c_{0}$, respectively, with
$a_{0}^{e},b_{0}^{e},c_{0}^{e}$ (cf. (\ref{eq:Asigma-E3})), and
we have chosen the affine coordinate $z^{e}(A)=z$ centered at the
LCSL $o_{1}$. The following proposition is described in \cite[Prop. 3.6, Appendix C]{HLTYpartI}:
\begin{prop}
\label{prop:PF-o1}The normalized period integral satisfies the Picard-Fuchs
system which consists of differential equations $\mathcal{D}_{i}\omega(z)=0(i=1,...,9)$
with 
\begin{equation}
\begin{matrix}\mathcal{D}_{1}=(\theta_{1}+\theta_{2}-\theta_{4})(\theta_{1}+\theta_{3}-\theta_{4})+z_{1}(\theta_{1}+\frac{1}{2})(\theta_{1}-\theta_{4}),\\
\mathcal{D}_{2}=(\theta_{1}+\theta_{2}-\theta_{4})(\theta_{2}+\theta_{3}-\theta_{4})+z_{2}(\theta_{2}+\frac{1}{2})(\theta_{2}-\theta_{4}),\\
\mathcal{D}_{3}=(\theta_{1}+\theta_{3}-\theta_{4})(\theta_{2}+\theta_{3}-\theta_{4})+z_{3}(\theta_{3}+\frac{1}{2})(\theta_{3}-\theta_{4}),\\
\mathcal{D}_{4}=(\theta_{2}-\theta_{4})(\theta_{3}-\theta_{4})-z_{1}z_{4}(\theta_{1}+\frac{1}{2})(\theta_{2}+\theta_{3}-\theta_{4}),\;\;\;\;\\
\mathcal{D}_{5}=(\theta_{1}-\theta_{4})(\theta_{3}-\theta_{4})-z_{2}z_{4}(\theta_{2}+\frac{1}{2})(\theta_{1}+\theta_{3}-\theta_{4}),\;\;\;\;\\
\mathcal{D}_{6}=(\theta_{1}-\theta_{4})(\theta_{2}-\theta_{4})-z_{3}z_{4}(\theta_{3}+\frac{1}{2})(\theta_{1}+\theta_{2}-\theta_{4}),\;\;\;\;\\
\mathcal{D}_{7}=(\theta_{1}+\theta_{2}-\theta_{4})(\theta_{3}-\theta_{4})+z_{1}z_{2}z_{4}(\theta_{1}+\frac{1}{2})(\theta_{2}+\frac{1}{2}),\;\;\\
\mathcal{D}_{8}=(\theta_{1}+\theta_{3}-\theta_{4})(\theta_{2}-\theta_{4})+z_{1}z_{3}z_{4}(\theta_{1}+\frac{1}{2})(\theta_{3}+\frac{1}{2}),\;\;\\
\mathcal{D}_{9}=(\theta_{2}+\theta_{3}-\theta_{4})(\theta_{1}-\theta_{4})+z_{2}z_{3}z_{4}(\theta_{2}+\frac{1}{2})(\theta_{3}+\frac{1}{2}),\;\;
\end{matrix}\label{eq:PF-system-o1}
\end{equation}
where $\theta_{i}:=z_{i}\frac{\partial}{\partial z_{i}}$. Around
the origin $z_{1}=\cdots=z_{4}=0,$ this system admits only one (up
to constant) regular solution given by 
\[
\omega_{0}(z)=\sum_{n_{1},n_{2},n_{3},n_{4}\geq0}c(n_{1},n_{2},n_{3},n_{4})z_{1}^{n_{1}}z_{2}^{n_{2}}z_{3}^{n_{3}}z_{4}^{n_{4}}
\]
with the coefficients $c(n_{1},n_{2},n_{3},n_{4})=:c(n)$ given by
\begin{equation}
c(n)=\frac{1}{\Gamma(\frac{1}{2})^{3}}\frac{\Gamma(n_{1}+\frac{1}{2})\Gamma(n_{2}+\frac{1}{2})\Gamma(n_{3}+\frac{1}{2})}{\Pi_{_{i=1}}^{3}\Gamma(n_{4}-n_{i}+1)\cdot\Pi_{1\leq j<k\leq3}\Gamma(n_{j}+n_{k}-n_{4}+1)}.\label{eq:def-cn}
\end{equation}
\end{prop}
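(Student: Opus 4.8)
The plan is to derive the Picard--Fuchs system (\ref{eq:PF-system-o1}) directly from the GKZ description of the period integral (\ref{eq:1-over-f1f2f3}) and then read off its indicial/Frobenius data near the LCSL $o_1$. First I would set up the GKZ data: write the three linear forms appearing under the square root in (\ref{eq:1-over-f1f2f3}) as Laurent polynomials in $x,y$ with coefficient vectors $\mathbf a=(a_0,a_1,a_2)$, $\mathbf b=(b_0,b_1,b_2)$, $\mathbf c=(c_0,c_1,c_2)$, so that $\bar\omega(A_0)$ is the standard $B$-type period $\int \prod f_j^{-1/2}\,\tfrac{dx\wedge dy}{xy}$. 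This is exactly the situation of \cite{HKTY}, and as recorded in \cite[Prop.3.1]{HLTYpartI} it satisfies a GKZ $\mathcal A$-hypergeometric system with exponents $\tfrac12$ along the three branch polynomials; the associated lattice of relations $L$ among the monomial exponents is rank four, and a choice of generators of $L\cap(\text{the relevant cone})$ gives both the invariant coordinates $z_1,\dots,z_4$ of Proposition \ref{prop:zk-O1} and the toric differential operators $\square_\ell$. The operators $\mathcal D_1,\dots,\mathcal D_9$ are then obtained by rewriting each $\square_\ell$ in terms of the logarithmic derivatives $\theta_i = z_i\partial_{z_i}$, clearing denominators, and using the resonance relation that the total degree operator acts as the constant coming from the $(\tfrac12,\tfrac12,\tfrac12)$ exponent data; the nine relations correspond to the nine minimal lattice vectors (the same nine that label the operators), and the factor structure $(\theta+\tfrac12)$ versus $(\theta_i-\theta_4)$ reflects precisely which of the three resonant exponents each term sees. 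I would present this as "the operators (\ref{eq:PF-system-o1}) annihilate $\omega(z)$", citing \cite[Prop.\,3.6, Appendix C]{HLTYpartI} for the detailed reduction since the bookkeeping is routine but lengthy.

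Next I would establish the solution count and exhibit $\omega_0$. The key point is that near $z_1=\cdots=z_4=0$ the symbol ideal of the system is the toric (Stanley--Reisner-type) ideal whose associated variety has dimension equal to the number of lattice generators, hence the holonomic rank at a generic nearby point equals the volume of the Newton configuration; but \emph{at} the LCSL the local exponents are constrained by the indicial equations $\mathcal D_i|_{\text{leading}}=0$, which force all four exponents $\rho_1,\dots,\rho_4$ (powers of $z_i$) to vanish simultaneously — this is the defining property of a large complex structure limit point. So there is a unique (up to scale) holomorphic solution. Its Taylor coefficients $c(n)$ are then pinned down by the recursions coming from $\mathcal D_1,\dots,\mathcal D_9$: for instance $\mathcal D_1$ gives $(n_1+n_2-n_4)(n_1+n_3-n_4)\,c(n) = -(n_1-\tfrac12)(n_1-1-n_4)\,c(n-e_1)$ after shifting, and similarly for the others. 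One checks that
\[
c(n)=\frac{1}{\Gamma(\tfrac12)^3}\,\frac{\Gamma(n_1+\tfrac12)\Gamma(n_2+\tfrac12)\Gamma(n_3+\tfrac12)}{\prod_{i=1}^3\Gamma(n_4-n_i+1)\,\prod_{1\le j<k\le3}\Gamma(n_j+n_k-n_4+1)}
\]
satisfies every one of these shift relations — each operator corresponds to lowering one or several of the $n_i$ by one and matching the ratio of consecutive $c$'s against the ratio of the relevant $\Gamma$-factors, which is an elementary identity $\Gamma(s+1)=s\Gamma(s)$ applied to the numerator and denominator. Normalizing $c(0,0,0,0)=1$ (the constant $1/\Gamma(\tfrac12)^3$ is chosen so this holds) gives the stated $\omega_0$, and convergence near the origin is immediate from Stirling.

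The main obstacle is not conceptual but organizational: showing that the \emph{full} GKZ/$E(3,6)$ system, not merely the nine listed operators, has exactly the stated one-dimensional space of holomorphic solutions at the LCSL, and that the nine operators already generate enough of the ideal to pin the coefficients uniquely. For this I would lean on \cite[Appendix C]{HLTYpartI}, where the resolution $\widetilde{\mathcal X}\to\mathcal X$ was constructed precisely so that the fan/cone data at $o_1$ produce a maximally unipotent monodromy point; the general theory of \cite{HLY} then guarantees the unique power-series solution and identifies its coefficients with a ratio of Gamma functions dictated by the $L$-lattice and the exponent vector $(\tfrac12,\tfrac12,\tfrac12)$. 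So the honest content of the proof is: (i) match the combinatorics of the cone $(\sigma_1^{(1)})^\vee\cap L$ with the nine monomial relations, (ii) verify the LCSL property (all indicial roots zero), and (iii) check the Gamma-quotient solves the recursions — with (i) being the part that genuinely requires the bookkeeping from \cite{HLTYpartI} and the rest being routine.
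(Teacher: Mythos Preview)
Your proposal is correct and aligned with the paper's own treatment: the paper does not give an independent proof here but simply records the proposition as ``described in \cite[Prop.\,3.6, Appendix C]{HLTYpartI},'' which is exactly the reference you lean on for the GKZ reduction and the cone combinatorics. Your sketch of the derivation (GKZ $\square_\ell$ operators from the lattice relations, the LCSL indicial argument for uniqueness, and the Gamma-ratio verification of the recursions) is in fact more detailed than what the present paper offers, and matches the content of the cited appendix.
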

\begin{rem}
The Picard-Fuchs system around the other point $o_{2}$ of the resolution
$\widetilde{\mathcal{X}}_{e}\to\mathcal{X}_{e}$ simply follows from
(\ref{eq:PF-system-o1}) by using the monomial relation (\ref{eq:z-o2-by-o1}).
The Picard-Fuchs systems around the points $o_{1}^{+}$ of the $\widetilde{\mathcal{X}}_{e}^{+}\to\mathcal{X}_{e}$
are described in Appendix \ref{sec:App.PF-equation-Bo1}. The systems
for the other boundary points $o_{2}^{+},o_{3}^{+}$ follow from the
above system by using the monomial relations described there. 
\end{rem}

\subsection{Period integrals by the generalized Frobenius method\label{sub:PF-and-Frobenius}}

The Picard-Fuchs system is a complete set of differential equations
which determines all local solutions around the origin $o_{1}$. We
construct all local solutions by Frobenius method for the hypergeometric
system following \cite{HKTY,HLY} (see Appendix \ref{sec:App-GKZ-mirror-sym}
for a brief summary). The basic object is the indicial ideal which
we can read off from (\ref{eq:PF-system-o1}).
\begin{prop}
\label{prop:Ind(D)}Define the indicial ideal of the Picard-Fuchs
system (\ref{eq:PF-system-o1}) by
\[
Ind(\mathcal{D})=\left\langle \begin{smallmatrix}(\theta_{1}+\theta_{2}-\theta_{4})(\theta_{1}+\theta_{3}-\theta_{4}),\;(\theta_{1}+\theta_{2}-\theta_{4})(\theta_{2}+\theta_{3}-\theta_{4}),\;(\theta_{1}+\theta_{3}-\theta_{4})(\theta_{2}+\theta_{3}-\theta_{4}),\\
(\theta_{2}-\theta_{4})(\theta_{3}-\theta_{4}),\;(\theta_{1}-\theta_{4})(\theta_{3}-\theta_{4}),\;(\theta_{1}-\theta_{4})(\theta_{2}-\theta_{4}),\\
(\theta_{1}+\theta_{2}-\theta_{4})(\theta_{3}-\theta_{4}),\;(\theta_{1}+\theta_{3}-\theta_{4})(\theta_{2}-\theta_{4}),\;(\theta_{2}+\theta_{3}-\theta_{4})(\theta_{1}-\theta_{4})
\end{smallmatrix}\right\rangle .
\]
This is a zero dimensional ideal in $\mathbb{Q}[\theta_{1},\theta_{2},\theta_{3},\theta_{4}]$
where $\theta_{k}:=z_{k}\frac{\partial\;}{\partial z_{k}}$. \end{prop}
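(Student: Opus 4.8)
The plan is to verify both assertions directly from the explicit generators of $Ind(\mathcal{D})$. The first claim — that $Ind(\mathcal{D})$ is an ideal of $\mathbb{Q}[\theta_1,\theta_2,\theta_3,\theta_4]$ — requires nothing: it is by construction the ideal generated by the nine listed quadratic polynomials, obtained from the Picard--Fuchs operators $\mathcal{D}_1,\dots,\mathcal{D}_9$ in Proposition \ref{prop:PF-o1} by discarding the terms carrying explicit powers of $z_k$ (equivalently, by applying each $\mathcal{D}_i$ to a monomial $z^{\rho}$ and reading off the leading coefficient). So the substance of the statement is the second assertion: that this ideal is zero-dimensional, i.e.\ that the quotient ring $\mathbb{Q}[\theta_1,\theta_2,\theta_3,\theta_4]/Ind(\mathcal{D})$ is a finite-dimensional $\mathbb{Q}$-vector space, equivalently that the affine variety $V(Ind(\mathcal{D}))\subset\mathbb{A}^4$ (coordinates $\theta_1,\dots,\theta_4$) is a finite set of points.

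First I would introduce the three linear forms
\[
L_{12}=\theta_1+\theta_2-\theta_4,\quad L_{13}=\theta_1+\theta_3-\theta_4,\quad L_{23}=\theta_2+\theta_3-\theta_4,
\]
and $M_1=\theta_1-\theta_4$, $M_2=\theta_2-\theta_4$, $M_3=\theta_3-\theta_4$, in terms of which the nine generators are $L_{12}L_{13}$, $L_{12}L_{23}$, $L_{13}L_{23}$, $M_2M_3$, $M_1M_3$, $M_1M_2$, $L_{12}M_3$, $L_{13}M_2$, $L_{23}M_1$. Then I would locate $V(Ind(\mathcal{D}))$ by a short case analysis on which of the $L$'s and $M$'s vanish at a given point. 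From the first three generators, at most one of $L_{12},L_{13},L_{23}$ can be nonzero at any point; from the next three, at most one of $M_1,M_2,M_3$ can be nonzero; and the last three generators kill the remaining cross terms. Running through the (few) resulting combinations, one finds that in every case the vanishing of the appropriate linear forms forces a system of linear equations in $\theta_1,\dots,\theta_4$ whose solution space is a point — concretely, one checks that each admissible combination pins down $\theta_1=\theta_2=\theta_3=\theta_4$, i.e.\ the variety is the single point $\{0\}$ (with some multiplicity), or at worst a finite union of such linear slices. Hence $V(Ind(\mathcal{D}))$ is finite, and by the Nullstellensatz the ideal is zero-dimensional.

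Alternatively, and perhaps more cleanly for the write-up, I would just exhibit, for each of the four variables $\theta_k$, a univariate polynomial in $\theta_k$ lying in $Ind(\mathcal{D})$: expanding suitable products and $\mathbb{Q}$-linear combinations of the nine generators (for instance combining $M_1M_2$, $M_1M_3$, $M_2M_3$ with $L_{ij}M_k$ to eliminate the off-diagonal monomials) yields a relation of the form $p_k(\theta_k)\in Ind(\mathcal{D})$ with $\deg p_k\ge 1$; zero-dimensionality is then immediate from the standard criterion that an ideal in a polynomial ring is zero-dimensional iff it contains a nonzero univariate polynomial in each variable. The main obstacle is purely bookkeeping: organizing the elimination so the cancellations are transparent and the claimed $p_k$ are genuinely in the ideal. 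There is no conceptual difficulty — it is a finite linear-algebra verification — but care is needed because the generators are not symmetric in $\theta_1,\theta_2,\theta_3$ versus $\theta_4$, so the elimination step for $\theta_4$ looks slightly different from the others. A one-line alternative acceptable for the paper is simply to note that a Gr\"obner basis computation confirms $\dim_{\mathbb{Q}}\mathbb{Q}[\theta]/Ind(\mathcal{D})<\infty$, but I would prefer to include the explicit linear-forms argument above so the dimension count (which governs the number and structure of local solutions in the generalized Frobenius method) is visible.
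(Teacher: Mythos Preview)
Your proposal is correct, and in fact more detailed than what the paper does. The paper's own proof is literally one sentence: ``We verify the claimed property by calculating the Gr\"obner basis of $Ind(\mathcal{D})$.'' That is, the authors take exactly the route you list as your ``one-line alternative acceptable for the paper'' and leave it at that; the next proposition then quotes the dimension count $\dim_{\mathbb{Q}}\mathbb{Q}[\theta]/Ind(\mathcal{D})=6$ with standard monomials $1;\theta_1,\dots,\theta_4;\theta_4^2$, again by Gr\"obner basis.

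Your linear-forms case analysis is a genuinely different and more transparent argument: it shows by hand that $V(Ind(\mathcal{D}))$ is supported at the origin, so no computer algebra is needed and the reader sees \emph{why} the ideal is zero-dimensional rather than merely \emph{that} it is. One small slip to fix in the write-up: the condition ``$\theta_1=\theta_2=\theta_3=\theta_4$'' that you name is a line, not a point; what the case analysis actually forces in every branch is $\theta_1=\theta_2=\theta_3=\theta_4=0$ (for instance, once all $M_i$ vanish you get $\theta_1=\theta_2=\theta_3=\theta_4$, and then $L_{12}L_{13}=\theta_4^2$ kills the remaining parameter). With that correction your argument goes through cleanly and, unlike the paper's, makes the local structure visible --- which is helpful given that the next proposition uses exactly this quotient ring to read off the intersection pairing $M_{ij}$.
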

\begin{proof}
We verify the claimed property by calculating the Gr\"obner basis
of $Ind(\mathcal{D})$. 
\end{proof}
The fact that $Ind(\mathcal{D})$ is a zero dimensional ideal is one
of the properties for the origin $o_{1}$ to be a LCSL. The Picard-Fuchs
system (\ref{eq:PF-system-o1}) has further properties which are common
for the GKZ systems arising from mirror symmetry. 
\begin{prop}
\label{prop:Ind-ring-Mij}The quotient ring $\mathbb{Q}[\theta_{1},\cdots,\theta_{4}]/Ind(\mathcal{D})$
is of dimension 6, with its standard monomials $1;\theta_{1},\cdots,\theta_{4};\theta_{4}^{2}$.
The intersection pairing $M_{ij}=\langle\theta_{i}\theta_{j}\rangle$
(see Appendix \ref{sec:Appendix-proof}) is given by 
\begin{equation}
\left(M_{ij}\right)=d\times\left(\begin{matrix}0 & 1 & 1 & 1\\
1 & 0 & 1 & 1\\
1 & 1 & 0 & 1\\
1 & 1 & 1 & 1
\end{matrix}\right),\label{eq:Mat-Mij}
\end{equation}
where $d:=\langle\theta_{4}^{2}\rangle$ will be fixed (to be 2) later. \end{prop}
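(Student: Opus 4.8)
The plan is to work entirely inside the polynomial ring $R=\mathbb{Q}[\theta_1,\theta_2,\theta_3,\theta_4]$ and compute a Gröbner basis of $Ind(\mathcal{D})$ for a convenient monomial order (say, degree-reverse-lexicographic with $\theta_1>\theta_2>\theta_3>\theta_4$). The nine generators of $Ind(\mathcal{D})$ are all quadratic, so the Gröbner basis computation is a finite mechanical task; the output will show that the standard (leading-term-free) monomials are exactly $1;\theta_1,\theta_2,\theta_3,\theta_4;\theta_4^2$, confirming $\dim_{\mathbb{Q}} R/Ind(\mathcal{D})=6$. I would exhibit the needed reductions explicitly: from the generators $(\theta_i-\theta_4)(\theta_j-\theta_4)$ one gets $\theta_i\theta_j\equiv (\theta_i+\theta_j)\theta_4-\theta_4^2$ for $i\neq j$ in $\{1,2,3\}$, and combining a relation like $\mathcal{D}_1$'s symbol $(\theta_1+\theta_2-\theta_4)(\theta_1+\theta_3-\theta_4)$ with these lets one solve for $\theta_i^2$ in terms of $\theta_4^2$ and the $\theta_k$. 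This reduces every quadratic monomial to the span of the six claimed standard monomials, and one checks no further collapse occurs (the six monomials are linearly independent in the quotient), giving dimension exactly $6$.

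For the intersection pairing $M_{ij}=\langle\theta_i\theta_j\rangle$, recall from Appendix~\ref{sec:Appendix-proof} that $\langle\cdot\rangle$ is the linear functional on $R/Ind(\mathcal{D})$ picking out the coefficient of the top standard monomial $\theta_4^2$ (normalized so that $\langle\theta_4^2\rangle=d$). So the computation of $M_{ij}$ is just: reduce $\theta_i\theta_j$ modulo the Gröbner basis and read off the coefficient of $\theta_4^2$. For $i\ne j$ in $\{1,2,3\}$, the relation $(\theta_i-\theta_4)(\theta_j-\theta_4)\equiv 0$ gives $\theta_i\theta_j\equiv (\theta_i+\theta_j)\theta_4-\theta_4^2$, whose $\theta_4^2$-coefficient is $-1$; but one must be careful because $(\theta_i+\theta_j)\theta_4$ is itself not standard and needs further reduction. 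Using $(\theta_i+\theta_j-\theta_4)(\theta_k-\theta_4)\equiv 0$ (one of the last three generators, with $\{i,j,k\}=\{1,2,3\}$) together with the already-reduced forms of $\theta_i\theta_k$ and $\theta_j\theta_k$, one solves for $\theta_i\theta_4$ and $\theta_j\theta_4$ and substitutes back; the net effect is that $\langle\theta_i\theta_j\rangle=d$. For $\theta_i\theta_4$ with $i\in\{1,2,3\}$ one uses $(\theta_i+\theta_j-\theta_4)(\theta_k-\theta_4)\equiv 0$ and the analogous relation with $j,k$ interchanged to pin down $\theta_i\theta_4\bmod Ind(\mathcal{D})$, yielding $\langle\theta_i\theta_4\rangle=d$ as well; finally $\langle\theta_4^2\rangle=d$ by definition. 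Assembling these gives precisely the matrix in (\ref{eq:Mat-Mij}).

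I expect the main obstacle to be bookkeeping rather than conceptual: one has to carry out the reductions in a consistent order so that intermediate expressions like $\theta_i\theta_4$ are themselves expressed purely in standard monomials before their $\theta_4^2$-coefficients are extracted, and one must verify that the different generators used give consistent answers (this consistency is automatic once the Gröbner basis is in hand, but a reader wants to see it). A clean way to organize this, which I would adopt, is: first establish the reduced normal forms $\theta_i\theta_4\equiv \alpha_i\theta_4^2 + (\text{linear})$ for $i=1,2,3$ by solving the linear system coming from generators $\mathcal{D}_7,\mathcal{D}_8,\mathcal{D}_9$'s symbols; then substitute to get $\theta_i\theta_j\equiv (\text{const})\theta_4^2+(\text{linear})$ for $i\ne j$; then verify the symbols of $\mathcal{D}_1,\mathcal{D}_2,\mathcal{D}_3$ are satisfied, which simultaneously confirms consistency and forces $\langle\theta_i^2\rangle$ (not needed for $M_{ij}$, but a useful check). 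The upshot is a short but careful linear-algebra argument; since the paper explicitly says $d$ is fixed to $2$ only later, I would leave $d$ symbolic throughout and simply record $M_{ij}=d\cdot(\text{the }0/1\text{ matrix})$.
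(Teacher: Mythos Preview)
Your approach is correct and essentially identical to the paper's: compute a Gr\"obner basis of $Ind(\mathcal{D})$ to identify the six standard monomials, then reduce each $\theta_i\theta_j$ to normal form and read off the $\theta_4^2$-coefficient using the linear functional $\langle\,\cdot\,\rangle$ defined in Appendix~\ref{sec:Appendix-proof}. The paper's proof is a two-sentence sketch (``make the Gr\"obner basis; the pairing follows from the definition of $\langle-\rangle$''), so your write-up is simply a more explicit version of the same computation.

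One small slip: you call the values $\langle\theta_i^2\rangle$ for $i=1,2,3$ ``not needed for $M_{ij}$,'' but these are precisely the diagonal entries $M_{11},M_{22},M_{33}$ of the matrix, which must equal $0$. You do indicate how to get them (from the symbols of $\mathcal{D}_1,\mathcal{D}_2,\mathcal{D}_3$), so this is only a wording issue, not a gap. Concretely, once you have $\theta_i\theta_4\equiv\theta_4^2$ and $\theta_i\theta_j\equiv\theta_4^2$ for distinct $i,j\in\{1,2,3\}$, expanding $(\theta_1+\theta_2-\theta_4)(\theta_1+\theta_3-\theta_4)\equiv 0$ yields $\theta_1^2\equiv 0$, and symmetrically for $\theta_2^2,\theta_3^2$; you should state this explicitly as part of the $M_{ij}$ computation rather than as an afterthought.
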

\begin{proof}
Since the indicial ideal is homogeneous, we have homogeneous basis
for the quotient. We can determine the standard monomials by making
Gr\"obner basis. The pairing $M_{ij}$ follows form the definition
of the $\mathbb{Q}$-linear map $\left\langle -\right\rangle :\mathbb{Q}[\theta]/Ind(\mathcal{D})\to\mathbb{Q}$
described in Appendix \ref{sec:Appendix-proof}.
\end{proof}
The following proposition is the content of the Frobenius method for
hypergeometric series of multi-varibles.
\begin{prop}
\label{prop:Frobenius-local-sol} (1) For the coefficient $c(n)$
in (\ref{eq:def-cn}), the following limits exist for all $n=(n_{1,}n_{2},n_{3},n_{4})\in\mathbb{Z}^{4}$:
\[
\lim_{\rho\to0}\frac{\partial\;}{\partial\rho_{i}}c(n+\rho),\;\;\lim_{\rho\to0}\sum_{i,j=1}^{4}M_{ij}\frac{\partial^{2}\;}{\partial\rho_{i}\partial\rho_{j}}c(n+\rho).
\]
In particular, these are non-vanishing only for $(n_{1,}n_{2},n_{3},n_{4})\in\mathbb{Z}_{\geq0}^{4}$.

\noindent (2) The complete set of solutions of the Picard-Fuchs system
(\ref{eq:PF-system-o1}) is given by 
\[
\omega_{0}(z),\;\omega_{i}^{(1)}(z):=\frac{\partial\;}{\partial\rho_{i}}\omega(z,\rho)\vert_{\rho=0},\;\omega^{(2)}(z):=-\frac{1}{2}\sum_{i,j}M_{ij}\frac{\partial\;}{\partial\rho_{i}}\frac{\partial\;}{\partial\rho_{j}}\omega_{0}(z,\rho)\vert_{\rho=0}
\]
 where $\omega(z,\rho):=\sum c(n+\rho)z^{n+\rho}$ and $\omega_{0}(z):=\omega(z,0)$.\end{prop}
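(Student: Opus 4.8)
The plan is to verify, in order, (1) the existence of the limits defining the logarithmic solutions, and (2) that $\omega_0$ together with the $\omega_i^{(1)}$ and $\omega^{(2)}$ exhaust the solution space. For part (1), I would start from the explicit Gamma-function expression (\ref{eq:def-cn}) for $c(n)$ and analyze $c(n+\rho)$ as a meromorphic function of $\rho=(\rho_1,\dots,\rho_4)$ near $\rho=0$, for each fixed $n\in\mathbb{Z}^4$. The key observation is that the denominator $\prod_{i=1}^3\Gamma(n_4-n_i+1)\cdot\prod_{1\le j<k\le 3}\Gamma(n_j+n_k-n_4+1)$ has poles (i.e. $1/\Gamma$ has zeros) exactly at those integer points $n$ lying outside the cone $\mathbb{Z}_{\ge 0}^4$ cut out by the relevant linear inequalities, while the numerator $\Gamma(n_1+\tfrac12)\Gamma(n_2+\tfrac12)\Gamma(n_3+\tfrac12)$ is regular and nonzero at half-integer shifts. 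So for $n\notin\mathbb{Z}_{\ge0}^4$, $c(n+\rho)$ vanishes to order $\ge 1$ in $\rho$ along the corresponding coordinate hyperplanes; the point is that it vanishes to order at least two in the directions paired nontrivially by $M_{ij}$, so that both $\partial_{\rho_i}c(n+\rho)$ and $\sum M_{ij}\partial_{\rho_i}\partial_{\rho_j}c(n+\rho)$ still vanish in the limit $\rho\to0$. For $n\in\mathbb{Z}_{\ge0}^4$ the coefficient $c(n+\rho)$ is holomorphic near $\rho=0$, so the limits trivially exist. This is the bookkeeping heart of the generalized Frobenius method and is the step I expect to be the main obstacle: one must check the pole-order count against the structure of $\mathrm{Ind}(\mathcal{D})$ and the pairing $M_{ij}$ from Proposition \ref{prop:Ind-ring-Mij}, case by case according to which facet of the cone $n$ violates. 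The standard reference arguments of \cite{HKTY,HLY} (summarized in Appendix \ref{sec:App-GKZ-mirror-sym}) reduce this to the statement that the vanishing order of $c(n+\rho)$ at $\rho=0$ dominates the order of the derivative operators being applied, which in turn follows because the annihilating linear forms of $\mathrm{Ind}(\mathcal{D})$ are precisely the products of the facet-defining forms.

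For part (2), the strategy is dimension counting together with the general principle that, for a GKZ/Picard-Fuchs system at a maximally unipotent point, the solution space is spanned by $\rho$-derivatives of $\omega(z,\rho)$ along the standard monomials of the Artinian ring $\mathbb{Q}[\theta]/\mathrm{Ind}(\mathcal{D})$. By Proposition \ref{prop:Ind-ring-Mij} that ring is $6$-dimensional with standard monomials $1;\theta_1,\dots,\theta_4;\theta_4^2$, so the solution space has dimension $6$, matching the count $1+4+1$ of $\omega_0$, the four $\omega_i^{(1)}$, and the single $\omega^{(2)}$. I would then argue that each candidate is indeed a solution: $\omega_0=\omega(z,0)$ solves the system because $c(n+\rho)$ satisfies the recursions deforming (\ref{eq:PF-system-o1}) and these hold identically in $\rho$ after clearing the indicial part; differentiating the identity $\mathcal{D}_i\,\omega(z,\rho)=(\text{indicial part})\cdot z^\rho+\cdots$ in $\rho$ and evaluating at $\rho=0$ kills the leading indicial term precisely because of the structure of $\mathrm{Ind}(\mathcal{D})$, yielding $\mathcal{D}_i\,\omega_i^{(1)}=0$; and the second-order combination $\omega^{(2)}=-\tfrac12\sum M_{ij}\partial_{\rho_i}\partial_{\rho_j}\omega(z,\rho)|_{\rho=0}$ is annihilated because $\sum M_{ij}\theta_i\theta_j$ lies in $\mathrm{Ind}(\mathcal{D})$ up to the lower-order monomials whose coefficients are accounted for by the choice of $M_{ij}$ as the intersection pairing (this is the defining property of the pairing recalled in Appendix \ref{sec:Appendix-proof}). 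Finally, linear independence follows from the distinct leading logarithmic behavior: $\omega_0$ is holomorphic, the $\omega_i^{(1)}$ each contribute a single $\log z_i$ (equivalently, leading term $\propto \log z_i$, visible since $\partial_{\rho_i}z^{n+\rho}|_{\rho=0}$ produces $\log z_i$), and $\omega^{(2)}$ contributes a quadratic-in-logs term that no linear combination of the others can match. Assembling these gives exactly six linearly independent solutions, hence a complete set.

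I expect part (1) to be where the real work lies; part (2) is then essentially formal given the ring-theoretic input already established in Propositions \ref{prop:Ind(D)} and \ref{prop:Ind-ring-Mij} and the general machinery of \cite{HKTY,HLY}. One subtlety worth flagging in the write-up: one should be careful that the \emph{specific} linear combination $\sum_{i,j}M_{ij}\partial_{\rho_i}\partial_{\rho_j}$, rather than an arbitrary quadratic form in the $\partial_{\rho}$'s, is the one that produces a genuine solution — this rigidity is exactly what fixes $M_{ij}$ (up to the overall normalization $d=\langle\theta_4^2\rangle$, which is pinned down later to $d=2$ by an independent consideration) and is the reason the statement is phrased with $M_{ij}$ rather than a general symmetric matrix.
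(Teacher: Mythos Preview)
Your approach is essentially the same as the paper's, and your identification of part (1) as the substantive step is correct. Two refinements are worth making. First, the mechanism you describe for the second-order limit is not quite right: it is \emph{not} that $c(n+\rho)$ vanishes to order $\ge 2$ at $\rho=0$ for $n\notin\mathbb{Z}_{\ge0}^4$. The paper's Remark~\ref{rem:Remark-GKZ-Frob} stresses precisely that individual mixed partials $\partial_{\rho_i}\partial_{\rho_j}c(n+\rho)|_{\rho=0}$ \emph{can} be nonzero for such $n$; the vanishing appears only after contracting with the specific pairing $M_{ij}$, and that cancellation is the genuine content rather than a consequence of a higher vanishing order. Second, for the first-order limits the paper does not invoke $M_{ij}$ at all: it writes $\partial_{\rho_i}c(n+\rho)|_{\rho=0}$ as $c(n)$ times a sum of digamma functions $\psi$, and then checks term by term that whenever a pole of a $\psi$-term cancels a zero of $c(n)$ the surviving inequalities still force $n\in\mathbb{Z}_{\ge0}^4$. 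Your part (2) argument---dimension count via the $6$-dimensional Artinian ring $\mathbb{Q}[\theta]/\mathrm{Ind}(\mathcal{D})$, verification that the $\rho$-derivatives kill the indicial terms $f_i(\rho)$, and linear independence from the leading-log filtration---matches the paper's exactly.
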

\begin{proof}
The claims follow by applying the generalized Frobenius method described
in \cite{HKTY,HLY}. To avoid going into technical details, we defer
the proofs to Appendix \ref{sec:Appendix-proof}.\end{proof}
\begin{rem}
\label{rem:Remark-GKZ-Frob}The solutions in (2) indicate that the
classical Frobenius method for hypergeometric series of one variables
naively extends to hypergeometric series of multi-varibles. This was
the \textit{non-trivial} observation first made in \cite{HKTY,HLY}
for GKZ hypergeometric systems arising from the mirror symmetry. In
fact, it is easy to see that the limit $\lim_{\rho\to0}\frac{\partial^{2}\;}{\partial\rho_{i}\partial\rho_{j}}c(n+\rho)$
has non-vanishing contributions even when some of $n_{k}$'s are negative.
However, after summing up with $M_{ij}$, these contributions cancel
out and we obtain the power series solution $\sum_{i,j}M_{ij}\frac{\partial\;}{\partial\rho_{i}}\frac{\partial\;}{\partial\rho_{j}}\omega_{0}(z,\rho)\vert_{\rho=0}$.
In Appendix \ref{sec:Appendix-proof}, we will show an example where
a naive application of the Frobenius method for hypergeometric series
of multi-variables generates local solutions but in the form of Laurent
series. 
\end{rem}
\para{Transcendental lattice from the period relation} Recall that
generic members of our family of K3 surfaces have transcendental lattice
(\ref{eq:Tx}). We can read off this transcendental lattice by finding
a quadratic relation (period relation) satisfied by period integrals.
Let us first look at the symmetric form on $\oplus_{i}\mathbb{Z}\theta_{i}$
defined by matrix $(M_{ij})$ above (assuming $d$ is an integer). 
\begin{lem}
\label{lem:int-form-M-P}The lattice $\oplus_{i}\mathbb{Z}\theta_{i}$
with the symmetric form $(M_{ij})$ is isomorphic to $U(d)\oplus\left\langle -d\right\rangle \oplus\left\langle -d\right\rangle $,
i.e., we have 
\[
(M_{ij})=^{t}P\left(\begin{matrix}0 & d & 0 & 0\\
d & 0 & 0 & 0\\
0 & 0 & -d & 0\\
0 & 0 & 0 & -d
\end{matrix}\right)P\text{ with }P=\left(\begin{matrix}1 & 0 & 1 & 1\\
0 & 1 & 1 & 1\\
0 & 0 & 1 & 1\\
0 & 0 & 1 & 0
\end{matrix}\right).
\]
\end{lem}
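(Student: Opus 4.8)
The plan is to prove the lemma by a direct matrix computation, since the transition matrix $P$ is already given explicitly in the statement. Two things must be checked: that $P$ is invertible over $\mathbb{Z}$, so that it defines a change of lattice basis, and that the congruence ${}^{t}P\,D\,P=(M_{ij})$ holds, where $D=\left(\begin{smallmatrix}0&d&0&0\\d&0&0&0\\0&0&-d&0\\0&0&0&-d\end{smallmatrix}\right)$ and $(M_{ij})$ is the matrix recorded in Proposition~\ref{prop:Ind-ring-Mij}. Once both are in place, the lemma follows from the standard fact that a congruence by a matrix in $GL(4,\mathbb{Z})$ realizes an isometry of lattices.

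For invertibility, I would expand $\det P$ along its first column; this reduces it to $\det\left(\begin{smallmatrix}1&1&1\\0&1&1\\0&1&0\end{smallmatrix}\right)=-1$, so $\det P=-1$ and hence $P\in GL(4,\mathbb{Z})$. For the congruence I would compute ${}^{t}P\,D\,P$ in two mechanical steps. Left multiplication by $D$ swaps rows $1$ and $2$ of $P$ and negates rows $3$ and $4$, producing $d\left(\begin{smallmatrix}0&1&1&1\\1&0&1&1\\0&0&-1&-1\\0&0&-1&0\end{smallmatrix}\right)$; then left multiplication by ${}^{t}P$ forms the four row-combinations dictated by the columns of $P$ (rows $1$ and $2$ are reproduced, while rows $3$ and $4$ become the sum of all four rows and the sum of the first three rows, respectively). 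The outcome is $d\left(\begin{smallmatrix}0&1&1&1\\1&0&1&1\\1&1&0&1\\1&1&1&1\end{smallmatrix}\right)$, which is exactly $(M_{ij})$ from Proposition~\ref{prop:Ind-ring-Mij}.

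It then remains to identify the block-diagonal form $D$ with the stated lattice. By definition $\left(\begin{smallmatrix}0&d\\d&0\end{smallmatrix}\right)$ is the Gram matrix of $U(d)$ — the hyperbolic plane with basis $e,f$ satisfying $e\cdot e=f\cdot f=0$ and $e\cdot f=d$ — and $(-d)$ is the Gram matrix of the rank-one lattice $\langle-d\rangle$; since $D$ is block diagonal, these summands are mutually orthogonal, so $D$ is the Gram matrix of $U(d)\oplus\langle-d\rangle\oplus\langle-d\rangle$. Combined with $P\in GL(4,\mathbb{Z})$, the congruence ${}^{t}P\,D\,P=(M_{ij})$ says precisely that $\bigoplus_{i}\mathbb{Z}\theta_{i}$ equipped with $(M_{ij})$ is isometric to $U(d)\oplus\langle-d\rangle\oplus\langle-d\rangle$, which is the assertion of the lemma (and one eventually specializes to $d=2$).

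There is essentially no genuine obstacle here: the argument is a finite verification once $P$ is in hand. The only points that deserve care are the bookkeeping in the $4\times4$ matrix product and remembering that an \emph{isometry of lattices} requires $\det P=\pm1$ rather than merely $\det P\neq0$. Had $P$ not been supplied, the non-routine step would have been to discover it: one observes that $\theta_{1}$ and $\theta_{2}$ are isotropic with $\theta_{1}\cdot\theta_{2}=d$, hence span a copy of $U(d)$, and one then diagonalizes the induced form on their orthogonal complement $\{v:\ v\cdot\theta_{1}=v\cdot\theta_{2}=0\}$; but with $P$ given this is unnecessary.
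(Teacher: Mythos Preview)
Your proof is correct and follows exactly the approach of the paper, which simply states that one verifies directly that the unimodular matrix $P$ gives the isomorphism. You have merely made explicit the determinant computation and the matrix product that the paper leaves to the reader.
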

\begin{proof}
It is easy to verify that the unimodular matrix $P$ gives the isomorphism. \end{proof}
\begin{prop}
\label{prop:period-relation-Mij}The following quadratic relation
holds:
\begin{equation}
\left(2\omega^{(2)}+d\pi^{2}\omega_{0}\right)\omega_{0}+\sum_{i,j}M_{ij}\omega_{i}^{(1)}\omega_{j}^{(1)}=0.\label{eq:quadratic-rel-sols}
\end{equation}
\end{prop}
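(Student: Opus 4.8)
The plan is to recognize (\ref{eq:quadratic-rel-sols}) as the statement that the period vector $\Pi = {}^{t}(\omega_{0},\omega_{1}^{(1)},\dots,\omega_{4}^{(1)},\omega^{(2)})$ lies on the defining quadric of the period domain $\mathcal{D}_{K3}$, together with an identification of the relevant Gram matrix. A holomorphic two-form on a K3 surface satisfies $\int_{X}\Omega\wedge\Omega=0$, so expanding $\Omega$ in an integral basis of $(T_{X}\otimes\mathbb{C})^{\vee}$ yields one constant-coefficient quadratic relation among the six solutions of Proposition \ref{prop:Frobenius-local-sol}. By Proposition \ref{prop:Ind-ring-Mij} this solution space carries the monodromy-weight filtration $\langle\omega_{0}\rangle\subset\langle\omega_{0},\omega_{1}^{(1)},\dots,\omega_{4}^{(1)}\rangle\subset(\text{everything})$, and the intersection form is strictly compatible with it; hence the relation must pair $\omega_{0}$ with $\omega^{(2)}$, and $\omega_{i}^{(1)}$ with $\omega_{j}^{(1)}$ through $M_{ij}=\langle\theta_{i}\theta_{j}\rangle$. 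Here Lemma \ref{lem:int-form-M-P} is exactly what is needed: it exhibits $(M_{ij})$ as the $U(d)\oplus\langle-d\rangle^{\oplus2}$ part of $T_{X}=U(2)^{\oplus2}\oplus A_{1}^{\oplus2}$, the remaining hyperbolic plane being spanned by the $\omega_{0}$ and $\omega^{(2)}$ directions. Normalizing the coefficient of $\omega_{0}\omega^{(2)}$ to $2$, the relation is thus forced into the form $2\omega_{0}\omega^{(2)}+\sum_{i,j}M_{ij}\omega_{i}^{(1)}\omega_{j}^{(1)}+c\,\omega_{0}^{2}=0$ for a single undetermined constant $c$.

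To compute $c$ I would pass to power series at the LCSL. With $\omega(z,\rho)=\sum_{n}c(n+\rho)z^{n+\rho}$, write $h_{i}(z):=\sum_{n}\partial_{\rho_{i}}c(n+\rho)|_{\rho=0}\,z^{n}$ and $k_{ij}(z):=\sum_{n}\partial_{\rho_{i}}\partial_{\rho_{j}}c(n+\rho)|_{\rho=0}\,z^{n}$, so that $\omega_{i}^{(1)}=\omega_{0}\log z_{i}+h_{i}$ and $\omega^{(2)}=-\tfrac{1}{2}\sum_{i,j}M_{ij}\bigl(\omega_{0}\log z_{i}\log z_{j}+h_{i}\log z_{j}+h_{j}\log z_{i}+k_{ij}\bigr)$. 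Substituting into $Q(z):=2\omega_{0}\omega^{(2)}+\sum_{i,j}M_{ij}\omega_{i}^{(1)}\omega_{j}^{(1)}+d\pi^{2}\omega_{0}^{2}$, every term containing a $\log z_{i}$ cancels — this is precisely the purpose of the $M_{ij}$-weighting in the definition of $\omega^{(2)}$ — leaving the honest holomorphic series $Q(z)=\sum_{i,j}M_{ij}h_{i}h_{j}-\omega_{0}\sum_{i,j}M_{ij}k_{ij}+d\pi^{2}\omega_{0}^{2}$. Subtracting the relation of the first paragraph gives $Q(z)=(d\pi^{2}-c)\,\omega_{0}^{2}$, and since $\omega_{0}(0)=1$ it suffices to verify $Q(0)=0$. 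Using $c(0)=1$ and $k_{ij}(0)=\partial_{\rho_{i}}\partial_{\rho_{j}}\log c|_{\rho=0}+h_{i}(0)h_{j}(0)$, the $h(0)$-quadratic terms cancel and $Q(0)=d\pi^{2}-\sum_{i,j}M_{ij}\,\partial_{\rho_{i}}\partial_{\rho_{j}}\log c|_{\rho=0}$.

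Finally, reading off $\log c$ from (\ref{eq:def-cn}): each $\Gamma$-factor with linear argument $\ell_{0}+\sum_{k}\ell_{k}\rho_{k}$ contributes $\pm\,\ell_{i}\ell_{j}\,\psi'(\ell_{0})$ to the Hessian $\partial_{\rho_{i}}\partial_{\rho_{j}}\log c|_{0}$, with $\ell_{0}=\tfrac{1}{2}$ for the numerator factors and $\ell_{0}=1$ for the denominator factors. Because $(M_{ij})$ is $d$ times the matrix with vanishing $(1,1),(2,2),(3,3)$ entries and $1$'s everywhere else, the $\psi'(\tfrac12)$-contributions drop out of the contraction $\sum_{i,j}M_{ij}\,\partial_{\rho_{i}}\partial_{\rho_{j}}\log c|_{0}$, which collapses to $6d\,\psi'(1)=d\pi^{2}$ since $\psi'(1)=\pi^{2}/6$. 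Hence $Q(0)=0$, proving (\ref{eq:quadratic-rel-sols}) (and incidentally showing it holds for any value of $d$). The main obstacle is really the first paragraph: making rigorous that the quadratic relation is forced into the stated shape requires the precise unipotent monodromy of the Frobenius basis — the normalization constants playing the role of the $\tfrac{2}{2\pi i}$ factor of Section \ref{sec:elliptic-lambda} — which lives in the HKTY/HLY framework recalled in Appendix \ref{sec:App-GKZ-mirror-sym}. Alternatively one can avoid Hodge theory altogether and verify the power-series identity $\sum_{i,j}M_{ij}h_{i}h_{j}-\omega_{0}\sum_{i,j}M_{ij}k_{ij}+d\pi^{2}\omega_{0}^{2}=0$ directly; being a log-free solution of the symmetric square of the Picard-Fuchs system, it is pinned down by finitely many Taylor coefficients at the LCSL, and the verification again reduces to the digamma/trigamma bookkeeping above.
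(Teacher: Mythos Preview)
Your argument is correct, and it is considerably more detailed than the paper's own proof, which consists of the single sentence ``We verify this by series expansions of the solutions to some higher orders.'' In other words, the authors simply check the identity as a $q$-series to sufficient depth and leave it at that. Your approach is genuinely different: you use the Hodge-theoretic period relation $\int_X\Omega\wedge\Omega=0$ together with the monodromy-weight filtration to pin down the shape of the quadratic relation up to one constant, and then compute that constant by an exact trigamma calculation at the origin. The cancellation of the $\psi'(\tfrac12)$ terms via $M_{11}=M_{22}=M_{33}=0$ and the evaluation $\sum M_{ij}\partial_{\rho_i}\partial_{\rho_j}\log c\big|_{0}=6d\,\psi'(1)=d\pi^2$ are both correct (each of the six denominator $\Gamma$-factors contributes $-d\,\psi'(1)$ to the contraction). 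What your route buys is an honest closed-form verification rather than a truncated check, and an explanation of \emph{why} the constant is $d\pi^2$; what the paper's route buys is brevity and the avoidance of the Hodge-theoretic input that you yourself flag as the delicate point. Your final remark --- that one can bypass Hodge theory by noting that the log-free quantity $Q(z)$ lies in a finite-dimensional space of solutions and is therefore determined by finitely many coefficients --- is essentially a cleaned-up version of the paper's argument, and would make the whole thing self-contained.
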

\begin{proof}
We verify this by series expansions of the solutions to some higher
orders. 
\end{proof}
When $d=2$, using Lemma \ref{lem:int-form-M-P}, we find that the
above quadratic form and the form of the transcendental lattice $T_{X}=U(2)\oplus U(2)\oplus A_{1}\oplus A_{1}$
are consistent if the following conjecture holds: 
\begin{conjecture}
Period integrals 
\begin{equation}
\Pi(z)=\,^{t}\left(\omega_{0},\;\frac{2}{(2\pi i)^{2}}(\omega^{(2)}+\pi^{2}\omega_{0}),\;\frac{2}{2\pi i}\omega_{1}^{(1)},\cdots,\frac{2}{2\pi i}\omega_{4}^{(1)}\right)\label{eq:intPi}
\end{equation}
 are integral basis, i.e., have integral monodromy which preserve
the symmetric form $\left(\begin{smallmatrix}0 & 2\\
2 & 0
\end{smallmatrix}\right)\oplus\left(M_{ij}\right)$ with $d=2$. 
\end{conjecture}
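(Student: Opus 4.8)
The plan is to pin down the integral structure by combining two ingredients: the \emph{local} monodromy near the LCSL $o_1$, which is completely dictated by the Frobenius data of Propositions \ref{prop:Ind-ring-Mij} and \ref{prop:period-relation-Mij}, and the \emph{global} monodromy group of the family, which by \cite{YoshidaEtal} is the congruence subgroup $\mathcal{G}(2)$ of $\mathcal{G}$ preserving $G=\left(\begin{smallmatrix}0 & 2\\ 2 & 0\end{smallmatrix}\right)^{\oplus 2}\oplus(-2)^{\oplus 2}$. The point of the conjecture is to show that, in the explicit basis $\Pi(z)$ of (\ref{eq:intPi}), these two structures coincide on the nose. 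Note first that Lemma \ref{lem:int-form-M-P} with $d=2$ identifies $\left(\begin{smallmatrix}0 & 2\\ 2 & 0\end{smallmatrix}\right)\oplus(M_{ij})$ with $U(2)^{\oplus 2}\oplus A_1^{\oplus 2}=T_X$, so $d=2$ is forced by the requirement that the forms match, and the whole problem becomes: are the monodromies of $\Pi(z)$ integral with respect to this form?

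First I would treat the local monodromies. Around each toric boundary divisor $z_k=0$ the monodromy acting on $(\omega_0,\omega^{(1)}_1,\dots,\omega^{(1)}_4,\omega^{(2)})$ is unipotent, with nilpotent logarithm $N_k$ obtained from the shift $\rho\mapsto\rho+e_k$ of the Frobenius parameter; the pairing $(M_{ij})$ of Proposition \ref{prop:Ind-ring-Mij} and the quadratic relation of Proposition \ref{prop:period-relation-Mij} are exactly the data needed to write $N_k$ in $6\times 6$ block form. I would then check directly from the explicit coefficients $c(n)$ in (\ref{eq:def-cn}) that the constants $\tfrac{2}{2\pi i}$ and $\tfrac{2}{(2\pi i)^2}$ are precisely those for which $T_k=\exp N_k$ becomes an integral matrix preserving $\left(\begin{smallmatrix}0 & 2\\ 2 & 0\end{smallmatrix}\right)\oplus(M_{ij})$ --- the direct analogue of the classical choice $\tfrac{2}{2\pi i}$ for $\omega_1$ in the Legendre family recorded in (\ref{eq:w0-w1-E24}). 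This part is a finite, if somewhat lengthy, calculation.

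Next I would propagate integrality to the whole monodromy group. Using the covering $\mathcal{M}_6=\bigcup_{\sigma\in S_6}\phi_\sigma(\mathcal{M}_{3,3}^{D_0})$ of (\ref{eq:M6-sigma-Union}) and Proposition \ref{prop:S6-action-on-Mresol}, each chart carries its own LCSL with a Frobenius basis $\Pi^\sigma(z^\sigma)$, and on the overlaps $\Pi$ and $\Pi^\sigma$ are related by a transition matrix $N(\sigma)\in GL(6,\mathbb{C})$. The claim to establish is that every $N(\sigma)$ lies in $GL(6,\mathbb{Z})$ and preserves the form. I would derive $N(\sigma)$ from the analytic continuation of $\Pi$ across the overlap, expressed through the twist factors $G(\sigma,e)$ of Definition \ref{def:twist-G-M6} and the permutation action $\Theta\mapsto\Theta^\sigma$ entering the master equation (\ref{eq:Master-Eq}), in direct imitation of the proof of Proposition \ref{prop:solving-MS-E24} and the rule (\ref{eq:Theta-sigma-tau}). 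Granting this, the group generated by all $T^\sigma_k$ together with all $N(\sigma)$ is the full monodromy group; it is integral and form-preserving, and on the other hand it must be conjugate to $\mathcal{G}(2)$ by \cite{YoshidaEtal}. Matching generators, or using $\mathcal{G}/\mathcal{G}(2)\simeq S_6\times\mathbb{Z}_2$, then identifies $\Pi(z)$ as an integral basis of $T_X$.

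The main obstacle is the second step: the Frobenius construction only produces the unipotent local monodromies, which generate a small unipotent subgroup of $\mathcal{G}(2)$, so the non-unipotent elements of $\mathcal{G}(2)$ must come from the transition matrices $N(\sigma)$, and seeing that these land in $GL(6,\mathbb{Z})$ rather than merely $GL(6,\mathbb{Q})$ --- i.e. that no spurious denominators are introduced --- is precisely the content of the conjecture and the hardest point. Concretely this seems to require either a genuinely global analysis, in practice a vanishing-cycle computation near the fifteen lines and points of $A_1$-singularities of $\mathcal{M}_6$ (Lemma \ref{lem:local-X-sing}), matching $\Pi(z)$ to periods $\bar\omega_C$ over an explicit integral basis of $T_X$; or else a rigidity argument, to the effect that the known image $\overline{\mathcal{G}(2)\diagdown\mathbb{H}_2}$ of the period map together with the LCSL normalization fixes the integral lattice up to finite index, followed by a check that the index equals one. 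Either route reduces the conjecture to a finite verification, but carrying it out is where the real work lies.
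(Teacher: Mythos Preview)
The statement you are attempting to prove is labelled a \emph{Conjecture} in the paper, and the paper offers no proof of it. The only commentary the authors give is that the factors of $2\pi i$ are chosen so that the \emph{local} monodromies around $z_i=0$ are integral, and that this is consistent with the general mirror-symmetry framework summarized in Appendix~\ref{sec:App-GKZ-mirror-sym}. Beyond that, the integrality of the full monodromy representation in the basis $\Pi(z)$ is left open.

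Your proposal is a reasonable strategic outline, and in fact your first step --- computing the unipotent $T_k=\exp N_k$ from the Frobenius data and checking that the normalizations in (\ref{eq:intPi}) render them integral and form-preserving --- is exactly the part the paper does claim, in the sentence following the conjecture. But you correctly diagnose that this only pins down a unipotent subgroup, and that the substance of the conjecture lies in showing that the transition matrices $N(\sigma)$ (equivalently, the monodromies around the non-toric discriminant components and the $A_1$-loci of $\mathcal{M}_6$) are integral in this same basis. Your two suggested routes --- an explicit vanishing-cycle identification of $\Pi(z)$ with periods over a $\mathbb{Z}$-basis of $T_X$, or a rigidity/finite-index argument --- are both plausible, but neither is carried out, and you yourself flag this as ``where the real work lies.'' So what you have written is a proof \emph{plan}, not a proof; it does not close the gap, and indeed the paper does not claim the gap is closed either.
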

Note that in this conjecture, we have introduced the factors $2\pi i$
to have integral local monodromy around the divisors $z_{i}=0$. This
integral structure of period integrals is in accord with the general
formulas which come from mirror symmetry, see Appendix \ref{sec:App-GKZ-mirror-sym}.
In Section \ref{sec:Mirror-symmetr}, we describe the mirror geometry
of $X$ using Proposition \ref{prop:period-relation-Mij}.

~

~

\section{\textbf{\textup{The $\lambda_{K3}$ functions from the master equation
\label{sec:Solving-Master-Eqs}}}}

We solve the master equation by determining the so-called mirror maps
around the boundary point $o_{1}$. Following the preceding section,
we will first make a local analysis around the fixed boundary point
$o_{1}$. Using the transformation property of the master equation,
we will finally arrive at the global expressions (\ref{eq:intro-lambdaK3})
and (\ref{eq:intro-lambdaK3plus}) for the $\lambda_{K3}$ functions.

\subsection{Mirror maps }

With the local solutions in Proposition \ref{prop:Frobenius-local-sol},
we can now define the mirror map locally around the point $o_{1}$.
\begin{defn}
\label{def:mirror-maps}(1) For the period integrals $\Pi(z)$ in
(\ref{eq:intPi}), we define 
\begin{equation}
t_{k}:=\frac{2}{2\pi i}\,\frac{\omega_{k}^{(1)}(z)}{\omega_{0}(z)}=\frac{2}{2\pi i}\log c_{k}z_{k}+\cdots(k=1,...,4),\label{eq:tk=00003Dlog}
\end{equation}
where $c_{1}=c_{2}=c_{3}=4$ and $c_{4}=1$. 

\noindent (2) Putting $Q_{k}:=e^{\pi it_{k}}$, we write the inverse
relations of (\ref{eq:tk=00003Dlog}) by 
\[
z_{k}(Q)=c_{k}Q_{k}+\cdots(k=1,...,4)
\]
and call them the \textit{mirror map} around the boundary point $o_{1}$. 
\end{defn}
We set 
\[
F:=\frac{2}{(2\pi i)^{2}}\frac{1}{\omega_{0}(z)}\big(\omega^{(2)}(z)+\pi^{2}\omega_{0}(z)\big).
\]
Then it holds that $\frac{\Pi(z)}{\omega_{0}(z)}=\,^{t}(1,F,t_{1},...,t_{4})$
for the period integrals $\Pi(z)$ (\ref{eq:intPi}). The quadratic
relation (\ref{eq:quadratic-rel-sols}) with $d=2$ becomes 
\[
4F+\sum_{i,j}M_{ij}t_{i}t_{j}=4F+4{\tt t}_{1}{\tt t}_{2}-2{\tt t}_{3}^{2}-2{\tt t}_{4}^{2}=0,
\]
where we define $(\mathtt{t}_{1},{\tt t}_{2},{\tt t}_{3},{\tt t}_{4}):=(t_{1},t_{2},t_{3},t_{4})\,^{t}P$
using the unimodular matrix $P$ in Lemma \ref{lem:int-form-M-P}.
This indicates that $[1,F,{\tt t}_{1},...,{\tt t}_{4}]$ is a point
in the period domain $\mathcal{D}_{K3}$ defined for the transcendental
lattice $T_{X}\simeq U(2)^{\oplus2}\oplus A_{1}^{\oplus2}$, namely
the map $z\mapsto[1,F,{\tt t}_{1},...,{\tt t}_{4}]$ composed with
the following isomorphism $\mu:\mathcal{D}_{K3}\simeq\mathbb{H}_{2}$
expresses the period map $\mathcal{P}:\mathcal{M}_{6}\to\mathbb{H}_{2}$
locally near the boundary point $o_{1}$. 
\begin{lem}
There is an isomorphism $\mu:\mathcal{D}_{K3}\simeq\mathbb{H}_{2}$
whose inverse $\mu^{-1}:\mathbb{H}_{2}\simeq\mathcal{D}_{K3}$ is
explicitly given by 
\[
\mu^{-1}(W)=\left[1,-\det W,w_{11},w_{22},\frac{w_{12}-iw_{21}}{1-i},\frac{w_{12}+iw_{21}}{1+i}\right]
\]
for $W=\left(\begin{matrix}w_{11} & w_{12}\\
w_{21} & w_{22}
\end{matrix}\right)$.\end{lem}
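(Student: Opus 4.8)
The plan is to verify directly that the displayed formula defines a biholomorphism $\nu\colon\mathbb{H}_{2}\to\mathcal{D}_{K3}$ and then set $\mu:=\nu^{-1}$. First I would fix the coordinate presentation of $\mathcal{D}_{K3}$ coming from $T_{X}\simeq U(2)^{\oplus2}\oplus A_{1}^{\oplus2}$: write a point of $\mathbb{P}\big((U(2)^{\oplus2}\oplus A_{1}^{\oplus2})\otimes\mathbb{C}\big)$ as $[\zeta_{0}:\zeta_{1}:\zeta_{2}:\zeta_{3}:\zeta_{4}:\zeta_{5}]$ in the basis whose Gram matrix is $\left(\begin{smallmatrix}0&2\\2&0\end{smallmatrix}\right)^{\oplus2}\oplus\langle-2\rangle^{\oplus2}$. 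This is exactly the presentation in which the period vector $[1,F,{\tt t}_{1},\ldots,{\tt t}_{4}]$ of the preceding subsection lives: the quadratic relation (\ref{eq:quadratic-rel-sols}) with $d=2$ says precisely that this vector lies on the quadric $Q(\zeta):=2\zeta_{0}\zeta_{1}+2\zeta_{2}\zeta_{3}-\zeta_{4}^{2}-\zeta_{5}^{2}=0$, so that $\mathcal{D}_{K3}$ is one of the two connected components of $\{\zeta:Q(\zeta)=0,\ h(\zeta)>0\}$, where $h(\zeta):=4\,\mathrm{Re}(\bar\zeta_{0}\zeta_{1})+4\,\mathrm{Re}(\bar\zeta_{2}\zeta_{3})-2|\zeta_{4}|^{2}-2|\zeta_{5}|^{2}$ is the Hermitian extension of the form. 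Set $\nu(W):=\big[\,1:-\det W:w_{11}:w_{22}:\tfrac{w_{12}-iw_{21}}{1-i}:\tfrac{w_{12}+iw_{21}}{1+i}\,\big]$, a holomorphic map $Mat(2,\mathbb{C})\to\mathbb{P}^{5}$ (the first coordinate is $1\neq0$).

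Next I would check that $\nu$ lands in $\mathcal{D}_{K3}$. For isotropy, using $(1\mp i)^{2}=\mp2i$ one has $\tfrac{(w_{12}-iw_{21})^{2}}{(1-i)^{2}}+\tfrac{(w_{12}+iw_{21})^{2}}{(1+i)^{2}}=2w_{12}w_{21}$, and combining this with $\det W=w_{11}w_{22}-w_{12}w_{21}$ makes $Q(\nu(W))$ vanish identically. For positivity, expanding $h(\nu(W))$ and using $|1\pm i|^{2}=2$ together with the parallelogram law gives $h(\nu(W))=8\,\mathrm{Im}(w_{11})\mathrm{Im}(w_{22})-2(\mathrm{Re}\,w_{12}-\mathrm{Re}\,w_{21})^{2}-2(\mathrm{Im}\,w_{12}+\mathrm{Im}\,w_{21})^{2}$, which equals $8\det\!\big((W^{\dagger}-W)/2i\big)$; hence $h(\nu(W))>0$ for $W\in\mathbb{H}_{2}$, where $(W^{\dagger}-W)/2i$ is positive definite. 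Since $\mathbb{H}_{2}$ is convex — a convex combination of matrices $W$ with $(W^{\dagger}-W)/2i>0$ again satisfies this — it is connected, so $\nu(\mathbb{H}_{2})$ lies in a single connected component of $\{Q=0,\ h>0\}$, which we take to be $\mathcal{D}_{K3}$.

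To invert, I would first observe that $\zeta_{0}\neq0$ on $\mathcal{D}_{K3}$: if $\zeta_{0}=0$ then $Q=0$ forces $2\zeta_{2}\zeta_{3}=\zeta_{4}^{2}+\zeta_{5}^{2}$, whence $h=4\,\mathrm{Re}(\bar\zeta_{2}\zeta_{3})-2|\zeta_{4}|^{2}-2|\zeta_{5}|^{2}\leq4|\zeta_{2}\zeta_{3}|-2|\zeta_{4}|^{2}-2|\zeta_{5}|^{2}=2|\zeta_{4}^{2}+\zeta_{5}^{2}|-2|\zeta_{4}|^{2}-2|\zeta_{5}|^{2}\leq0$, contradicting $h>0$. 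Normalizing $\zeta_{0}=1$, I define $\mu([\zeta])$ to be the matrix with $w_{11}=\zeta_{2}$, $w_{22}=\zeta_{3}$, and $w_{12},w_{21}$ the unique solution of $(1-i)\zeta_{4}=w_{12}-iw_{21}$, $(1+i)\zeta_{5}=w_{12}+iw_{21}$, that is $w_{12}=\tfrac{1}{2}\big((1-i)\zeta_{4}+(1+i)\zeta_{5}\big)$ and $w_{21}=\tfrac{1}{2}\big((1+i)\zeta_{4}+(1-i)\zeta_{5}\big)$. Reading the positivity computation backwards, $W:=\mu([\zeta])$ has $(W^{\dagger}-W)/2i$ definite, and membership in the component $\mathcal{D}_{K3}=\nu(\mathbb{H}_{2})$ forces it positive (not negative) definite, so $W\in\mathbb{H}_{2}$. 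By construction $\mu\circ\nu=\mathrm{id}$, and $\nu\circ\mu=\mathrm{id}$ since $\zeta_{2},\ldots,\zeta_{5}$ are returned verbatim while $\zeta_{1}$ is recovered from $Q=0$ as $\zeta_{1}=\tfrac{1}{2}(\zeta_{4}^{2}+\zeta_{5}^{2})-\zeta_{2}\zeta_{3}=w_{12}w_{21}-w_{11}w_{22}=-\det W$, again by the isotropy identity. Both $\nu$ and $\mu$ are rational with nonvanishing denominators on their domains, hence holomorphic; being mutually inverse they are biholomorphisms, and $\mu=\nu^{-1}$ has the stated form.

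I expect the main technical point to be the positivity step: recognizing the Hermitian form $h(\nu(W))$ as $8\det\big((W^{\dagger}-W)/2i\big)$, which is the bridge between the scalar positivity defining $\mathcal{D}_{K3}$ and the matrix positivity defining $\mathbb{H}_{2}$, and then tracking sign conventions through the two-component structure of $\{Q=0,\ h>0\}$ so that the distinguished component $\mathcal{D}_{K3}=\nu(\mathbb{H}_{2})$ is the one compatible with the period-map normalization of Section~\ref{sec:Gen-Frobenius-method}. Everything else reduces to elementary manipulations with $1\pm i$ and $2\times2$ determinants.
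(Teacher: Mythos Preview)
Your proposal is correct and, in fact, considerably more complete than what the paper does. The paper's own proof consists of two sentences: it checks the isotropy relation $\mu^{-1}(W).\mu^{-1}(W)=0$ (your $Q(\nu(W))=0$ computation) and then refers to \cite[Sect.\,1.3]{Matsu93} for everything else. You instead supply a self-contained argument: the identification $h(\nu(W))=8\det\big((W^{\dagger}-W)/2i\big)$ linking the scalar positivity of $\mathcal{D}_{K3}$ to the matrix positivity of $\mathbb{H}_{2}$, the nonvanishing of $\zeta_{0}$ on the period domain, the explicit linear inversion for $w_{12},w_{21}$, and the connectedness argument to pin down the correct component. One small phrasing issue: when you write ``membership in the component $\mathcal{D}_{K3}=\nu(\mathbb{H}_{2})$'' you have not yet established that equality; what you actually need (and what your ingredients give) is that $\mu$ is continuous on the connected set $\mathcal{D}_{K3}$, takes values in $\mathbb{H}_{2}\sqcup\mathbb{H}_{2}^{-}$ by the definiteness observation, and hits $\mathbb{H}_{2}$ at the points $\nu(W_{0})$ since $\mu\circ\nu=\mathrm{id}$ --- hence $\mu(\mathcal{D}_{K3})\subset\mathbb{H}_{2}$. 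With that tweak the argument is clean, and your approach has the advantage of making the lemma independent of the cited reference.
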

\begin{proof}
It is easy to verify the quadratic relation $\mu^{-1}(W).\mu^{-1}(W)=0$.
We refer \cite[Sect.\,1.3]{Matsu93} for the more details of the isomorphism. \end{proof}
\begin{defn}
\label{def:PA-E36-o1}Near the boundary point $o_{1}$, we describe
the period map $\mathcal{P}:\mathcal{M}_{6}\to\mathbb{H}_{2}$ by
\[
\mathcal{P}([A])=\mu\left(\left[1,F,{\tt t_{1},{\tt t}_{2},{\tt t}_{3},{\tt t}_{4}}\right]\right),
\]
where $\,^{t}(1,F,{\tt t_{1},{\tt t}_{2},{\tt t}_{3},{\tt t}_{4}})=\left(\begin{smallmatrix}1\\
 & 1\\
 &  & P
\end{smallmatrix}\right)\frac{\Pi(z)}{\omega_{0}(z)}$ is the period integral (\ref{eq:intPi}) determined near the boundary
point $o_{1}$ (hence $z=z(A))$ and $P$ is the unimodular matrix
in Lemma \ref{lem:int-form-M-P}. \end{defn}
\begin{rem}
Parallel to the above definition, we have local descriptions of the
period maps near each of $o_{1},o_{2};o_{1}^{+},o_{2}^{+},o_{3}^{+}$.
Because of $S_{6}$ invariance of the resolutions $\widetilde{\mathcal{M}}_{6}$
and $\widetilde{\mathcal{M}}_{6}^{+}$, we have local descriptions
of the period maps near all of the boundary points in the resolutions
as well.\textcolor{red}{{} }
\end{rem}
When we study the modular properties, the coordinate ${\tt t}_{k}$
introduced above is preferred to the coordinate $t_{k}$. Correspondingly
we define $q_{k}:=e^{\pi i{\tt t}_{k}}(k=1,2,3)$ and $q_{4}:=e^{\pi i(\mathtt{t}_{4}+1)}$
which are related to the $Q_{k}$ by 
\[
q_{1}=Q_{1}Q_{3}Q_{4},\,\,q_{2}=Q_{2}Q_{3}Q_{4},\,\,q_{3}=Q_{3}Q_{4},\,\,q_{4}=-Q_{3}
\]
or by 
\begin{equation}
Q_{1}=\frac{q_{1}}{q_{3}},\,\,Q_{2}=\frac{q_{2}}{q_{3}},\,\,Q_{3}=-q_{4},\,\,Q_{4}=-\frac{q_{3}}{q_{4}}.\label{eq:Qby-q}
\end{equation}

\begin{rem}
Here, the slightly mysterious shift in the definition $q_{4}=e^{\pi i(\mathtt{t}_{4}+1)}$
corresponds to changing the branch cut for the logarithms of $\log z_{3}$
and $\log z_{4}$, i.e., changing $\log z_{3}\to\log z_{3}+\pi i$
and $\log z_{4}\to\log z_{4}+\pi i$ in Definition \ref{def:mirror-maps}.
We \textit{do not} have a good understanding about this shift, but
this is necessary to express the mirror maps in terms of theta functions. 
\end{rem}
For convenience, we introduce the following notation. 
\begin{defn}
By $z_{k}(q)$ we represent the mirror map $z_{k}(Q)$ substituted
the relation (\ref{eq:Qby-q}), i.e., $z_{k}(q):=z_{k}(Q)\vert_{Q_{i}=Q_{i}(q)}$. \end{defn}
\begin{prop}
When we substitute the mirror map formally into the unique $($up
to constant$)$ power series $\omega_{0}(z)$, we have
\[
\begin{alignedat}{3}\omega_{0}(z(q))^{2} &  & = &  &  & \,1+8(q_{1}+q_{2})+24(q_{1}^{2}+q_{2}^{2}+4q_{1}q_{2})\\
 &  & - &  &  & \,8q_{1}q_{2}\Big\{4\big(\frac{1}{q_{3}}+q_{3}\big)-4\big(\frac{1}{q_{4}}+q_{4}\big)+\big(\frac{1}{q_{3}}+q_{3}\big)\big(\frac{1}{q_{4}}+q_{4}\big)\Big\}+\cdots.
\end{alignedat}
\]

\end{prop}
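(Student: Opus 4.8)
The plan is to verify the claimed $q$-expansion of $\omega_0(z(q))^2$ by a direct but organized power-series computation, exploiting the explicit forms already established in the excerpt. First I would recall the two ingredients we have in hand: the hypergeometric coefficients $c(n_1,n_2,n_3,n_4)$ from \eqref{eq:def-cn}, which give $\omega_0(z)=\sum_n c(n)z^{n}$ as an explicit multivariable series, and the mirror map $z_k(Q)=c_kQ_k+\cdots$ from Definition \ref{def:mirror-maps}, obtained by inverting $t_k=\frac{2}{2\pi i}\log(c_kz_k)+\cdots$ where the lower-order corrections come from $\omega_k^{(1)}/\omega_0$. Concretely, $\omega_k^{(1)}(z)=\partial_{\rho_k}\omega(z,\rho)|_{\rho=0}$, so $t_k=\frac{2}{2\pi i}\bigl(\log(c_kz_k)+g_k(z)\bigr)$ with $g_k(z)$ a power series vanishing at the origin whose coefficients are computed from $\partial_{\rho_k}c(n+\rho)|_{\rho=0}$. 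Inverting order by order gives $z_k=c_kQ_k e^{-g_k(z(Q))}$, which one solves iteratively to the order needed.

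The key steps, in order: (1) compute $c(n)$ for all $n=(n_1,n_2,n_3,n_4)$ with $|n|$ small enough — because of the $\Gamma$-factors in the denominator, $c(n)$ vanishes unless $0\le n_i\le n_4$ for $i=1,2,3$ and $n_j+n_k\ge n_4$, which drastically limits the relevant $n$; (2) compute the logarithmic solutions' series $g_k(z)$ to the same order, again using the support constraints on $\partial_{\rho_i}c(n+\rho)|_{\rho=0}$ from Proposition \ref{prop:Frobenius-local-sol}(1); (3) invert to get $z_k(Q)$ as power series in $Q_1,\dots,Q_4$ up through the orders contributing to the stated terms; (4) substitute the change of variables \eqref{eq:Qby-q}, $Q_1=q_1/q_3$, $Q_2=q_2/q_3$, $Q_3=-q_4$, $Q_4=-q_3/q_4$, to re-express everything in the $q$-variables, noting that this substitution produces both positive and negative powers of $q_3,q_4$, which is why the answer is organized in terms of the symmetric combinations $q_3+q_3^{-1}$ and $q_4+q_4^{-1}$; (5) form $\omega_0(z(q))=\sum_n c(n)z(q)^n$, square it, and collect terms, checking the coefficients $1$, $8(q_1+q_2)$, $24(q_1^2+q_2^2+4q_1q_2)$, and the $q_1q_2$-bracket. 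A useful consistency check along the way is that $\omega_0(z(q))^2$ must be invariant under $q_3\leftrightarrow q_3^{-1}$ and $q_4\leftrightarrow q_4^{-1}$ separately (reflecting the Weyl symmetry of the $A_1$ factors and the shift-of-branch choice), and symmetric under $q_1\leftrightarrow q_2$; these symmetries cut the bookkeeping considerably.

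The main obstacle I anticipate is purely the bookkeeping of the multivariable series inversion in step (3) combined with the substitution in step (4): to see the cross-terms like $q_1q_2(q_3+q_3^{-1})(q_4+q_4^{-1})$ one needs $z_k(Q)$ to a mixed order in all four $Q_k$, and the presence of the negative powers after \eqref{eq:Qby-q} means one must track ``intermediate'' monomials $Q_3^aQ_4^b$ with $a,b$ of either sign that only combine into Laurent-polynomial-in-$q_3,q_4$ form at the end. The cleanest way to control this is to compute everything symbolically in the $q_k$ directly — i.e. first rewrite the $\mathcal{D}_i$ or at least the series $\omega_0$, $\omega_k^{(1)}$ in the $q$-variables via the monomial map, then invert — rather than passing through $Q$; but either way the proof is a finite (if tedious) verification, which is presumably why the authors simply write ``we verify this by series expansions to some higher orders'' as the proof. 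I would present it the same way: set up the generalized Frobenius solutions explicitly using \eqref{eq:def-cn} and Proposition \ref{prop:Frobenius-local-sol}, carry out the inversion and substitution to the stated order, and record that the resulting coefficients match, with the $q_3\leftrightarrow q_3^{-1}$, $q_4\leftrightarrow q_4^{-1}$, $q_1\leftrightarrow q_2$ symmetries serving as the internal consistency check.
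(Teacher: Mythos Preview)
Your proposal is correct and matches the paper's approach: the paper gives no proof at all for this proposition, treating it as a direct computational verification of the $q$-expansion obtained by substituting the mirror map into $\omega_0(z)^2$. Your outlined steps---computing $c(n)$ and the logarithmic corrections from \eqref{eq:def-cn} and Proposition~\ref{prop:Frobenius-local-sol}, inverting to get $z_k(Q)$, passing to the $q$-variables via \eqref{eq:Qby-q}, and collecting terms---are exactly the computation one must carry out, and your symmetry observations are a sensible way to organize it; note only that the phrase ``we verify this by series expansions to some higher orders'' you quote is actually the proof of Proposition~\ref{prop:period-relation-Mij}, not of this proposition, which the authors leave entirely unproved.
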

The above expression plays a role when studying the master equation.

\subsection{Solving the master equation 1\label{sub:masterEq-1}}

Now we can set up the master equation around the boundary point $o_{1}$
by using the period map $\mathcal{P}([A])$ given in Definition \ref{def:PA-E36-o1}
as follows:
\[
P_{I}(z)\omega_{0}(z)^{2}=\Theta^{\tau}\left(\begin{matrix}i\,j\,k\\
l\,m\,n
\end{matrix}\right)^{2}(\mathcal{P}([A]))\quad(I=\{\{i,j,k\},\{l,m,n\}\}),
\]
where $\omega_{0}(z)$ is the unique power series solution near $o_{1}$.
Note that both sides of this equation are given by $q$-expansions
when we substitute the mirror map $z_{k}=z_{k}(q)$. By explicit calculations,
we find the following property:
\begin{prop}
\label{prop:fixing-tau}When expanded into q-series, the master equation
holds to some higher order in $q$ only if we take $\tau=\left(\begin{smallmatrix}1\,2\,3\,4\,5\,6\\
3\,2\,6\,1\,5\,4
\end{smallmatrix}\right).$ 
\end{prop}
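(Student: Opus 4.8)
The plan is to verify Proposition \ref{prop:fixing-tau} by a direct finite computation that compares the two sides of the master equation as formal $q$-series. The master equation at the boundary point $o_1$ reads
\[
P_I(z)\,\omega_0(z)^2 = \Theta^{\tau}\!\left(\begin{smallmatrix}i\,j\,k\\ l\,m\,n\end{smallmatrix}\right)^2(\mathcal{P}([A])),
\]
and both sides become honest power series in $q_1,q_2$ (with Laurent dependence on $q_3,q_4$) once the mirror map $z_k=z_k(q)$ is substituted. First I would assemble the left-hand side: using the explicit polynomials $P_I(z)$ from Appendix \ref{sec:App-P-Q}, the mirror map $z_k(q)=c_kQ_k+\cdots$ from Definition \ref{def:mirror-maps} together with the change of variables (\ref{eq:Qby-q}), and the $q$-expansion of $\omega_0(z(q))^2$ recorded in the Proposition preceding this subsection. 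This produces, for each of the ten ordered partitions $I$, a definite $q$-series $P_I(z(q))\,\omega_0(z(q))^2$ up to some truncation order $N$.

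Next I would assemble the right-hand side: the period map $\mathcal{P}([A])=\mu([1,F,\mathtt{t}_1,\dots,\mathtt{t}_4])$ from Definition \ref{def:PA-E36-o1} gives a matrix $W\in\mathbb{H}_2$ whose entries are explicit in $\mathtt{t}_k$, hence in the $q_k$ via $q_k=e^{\pi i\mathtt{t}_k}$; feeding $W$ into the ten genus-two theta functions $\Theta_j(W)$ of Appendix \ref{sec:App-Theta-fn} and squaring yields ten $q$-series. The content of the Proposition is that the system of ten equalities "LHS $=$ RHS with a relabeling $\tau$" can be satisfied, order by order in $q$, for one and only one $\tau\in S_6$ (acting on the partition labels of $\Theta$), namely $\tau=\left(\begin{smallmatrix}1\,2\,3\,4\,5\,6\\ 3\,2\,6\,1\,5\,4\end{smallmatrix}\right)$. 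Concretely, the argument is: the leading terms ($q$-constant and linear order) of the theta-squares are all distinct as functions of $q$ except for the symmetries $\Theta\left(\begin{smallmatrix}ijk\\ lmn\end{smallmatrix}\right)=\Theta\left(\begin{smallmatrix}lmn\\ ijk\end{smallmatrix}\right)$, so matching the ten $P_I(z(q))\omega_0(z(q))^2$ against the theta-squares pins down which theta corresponds to which $I$, i.e. determines $\tau$ as a permutation of the six indices; then one checks that this candidate $\tau$ continues to make all ten equalities hold to the truncation order $N$, which by the rigidity of modular forms of weight two on $\Gamma_M(1+i)$ (they are determined by finitely many $q$-coefficients) certifies the identity.

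The main obstacle is bookkeeping rather than conceptual: one must carry the mirror map and $\omega_0(z(q))^2$ to high enough order that the ten matched $q$-series are genuinely distinguished (the first few orders may not separate all partitions because of the sign/symmetry relations among the $\Theta$'s), and one must be careful about the branch-cut shift $q_4=e^{\pi i(\mathtt{t}_4+1)}$ and the constants $c_1=c_2=c_3=4$, $c_4=1$, since an error in normalization would spuriously rule out the correct $\tau$. I would organize the check by first using only the order-$\leq 1$ data to produce the finite list of candidate permutations compatible with the leading behavior, and only then push the expansion further to eliminate all but $\tau=\left(\begin{smallmatrix}1\,2\,3\,4\,5\,6\\ 3\,2\,6\,1\,5\,4\end{smallmatrix}\right)$; the uniqueness claim is then the statement that the candidate list collapses to a single element, which is verified computationally.
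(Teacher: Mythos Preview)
Your proposal is correct and is exactly the approach the paper takes: the paper offers no argument beyond the phrase ``By explicit calculations, we find the following property,'' and what you have written is precisely a careful blueprint for that $q$-series computation. Your added remarks about the bookkeeping hazards (the branch shift in $q_4$, the constants $c_k$, and using low-order terms to cut down the candidate list before pushing to higher order) are sensible and go somewhat beyond what the paper spells out.
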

Now we can determine $z_{k}$(q) and $\omega_{0}(z(q))$ in terms
of the theta functions by solving the master equation 
\begin{equation}
P_{I}(z)\omega_{0}(z)^{2}=\Theta\left(\begin{matrix}\tau(i)\,\tau(j)\,\tau(k)\\
\tau(l)\,\tau(m)\,\tau(n)
\end{matrix}\right)^{2}\quad(I=\{\{i,j,k\},\{l,m,n\}\})\label{eq:master-eq-o1}
\end{equation}
for $z_{k}$, $\omega_{0}$. This is an overdetermined algebraic system.
However after some algebras, we find the following 
\begin{prop}
\label{prop:Sol-algebraic-master-eq}The above master equation has
a $($unique$)$ solution, 
\begin{equation}
\begin{alignedat}{5}z_{1} & = &  & \frac{\Theta_{3}^{2}+\Theta_{9}^{2}-\omega_{0}^{2}}{\omega_{0}^{2}-\Theta_{7}^{2}} & ,\quad & z_{2} & = &  & \frac{\Theta_{3}^{2}+\Theta_{9}^{2}-\omega_{0}^{2}}{\omega_{0}^{2}-\Theta_{9}^{2}} & ,\\
z_{3} & = &  & \frac{(\omega_{0}^{2}-\Theta_{7}^{2})(\omega_{0}^{2}-\Theta_{9}^{2})}{\omega_{0}^{2}(\Theta_{4}^{2}+\Theta_{9}^{2}-\omega_{0}^{2})} & ,\quad & z_{4} & = &  & \frac{\Theta_{4}^{2}+\Theta_{9}^{2}-\omega_{0}^{2}}{\Theta_{3}^{2}+\Theta_{9}^{2}-\omega_{0}^{2}},
\end{alignedat}
\label{eq:zk-theta}
\end{equation}

\end{prop}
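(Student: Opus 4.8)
The plan is to solve the overdetermined algebraic system \eqref{eq:master-eq-o1} by exploiting the structure of the affine semi-invariants $P_I(z)$ listed in Appendix \ref{sec:App-P-Q} and the linear (Plücker-type) relations among them, combined with the already-fixed permutation $\tau$ from Proposition \ref{prop:fixing-tau}. Concretely, for each of the ten ordered partitions $I=\{\{i,j,k\},\{l,m,n\}\}$ of $\{1,\dots,6\}$, \eqref{eq:master-eq-o1} reads $P_I(z)\,\omega_0^2 = \Theta\bigl(\begin{smallmatrix}\tau(i)\,\tau(j)\,\tau(k)\\ \tau(l)\,\tau(m)\,\tau(n)\end{smallmatrix}\bigr)^2$. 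First I would write down these ten scalar equations explicitly, reading off the polynomials $P_I(z_1,z_2,z_3,z_4)$ from the appendix; each $P_I$ is a low-degree polynomial in the $z_k$, and several of the $P_I$ are units ($\pm1$) exactly as in the elliptic case \eqref{eq:tildeYI-E24}. The equation coming from a partition with $P_I\equiv -1$ (or $+1$) immediately gives $\omega_0^2$ as $\pm$ a single $\Theta^2$; this is the $K3$ analogue of $(-1)\omega_0^2 = -\theta_3(\tau)^4$, and pins down which theta function plays the role of $\Theta_8$ (the normalizing one appearing in the intro formula for $\omega_0^2$).

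Next, with $\omega_0^2$ expressed through one $\Theta_k^2$, I would take ratios of pairs of the ten equations to eliminate $\omega_0^2$ and obtain each $z_k$ as a ratio of polynomials in the $\Theta^2$'s. The key point is that the four coordinates $z_1,z_2,z_3,z_4$ appear \emph{linearly} enough in the $P_I$'s (they are essentially cross-ratio-like monomial/affine expressions, cf. Proposition \ref{prop:zk-O1}) that four judiciously chosen equations, together with the one giving $\omega_0^2$, suffice to solve for $(z_1,z_2,z_3,z_4,\omega_0^2)$ rationally. One uses the linear relations among the $Y_I$ (the analogues of $Y_0-Y_1+Y_2=0$, which on the theta side are the quadratic theta identities recorded after \eqref{eq:theta-relation} and in Appendix \ref{sec:App-Theta-fn}) to rewrite numerators and denominators so that the answer takes the stated shape $z_k = (\Theta^2+\Theta^2-\omega_0^2)/(\omega_0^2 - \Theta^2)$ etc. Substituting back $\omega_0^2$ in terms of theta functions then gives \eqref{eq:zk-theta} on the nose. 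This matches the intro display \eqref{eq:intro-lambdaK3} with $z_k=\lambda_k$, and the expression for $\omega_0(\lambda_1,\dots,\lambda_4)^2$ there is the same linear combination of $\Theta^2$'s that emerges from the unit-$P_I$ equation (after using one more theta identity to symmetrize).

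For uniqueness, I would argue that the ten equations \eqref{eq:master-eq-o1} are consistent (this is guaranteed once $\tau$ is chosen as in Proposition \ref{prop:fixing-tau}, since the $Y_I$ satisfy the same linear syzygies as the $\Theta^2$) and that the subsystem actually used to solve has a unique solution by a Jacobian/degree count: the map $(z_1,z_2,z_3,z_4)\mapsto$ (the relevant ratios of $P_I$) is birational onto its image, so inverting it gives a single rational solution. Finally I would note that the remaining (unused) equations are then automatically satisfied — this is forced by the consistency of the linear relations on both sides — so no further check is needed beyond what Proposition \ref{prop:fixing-tau} already provides via the $q$-expansion.

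The main obstacle I anticipate is purely organizational rather than conceptual: correctly bookkeeping the signs and the label permutation $\tau = \bigl(\begin{smallmatrix}1\,2\,3\,4\,5\,6\\ 3\,2\,6\,1\,5\,4\end{smallmatrix}\bigr)$ when translating each $P_I$ into the right $\Theta_k^2$, and choosing the four equations that decouple the $z_k$ most cleanly so that the stated closed forms fall out without an explosion of terms. Verifying that the overdetermined leftover equations hold — i.e. that the solution is genuinely consistent and not just a solution of a subsystem — is the one place where one must invoke, rather than recompute, the matching between the Plücker relations among the $Y_I$ and the quadratic theta relations in Appendix \ref{sec:App-Theta-fn}; everything else is elimination in a polynomial ring.
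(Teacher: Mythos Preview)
Your proposal contains a genuine gap stemming from an incorrect extrapolation of the elliptic case. You assert that ``several of the $P_I$ are units ($\pm1$)'' so that one equation immediately gives $\omega_0^2$ as $\pm$ a single $\Theta_k^2$. But in the K3 case \emph{none} of the ten polynomials $P_I(z)$ listed in Appendix~\ref{sec:App-P-Q} is constant: the simplest ones are $P_2=-1+z_1z_3z_4$, $P_3=-1+z_2z_3z_4$, $P_{10}=1-z_1z_2z_4$, all genuinely depending on $z$. Consequently there is no equation that pins down $\omega_0^2$ outright, and the final answer $\omega_0^2=\frac{1}{2\Theta_8^2}\{\Theta_7^2\Theta_8^2-\Theta_{10}^2\Theta_5^2+\Theta_6^2\Theta_9^2-\widetilde{\Theta}\}$ is visibly \emph{not} a single $\Theta_k^2$; it involves the square root $\widetilde{\Theta}$ of a weight-eight expression.

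This error propagates to your treatment of consistency. The paper's proof selects four equations (those with $s=2,3,7,9$) to solve for $z_1,\dots,z_4$ \emph{as rational functions of $\omega_0^2$ and the $\Theta_k^2$}, exactly the displayed formulas~(\ref{eq:zk-theta}). Substituting these into the remaining six equations does \emph{not} reduce everything to the five linear theta relations as you claim: five of them do, but one yields a genuine new constraint, the quadratic
\[
\Theta_8^2\,\omega_0^4-\bigl(\Theta_7^2\Theta_8^2-\Theta_{10}^2\Theta_5^2+\Theta_6^2\Theta_9^2\bigr)\omega_0^2+\Theta_6^2\Theta_7^2\Theta_9^2=0,
\]
whose discriminant is identified (via the linear relations) with $\frac{2^4}{3^2 5^2}\Theta^2$, producing the $\widetilde{\Theta}$ term. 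The sign of the square root is then fixed by the $q$-expansion. So the ``leftover'' equations are not automatic, and the step you flag as merely organizational --- verifying consistency --- is in fact where the nontrivial content (the determination of $\omega_0^2$) lives.
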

\begin{equation}
\omega_{0}^{2}=\frac{1}{2\Theta_{8}^{2}}\left\{ \Theta_{7}^{2}\Theta_{8}^{2}-\Theta_{10}^{2}\Theta_{5}^{2}+\Theta_{6}^{2}\Theta_{9}^{2}-\widetilde{\Theta}\right\} ,\label{eq:w0-theta}
\end{equation}
where theta functions $\Theta_{i}$ and $\widetilde{\Theta}$ are
defined in Appendix \ref{sec:App-Theta-fn}. 
\begin{proof}
Since the master equation (\ref{eq:master-eq-o1}) (for $z_{k}$ and
$\omega_{0}^{2}$) is overdetermined, we select four equations to
solve for $z_{1},...,z_{4}$. For example, we can take four equations
indexed by the following $I=\{\{i,j,k\},\{l,m,n\}\}:$ 
\[
\{\{1,2,5\},\{3,4,6\}\},\{\{1,3,4\},\{2,5,6\}\},\{\{1,3,6\},\{2,4,5\}\},\{\{1,5,6\},\{2,3,4\}\}.
\]
Using the corresponding polynomials $P_{s}(z)$ $(s=2,3,7,9)$ in
Appendix \ref{sec:App-P-Q}, we obtain the claimed expressions (\ref{eq:zk-theta}).
Substituting these into the remaining six equations, it turns out
that these are equivalent to five linear relations among the theta
functions \cite[Rem.\,3.1.2]{Matsu93} and one additional equation;
\[
\Theta_{8}^{2}\omega_{0}^{4}-\left\{ \Theta_{7}^{2}\Theta_{8}^{2}-\Theta_{10}^{2}\Theta_{5}^{2}+\Theta_{6}^{2}\Theta_{9}^{2}\right\} \omega_{0}^{2}+\Theta_{6}^{2}\Theta_{7}^{2}\Theta_{9}^{2}=0.
\]
We solve this equation for $\omega_{0}^{2}$ to obtain 
\[
\omega_{0}^{2}=\frac{1}{2\Theta_{8}^{2}}\Big\{ T\pm\sqrt{T^{2}-4\Theta_{6}^{2}\Theta_{7}^{2}\Theta_{8}^{2}\Theta_{9}^{2}}\Big\},
\]
where $T:=\Theta_{7}^{2}\Theta_{8}^{2}-\Theta_{10}^{2}\Theta_{5}^{2}+\Theta_{6}^{2}\Theta_{9}^{2}$.
Using the linear relations, we can verify the following equality:
\[
T^{2}-4\Theta_{6}^{2}\Theta_{7}^{2}\Theta_{8}^{2}\Theta_{9}^{2}=\frac{1}{12}\Big\{\big(\sum_{k=1}^{10}\Theta_{k}^{4}\big)^{2}-4\sum_{k=1}^{10}\Theta_{k}^{8}\Big\}=\frac{2^{4}}{3^{2}5^{2}}\Theta^{2},
\]
where the second equality is nothing but the definition of the weight
four modular form $\Theta$ \cite[Prop.\,3.1.5]{Matsu93}. We finally
determine the sign of the square root so that the relation (\ref{eq:omega2})
below holds as the $q$-series.\end{proof}
\begin{rem}
We have solved the master equation (\ref{eq:master-eq-o1}) expressed
in the coordinate around the boundary point $o_{1}$. By the same
arguments given in the proof of Proposition \ref{prop:solving-MS-E24},
we can transform the master equation (\ref{eq:master-eq-o1}) to other
charts which cover $\mathcal{M}_{6}$, and see that these are equivalent
to (\ref{eq:master-eq-o1}). For this argument, we use the covering
property (\ref{eq:M6-sigma-Union}) of $\mathcal{M}_{6}$, the transformation
property (\ref{eq:Y-PI-E36}) and also the relation 
\[
\Theta\left(\begin{matrix}i\,j\,k\\
l\,m\,n
\end{matrix}\right)(g_{\sigma}\cdot W)=\vert CW+D\vert^{2}\,\Theta\left(\begin{matrix}\sigma(i)\,\sigma(j)\,\sigma(k)\\
\sigma(l)\,\sigma(m)\,\sigma(n)
\end{matrix}\right)(W)
\]
for $g_{\sigma}=\left(\begin{smallmatrix}A & B\\
C & D
\end{smallmatrix}\right)\in\Gamma_{T}$ which corresponds to $\sigma\in S_{6}$, see \cite[Sect.3.1]{Matsu93}. \end{rem}
\begin{prop}
\label{prop:w0-Sq-lambda-theta}Define the $\lambda_{K3}$ functions
by $\lambda_{k}=z_{k}(q)$ with (\ref{eq:zk-theta}) and (\ref{eq:w0-theta}).
Then for the hypergeometric series $\omega_{0}(z_{1},z_{2},z_{3},z_{4})$
in Proposition \ref{prop:PF-o1}, the following equality holds:
\begin{equation}
\omega_{0}(\lambda_{1},\lambda_{2},\lambda_{3},\lambda_{4})^{2}=\frac{1}{2\Theta_{8}^{2}}\left\{ \Theta_{7}^{2}\Theta_{8}^{2}-\Theta_{10}^{2}\Theta_{5}^{2}+\Theta_{6}^{2}\Theta_{9}^{2}-\widetilde{\Theta}\right\} \label{eq:omega2}
\end{equation}
where $\Theta_{i}=\Theta_{i}(q)$ and $\widetilde{\Theta}=\widetilde{\Theta}(q)$. 
\end{prop}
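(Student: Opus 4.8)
The plan is to obtain (\ref{eq:omega2}) as an immediate consequence of the uniqueness asserted in Proposition~\ref{prop:Sol-algebraic-master-eq}. Concretely: if one knows that the mirror map $z=z(q)$ together with the power series $\omega_{0}$ of Proposition~\ref{prop:PF-o1} \emph{evaluated along that mirror map} solves the master equation (\ref{eq:master-eq-o1}), then, because Proposition~\ref{prop:Sol-algebraic-master-eq} says that (\ref{eq:master-eq-o1}) has the unique solution (\ref{eq:zk-theta})--(\ref{eq:w0-theta}), the series $\omega_{0}(z(q))^{2}$ must equal the right-hand side of (\ref{eq:w0-theta}), which is exactly the right-hand side of (\ref{eq:omega2}); and $\lambda_{k}=z_{k}(q)$ by definition. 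In essence this isolates, as a statement in its own right, the final step in the proof of Proposition~\ref{prop:Sol-algebraic-master-eq}, where the sign of the square root was chosen so that (\ref{eq:omega2}) would hold as a $q$-series. So the only thing genuinely requiring proof is that (\ref{eq:master-eq-o1}) holds for this pair.

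For that I would invoke the relation (\ref{eq:Y-PI-E36}) together with Matsumoto's commutative diagram (\ref{eq:diagram-M6-H2-P9}). By (\ref{eq:Y-PI-E36}), the left side of (\ref{eq:master-eq-o1}) equals $Y_{I}(A)\,\bar{\omega}(A)^{2}$, where $\bar{\omega}(A)$ is the period over the cycle whose normalized period is the regular solution $\omega_{0}$; hence (\ref{eq:master-eq-o1}) is equivalent to $Y_{I}(A)\,\bar{\omega}(A)^{2}=\Theta^{\tau}_{I}(\mathcal{P}([A]))^{2}$ for all partitions $I$, where $\tau$ is the permutation singled out in Proposition~\ref{prop:fixing-tau}. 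On the other hand, the diagram (\ref{eq:diagram-M6-H2-P9}) of \cite{Matsu93} is the \emph{projective} identity $\Phi_{Y}([A])=\Phi(\mathcal{P}([A]))$ in $\mathbb{P}^{9}$, so $Y_{I}(A)=\kappa([A])\,\Theta^{\tau}_{I}(\mathcal{P}([A]))^{2}$ with a scalar $\kappa([A])$ independent of $I$ (indeed $\tau$ is exactly the relabelling that makes the $S_{6}$-indices on the two sides of the diagram match up). Therefore (\ref{eq:master-eq-o1}) reduces to the single scalar identity $\kappa([A])\,\bar{\omega}(A)^{2}\equiv 1$, i.e. $\bar{\omega}(A)^{2}\,Y_{I}(A)\equiv\Theta^{\tau}_{I}(\mathcal{P}([A]))^{2}$.

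I would prove this last identity by a section-counting argument: both sides are (multivalued) holomorphic sections of the same line bundle over the resolved moduli space $\widetilde{\mathcal{M}}_{6}$ --- the square of the Hodge line bundle twisted by the bundle whose sections are the $Y_{I}$, equivalently the weight-two forms $\Theta_{j}^{2}$ --- so their ratio is a monodromy-invariant, nowhere-vanishing holomorphic function on a projective variety, hence constant, and this constant is pinned to $1$ by letting $q\to 0$: at the LCSL the normalization makes $\omega_{0}(z)\to 1$ and $z_{k}\to 0$, so $\bar{\omega}(A)^{2}Y_{I}(A)$ tends to a leading value read off from $P_{I}(0)$ (Appendix~\ref{sec:App-P-Q}), while $\Theta^{\tau}_{I}(\mathcal{P})^{2}$ degenerates to its leading $q$-coefficient (Appendix~\ref{sec:App-Theta-fn}), and the two agree. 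The step I expect to be the real obstacle is precisely this bookkeeping: one must check that the automorphy factor $\vert CW+D\vert^{2}$ of the squared theta forms, combined with the $S_{6}$-permutation of labels, cancels exactly against the transformation of $\bar{\omega}(A)^{2}Y_{I}(A)$, so that the ratio is genuinely a single constant rather than an $I$-dependent or weight-carrying factor --- this is what the choice of $\tau$ in Proposition~\ref{prop:fixing-tau} accomplishes. A more pedestrian alternative, in keeping with the computational tone of Section~\ref{sec:Solving-Master-Eqs}, is to expand both sides of (\ref{eq:master-eq-o1}) in $q$ --- using the mirror map, the $q$-expansion of $\omega_{0}(z(q))^{2}$ recorded just before Subsection~\ref{sub:masterEq-1}, and the theta $q$-expansions of Appendix~\ref{sec:App-Theta-fn} --- and then use the finite dimensionality of the space of weight-two modular forms on $\Gamma_{M}(1+i)$ to upgrade agreement through a computable order into an exact identity.
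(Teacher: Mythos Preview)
Your first paragraph is exactly the paper's own logic: Proposition~\ref{prop:w0-Sq-lambda-theta} is stated immediately after Proposition~\ref{prop:Sol-algebraic-master-eq} with no separate proof, because once the master equation is known to hold for the pair $(z(q),\omega_{0}(z(q)))$, the uniqueness in Proposition~\ref{prop:Sol-algebraic-master-eq} forces (\ref{eq:omega2}); the sign choice for the square root there was already made so that (\ref{eq:omega2}) matches the $q$-series. So on this point you and the paper agree.

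Where you go further is in trying to \emph{prove} the master equation. The paper does not do this: Proposition~\ref{prop:fixing-tau} only asserts that the equation ``holds to some higher order in $q$'' for the specified $\tau$, and the text immediately following Proposition~\ref{prop:w0-Sq-lambda-theta} explicitly says ``We do expect a similar direct proof for the above relation,'' i.e.\ the identity is verified computationally rather than established in closed form. Your two proposed routes --- the section/line-bundle argument and the $q$-expansion plus finite-dimensionality of $\mathrm{M}_{\det}(\Gamma_{M}(1+i))$ --- are both genuinely more than the paper offers. The second is cleaner and closer in spirit to the paper's computational style: since the $\Theta_{i}^{2}$ freely generate the rank-five module of weight-two forms (Appendix~\ref{sec:App-Theta-fn}), matching finitely many $q$-coefficients of $P_{I}(z(q))\,\omega_{0}(z(q))^{2}$ against the theta side does upgrade to an exact identity, once you know the left side lies in that module (which follows from Matsumoto's projective identity $\Phi_{Y}=\Phi\circ\mathcal{P}$ plus the fact that $\bar{\omega}^{2}$ transforms with the correct automorphy factor). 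The first route needs more care than you indicate: the ratio $\kappa([A])\bar{\omega}(A)^{2}$ is \emph{a priori} only locally defined and multivalued, and you must check that the monodromy of $\bar{\omega}^{2}$ for the specific cycle giving $\omega_{0}$ matches the $\det$-character of $\Gamma_{M}(1+i)$ exactly, and that the ratio extends across the boundary divisors of $\widetilde{\mathcal{M}}_{6}$, before you can invoke compactness.
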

Clearly, the above relation is a generalization of the formula $\omega_{0}(\lambda(q))^{2}=\theta_{3}(q)^{2}$
(\ref{eq:elliptic-lambda}) which is classically known for the Legendre
family. Several direct proofs are known for the relation $\omega_{0}(\lambda(q))^{2}=\theta_{3}(q)^{2}$
in terms of Picard-Fuchs equations \cite[Sect.5.4]{Zagier}. We do
expect a similar direct proof for the above relation.  
\begin{rem}
\label{rem:o1-o2}The above analysis has been done starting from the
local solutions around $o_{1}$. We may also take the other boundary
point $o_{2}$, which are related by (Laurent) monomial relation (\ref{eq:z-o2-by-o1}).
It is easy to see that the Picard-Fuchs system (\ref{eq:PF-system-o1})
preserves the same form when we substitute the monomial relation (\ref{eq:z-o2-by-o1}).
Hence we obtain the same local solutions as in Propositions \ref{prop:PF-o1}
and \ref{prop:Frobenius-local-sol}; and the same calculations as
above apply to $o_{2}$, and in particular, we result in the same
form of the mirror map (\ref{eq:zk-theta}). However these two mirror
maps have different boundary conditions; the former vanishes at the
normal crossing divisors $z_{1}(o_{1})z_{2}(o_{1})z_{3}(o_{1})z_{4}(o_{1})=0$,
while the latter vanishes at $z_{1}(o_{2})z_{2}(o_{2})z_{3}(o_{2})z_{4}(o_{2})=0$.
We regard these mirror maps as different representations of a $\lambda_{K3}$-function,
which correspond to different forms of the elliptic $\lambda$-function,
cf.~(\ref{eq:table-zSigma}), with different vanishing conditions
at the cusps. 
\end{rem}

\subsection{Solving the master equation 2. \label{sub:masterEq-2}}

Completely parallel calculations apply to the flipped resolution $\widetilde{\mathcal{X}}_{e}^{+}\to\mathcal{X}_{e}$
where we found three boundary points $o_{i}^{+}(i=1,2,3)$. We denote
by $z_{k}(o_{i}^{+})$ $(i=1,2,3)$ the corresponding local coordinate
and set $\tilde{z}_{k}:=z_{k}(o_{1}^{+})$. From the definitions $z_{k}=\prod_{i}{\tt a}_{i}^{\ell_{i}^{(k)}}$
and $\tilde{z}_{k}=\prod_{i}\mathtt{a}^{\tilde{\ell}_{i}^{(k)}}$
(see \cite[Def.3.5]{HLTYpartI}) and the relations (\ref{eq:tilde-ell-by-ell}),
we see that the coordinate $\tilde{z}_{k}$ is related to the coordinate
$z_{k}=z_{k}(o_{1})$ of the other resolution $\widetilde{\mathcal{X}}_{e}\to\mathcal{X}_{e}$
by 
\[
\tilde{z}_{1}=z_{1},\,\,\tilde{z}_{2}=z_{1}z_{4},\,\,\tilde{z}_{3}=\frac{z_{2}}{z_{1}},\,\,\tilde{z}_{4}=\frac{z_{3}}{z_{1}}.
\]
Inverting this (Laurent) monomial relation as $z_{k}=z_{k}(\tilde{z})$,
we substitute into the polynomials $P_{I}(z)$. We directly check
the results are polynomials in $\tilde{z}_{k}$.
\begin{defn}
We define by $Q_{I}(\tilde{z})$ the polynomial $P_{I}(z)\vert_{z=z(\tilde{z})}$. 
\end{defn}
By definition, we have 
\[
\frac{1}{a_{0}b_{0}c_{0}}Y(A_{0})=P_{I}(z)=Q_{I}(\tilde{z}).
\]
It should be noted that $P_{I}(t)$ and $Q_{I}(t)$ are polynomials
of different shapes. 

In Appendix \ref{sec:App-Theta-fn}, we list the Picard-Fuchs system
in the coordinate $\widetilde{z}_{k}$. The origin $o_{1}^{+}$ of
this system is a LCSL where we have unique (up to constant) regular
solution $\omega_{0}(\tilde{z})$ and all others contain powers of
logarithms, $\log\tilde{z}_{k}$. The Frobenius method applies to
this case as well. By finding the quadratic relation satisfied by
solutions, we can describe the period map $\mathcal{P}:\mathcal{M}_{6}\to\mathbb{H}_{2}$
locally around $o_{1}^{+}$. This time we set up the master equation
in the following from
\begin{equation}
Q_{I}(\tilde{z})\omega_{0}(\tilde{z})^{2}=\Theta^{\rho}\left(\begin{matrix}i\,j\,k\\
l\,m\,n
\end{matrix}\right)^{2}(\mathcal{P}([A]))\quad(I=\{\{i,j,k\},\{l,m,n\}\}).\label{eq:master-eq-o1plus}
\end{equation}

\begin{prop}
\label{prop:fixing-rho}When (\ref{eq:master-eq-o1plus}) is expanded
in $q$-series, the master equation holds in lower degrees in $q$
only when when we take $\rho=\left(\begin{smallmatrix}1\,2\,3\,4\,5\,6\\
1\,4\,5\,3\,6\,2
\end{smallmatrix}\right).$ 
\end{prop}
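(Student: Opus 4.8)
The plan is to prove Proposition~\ref{prop:fixing-rho} by the same $q$-series matching strategy used for Proposition~\ref{prop:fixing-tau}, now carried out near the boundary point $o_1^{+}$ of the flipped resolution $\widetilde{\mathcal{X}}_{e}^{+}$. First I would produce the mirror map $\tilde z_k = \tilde z_k(q)$ near $o_1^{+}$: starting from the Picard--Fuchs system in the coordinates $\tilde z_k$ listed in Appendix~\ref{sec:App.PF-equation-Bo1}, apply the generalized Frobenius method exactly as in Section~\ref{sec:Gen-Frobenius-method} to obtain the unique regular solution $\omega_0(\tilde z)$ (with coefficients $\tilde c(n)$) together with the logarithmic solutions $\omega_i^{(1)}$, $\omega^{(2)}$; determine the quadratic period relation satisfied by them (the analogue of Proposition~\ref{prop:period-relation-Mij}); form the flat coordinates, the associated variables $\tilde q_k$ including whatever branch-cut shift is forced (analogous to $q_4 = e^{\pi i(\mathtt{t}_4+1)}$ in the $o_1$ case); and invert to get $\tilde z_k(q) = \tilde c_k \tilde Q_k + \cdots$. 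Substituting these series into the period integrals and composing with the isomorphism $\mu^{-1}:\mathbb{H}_2\simeq\mathcal{D}_{K3}$ expresses $\mathcal{P}([A])$ as an explicit matrix $W$ with $q$-series entries, so that each $\Theta_i(\mathcal{P}([A]))^2$ becomes a $q$-series via the genus-two theta expansions of Appendix~\ref{sec:App-Theta-fn}.

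The heart of the argument is then the comparison of the two sides of (\ref{eq:master-eq-o1plus}) as $q$-series. On the left I would expand $Q_I(\tilde z(q))\,\omega_0(\tilde z(q))^2$ using the explicit polynomials $Q_I$ and the series $\omega_0$; on the right I would expand $\Theta^{\rho}\!\left(\begin{smallmatrix}i\,j\,k\\ l\,m\,n\end{smallmatrix}\right)^2$. The first step is to read off which of the ten polynomials $Q_I$ vanish at $\tilde z = 0$, and to what order in the natural grading; these vanishing orders must match those of $\Theta(\rho(I))^2$ at the cusp $\tilde z\to 0$, i.e.\ of the genus-two theta constants that degenerate there. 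Because $S_6$ acts transitively on the fifteen singular points and the ten partitions $I$ carry a rigid incidence structure, this leading-order bookkeeping alone should reduce the $6! = 720$ permutations to a short list (a single coset of the stabilizer of the relevant combinatorial data); matching one or two further orders in $q$ then isolates $\rho = \left(\begin{smallmatrix}1\,2\,3\,4\,5\,6\\ 1\,4\,5\,3\,6\,2\end{smallmatrix}\right)$, exactly as the analogous computation fixed $\tau$ in Proposition~\ref{prop:fixing-tau}.

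The main obstacle is organizational rather than conceptual. One must (i) correctly identify the branch-cut shift, and hence the $\tilde q$-variables, so that the $\Theta_i$ appear as honest $q$-series rather than Laurent series --- this is the subtle point flagged in the remark following (\ref{eq:Qby-q}), and the flipped geometry may well demand a shift different from the $o_1$ one; and (ii) carry the $q$-expansions of $\omega_0(\tilde z(q))^2$ and of the ten $\Theta_i^2$ far enough that any residual permutation ambiguity left after the leading-order step is fully broken. Both are finite checks of exactly the type already performed near $o_1$, and I expect no new phenomena, so the proof amounts to running the computation and recording that $\rho = \left(\begin{smallmatrix}1\,2\,3\,4\,5\,6\\ 1\,4\,5\,3\,6\,2\end{smallmatrix}\right)$ is the unique (representative) solution.
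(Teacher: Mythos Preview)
Your proposal is correct and follows exactly the approach the paper takes: as with Proposition~\ref{prop:fixing-tau}, the paper offers no argument beyond stating that the result is found by explicit $q$-series calculation, and your plan to compute the mirror map $\tilde z_k(q)$ via the Frobenius method at $o_1^+$, substitute into both sides of (\ref{eq:master-eq-o1plus}), and match low-order $q$-coefficients to pin down $\rho$ is precisely that calculation spelled out in detail.
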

Using the above element $\rho\in S_{6}$, we set up the algebraic
master equation for $\tilde{z}_{k}$ and $\omega_{0}$. Corresponding
to Proposition \ref{prop:Sol-algebraic-master-eq} ans Proposition
\ref{prop:w0-Sq-lambda-theta}, we obtain 
\begin{prop}
\label{prop:mirrorMap-o1plus}The master equation has a unique solution,
\begin{equation}
\begin{alignedat}{5}\tilde{z}_{1} & = &  & \frac{\Theta_{3}^{2}+\Theta_{9}^{2}-\omega_{0}^{2}}{\omega_{0}^{2}-\Theta_{6}^{2}} & ,\quad & \tilde{z}_{2} & = &  &  & \frac{\Theta_{4}^{2}+\Theta_{9}^{2}-\omega_{0}^{2}}{\omega_{0}^{2}-\Theta_{6}^{2}},\\
\tilde{z}_{3} & = &  & \frac{\omega_{0}^{2}-\Theta_{6}^{2}}{\omega_{0}^{2}-\Theta_{9}^{2}}, & \quad & \tilde{z}_{4} & = &  &  & \frac{(\omega_{0}^{2}-\Theta_{6}^{2})^{2}(\omega_{0}^{2}-\Theta_{9}^{2})}{\omega_{0}^{2}(\Theta_{3}^{2}+\Theta_{9}^{2}-\omega_{0}^{2})(\Theta_{4}^{2}+\Theta_{9}^{2}-\omega_{0}^{2})},
\end{alignedat}
\label{eq:zk-theta-plus}
\end{equation}

\end{prop}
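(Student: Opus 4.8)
The plan is to repeat, almost verbatim, the argument used in the proof of Proposition~\ref{prop:Sol-algebraic-master-eq}, now carried out in the flipped coordinates $\tilde z_k=z_k(o_1^+)$ with the polynomials $Q_I(\tilde z)$ in place of $P_I(z)$. First I would invoke Proposition~\ref{prop:fixing-rho} to pin down the permutation $\rho=\left(\begin{smallmatrix}1\,2\,3\,4\,5\,6\\1\,4\,5\,3\,6\,2\end{smallmatrix}\right)$, so that the system to be solved is
\[
Q_I(\tilde z)\,\omega_0(\tilde z)^2=\Theta\!\left(\begin{smallmatrix}\rho(i)\,\rho(j)\,\rho(k)\\ \rho(l)\,\rho(m)\,\rho(n)\end{smallmatrix}\right)^2,\qquad I=\{\{i,j,k\},\{l,m,n\}\},
\]
an overdetermined algebraic system in the five unknowns $\tilde z_1,\dots,\tilde z_4,\omega_0^2$. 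Using the explicit list of $Q_I$ in Appendix~\ref{sec:App-P-Q} (which are obtained from the $P_I$ by the Laurent-monomial substitution $z_k=z_k(\tilde z)$ coming from $\tilde z_1=z_1,\ \tilde z_2=z_1z_4,\ \tilde z_3=z_2/z_1,\ \tilde z_4=z_3/z_1$), I would select four partitions $I$ for which the corresponding equations are rationally solvable — the $o_1^+$-analogue of the four partitions chosen around $o_1$ — and thereby read off $\tilde z_1,\dots,\tilde z_4$ as rational functions of $\omega_0^2$ and the $\Theta_i^2$, obtaining exactly~(\ref{eq:zk-theta-plus}).

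Second, I would substitute these four expressions back into the remaining six equations of the system. As in the unflipped case, I expect five of them to reduce to identities once the five linear relations among the $\Theta_i^2$ of \cite[Rem.\,3.1.2]{Matsu93} are imposed, leaving a single equation quadratic in $\omega_0^2$, of the shape $\Theta_8^2\,\omega_0^4-T\,\omega_0^2+\Theta_6^2\Theta_7^2\Theta_9^2=0$ (or a close variant) with $T=\Theta_7^2\Theta_8^2-\Theta_{10}^2\Theta_5^2+\Theta_6^2\Theta_9^2$. Solving by the quadratic formula and checking, via the same linear relations, that $T^2-4\Theta_6^2\Theta_7^2\Theta_8^2\Theta_9^2=\tfrac{2^4}{3^2 5^2}\Theta^2$ with $\Theta$ the weight-four modular form of \cite[Prop.\,3.1.5]{Matsu93}, recovers $\omega_0^2=\tfrac{1}{2\Theta_8^2}\{\Theta_7^2\Theta_8^2-\Theta_{10}^2\Theta_5^2+\Theta_6^2\Theta_9^2-\widetilde\Theta\}$, the formula already appearing for $\lambda_k^+$; the sign of the square root is then fixed by matching the leading terms of the $q$-expansion against $\omega_0(\tilde z(q))^2$ computed from the mirror map. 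Uniqueness is automatic: the system is overdetermined and consistent, so the solved expressions are forced, and the only residual ambiguity (the branch of the square root) is removed by the $q$-expansion; this also yields the analogue of Proposition~\ref{prop:w0-Sq-lambda-theta} for $\lambda_k^+$.

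The main obstacle is, once again, the elimination step: one must verify that after the four solved expressions are substituted the six leftover equations genuinely collapse — using \emph{precisely} the Matsumoto linear relations — to a single quadratic in $\omega_0^2$, and that its discriminant is the perfect square $\tfrac{2^4}{3^2 5^2}\Theta^2$. Because the $Q_I$ are of higher degree than the $P_I$, this computation is bulkier than in Proposition~\ref{prop:Sol-algebraic-master-eq}, but it is a finite symbolic verification; everything else (inverting the monomial relation $z_k=z_k(\tilde z)$, solving the four chosen equations, comparing low-order $q$-series) is routine and parallels Section~\ref{sub:masterEq-1} step for step.
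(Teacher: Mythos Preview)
Your proposal is correct and matches the paper's own approach: the paper gives no separate proof for this proposition, stating only that the argument is completely parallel to Proposition~\ref{prop:Sol-algebraic-master-eq} with $Q_I(\tilde z)$ and the permutation $\rho$ of Proposition~\ref{prop:fixing-rho} in place of $P_I(z)$ and $\tau$. Your outlined elimination, reduction to the same quadratic in $\omega_0^2$, and sign-fixing by $q$-expansion are exactly what is intended.
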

\begin{equation}
\omega_{0}^{2}=\frac{1}{2\Theta_{8}^{2}}\left\{ \Theta_{7}^{2}\Theta_{8}^{2}-\Theta_{10}^{2}\Theta_{5}^{2}+\Theta_{6}^{2}\Theta_{9}^{2}-\widetilde{\Theta}\right\} ,\label{eq:w0-theta-plus}
\end{equation}
where $\Theta_{i}$, and $\widetilde{\Theta}$ are defined in Appendix
\ref{sec:App-Theta-fn}. 
\begin{prop}
\label{prop:omega0-o1plus}Define the $\lambda_{K3}^{+}$ functions
by $\lambda_{k}^{+}=\tilde{z}_{k}(q)$ with (\ref{eq:zk-theta}) and
(\ref{eq:w0-theta}). Then, for the hypergeometric series $\omega_{0}(\tilde{z}_{1},\tilde{z}_{2},\tilde{z}_{3},\tilde{z}_{4})$
in Appendix \ref{sec:App.PF-equation-Bo1}, the following equality
holds:
\begin{equation}
\omega_{0}(\lambda_{1}^{+},\lambda_{2}^{+},\lambda_{3}^{+},\lambda_{4}^{+})^{2}=\frac{1}{2\Theta_{8}^{2}}\left\{ \Theta_{7}^{2}\Theta_{8}^{2}-\Theta_{10}^{2}\Theta_{5}^{2}+\Theta_{6}^{2}\Theta_{9}^{2}-\widetilde{\Theta}\right\} \label{eq:omega2-plus}
\end{equation}
where $\Theta_{i}=\Theta_{i}(q)$ and $\widetilde{\Theta}=\widetilde{\Theta}(q)$. 
\end{prop}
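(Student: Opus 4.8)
The plan is to obtain this as an immediate consequence of Proposition \ref{prop:mirrorMap-o1plus}, exactly as the parallel statement Proposition \ref{prop:w0-Sq-lambda-theta} follows from Proposition \ref{prop:Sol-algebraic-master-eq}. The essential observation is one of bookkeeping: in the master equation (\ref{eq:master-eq-o1plus}) the symbol $\omega_{0}(\tilde z)$ denotes the genuine power series with coefficients $\tilde c(n)$ attached to the Picard--Fuchs system of Appendix \ref{sec:App.PF-equation-Bo1}, so the quantity written ``$\omega_{0}^{2}$'' in (\ref{eq:w0-theta-plus}) is by construction $\omega_{0}(\tilde z)^{2}$ evaluated at the mirror map $\tilde z_{k}=\tilde z_{k}(q)=\lambda_{k}^{+}$. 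Consequently, once Proposition \ref{prop:mirrorMap-o1plus} is in force, (\ref{eq:w0-theta-plus}) reads verbatim as the asserted identity (\ref{eq:omega2-plus}), and the whole content of the proposition is already contained in the last line of Proposition \ref{prop:mirrorMap-o1plus}. So I would simply re-run, with the flipped data, the argument that establishes Proposition \ref{prop:mirrorMap-o1plus}.

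That argument proceeds in three moves. First, fix the permutation $\rho=\left(\begin{smallmatrix}1\,2\,3\,4\,5\,6\\1\,4\,5\,3\,6\,2\end{smallmatrix}\right)$ by the low-degree $q$-series test of Proposition \ref{prop:fixing-rho}. Second, treat the ten $\Theta_{i}^{2}$ together with $\omega_{0}$ and $\tilde z_{1},\dots,\tilde z_{4}$ as quantities linked by the ten equations $Q_{I}(\tilde z)\,\omega_{0}^{2}=\Theta^{\rho}\left(\begin{smallmatrix}i\,j\,k\\l\,m\,n\end{smallmatrix}\right)^{2}$ (with $Q_{I}$ the polynomials of Appendix \ref{sec:App-P-Q}), select four indices $I$ for which the resulting subsystem is rationally solvable, and solve it for $\tilde z_{1},\dots,\tilde z_{4}$; this yields (\ref{eq:zk-theta-plus}). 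Third, substitute these expressions back into the remaining six equations: using the five linear relations among the $\Theta_{i}^{2}$ \cite[Rem.\,3.1.2]{Matsu93}, all but one become identities and the last is a quadratic in $\omega_{0}^{2}$; solving it and invoking $T^{2}-4\Theta_{6}^{2}\Theta_{7}^{2}\Theta_{8}^{2}\Theta_{9}^{2}=\frac{2^{4}}{3^{2}5^{2}}\Theta^{2}$, where $T:=\Theta_{7}^{2}\Theta_{8}^{2}-\Theta_{10}^{2}\Theta_{5}^{2}+\Theta_{6}^{2}\Theta_{9}^{2}$ and $\Theta$ is the weight-four form (so that the square root is $\widetilde{\Theta}$ up to sign), produces (\ref{eq:w0-theta-plus}), with the sign fixed by matching the leading terms of the $q$-expansion. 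Finally, by the uniqueness asserted in Proposition \ref{prop:mirrorMap-o1plus} together with the commutativity and $S_{6}$-equivariance of diagram (\ref{eq:diagram-M6-H2-P9}) (cf.\ the Remark after Proposition \ref{prop:Sol-algebraic-master-eq}), the identity established in the chart around $o_{1}^{+}$ extends over all of $\widetilde{\mathcal{M}}_{6}^{+}$.

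The hard part is the computational core shared with Propositions \ref{prop:fixing-rho} and \ref{prop:mirrorMap-o1plus}: one must expand the theta constants $\Theta_{i}(q)$, the weight-four form $\widetilde{\Theta}(q)$, and the inverse mirror map $\tilde z_{k}(q)$ to comparable order and verify that the overdetermined system (\ref{eq:master-eq-o1plus}) is consistent to a degree high enough to pin down the finitely many algebraic unknowns uniquely; one must also handle with care the branch shift $q_{4}=e^{\pi i(\mathtt{t}_{4}+1)}$ and the choice of sign for the square root, so that (\ref{eq:omega2-plus}) holds as a $q$-series and not merely up to $\pm$. A conceptually cleaner but more demanding alternative would be a direct Picard--Fuchs proof in the spirit of Zagier's treatment of $\omega_{0}(\lambda(\tau))^{2}=\theta_{3}(\tau)^{4}$ \cite[Sect.5.4]{Zagier}, working from the differential system of Appendix \ref{sec:App.PF-equation-Bo1} and the weight-two modularity of the $\Theta_{i}^{2}$; I expect that route to eliminate the order-of-expansion estimates at the cost of a subtler ODE argument.
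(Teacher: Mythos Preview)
Your proposal is correct and matches the paper's approach: the paper gives no separate proof for this proposition, presenting it as an immediate consequence of Proposition~\ref{prop:mirrorMap-o1plus} (exactly parallel to how Proposition~\ref{prop:w0-Sq-lambda-theta} follows from Proposition~\ref{prop:Sol-algebraic-master-eq}), with the whole of Subsection~\ref{sub:masterEq-2} introduced by ``Completely parallel calculations apply to the flipped resolution.'' Your three-move summary of that parallel calculation is accurate and slightly more explicit than what the paper writes out.
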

\par
\begin{rem}
As above, we arrived at the two definitions of the K3 analogues of
elliptic lambda functions, $\lambda_{k}$ and $\lambda_{k}^{+}$corresponding
to the resolutions $\widetilde{\mathcal{X}}_{e}\to\mathcal{X}_{e}$
and $\widetilde{\mathcal{X}}_{e}^{+}\to\mathcal{X}_{e}$, respectively.
As described in Remark \ref{rem:o1-o2}, these two have different
behavior near the normal crossing boundary divisors in the different
resolutions $\widetilde{\mathcal{M}}_{6}$ and $\widetilde{\mathcal{M}}_{6}^{+}$.
We regards these $\lambda_{k}$ and $\lambda_{k}^{+}$ are non-isomorphic
since these are defined on the non-isomorphic resolutions. 

~

~
\end{rem}

\section{\textbf{\textup{Mirror symmetry to a double cover of $Bl_{3}\mathbb{P}^{2}$
\label{sec:Mirror-symmetr}}}}

We can read off a mirror correspondence of the K3 surfaces $X$ from
the period integrals near the LCSLs (see Appendix \ref{sec:App-GKZ-mirror-sym}).
Extending general observations made in \cite{HKTY,HLY,HLY2} to the
present case, we identify the mirror partner of $X$ starting from
inspecting the structure of the ring defined by the indicial ideal
$Ind(D)$.

\subsection{A double cover of $Bl_{3}\mathbb{P}^{2}$ }

Let $Bl_{3}\mathbb{P}^{2}$ be a blow-up at three (general) points
of $\mathbb{P}^{2}$, which is a del Pezzo surface $S_{6}$ of degree
6. We denote by $E_{1},E_{2},E_{3}$ the exceptional divisors and
by $H$ the pull-back of the hyperplane class in $\mathbb{P}^{2}$.
Then following the lemma is immediate:
\begin{lem}
Define $L_{i}=H-E_{i}\;(i=1,2,3)$ and $L_{4}=H$. Then the intersection
form is given by 
\begin{equation}
\left(L_{i}\cdot L_{j}\right)=\left(\begin{matrix}0 & 1 & 1 & 1\\
1 & 0 & 1 & 1\\
1 & 1 & 0 & 1\\
1 & 1 & 1 & 1
\end{matrix}\right).\label{eq:Lij}
\end{equation}

\end{lem}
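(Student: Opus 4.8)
The plan is a direct computation in the intersection ring of $Bl_{3}\mathbb{P}^{2}$. First I would recall the standard presentation of the Picard lattice of the blow-up of $\mathbb{P}^{2}$ at three distinct points: it is freely generated by $H,E_{1},E_{2},E_{3}$ subject to $H^{2}=1$, $E_{i}\cdot E_{j}=-\delta_{ij}$, and $H\cdot E_{i}=0$. Generality of the three points is only needed for $Bl_{3}\mathbb{P}^{2}$ to be a del Pezzo surface (the points not collinear); the intersection numbers themselves do not depend on it.

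Next I would expand $L_{i}\cdot L_{j}$ for each pair using bilinearity. For $1\le i<j\le 3$ one has $L_{i}\cdot L_{j}=(H-E_{i})\cdot(H-E_{j})=H^{2}-H\cdot E_{j}-E_{i}\cdot H+E_{i}\cdot E_{j}=1$; the diagonal entries for $i\in\{1,2,3\}$ are $L_{i}^{2}=(H-E_{i})^{2}=H^{2}+E_{i}^{2}=0$; the mixed entries are $L_{i}\cdot L_{4}=(H-E_{i})\cdot H=H^{2}=1$ for $i\in\{1,2,3\}$; and finally $L_{4}^{2}=H^{2}=1$. Collecting these into the $4\times 4$ Gram matrix yields exactly (\ref{eq:Lij}).

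There is no real obstacle here: the verification is immediate. The only point worth emphasising in the surrounding discussion is that this Gram matrix is precisely the matrix appearing in Proposition \ref{prop:Ind-ring-Mij} at $d=1$, so that the lattice $\bigoplus_{i}\mathbb{Z}L_{i}$ with its intersection pairing is identified with $\bigoplus_{i}\mathbb{Z}\theta_{i}$ equipped with $(M_{ij})$. This identification is what links the indicial-ideal data of the Picard--Fuchs system (\ref{eq:PF-system-o1}) to the cohomology ring of the conjectural mirror, realising the mirror of $X$ as a double cover of $Bl_{3}\mathbb{P}^{2}$, which is the subject of the remainder of this section.
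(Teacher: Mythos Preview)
Your proof is correct and is exactly the direct computation the paper has in mind when it declares the lemma ``immediate'' without further argument. The additional paragraph connecting the Gram matrix to Proposition~\ref{prop:Ind-ring-Mij} is appropriate context but not part of the proof itself.
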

We identify the above intersection form, up to the factor $d=2$,
with (\ref{eq:Mat-Mij}) appeared in Proposition \ref{prop:Ind-ring-Mij}.
We explain the factor $2$ by considering a double cover: Consider
two general elements $g_{i1},g_{i2}\in|H-E_{i}|$ for each one dimensional
linear system $|H-E_{i}|$ on $S_{6}$. We define the double cover
$\overline{S}_{6}\to S_{6}$ branched along the zero locus $\left\{ g_{11}g_{12}g_{21}g_{22}g_{31}g_{32}=0\right\} $. 
\begin{prop}
The double cover of $\overline{S}_{6}$ is a $K3$ surface which is
singular at 12 points of $A_{1}$ singularities. Its Picard lattice
is generated by the proper transforms $\tilde{L}_{i}$ of $L_{i}\;(i=1,..,4)$
with the intersection matrix $(\tilde{L}_{i}\cdot\tilde{L}_{j})=2\,(L_{i}\cdot L_{j})$.\end{prop}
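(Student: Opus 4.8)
The plan is to verify the three assertions---$\overline{S}_6$ is a K3 surface, it has exactly 12 nodes, and its minimal resolution has Picard lattice $2(L_i\cdot L_j)$---by a sequence of standard double-cover computations. First I would recall the adjunction/canonical bundle formula for a double cover $\pi\colon\overline{S}_6\to S_6$ branched along a smooth divisor $B$: one has $K_{\overline{S}_6}=\pi^{*}(K_{S_6}+\tfrac12 B)$. Here $B$ is the zero locus of $g_{11}g_{12}g_{21}g_{22}g_{31}g_{32}$, so $B\in|2(L_1+L_2+L_3)|=|2(3H-E_1-E_2-E_3)|$; since $K_{S_6}=-3H+E_1+E_2+E_3=-(L_1+L_2+L_3)$, we get $K_{S_6}+\tfrac12 B=0$, hence $K_{\overline{S}_6}=0$. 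Combined with $\chi(\mathcal{O}_{\overline{S}_6})=\chi(\mathcal{O}_{S_6})+\chi(\mathcal{O}_{S_6}(-(L_1+L_2+L_3)))=1+1=2$ (using $\mathcal{O}_{S_6}(-(L_1+L_2+L_3))\cong K_{S_6}$ and Serre duality, $h^0=h^1=0$, $h^2=1$), the minimal resolution is a K3 surface. The nodes arise precisely from the singularities of $B$: the six curves $g_{i\alpha}=0$ are general members of pencils, so they meet pairwise transversally, and a double cover over a node of the branch locus is an $A_1$ singularity. Counting intersection points of the six curves: the only forced intersections come from distinct divisor classes, i.e. for $\{i,\alpha\}\ne\{j,\beta\}$ with $i\ne j$, and $(L_i\cdot L_j)=1$ gives one point each; there are $\binom{3}{2}=3$ pairs of indices $\{i,j\}$ and $2\times 2=4$ choices of $(\alpha,\beta)$, giving $3\cdot 4=12$ nodes. (Two general members of the \emph{same} pencil $|L_i|$ share only the base point behaviour, but $|H-E_i|$ has self-intersection $1$; I would need to check that the base point of the pencil does not contribute an extra node, which it does not because one can choose $g_{i1},g_{i2}$ with no common zero after the blow-up, the pencil being base-point-free on $S_6$.)

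Next I would compute the Picard lattice of the resolution $\widetilde{S}_6\to\overline{S}_6$. The pullback $\pi^{*}$ gives a primitive embedding of $\mathrm{Pic}(S_6)$ into $\mathrm{Pic}(\widetilde{S}_6)$ scaled by $2$: for divisor classes $D,D'$ pulled back and then proper-transformed, $(\tilde D\cdot\tilde D')=(\pi^{*}D\cdot\pi^{*}D')=2(D\cdot D')$ since $\pi$ is degree $2$ and the proper transforms differ from pullbacks only by exceptional curves orthogonal to them. Applying this to $L_1,\dots,L_4$ and invoking the lemma's intersection matrix $(L_i\cdot L_j)$ yields $(\tilde L_i\cdot\tilde L_j)=2(L_i\cdot L_j)$, which is exactly the matrix in \eqref{eq:Lij} multiplied by $2$, matching \eqref{eq:Mat-Mij} with $d=2$. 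That the $\tilde L_i$ together with the twelve exceptional $(-2)$-curves \emph{generate} $\mathrm{Pic}(\widetilde{S}_6)$ (rather than merely span a finite-index sublattice) would follow from a rank and discriminant count: $\mathrm{rk}=4+12=16$, consistent with the transcendental lattice $T_X\cong U(2)^{\oplus2}\oplus A_1^{\oplus2}$ of rank $6$, and the discriminant of $2(L_i\cdot L_j)\oplus A_1^{\oplus12}$ should match that of the orthogonal complement of $T_X$ in the K3 lattice.

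The main obstacle I anticipate is the last point---showing the lattice is generated on the nose, i.e. controlling saturation. The clean way is to identify $\widetilde{S}_6$ with a member of the mirror family predicted by the Batyrev/Gross--Siebert-type construction implicit in \cite{HKTY,HLY,HLY2}, for which the Picard lattice is known to be $U(2)^{\oplus2}\oplus A_1^{\oplus2}$'s orthogonal complement; alternatively, one argues directly that a class $\tfrac12\tilde L$ cannot be integral by restricting to a suitable rational curve. I would state the generation claim and defer the saturation verification to a lattice-theoretic computation (or cite the forthcoming companion paper, as the excerpt's introduction promises a sequel treating the higher-dimensional generalizations and mirror construction in detail). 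The rest---canonical bundle, Euler characteristic, node count, and the scaling of the intersection form---is routine and is what I would write out carefully.
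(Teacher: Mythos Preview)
Your approach is essentially the same as the paper's, only far more thorough: the paper's entire proof reads ``The number of intersection points is immediate by counting intersection numbers of the divisors $\{g_{ia}=0\}$. The intersection forms $(\tilde L_i\cdot\tilde L_j)$ are doubled by the double covering.'' So the canonical-bundle and $\chi(\mathcal{O})$ computations you give, as well as the saturation discussion, go well beyond what the paper actually writes; the paper does not address the generation claim at all.

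One small slip: you write that $|H-E_i|$ has self-intersection $1$, but in fact $L_i^2=(H-E_i)^2=H^2+E_i^2=1-1=0$ for $i=1,2,3$, consistent with the diagonal of the matrix in the preceding lemma. This is why two general members of the same pencil $|L_i|$ are automatically disjoint and contribute no nodes, so your worry about a base point is moot; the count of $12$ is simply $\sum_{i<j}(L_i\cdot L_j)\cdot 4=3\cdot 4$ as you say. Otherwise your argument is correct and, if anything, more complete than what the paper provides.
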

\begin{proof}
The number of intersection points is immediate by counting intersection
numbers of the divisors $\left\{ g_{ia}=0\right\} .$ The intersection
forms $(\tilde{L}_{i}\cdot\tilde{L}_{j})$ are doubled by the double
covering.
\end{proof}
The intersection form of $\overline{S}_{6}$ explains the factor $d=2$
in Proposition \ref{prop:period-relation-Mij}. Let us recall that
the K3 surface $X$ is defined to be a resolution of the singular
double cover $\overline{X}\to\mathbb{P}^{2}$ branched along general
six lines. Based on the form of mirror symmetry observed for hypersurfaces
in toric varieties \cite{HKTY,Hos}, we conjecture the following (cf.
the next section):
\begin{conjecture}
\label{conj:Mirror-D-C}Mirror of the double cover (singular) K3 surface
$\overline{X}$ is a singular K3 surface $\overline{S}_{6}$ defined
above. Namely, the double covering of del Pezzo surface $S_{6}$ branched
along the zero loci of general elements $g_{ia}\in|H-E_{i}|$ (i=1,2,3;
a=1,2).
\end{conjecture}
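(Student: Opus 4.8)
Since this is a conjecture rather than a theorem, what follows is a proposed route (the follow-up paper announced in the Introduction presumably carries it out). The plan is to establish the statement in the Batyrev-Borisov/GKZ sense, i.e. as an isomorphism of the variations of Hodge structure (equivalently, of the $D$-modules of period integrals together with their integral structures) of the two four-dimensional families: the double cover family $\{\overline X_a\}_{a\in\mathcal M_6}$ and the family of branched double covers $\overline S_6\to S_6$ along $\{g_{11}g_{12}g_{21}g_{22}g_{31}g_{32}=0\}$ with $g_{ia}\in|H-E_i|$. I would deliberately \emph{not} try to deduce this from Dolgachev-Nikulin lattice mirror symmetry: both families have generic Picard number $16$ (the resolution of $\overline X$ has $NS$ of rank $1+15$, and by the preceding Proposition the resolution of $\overline S_6$ has $NS$ of rank $4+12$), so the ranks do not add up to $20$ and the naive swap $NS\leftrightarrow T$ fails; moreover $T_X\simeq U(2)^{\oplus2}\oplus A_1^{\oplus2}$ has no hyperbolic summand $U$. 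The natural setting is instead the toric/hypergeometric one: $\overline X=\{w^2=\ell_1\cdots\ell_6\}$ is an anticanonical hypersurface in the Gorenstein toric Fano threefold $\mathbb P(1,1,1,3)$, with the product of six lines being the large complex structure anticanonical section, and the self-duality $G(3,6)\cong G(3,6)$ of the Grassmannian underlying $E(3,6)$ is what one expects to make the mirror again an $E(3,6)$-type period system attached to the dual geometric model.

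Concretely I would proceed in two steps. First, make the toric duality explicit: write down the reflexive polytope $\Delta$ of $\mathbb P(1,1,1,3)$ and the finite set $\mathcal A$ which, in the LCSL chart of \cite{HLTYpartI}, produces the GKZ system (\ref{eq:PF-system-o1}) with coefficients (\ref{eq:def-cn}); then compute the polar dual $\Delta^\ast$ and the associated toric variety $\mathbb P_{\Delta^\ast}$ with its anticanonical system. Second, identify the mirror: show that a crepant partial resolution of $\mathbb P_{\Delta^\ast}$ contains the hexagon toric surface $S_6=Bl_3\mathbb P^2$, that $-K$ restricted there equals $-2K_{S_6}=\sum_{i=1}^3 2(H-E_i)$, and that the $\mathbb Z_2$-cover structure carried by the relation $w^2=(\cdots)$ yields precisely the branched double cover $\overline S_6\to S_6$ along $\{\prod_{i,a}g_{ia}=0\}$, $g_{ia}\in|H-E_i|$. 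Several numbers can serve as sanity checks along the way: $(H-E_i)^2=0$ forces $\{g_{i1}=0\}\cap\{g_{i2}=0\}=\varnothing$ while $(H-E_i)\cdot(H-E_j)=1$ for $i\ne j$ gives $3\cdot2\cdot2=12$ nodes, matching the $12$ $A_1$'s in the preceding Proposition; $\dim|H-E_i|=1$ and $\dim\mathrm{Aut}(S_6)=2$ give $6-2=4$ moduli, matching $\dim\mathcal M_6$; and $K_{\overline S_6}=0$ confirms $\overline S_6$ is $K3$.

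The remaining verifications are the lattice- and period-theoretic ones, half of which are already in the paper. On the $X$-side the indicial-ideal lattice is $(M_{ij})=d\,(L_i\cdot L_j)$ with $d=2$ (Proposition \ref{prop:Ind-ring-Mij} and (\ref{eq:Lij})), i.e. $U(2)\oplus A_1^{\oplus2}$ by Lemma \ref{lem:int-form-M-P}, and the period relation of Proposition \ref{prop:period-relation-Mij} promotes this to $T_X\simeq U(2)^{\oplus2}\oplus A_1^{\oplus2}$; on the $\overline S_6$-side the preceding Proposition gives $(\widetilde L_i\cdot\widetilde L_j)=2(L_i\cdot L_j)$ inside $NS$, so the four K\"ahler directions $\widetilde L_1,\dots,\widetilde L_4$ on one side match the four complex deformations of $\overline S_6$ on the other, and conversely. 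One would then match the Picard-Fuchs system of the $\overline S_6$ family at the corresponding boundary point with (\ref{eq:PF-system-o1}) and check that the weight-four form $\widetilde\Theta$ entering (\ref{eq:omega2}) is reproduced. The hard part is making the second step rigorous: proving that Batyrev-Borisov duality applied to $\{w^2=\ell_1\cdots\ell_6\}\subset\mathbb P(1,1,1,3)$ genuinely returns a resolution of the stated double cover of $S_6$, with the singular strata and the compactified moduli spaces matched, and then upgrading the resulting equality of $D$-modules and numerical invariants to a precise mirror statement (an isomorphism of polarized variations of Hodge structure, and ultimately a homological equivalence), a subtlety sharpened by the fact that this example lies outside the classical Dolgachev-Nikulin range.
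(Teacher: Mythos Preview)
Your proposal and the paper agree on two important points: this is a conjecture (the paper offers an interpretation, not a proof), and Dolgachev--Nikulin lattice duality is not the right framework because $T_X\simeq U(2)^{\oplus2}\oplus A_1^{\oplus2}$ has no $U$-summand. Your numerical checks (12 nodes, 4 moduli, triviality of $K$) are all correct.

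Where you diverge is in the toric packaging. You propose to view $\overline X=\{w^2=\ell_1\cdots\ell_6\}$ as an anticanonical hypersurface in the three-dimensional $\mathbb P(1,1,1,3)$ and apply ordinary Batyrev polar duality to its reflexive polytope. The paper instead works entirely in dimension two: it takes the polytope $\Delta$ of $\mathbb P^2$, writes the Minkowski decomposition $\Delta=\Delta_1+\Delta_2+\Delta_3$ (each $\Delta_i$ a triangle giving a linear form), and observes that the Batyrev--Borisov dual $\nabla=(\mathrm{Conv}\{\Delta_1,\Delta_2,\Delta_3\})^*$ is the hexagon with $\mathbb P_\nabla\simeq Bl_3\mathbb P^2$. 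The double-cover structure is then imposed symmetrically on both sides by taking \emph{two} general sections from each piece of the nef-partition and branching along their union; this is a new general construction (announced for \cite{HLLY}) that manufactures the pair $(\overline X,\overline S_6)$ directly.

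The paper's two-dimensional route is better adapted to the problem: the GKZ system (\ref{eq:PF-system-o1}) actually arises from the 2D integrand (\ref{eq:1-over-f1f2f3}) with its three trinomial factors, not from a 3D hypersurface system. Your $\mathbb P(1,1,1,3)$ route faces a genuine obstacle: the generic sextic there has a one-parameter Batyrev mirror, so to recover a \emph{four}-parameter mirror for the factored locus $f_6=\ell_1\cdots\ell_6$ you would need to restrict the set $\mathcal A$ or reintroduce a nef-partition on the 3D polytope---which, once unwound, essentially reproduces the paper's 2D picture. So your approach is not wrong, but the step ``compute $\Delta^*$ for $\mathbb P(1,1,1,3)$ and find $S_6$ inside a crepant resolution of $\mathbb P_{\Delta^*}$'' is not the one the authors take, and carrying it out would require more than standard hypersurface duality.
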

The relation of the conjecture to the standard descriptions of mirror
symmetry of K3 surfaces \cite{Batyrev,DoMs,GW} is not completely
clear, since the lattice $U(2)$ instead of $U$ is contained as a
summand of the transcendental lattice $T_{\overline{X}}$, for example.
However, we interpret below the conjecture as a variant of the so-called
Batyrev-Borisov toric mirror construction. 

\def\DNabla{
\begin{xy}
(-20,0)*{\Delta},
(60,0)*{\nabla},
(-13,-17)*{\Delta_1},
(10,-14)*{\Delta_3},
(5,-30)*{\Delta_2},
(35,-13)*{\nabla_2},
(51,-20)*{\nabla_1},
(32,-28)*{\nabla_3},
(0,0)*\dir{*},    
(-5,10)*\dir{*},  
(-5,5)*\dir{*},   
(-5,0)*\dir{*},   
(-5,-5)*\dir{*},  
(0,-5)*\dir{*},   
(5,-5)*\dir{*},   
(10,-5)*\dir{*},  
(5,0)*\dir{*},    
(0,5)*\dir{*},    
(40,0)*\dir{*},   
(35,0)*\dir{*},   
(35,-5)*\dir{*},  
(40,-5)*\dir{*},  
(45,0)*\dir{*},   
(45,5)*\dir{*},   
(40,5)*\dir{*},   
(0,-20)*\dir{*},    
(-5,-10)*{\cdot},   
(-5,-15)*\dir{*},   
(-5,-20)*\dir{*},   
(-5,-25)*{\cdot},   
(0,-25)*\dir{*},    
(5,-25)*\dir{*},    
(10,-25)*{\cdot},   
(5,-20)*\dir{*},    
(0,-15)*\dir{*},    
(40,-20)*\dir{*},   
(35,-20)*{\cdot},   
(35,-25)*\dir{*},   
(40,-25)*{\cdot},   
(45,-20)*\dir{*},   
(45,-15)*{\cdot},   
(40,-15)*\dir{*},   
\ar@{-} (-5,-5);(-5,10)
\ar@{-} (-5,-5);(10,-5)
\ar@{-} (-5,10);(10,-5)
\ar@{-} (35,0);(35,-5)
\ar@{-} (35,-5);(40,-5)
\ar@{-} (40,-5);(45,0)
\ar@{-} (45,0);(45,5)
\ar@{-} (45,5);(40,5)
\ar@{-} (40,5);(35,0)
\ar@{-} (0,-20);(-5,-15)
\ar@{-} (0,-20);(-5,-20)
\ar@{-} (-5,-15);(-5,-20)
\ar@{-} (0,-20);(0,-25)
\ar@{-} (0,-20);(5,-25)
\ar@{-} (0,-25);(5,-25)
\ar@{-} (0,-20);(5,-20)
\ar@{-} (0,-20);(0,-15)
\ar@{-} (5,-20);(0,-15)
\ar@{-} (40,-20);(35,-25)
\ar@{-} (40,-20);(45,-20)
\ar@{-} (40,-20);(40,-15)
\end{xy}}\vskip0.2cm 
\begin{table}[htbp]
\[ \DNabla \]
\caption{{\bf Fig.2} Batyrev-Borisov duality for the Minkowski sums $\Delta=\Delta_1+\Delta_2+\Delta_3$ and $\nabla=\nabla_1+\nabla_2+\nabla_3$.}
\end{table}  \vskip-0.8cm \; 

~

\subsection{Double coverings from the Batyrev-Borisov duality}

It is suggestive to arrange the combinatorial data for the constructions
$\overline{X}$ and $\overline{S}_{6}$ into a generalization of Batyrev-Borisov
toric mirror construction \cite{Batyrev,Batyrev-Borisov}. In a follow
up paper \cite{HLLY}, we will provide a full generalization to all
dimensional Calabi-Yau varieties.

\para{Batyrev-Borisov duality} Recall that, in toric geometry, the
projective plane $\mathbb{P}^{2}$ is described as $\mathbb{P}_{\Delta}$
with a two dimensional polytope 
\[
\Delta={\rm Conv}\left\{ (2,-1),(-1,2),(-1,-1)\right\} ,
\]
whose integral points represent sections of $-K_{\mathbb{P}^{2}}$.
We consider the following Minkowski sum decomposition 
\begin{equation}
\Delta=\Delta_{1}+\Delta_{2}+\Delta_{3}\label{eq:Minkowski-decomp-D}
\end{equation}
with $\Delta_{1}={\rm Conv\left\{ (-1,0),(-1,1),(0,0)\right\} },\Delta_{2}={\rm Conv\left\{ (0,-1),(1,-1),(0,0)\right\} }$
and $\Delta_{3}={\rm Conv\left\{ (1,0),(0,1),(0,0)\right\} .}$ This
decomposition corresponds to the factorization $f_{\Delta}=f_{\Delta_{1}}f_{\Delta_{2}}f_{\Delta_{3}}$
of Laurent polynomials of $\Delta$ into polynomials defined for each
polytope $\Delta_{i}$. In terms of homogeneous coordinates, this
is nothing but a factorization of cubic polynomials into three linear
polynomials. 

According to Batyrev-Borisov construction, we define the following
polar dual
\[
\nabla:=\left({\rm Conv\left\{ \Delta_{1},\Delta_{2},\Delta_{3}\right\} }\right)^{*}.
\]
Then, the Minkowski decomposition (\ref{eq:Minkowski-decomp-D}) induces
the corresponding Minkowski decomposition of $\nabla$,
\[
\nabla=\nabla_{1}+\nabla_{2}+\nabla_{3}
\]
with $\nabla_{1}={\rm Conv\left\{ (1,0),(0,0)\right\} ,\nabla_{2}={\rm Conv}\left\{ (0,1),(0,0)\right\} }$
and $\nabla_{3}={\rm Conv}\{(-1,-1),$ $(0,0)\}$. By Batyrev-Borisov
duality, we obtain the original $\Delta$ by the polar dual 
\[
\Delta=\left({\rm Conv}\left\{ \nabla_{1},\nabla_{2},\nabla_{3}\right\} \right)^{*}.
\]
The duality holds in general for the so-called reflexive polytopes
with additional data called nef-partitions. In Fig.2, we summarize
the duality in the present case. It is clear that the toric variety
$\mathbb{P}_{\nabla}$ is isomorphic to $Bl_{3}\mathbb{P}^{2}$, the
blow-up at three coordinate points of $\mathbb{P}^{2}$. 

\para{Double coverings from the duality} In the Batyrev-Bosisov duality,
associated to the polytope $\Delta_{i}$ (respectively $\nabla_{j}$
), we have Laurent polynomial $f_{\Delta_{i}}$ ($g_{\nabla_{j}}$)
and also a toric divisor $D_{\Delta_{i}}$ in $\mathbb{P}_{\nabla}$
($D_{\nabla_{j}}$in $\mathbb{P}_{\Delta}$). They may be summarized
in 
\[
f_{\Delta_{i}}\in H^{0}(\mathbb{P}_{\Delta},\mathcal{O}(D_{\nabla_{i}}))\text{ and }g_{\nabla_{j}}\in H^{0}(\mathbb{P}_{\nabla},\mathcal{O}(D_{\Delta_{j}}))
\]
under the duality. 
\begin{defn}
Suppose two reflexive polytopes have Minkowski sum decompositions
$\Delta=\Delta_{1}+\cdots+\Delta_{s}$ and $\nabla=\nabla_{1}+\cdots+\nabla_{s}$
which are dual in the sense of Batyrev-Borisov. Take two general sections
$f_{\Delta_{i},1},f_{\Delta_{i},2}\in H^{0}(\mathbb{P}_{\Delta},\mathcal{O}(D_{\nabla_{i}}))$
and $g_{\nabla_{j},1},g_{\nabla_{j},2}\in H^{0}(\mathbb{P}_{\nabla},\mathcal{O}(D_{\Delta_{j}}))$
for each divisors $D_{\Delta_{i}}$ and $D_{\nabla_{j}}$. We define
the double covering $\overline{Y}_{\Delta}$ of toric Fano variety
$\mathbb{P}_{\Delta}$ branched along $\cup_{i,a}\left\{ f_{\Delta_{i},a}=0\right\} $,
and similarly $\overline{Y}_{\nabla}$ of $\mathbb{P}_{\nabla}$ with
the branch locus $\cup_{j,a}\left\{ g_{\nabla_{j},a}=0\right\} $
in $\mathbb{P}_{\nabla}$.
\end{defn}
By construction, the double covers $\overline{Y}_{\Delta}$ and $\overline{Y}_{\nabla}$
are Calabi-Yau varieties which is singular in general. Our observation
made in Conjecture \ref{conj:Mirror-D-C} can be understood as a special
case of the pair of double covers in dimensions two, i.e., $(\overline{Y}_{\Delta},\overline{Y}_{\nabla})=(\overline{X},\overline{S}_{6})$.
We naturally expect that these double cover Calabi-Yau varieites $\overline{Y}_{\Delta}$
and $\overline{Y}_{\nabla}$ are mirror symmetric in general as we
have observed in the special case. Other geometric justifications
(e.g. \cite{SYZ,GS1,GS2,DHT}) for this new duality are also expected,
but we defer them to future investigations. 

~

~

~

~

\newpage

\appendix
\renewcommand{\themyparagraph}{{\Alph{section}.\arabic{subsection}.\alph{myparagraph}}}

\section{\textbf{\textup{Genus two theta functions}} \label{sec:App-Theta-fn}}

Here we summarize our notation for the genus two theta functions following
\cite{Matsu93,DoOrt}. 
\begin{defn}
For $W\in\mathbb{H}_{2}$ and $a,b\in\frac{1}{2}\mathbb{Z}[i]^{2}$,
we define theta functions on $\mathbb{H}_{2}$ by 
\[
\Theta\left[\begin{matrix}a\\
b
\end{matrix}\right](W):=\sum_{n\in\mathbb{Z}[i]^{2}}\exp\left(\pi i\big(\,^{t}\overline{(n+a)}W(n+a)+2\mathrm{Re}(\,^{t}\overline{b}\,n)\big)\right).
\]

\end{defn}
The theta functions $\Theta\left(\begin{matrix}i\,j\,k\\
l\,m\,n
\end{matrix}\right)(W)$ used in the text are special types given by $a,b$ satisfying $\mathrm{Re}(a)=\mathrm{Im}(a),$$\mathrm{Re}(b)=\mathrm{Im}(b)$,
which are specified by the correspondence 

\[
\left[\begin{matrix}a\\
b
\end{matrix}\right]=\frac{1+i}{2}\left[\begin{smallmatrix}s_{1}\\
s_{2}\\
s_{3}\\
s_{4}
\end{smallmatrix}\right]\leftrightarrow\left(\begin{matrix}i\,j\,k\\
l\,m\,n
\end{matrix}\right).
\]
Explicitly, we use the following correspondences \cite{Matsu93}:
\[
\begin{aligned}\left[\begin{smallmatrix}1\\
1\\
1\\
1
\end{smallmatrix}\right]\leftrightarrow\left(\begin{matrix}1\,2\,3\\
4\,5\,6
\end{matrix}\right), &  & \left[\begin{smallmatrix}1\\
1\\
0\\
0
\end{smallmatrix}\right]\leftrightarrow\left(\begin{matrix}1\,2\,4\\
3\,5\,6
\end{matrix}\right), &  & \left[\begin{smallmatrix}1\\
0\\
0\\
0
\end{smallmatrix}\right]\leftrightarrow\left(\begin{matrix}1\,2\,5\\
3\,4\,6
\end{matrix}\right),\\
\left[\begin{smallmatrix}1\\
0\\
0\\
1
\end{smallmatrix}\right]\leftrightarrow\left(\begin{matrix}1\,2\,6\\
3\,4\,5
\end{matrix}\right), &  & \left[\begin{smallmatrix}0\\
1\\
0\\
0
\end{smallmatrix}\right]\leftrightarrow\left(\begin{matrix}1\,3\,4\\
2\,5\,6
\end{matrix}\right), &  & \left[\begin{smallmatrix}0\\
0\\
0\\
0
\end{smallmatrix}\right]\leftrightarrow\left(\begin{matrix}1\,3\,5\\
2\,4\,6
\end{matrix}\right), &  & \left[\begin{smallmatrix}0\\
0\\
0\\
1
\end{smallmatrix}\right]\leftrightarrow\left(\begin{matrix}1\,3\,6\\
2\,4\,5
\end{matrix}\right),\\
\left[\begin{smallmatrix}0\\
0\\
1\\
1
\end{smallmatrix}\right]\leftrightarrow\left(\begin{matrix}1\,4\,5\\
2\,3\,6
\end{matrix}\right), &  & \left[\begin{smallmatrix}0\\
0\\
1\\
0
\end{smallmatrix}\right]\leftrightarrow\left(\begin{matrix}1\,4\,6\\
2\,3\,5
\end{matrix}\right), &  & \left[\begin{smallmatrix}0\\
1\\
1\\
0
\end{smallmatrix}\right]\leftrightarrow\left(\begin{matrix}1\,5\,6\\
2\,3\,4
\end{matrix}\right).
\end{aligned}
\]
We also write these ten theta functions by $\Theta_{i}$ with $i=1,...,10$
by ordering the theta functions $\Theta\left(\begin{matrix}i\,j\,k\\
l\,m\,n
\end{matrix}\right)$ from the left to right, and the first line to the third line in the
above correspondence. These functions have $q$-series expansions
with 
\[
q_{1}=e^{\pi iw_{11}},\;q_{2}=e^{\pi iw_{22}},\;q_{3}q_{4}=e^{\pi i(w_{12}+w_{21})},\;\frac{q_{3}}{q_{4}}=e^{-\pi(w_{12}-w_{12})}
\]
for $W=\left(\begin{matrix}w_{11} & w_{12}\\
w_{21} & w_{22}
\end{matrix}\right)$. The following properties are known in literatures (see \cite[Prop.3.1.1, Cor. 3.2.2]{Matsu93}):
\begin{prop}
The squares of the ten theta functions $\Theta_{i}(W)^{2}$ are modular
forms on $\Gamma_{T}(1+i)$ with the character $\det:\Gamma_{T}(1+i)\to\mathbb{C}^{*}$.
Any five of linearly independent theta functions freely generate the
module $\mathrm{M}_{\det}(\Gamma_{T}(1+i))$. 
\end{prop}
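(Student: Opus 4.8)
The plan is to reduce both assertions to the transformation law for the theta functions of Definition~\ref{sec:App-Theta-fn} --- the type $I_{2,2}$ (Hermitian/Gaussian) analogue of the classical symplectic theta transformation formula --- together with a dimension count for weight-two forms that is read off from the commutative diagram (\ref{eq:diagram-M6-H2-P9}). Throughout I write $\Gamma_{T}(1+i)$ for the congruence subgroup denoted $\Gamma_{M}(1+i)$ in the body, i.e. $\Gamma_{T}(1+i)\simeq\mathcal{G}(2)$. First I would record, for $\gamma=\left(\begin{smallmatrix}A&B\\C&D\end{smallmatrix}\right)$ in the group $\Gamma_{T}$ realized inside the unitary group preserving the Hermitian form cutting out $\mathbb{H}_{2}$, that there is an action $(a,b)\mapsto\gamma\!\cdot\!(a,b)$ on the finite set of characteristics and an eighth root of unity $\varepsilon(\gamma,a,b)$ with
\[
\Theta\!\left[\begin{smallmatrix}a\\b\end{smallmatrix}\right]\!(\gamma\cdot W)=\varepsilon(\gamma,a,b)\,\det(CW+D)^{1/2}\,\Theta\!\left[\gamma\!\cdot\!(a,b)\right]\!(W).
\]
The defining congruence of $\Gamma_{T}(1+i)$ is precisely the condition that $\gamma\!\cdot\!(\,\cdot\,)$ fix each of the ten equivalence classes of (even) characteristics listed in Appendix~\ref{sec:App-Theta-fn}. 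Hence for $\gamma\in\Gamma_{T}(1+i)$ one gets $\Theta_{i}(\gamma\cdot W)^{2}=\varepsilon_{i}(\gamma)^{2}\det(CW+D)\,\Theta_{i}(W)^{2}$ for every $i$, so $\Theta_{i}^{2}$ is a weight-two modular form on $\Gamma_{T}(1+i)$ (holomorphy at the cusps being automatic from the explicit $q$-expansions), and $\chi_{i}\colon\gamma\mapsto\varepsilon_{i}(\gamma)^{2}$ is a character of $\Gamma_{T}(1+i)$ into roots of unity.

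\textbf{Identifying the character.} Since the ten characteristics form a single orbit under $\Gamma_{T}/\Gamma_{T}(1+i)\simeq S_{6}\times\mathbb{Z}_{2}$, the characters $\chi_{i}$ coincide; call the common one $\chi$. To see $\chi=\det$ it suffices to evaluate $\varepsilon(\gamma,a,b)^{2}$ on a set of generators of $\Gamma_{T}(1+i)$ and to compare, via the isomorphism $\Gamma_{T}(1+i)\simeq\mathcal{G}(2)\subset PGL(6,\mathbb{Z})$ --- on which $\det$ is the well-defined sign character --- with the determinant of the corresponding element of $\mathcal{G}(2)$; this is the finite computation recorded in \cite{Matsu93}.

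\textbf{Dimension count and basis.} First, the ten squares span a $5$-dimensional subspace $V\subset\mathrm{M}_{\det}(\Gamma_{T}(1+i))$: under the $S_{6}$-equivariant diagram (\ref{eq:diagram-M6-H2-P9}) the $\Theta_{i}^{2}$ are pulled back from the same coordinate forms on $\mathbb{P}^{9}$ as the semi-invariants $Y_{I}$, which satisfy exactly five independent Pl\"ucker-type linear relations (these are the five linear relations among the $\Theta_{i}^{2}$, cf. the proof of Proposition~\ref{prop:Sol-algebraic-master-eq} and \cite{Matsu93}), and no fewer, so $\dim_{\mathbb{C}}V=10-5=5$. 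Second, $\dim_{\mathbb{C}}\mathrm{M}_{\det}(\Gamma_{T}(1+i))\le 5$: by \cite{Matsu93} the map $\Phi=\Phi_{Y}\circ\mathcal{P}^{-1}$ realizes (a dense open of) the Baily--Borel compactification of $\Gamma_{T}(1+i)\backslash\mathbb{H}_{2}$ as a subvariety of $\mathbb{P}^{9}$ contained in the linear $\mathbb{P}^{4}$ cut out by those five relations and in no hyperplane of it, so pullback of linear forms identifies $\mathrm{M}_{\det}(\Gamma_{T}(1+i))$ (weight two) with $H^{0}(\mathbb{P}^{4},\mathcal{O}(1))$, which is $5$-dimensional; equivalently one may invoke the Riemann--Roch / Hilbert-series dimension formula for modular forms on this arithmetic quotient. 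Hence $V=\mathrm{M}_{\det}(\Gamma_{T}(1+i))$ has dimension $5$, and any five linearly independent theta squares form a $\mathbb{C}$-basis of it --- i.e. freely generate it.

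\textbf{The main difficulty.} The one genuinely delicate point is the second step: tracking the eighth-root-of-unity multiplier $\varepsilon(\gamma,a,b)$ through the theta transformation formula and confirming that its square agrees with $\det(\gamma)$ for all $\gamma\in\Gamma_{T}(1+i)$ requires a careful match between the characteristic action, an explicit generating set for $\Gamma_{T}(1+i)$, and the isomorphism $\Gamma_{T}(1+i)\simeq\mathcal{G}(2)$. The first and third steps are, by contrast, essentially a transcription of the classical structure of the theta transformation law and of the geometry already encoded in (\ref{eq:diagram-M6-H2-P9}).
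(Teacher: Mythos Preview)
The paper does not prove this proposition at all: it is stated as a known fact with the attribution ``The following properties are known in literatures (see \cite[Prop.\,3.1.1, Cor.\,3.2.2]{Matsu93})'' immediately preceding it. So your outline is not competing with any argument in the paper --- it is an attempt to reconstruct the proof in the cited reference.

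Your sketch is reasonable in shape (transformation law $\Rightarrow$ modularity with a character; identify the character on generators; dimension count), but two points deserve attention. First, your upper bound $\dim\mathrm{M}_{\det}(\Gamma_{T}(1+i))\le 5$ via the embedding $\Phi$ is circular as written: the assertion that pullback of $\mathcal{O}(1)$ from $\mathbb{P}^{4}$ exhausts the weight-two forms is equivalent to saying $\Phi$ is defined by the \emph{complete} linear system, which is precisely what you are trying to prove. Your parenthetical ``equivalently one may invoke the Riemann--Roch / Hilbert-series dimension formula'' is the actual argument and should be the primary one; in Matsumoto's paper this step goes through an explicit computation of the dimension of the relevant space of automorphic forms on the type $I_{2,2}$ domain. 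Second, the phrase ``freely generate the module $\mathrm{M}_{\det}(\Gamma_{T}(1+i))$'' in the source (Matsumoto, Cor.\,3.2.2) refers to the full graded ring $\bigoplus_{k}\mathrm{M}_{\det^{k}}$ being a polynomial ring in five generators, not merely to the weight-two piece being $5$-dimensional. Your argument establishes only the latter; passing from ``basis in weight two'' to ``polynomial ring in all weights'' requires an additional step (injectivity of multiplication, or equivalently algebraic independence of the five chosen $\Theta_{i}^{2}$), which Matsumoto obtains from the fact that the image of $\Phi$ is $4$-dimensional and not contained in any hypersurface of the linear $\mathbb{P}^{4}$.
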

There is also a modular form $\Theta(W)$ on $\Gamma_{T}$ with a
certain character $\chi:\Gamma_{T}\to\left\{ \pm1\right\} $ \cite[Lem. 3.1.3]{Matsu93},
which satisfies 
\[
\Theta(W)^{2}=\frac{3\cdot5^{2}}{2^{6}}\left\{ \left(\sum_{i=1}^{10}\Theta_{i}(W)^{4}\right)^{2}-4\sum_{i=1}^{10}\Theta_{i}(W)^{8}\right\} .
\]
This relation is implicitly contained in the (algebraic) master equation
as shown in Proposition \ref{prop:Sol-algebraic-master-eq}. When
expressing our lambda functions and the hypergeometric series $\omega_{0}(\lambda)^{2}$,
we have introduced $\widetilde{\Theta}(W):=\frac{2^{2}}{3\cdot5}\Theta(W)$. 

~

~

\section{\textbf{\textup{The polynomials $P_{I}$ and $Q_{I}$ \label{sec:App-P-Q}}}}

Recall that we have defined the polynomials $P_{I}(t)$ and $Q_{I}(t)$
by expressing the (inhomogeneous) semi-invariants $\frac{1}{a_{0}b_{0}c_{0}}Y_{I}(A_{0})$
by the affine coordinates $z_{k}:=z_{k}(o_{1})$ and $\tilde{z}_{k}:=z_{k}(o_{1}^{+})$,
respectively. By definition, we have 
\[
\frac{1}{a_{0}b_{0}c_{0}}Y_{I}(A_{0})=P_{I}(z)\text{ and }\frac{1}{a_{0}b_{0}c_{0}}Y_{I}(A_{0})=Q_{I}(\tilde{z}).
\]
The coding of the semi-invariants by $I=\left\{ \{ijk\},\{lmn\}\right\} $
comes from the definition $Y_{I}(A)=[ijk][lmn]$. We use the definitions
given in \cite[Appendix C]{HLTYpartI};
\[
\begin{aligned}Y_{0}=[1\,2\,3][4\,5\,6], &  & Y_{1}=[1\,2\,4][3\,5\,6], &  & Y_{2}=[1\,2\,5][3\,4\,6],\\
Y_{3}=[1\,3\,4][2\,5\,6], &  & Y_{4}=[1\,3\,5][2\,4\,6],\\
Y_{6}=[1\,2\,6][3\,4\,5], &  & Y_{7}=[1\,3\,6][2\,4\,5], &  & Y_{8}=[1\,4\,6][2\,3\,5],\\
Y_{9}=[1\,5\,6][2\,3\,4], &  & Y_{10}=[1\,4\,5][2\,3\,6],
\end{aligned}
\]
where $Y_{5}$ is used for the degree two element. We denote by $P_{s}$
and $Q_{s}$ the corresponding polynomials to the $Y_{s}$ above.
These numbering should not be confused with the numbering $\Theta_{i}(i=1,...,10)$.
Below we list the polynomials : 
\[
\begin{aligned}P_{0}= & 1-(z_{1}z_{2}+z_{1}z_{3}+z_{2}z_{3}+z_{1}z_{2}z_{3})z_{4}-z_{1}z_{2}z_{3}z_{4}^{2},\\
P_{1}= & -z_{1}z_{2}(1+z_{3})z_{4},\;\;\;\;\;\,P_{2}=-1+z_{1}z_{3}z_{4},\;\;\;\;\;\;\;\;\;\;\;\;P_{3}=-1+z_{2}z_{3}z_{4},\\
P_{4}= & -z_{1}z_{2}(1+z_{3}z_{4})z_{4},\;\;\,P_{6}=-(1+z_{1}z_{4})z_{2}z_{3}z_{4},\;\;\,P_{7}=z_{1}(1+z_{2})z_{3}z_{4},\\
P_{8}= & (1+z_{1})z_{2}z_{3}z_{4},\;\;\;\;\;\;\;\;\;\;P_{9}=-z_{1}z_{3}z_{4}(1+z_{2}z_{4}),\;\;P_{10}=1-z_{1}z_{2}z_{4}.
\end{aligned}
\]
The polynomial $Q_{k}(\tilde{z})$ is determined by $Q_{k}(\tilde{z})=P_{k}(z)$
by substituting the relations $z_{1}=\tilde{z}_{1},z_{2}=\tilde{z}_{1}\tilde{z_{3}}$,
$z_{3}=\tilde{z}_{2}\tilde{z}_{4},z_{4}=\tilde{z}_{2}/\tilde{z}_{1}$.
Note that $Q_{s}(\tilde{z})$ are polynomials in $\tilde{z}_{k}$
although $\tilde{z}_{1}$ appears in the denominator of $z_{4}$. 

~

~

\section{\textbf{\textup{The representation $z_{k}^{\sigma}=\varphi_{\sigma}(z_{1},z_{2},z_{3},z_{4})$
of $S_{6}$}} \label{sec:App-representation-S6}}

The right action of $\sigma\in S_{3}$ on $2\times4$ matrix $A$
defines an element of $\mathrm{Aut}(\mathcal{M}_{4})$, $z(A)\mapsto z^{\sigma}(A):=z(A\sigma)$.
This naturally gives rise to the well-known representation $S_{3}\to\mathrm{Aut}(\mathcal{M}_{4})\simeq\mathrm{Aut}(\mathbb{P}^{1})$:
\[
\begin{matrix}\sigma & : & e & (12) & (23) & (23)(12) & (12)(23) & (13)\\
z^{\sigma} & : & z & \frac{1}{z} & \frac{z}{z-1} & 1-\frac{1}{z} & \frac{1}{1-z} & 1-z\\
G(\sigma,e) & : & 1 & z & 1-z & -z & z-1 & -1
\end{matrix}
\]
Here we have included the twist factor $G(\sigma,e)$ defined in (\ref{def:twist-G-E24}).
In a similar way, we have the representation $S_{6}\to\mathrm{Aut}(\mathcal{M}_{6})$
induced by the right action of $\sigma\in S_{6}$ on $3\times6$ matrices.
This action naturally defines the corresponding transformation $z_{k}^{\sigma}=\varphi_{\sigma}(z_{1},z_{2},z_{3},z_{4})$
on the affine coordinates $z_{k}$ of the resolutions. For the case
$z_{k}=z_{k}(o_{1})$, we present explicit forms of the transformations
for some $\sigma\in S_{6}$. Although expressions become complicated
in general, these should be regarded as the generalization of the
rational transformations given in the above table. 

$\sigma=\left(\begin{smallmatrix}1\,2\,3\,4\,5\,6\\
1\,2\,3\,6\,5\,4
\end{smallmatrix}\right)$
\[
z_{1}^{\sigma}=\frac{1}{z_{1}},\;\;z_{2}^{\sigma}=-z_{2}z_{3}z_{4},\;\;z_{3}^{\sigma}=\frac{1}{z_{1}z_{4}},\;\;z_{4}^{\sigma}=\frac{z_{1}}{z_{3}};\;\;G(\sigma,e)=\frac{1}{z_{1}z_{3}z_{4}}
\]

$\sigma=\left(\begin{smallmatrix}1\,2\,3\,4\,5\,6\\
2\,3\,4\,5\,6\,1
\end{smallmatrix}\right)$
\[
\begin{aligned} & z_{1}^{\sigma}=-\frac{z_{2}}{1+z_{2}},\;\;z_{2}^{\sigma}=-\frac{1-z_{1}z_{3}z_{4}}{1+z_{1}z_{4}},\;\;z_{3}^{\sigma}=-\frac{1-z_{1}z_{3}z_{4}}{1+z_{3}z_{4}}\\
 & z_{4}^{\sigma}=-\frac{(1+z_{1}z_{4})(z+z_{3}z_{4})}{1-z_{1}z_{3}z_{4}};\;\;\;G(\sigma,e)=-\frac{1}{1+z_{2}}.
\end{aligned}
\]

$\sigma=\left(\begin{smallmatrix}1\,2\,3\,4\,5\,6\\
6\,5\,4\,3\,2\,1
\end{smallmatrix}\right)$
\[
\begin{aligned}z_{1}^{\sigma}=-\frac{(1+z_{1})z_{2}(1+z_{3})z_{4}}{(1+z_{2}z_{4})(1-z_{1}z_{3}z_{4})},\;\; & z_{2}^{\sigma}=-\frac{z_{1}(1+z_{2})(1+z_{3})z_{4}}{(1+z_{1}z_{4})(1-z_{2}z_{3}z_{4})},\\
z_{3}^{\sigma}=-\frac{(1+z_{1})(1+z_{2})z_{3}z_{4}}{(1-z_{1}z_{2}z_{4})(1+z_{3}z_{4})},\;\; & z_{4}^{\sigma}=\frac{(1+z_{1}z_{4})(1+z_{2}z_{4})(1+z_{3}z_{4})}{(1+z_{1})(1+z_{2})(1+z_{3})z_{4}}.
\end{aligned}
\]

\[
G(\sigma,e)=\frac{1-(z_{1}z_{2}+z_{1}z_{3}+z_{2}z_{3}+z_{1}z_{2}z_{3})z_{4}-z_{1}z_{2}z_{3}z_{4}^{2}}{(1-z_{1}z_{2}z_{4})(1-z_{1}z_{3}z_{4})(1-z_{2}z_{3}z_{4})}.
\]
We have similar expressions for the other boundary points $o_{i}$
and $o_{i}^{+}$ as well. 

~

~

\section{\textbf{\textup{Mirror symmetry and the generalized Frobenius method}}
\label{sec:App-GKZ-mirror-sym}}

Here we summarize briefly the Frobenius method formulated in \cite{HKTY,HLY,HLY2}
for the GKZ systems which determines period integrals of Calabi-Yau
complete intersections. The (generalized) Frobenius method applies
to the local solutions about special boundary points, i.e., LCSLs.
Assume that $X$ is a K3 surface given as complete intersection in
a toric variety, and $X^{*}$ is the mirror K3 surface determined
by Batyrev-Borisov toric mirror symmetry \cite{Batyrev,Batyrev-Borisov}.
In this setting, we have a family of $X^{*}$ over the parameter space
of its defining equations. 

Let $x_{1},...,x_{r}$ be the affine coordinate near a LCSL. Let $\mathcal{D}_{1},\cdots,\mathcal{D}_{s}$
be the Picard-Fuchs differential operators which follows from the
GKZ system characterizing the period integrals in the affine chart.
Following \cite{HKTY,HLY,HLY2}, we consider a polynomial ring $\mathbb{Q}[\theta_{1},\cdots,\theta_{r}]$
generated by $\theta_{i}:=x_{i}\frac{\partial\;}{\partial x_{i}}$
and define the \textit{indicial ideal} of the Picard-Fuchs equations,
\[
Ind(\mathcal{D}_{1},\cdots,\mathcal{D}_{s})\subset\mathbb{Q}[\theta_{1},\cdots,\theta_{r}].
\]
Indicial ideal $Ind(\mathcal{D}_{1},\cdots,\mathcal{D}_{s})$ is a
homogeneous ideal generated by initial terms of $\mathcal{D}_{i}$
and determines the \textit{indices} for the local solutions. 
\begin{prop}
There is an isomorphism 
\[
\mathbb{Q}[\theta_{1},\cdots,\theta_{r}]/Ind(\mathcal{D}_{1},\cdots,\mathcal{D}_{s})\simeq H^{0}(X,\mathbb{Q})\oplus H^{2}(X,\mathbb{Q})_{toric}\oplus H^{4}(X,\mathbb{Q}),
\]
where $H^{2}(X,\mathbb{Q})_{toric}$ is generated by the restrictions
of the ambient toric divisors. 
\end{prop}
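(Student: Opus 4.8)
The plan is to identify both sides of the claimed isomorphism with a single combinatorially defined graded ring. On the cohomology side, I would invoke the Batyrev--Borisov description of the toric part of the cohomology: for a Calabi--Yau complete intersection $X\subset\mathbb{P}_\Sigma$ cut out by nef sections whose classes sum to $-K_{\mathbb{P}_\Sigma}$, the graded ring $\bigoplus_k H^{2k}(X,\mathbb{Q})_{toric}$ admits a presentation as $\mathbb{Q}[D_1,\dots,D_N]$ modulo the Stanley--Reisner ideal $SR(\Sigma)$, the ideal of linear relations among the restricted toric divisor classes, and the ideal generated by the first Chern classes of the line bundles cutting out $X$; the cohomological grading corresponds to the polynomial degree in the $D_a$. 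After choosing a basis of the Kähler cone one may replace the $D_a$ by variables $\theta_1,\dots,\theta_r$ dual to the generators of the Mori cone, one per Kähler modulus, which is the number $r$ of affine parameters.

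On the hypergeometric side, the LCSL near which the Picard--Fuchs operators $\mathcal D_1,\dots,\mathcal D_s$ are written corresponds to a maximal cone of the secondary fan, equivalently a regular triangulation $T$ of the point configuration $\mathcal A$; in this chart the coordinates $x_1,\dots,x_r$ are the Mori cone generators and the $\mathcal D_i$ are the localized GKZ operators, namely the box operators $\Box_\ell$ together with the homogeneity (Euler) operators. I would compute $Ind(\mathcal D_1,\dots,\mathcal D_s)$ by extracting initial terms at $x=0$: because the point is a LCSL the two monomials of each box operator $\Box_\ell$ have distinct $x$-orders, so no cancellation occurs and the indicial part is a single monomial in the $\theta_a$ (up to a unit and lower-degree terms already in the ideal), while the homogeneity operators contribute exactly the linear relations. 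Converting $\partial$ to $\theta$ turns these box monomials into the generators of $SR(\Sigma)$ written in the Kähler basis, and the linear parts into the linear relations above — so the two presentations agree. This is the observation of Hosono--Klemm--Theisen--Yau and Hosono--Lian--Yau, which I would cite and then verify directly in the case at hand.

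Indeed, for the double cover family studied here the verification is essentially already in the body of the paper: Proposition~\ref{prop:Ind(D)} identifies $Ind(\mathcal D)$ explicitly, Proposition~\ref{prop:Ind-ring-Mij} shows the quotient ring is $6$-dimensional with standard monomials $1;\theta_1,\dots,\theta_4;\theta_4^2$ and intersection matrix $(M_{ij})$ with $d=2$, and the lemma in Section~\ref{sec:Mirror-symmetr} identifies $(M_{ij})$ with the intersection form $(\tilde L_i\cdot\tilde L_j)=2(L_i\cdot L_j)$ of the divisor classes on the double cover $\overline S_6$ of $Bl_3\mathbb{P}^2$; since $\dim H^0=\dim H^4=1$ and $H^2(\cdot)_{toric}$ has rank $4$ with exactly this intersection form, the quotient ring reproduces $\bigoplus_k H^{2k}(\cdot,\mathbb{Q})_{toric}$ degree by degree, in accordance with Conjecture~\ref{conj:Mirror-D-C}. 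The main obstacle is the combinatorial matching step in general position: it relies essentially on the chart being maximally unipotent (so that the box operators degenerate to pure monomials) and on keeping careful track of the correspondence between Mori cone generators and primitive ray generators of $\Sigma$, together with the extra Chern-class relations coming from the complete-intersection structure (or, in the case treated here and in the follow-up \cite{HLLY}, from the double covering); away from a LCSL the initial terms acquire cancellations and the clean identification fails.
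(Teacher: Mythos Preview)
The paper does not actually prove this proposition: it is stated in Appendix~\ref{sec:App-GKZ-mirror-sym} as part of a ``quick summary of the works \cite{HKTY,HLY,HLY2}'' and is simply attributed to those references. Your outline --- identifying the indicial ideal at a LCSL with the Stanley--Reisner relations plus linear relations plus the Chern-class relations of the complete intersection, and matching this with the Batyrev--Borisov presentation of $\bigoplus_k H^{2k}(X,\mathbb{Q})_{toric}$ --- is exactly the argument those references give, so on the general statement you are in line with what the paper intends.

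One point to flag: your ``verification in the case at hand'' via Propositions~\ref{prop:Ind(D)}, \ref{prop:Ind-ring-Mij} and the intersection computation on $\overline{S}_6$ is not a verification of \emph{this} proposition. The proposition is stated for $X$ a complete intersection in a toric variety, whereas the double cover $\overline{X}$ is not literally of that type; what your check establishes is that the quotient ring $\mathbb{Q}[\theta]/Ind(\mathcal{D})$ coincides with the toric cohomology of the \emph{conjectural} mirror $\overline{S}_6$, which is evidence for Conjecture~\ref{conj:Mirror-D-C} rather than an instance of the proposition. The paper keeps these separate (the proposition is a cited general fact; the double-cover matching is a conjecture supported by the numerics and deferred to \cite{HLLY}), and your write-up should do the same.
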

When we normalize the top form of quotient ring, we can introduce
a pairing in the quotient ring which corresponds to the pairing in
the cohomology (of the mirror manifold). We denote this pairing for
the generators $\theta_{i}$ by 
\[
K_{ij}:=\langle\theta_{i}\theta_{j}\rangle.
\]
This represents the intersection pairing among the corresponding generators
of $H^{2}(X,\mathbb{Q})_{toric}$. 
\begin{prop}
Near the boundary point (LCSL), there is only one power series $w_{0}(x)$
representing a period integral of the mirror family, which has the
form $w_{0}(x)=\sum_{n\in\mathbb{Z}_{\geq0}^{r}}c(n)x^{n}$. All other
solutions contain logarithmic singularities, and they are given by
\begin{equation}
w_{i}^{(1)}(x):=\frac{\partial\;}{\partial\rho_{i}}w_{0}(x,\rho)\vert_{\rho=0},\;\;\;\;w^{(2)}(x):=-\frac{1}{2}\sum_{i,j}K_{ij}\frac{\partial\;}{\partial\rho_{i}}\frac{\partial\;}{\partial\rho_{j}}w_{0}(x,\rho)\vert_{\rho=0},\label{eq:FrobMethod}
\end{equation}
where $w_{0}(x,\rho)=\sum_{n}c(n+\rho)x^{n+\rho}$ with formal parameters
$\rho_{i}$. 
\end{prop}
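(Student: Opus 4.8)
The plan is to run the generalized Frobenius method for the resonant GKZ system in three stages---uniqueness of the holomorphic solution, production of the logarithmic solutions by differentiation in $\rho$, and a dimension count for completeness---using throughout the identification $\mathbb{Q}[\theta_1,\dots,\theta_r]/Ind(\mathcal{D}_1,\dots,\mathcal{D}_s)\simeq H^0(X,\mathbb{Q})\oplus H^2(X,\mathbb{Q})_{toric}\oplus H^4(X,\mathbb{Q})$ established just above. First I would settle uniqueness of $w_0$. Because the boundary point is an LCSL, the indicial ideal is zero-dimensional and the quotient ring $R:=\mathbb{Q}[\theta]/Ind$ is supported at the single index $\rho=0$; equivalently $\rho=0$ is the only exponent solving the indicial equations. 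Substituting a candidate power series $\sum_{n\ge0}a_n x^n$ into the operators $\mathcal{D}_k$ produces recursions that are invertible away from the index $0$ and hence determine every $a_n$ from $a_0$, forcing the holomorphic solution to be unique up to scale; normalizing $a_0=1$ gives $w_0(x)=w_0(x,0)$ with $w_0(x,\rho)=\sum_n c(n+\rho)x^{n+\rho}$. That $w_0$ is an actual period follows from the GKZ origin of the $\mathcal{D}_k$ as in \cite{HKTY,HLY}, and unipotence of the local monodromy rules out any second single-valued solution, so $w_0$ is the unique holomorphic solution among \emph{all} solutions, not merely among power series.

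Second I would construct the single-logarithmic solutions. The coefficient $c(n+\rho)$ is a ratio of Gamma factors, and the telescoping built into this ratio yields, for each Picard--Fuchs operator, the exact identity
\[
\mathcal{D}_k\,w_0(x,\rho)=P_k(\rho)\,x^{\rho},
\]
where $P_k(\rho)$ is the initial symbol of $\mathcal{D}_k$ (the substitution $\theta\mapsto\rho$ into its leading term) and the $P_k$ generate $Ind$. Since these generators are homogeneous of degree two, each $P_k$ has neither a constant nor a linear term. Differentiating once in $\rho_i$, commuting $\partial_{\rho_i}$ past the $\rho$-free operator $\mathcal{D}_k$, and setting $\rho=0$ gives
\[
\mathcal{D}_k\,w_i^{(1)}(x)=\partial_{\rho_i}\bigl(P_k(\rho)\,x^{\rho}\bigr)\big|_{\rho=0}=\partial_{\rho_i}P_k(0)+P_k(0)\log x_i=0,
\]
so each $w_i^{(1)}=\partial_{\rho_i}w_0(x,\rho)|_{\rho=0}$ is a solution; its nonzero $\log x_i$-part is governed by the degree-one piece of $R$, namely $H^2(X,\mathbb{Q})_{toric}$, which makes $w_0,w_1^{(1)},\dots,w_r^{(1)}$ linearly independent.

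Third, and here the genuine work lies, I would treat the double-logarithmic solution. The obstacle is precisely the phenomenon recorded in Remark~\ref{rem:Remark-GKZ-Frob}: at resonance $c(n+\rho)$ has poles, so the individual second derivatives $\partial_{\rho_i}\partial_{\rho_j}w_0|_{\rho=0}$ pick up contributions from multi-indices $n$ with some $n_k<0$ and are Laurent rather than power series. The key lemma to prove is that the residues generating these negative-power tails assemble, at each resonant $n$, into a quadratic form in $\rho$ proportional to $K_{ij}=\langle\theta_i\theta_j\rangle$, so that the weighted trace $\sum_{i,j}K_{ij}\partial_{\rho_i}\partial_{\rho_j}w_0|_{\rho=0}$ annihilates every Laurent tail and $w^{(2)}=-\tfrac12\sum_{i,j}K_{ij}\partial_{\rho_i}\partial_{\rho_j}w_0|_{\rho=0}$ is an honest power series times at most squared logarithms. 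Granting this, $w^{(2)}$ solves the system: applying $\mathcal{D}_k$ and using $P_k(0)=0$, $\partial_{\rho_i}P_k(0)=0$ reduces $\mathcal{D}_k w^{(2)}$ to $-\tfrac12\sum_{i,j}K_{ij}\,\partial_{\rho_i}\partial_{\rho_j}P_k(0)=-\langle P_k\rangle$, which vanishes because $P_k\in Ind$ maps to $0$ in $R$ and $\langle-\rangle$ factors through $R$. I expect the Laurent-tail cancellation---and the fact that the \emph{same} coefficients $K_{ij}=\langle\theta_i\theta_j\rangle$ simultaneously force $w^{(2)}$ to be a power series and force it to solve the system---to be the main difficulty; this is the ``non-trivial observation'' of \cite{HKTY,HLY} and the one step that uses the cohomological meaning of $K_{ij}$ rather than merely the ring structure of $R$.

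Finally I would close by completeness. The holonomic rank of the system at the LCSL equals $\dim_{\mathbb{Q}}R=\dim H^0+\dim H^2_{toric}+\dim H^4=1+r+1$, and the constructed solutions $w_0,w_1^{(1)},\dots,w_r^{(1)},w^{(2)}$ are exactly $r+2$ in number and are linearly independent because they carry strictly increasing orders of logarithm, matching the degrees $0,1,2$ under the grading of $R$. Hence they form a complete fundamental system, which is the assertion of the proposition.
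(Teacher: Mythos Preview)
Your outline---uniqueness of $w_0$ from the zero-dimensional indicial ideal, Frobenius differentiation in $\rho$ for the logarithmic solutions, the pairing argument $\langle P_k\rangle=0$, and a dimension count against $\dim_{\mathbb{Q}}R=r+2$---has the right shape and matches how the paper and its references organize the argument. But the ``exact identity'' on which your second and third steps rest,
\[
\mathcal{D}_k\,w_0(x,\rho)=P_k(\rho)\,x^{\rho},
\]
is false in the multivariable setting. In one variable the Frobenius recursion leaves a single boundary term at $n=0$; in several variables, applying $\mathcal{D}_k$ to $\sum_{n\ge0}c(n+\rho)x^{n+\rho}$ produces boundary contributions along the \emph{entire facet} of $\mathbb{Z}_{\ge0}^r$ where the shift in $\mathcal{D}_k$ exits the cone, not just at the origin. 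The paper records exactly this: $\mathcal{D}_i\omega_0(z,\rho)=z^\rho f_i(\rho)+z^\rho\sum_i z_iF_i(\rho,z)$ with nontrivial higher terms $F_i$. Equivalently, if one sums over all of $\mathbb{Z}^r$ the Gamma-ratio recursion telescopes completely and $\mathcal{D}_k w_0(x,\rho)\equiv0$ for every $\rho$---so there is no $P_k(\rho)x^\rho$ term at all---but then the series is a priori only a formal Laurent series. Either way, your line $\mathcal{D}_k w_i^{(1)}=\partial_{\rho_i}P_k(0)+P_k(0)\log x_i=0$ does not establish what you need.

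Compared with the paper's proof of the concrete instance (Proposition~\ref{prop:Frobenius-local-sol}, Appendix~E), you have the emphasis inverted. There, the fact that the $\rho$-derivatives are formal solutions is essentially immediate (and the pairing lemma you reproduce, $\langle g\rangle=\tfrac{1}{2}\sum M_{ij}\partial_{\rho_i}\partial_{\rho_j}g|_0$, is what singles out the combination $\sum K_{ij}\partial_{\rho_i}\partial_{\rho_j}$). The genuine work, which you skip entirely for $w_i^{(1)}$ and describe only heuristically for $w^{(2)}$, is the support analysis: one must show that $\partial_{\rho_i}c(n+\rho)|_{\rho=0}$ and $\sum K_{ij}\partial_{\rho_i}\partial_{\rho_j}c(n+\rho)|_{\rho=0}$ vanish for $n\notin\mathbb{Z}_{\ge0}^r$. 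The paper does this by direct case analysis of the $\Gamma$- and $\psi$-factors---listing, for each term like $c(n)\psi(n_4-n_1+1)$, the inequalities that allow a pole/zero cancellation and checking they force $n_i\ge0$. Your proposed mechanism for $w^{(2)}$ (residues ``proportional to $K_{ij}$'') is not how the cancellation actually runs; and the paper stresses explicitly that even for the first derivatives the power-series property is \emph{not} automatic and must be verified by hand.
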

Mirror symmetry of K3 surfaces can be summarized in the following
proposition:
\begin{prop}
$($\cite[Sect.\,2.4]{Hos}$)$ The following quadratic relation holds:
\[
2\;w_{0}(x)\left(w^{(2)}(x)+(2\pi)^{2}w_{0}(x)\right)+\sum_{i,j}K_{ij}w_{i}^{(1)}(x)w_{j}^{(1)}(x)=0.
\]

\end{prop}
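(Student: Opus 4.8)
The identity asserted is nothing but the coordinate expression, in the Frobenius basis attached to the LCSL, of the vanishing $\int_{X^{*}}\Omega\wedge\Omega=0$ for the holomorphic $2$-form $\Omega=\Omega_{x}$ on the mirror K3 surface $X^{*}_{x}$. This vanishing is automatic: on a complex surface $\Omega$ is of type $(2,0)$, so $\Omega\wedge\Omega$ is a section of $\bigwedge^{4}$ of the cotangent bundle, hence zero. So the plan is: (i) identify the three Frobenius solutions $w_{0},w^{(1)}_{i},w^{(2)}$ with the components of an integral period vector of $\Omega$ along a basis of $H_{2}(X^{*},\mathbb{Z})$ adapted to the LCSL; (ii) read off the Gram matrix of the cup-product pairing on that sublattice; (iii) substitute into $\int_{X^{*}}\Omega\wedge\Omega=0$. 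I regard the period-theoretic route as the conceptual one and the series-expansion check (already used in Proposition~\ref{prop:period-relation-Mij} for the six-lines case) as the elementary verification.

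Carrying out (i)--(iii): normalize $\Omega_{x}$ so that its period over the vanishing $r$-cycle $T_{0}$ equals the unique regular solution $w_{0}(x)$. The standard LCSL package (maximal unipotency of the monodromy, nilpotent orbit theorem) then produces a limiting mixed Hodge structure on $H^{2}(X^{*})$ which, via the isomorphism $\mathbb{Q}[\theta_{1},\dots,\theta_{r}]/Ind(\mathcal{D})\simeq H^{0}(X)\oplus H^{2}(X)_{toric}\oplus H^{4}(X)$ quoted in Appendix~D, is identified with $H^{\ast}(X)$ of the mirror; under this identification one obtains an integral basis $\gamma_{0},\gamma^{(1)}_{1},\dots,\gamma^{(1)}_{r},\gamma^{(2)}$ of the relevant transcendental sublattice of $H_{2}(X^{*},\mathbb{Z})$ whose $\Omega$-periods are, after absorbing powers of $2\pi i$ into the normalization, exactly $w_{0}$, $w^{(1)}_{1},\dots,w^{(1)}_{r}$, and $w^{(2)}+(2\pi)^{2}w_{0}$ (the constant shift of $\gamma^{(2)}$ by $w_{0}$ being precisely what makes the period vector integral). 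On this sublattice the cup-product Gram matrix in that basis is $\left(\begin{smallmatrix}0&1\\1&0\end{smallmatrix}\right)\oplus(K_{ij})$: the hyperbolic summand pairs $\gamma_{0}$ (image of $H^{0}$) with $\gamma^{(2)}$ (image of $H^{4}$), and the block $(K_{ij})=(\langle\theta_{i}\theta_{j}\rangle)$ is, by the very construction of the ring isomorphism, the intersection form on $H^{2}(X)_{toric}$. Expanding $\Omega=\sum_{\gamma}\big(\int_{\gamma}\Omega\big)\gamma^{\vee}$ and contracting with this Gram matrix, $\int_{X^{*}}\Omega\wedge\Omega=0$ becomes $2\,w_{0}\big(w^{(2)}+(2\pi)^{2}w_{0}\big)+\sum_{i,j}K_{ij}w^{(1)}_{i}w^{(1)}_{j}=0$, as claimed; any cross terms $w_{0}w^{(1)}_{i}$ are killed by choosing $\gamma^{(2)}$ orthogonal to the $\gamma^{(1)}_{i}$, which is automatic for the weight-filtration-adapted basis.

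An alternative, entirely formal, proof (the one invoked for the concrete statement in Proposition~\ref{prop:period-relation-Mij}, ``verified by series expansion'') writes $w_{0}(x,\rho)=\sum_{n}c(n+\rho)x^{n+\rho}$, so that $w_{0},w^{(1)}_{i},w^{(2)}$ all arise from this single series by $\rho$-differentiation and $\rho\to0$; the left-hand side is then a fixed quadratic differential operator in $(\rho,\rho')$ applied to $w_{0}(x,\rho)\,w_{0}(x,\rho')$ at $\rho=\rho'=0$, and one proves the vanishing of each $x$-coefficient from the $\Gamma$-function recursion for $c(\cdot)$ together with the quadratic relations in $\mathbb{Q}[\theta]/Ind(\mathcal{D})$ encoded by $(K_{ij})$ and Lemma~\ref{lem:int-form-M-P}. \textbf{The main obstacle}, in either route, is the phenomenon flagged in Remark~\ref{rem:Remark-GKZ-Frob}: the naive second $\rho$-derivative of $c(n+\rho)$ contributes spurious nonzero terms for multi-indices with some $n_{k}<0$, and one must arrange the bookkeeping so that the combination $\sum_{i,j}K_{ij}\partial_{\rho_{i}}\partial_{\rho_{j}}$ makes their cancellation transparent --- which on the geometric side is the statement that the monodromy-weight filtration, and hence the block form of the pairing, is matched correctly. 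In practice one checks the identity by $x$-series expansion to the order needed to pin down all free coefficients and then appeals to the general treatment in \cite{Hos,HKTY,HLY}.
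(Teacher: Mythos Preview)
The paper itself does not prove this proposition: it is stated with a bare citation to \cite[Sect.\,2.4]{Hos}, and for the specific six-lines instance (Proposition~\ref{prop:period-relation-Mij}) the paper's entire proof reads ``We verify this by series expansions of the solutions to some higher orders.'' Your proposal therefore goes well beyond what the paper offers. Your geometric route --- recognizing the identity as the Hodge--Riemann relation $\int_{X^{*}}\Omega\wedge\Omega=0$ written in a Frobenius-adapted basis whose Gram matrix is a hyperbolic plane joined to $(K_{ij})$ --- is exactly the conceptual argument behind the cited reference, and is correct in outline.

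One point deserves sharpening. Your sketch asserts that the period dual to $\gamma^{(2)}$ is $w^{(2)}+(2\pi)^{2}w_{0}$, with the constant shift ``being precisely what makes the period vector integral,'' but you do not explain why the constant is $(2\pi)^{2}$ rather than any other multiple of $w_{0}$. The relation $\Omega\wedge\Omega=0$ alone only says that \emph{some} linear combination $w^{(2)}+c\,w_{0}$ satisfies the quadratic identity; the specific value of $c$ is not visible from the block form of the pairing and requires either the integral-structure normalization (the $\widehat{\Gamma}$-class input implicit in \cite{Hos}) or a direct constant-term match. So your two ``routes'' are not really independent alternatives: the geometric argument fixes the shape of the relation, and the series check --- which is all the paper actually does in the concrete case --- is what pins down the constant. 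Apart from this, your proposal is sound and considerably more informative than the paper's treatment.
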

Three propositions above provide a quick summary of the works \cite{HKTY,HLY,HLY2}
for the mirror symmetry expressed in the Frobenius method. It should
be noted that, while the standard Frobenius method is a well-known
technique for hypergeometric differential equations of one variables,
the Frobenius method here is a non-trivial generalization to multi-variables,
see Remark \ref{rem:Remark-GKZ-Frob} and Appendix \ref{sec:Appendix-proof}
below. 

~

~

\specialsection{\textbf{Proof of Proposition \ref{prop:Frobenius-local-sol} \label{sec:Appendix-proof}}}

Here we present the details of the proof of Proposition \ref{prop:Frobenius-local-sol}.
We also include an example which shows that a naive application of
Frobenius method results in Laurent series in general. 

\noindent  {\bf E1. Proof of \ref{prop:Frobenius-local-sol}.}
Let $\mathcal{D}_{i}$ be the Picard-Fuchs differential operators
in (\ref{eq:PF-system-o1}). Let $f_{1}(\theta),\cdots,f_{9}(\theta)$
be the homogeneous generators of the indicial ideal $Ind(\mathcal{D})$
in Proposition \ref{prop:Ind(D)}, in order. Then, for the hypergeometric
series $\omega_{0}(z,\rho)$ defined in Proposition \ref{prop:Frobenius-local-sol}
(2), it is easy to verify that 
\[
\mathcal{D}_{i}\omega_{0}(z,\rho)=z^{\rho}f_{i}(\rho)+z^{\rho}\sum_{i}z_{i}F_{i}(\rho,z),
\]
where $f_{i}(\rho)$ are the monomials $f_{i}(\theta)$ with $\theta_{i}$
replaced by $\rho_{i}$, and $F_{i}(\rho,z)$ are power series in
$z_{k}$. The indicial ideal $Ind(\mathcal{D})$ can be identified
with the ideal $I_{\rho}:=\left\langle f_{1}(\rho),\cdots,f_{9}(\rho)\right\rangle $
of the polynomial ring $\mathbb{Q}[\rho]:=\mathbb{Q}[\rho_{1},\cdots,\rho_{4}]$.
As claimed in Proposition \ref{prop:Ind-ring-Mij}, the quotient ring
$\mathbb{Q}[\rho]/I_{\rho}$ is finite dimensional with its bases
$1,\rho_{1},...,\rho_{4},\rho_{4}^{2}$. We can introduce a $\mathbb{Q}$-linear
map $\left\langle -\right\rangle :\mathbb{Q}[\rho]/I_{\rho}\to\mathbb{Q}$
by the following properties 
\[
\left\langle 1\right\rangle =\left\langle \rho_{i}\right\rangle =0,\left\langle \rho_{4}^{2}\right\rangle =d\text{ and }\left\langle h(\rho)\right\rangle =0\;(h(\rho)\in I_{\rho}),
\]
where $d\in\mathbb{Q}$ is a constant. In Proposition \ref{prop:Ind-ring-Mij},
we have introduced $M_{ij}=\left\langle \rho_{i}\rho_{j}\right\rangle $.
The next lemma follows from the above definitions:
\begin{lem}
For $g(\rho)\in\mathbb{Q}[\rho]$, we have
\[
\begin{aligned} &  & \left\langle g(\rho)\right\rangle =\frac{1}{2!}\sum_{i,j}M_{ij}\frac{\partial\,}{\partial\rho_{i}}\frac{\partial\,}{\partial\rho_{j}}g(\rho)\Big\vert_{\rho=0},\quad\left\langle \rho_{i}g(\rho)\right\rangle =\sum_{j}M_{ij}\frac{\partial\,}{\partial\rho_{j}}g(\rho)\Big\vert_{\rho=0}.\end{aligned}
\]

\end{lem}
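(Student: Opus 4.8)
The plan is to exploit that the indicial ideal $I_\rho=\langle f_1(\rho),\dots,f_9(\rho)\rangle$ is \emph{homogeneous}: every one of the nine generators in Proposition~\ref{prop:Ind(D)} is a product of two linear forms in the $\theta_i$, hence homogeneous of degree two. Consequently $I_\rho$ is a graded ideal and the quotient $R:=\mathbb{Q}[\rho]/I_\rho$ is a graded ring $R=R_0\oplus R_1\oplus R_2\oplus\cdots$. By the list of standard monomials in Proposition~\ref{prop:Ind-ring-Mij}, namely $1;\rho_1,\dots,\rho_4;\rho_4^2$, we have $R_0=\mathbb{Q}\cdot 1$, $R_1=\bigoplus_i\mathbb{Q}\cdot\rho_i$, $R_2=\mathbb{Q}\cdot\rho_4^2$, and $R_k=0$ for $k\ge 3$; in particular every homogeneous polynomial of degree $\ge 3$ lies in $I_\rho$.

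First I would decompose an arbitrary $g\in\mathbb{Q}[\rho]$ into homogeneous components $g=g^{(0)}+g^{(1)}+g^{(2)}+g^{(\ge 3)}$. Since $\langle-\rangle$ is $\mathbb{Q}$-linear, annihilates $I_\rho$ (hence $g^{(\ge 3)}$), and satisfies $\langle 1\rangle=\langle\rho_i\rangle=0$, only the quadratic part survives, i.e. $\langle g\rangle=\langle g^{(2)}\rangle$. Taylor's formula identifies $g^{(2)}=\frac12\sum_{i,j}\big(\tfrac{\partial^2}{\partial\rho_i\partial\rho_j}g\big)(0)\,\rho_i\rho_j$, where the second derivatives at $\rho=0$ depend only on $g^{(2)}$. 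Applying $\langle-\rangle$, using linearity and $M_{ij}=\langle\rho_i\rho_j\rangle$, gives
\[
\langle g\rangle=\frac{1}{2!}\sum_{i,j}M_{ij}\,\frac{\partial}{\partial\rho_i}\frac{\partial}{\partial\rho_j}g(\rho)\Big\vert_{\rho=0},
\]
which is the first identity. For the second, I would apply the first one with $\rho_i g$ in place of $g$, or argue directly: the degree-two component of $\rho_i g$ is $\rho_i g^{(1)}=\rho_i\sum_j(\tfrac{\partial}{\partial\rho_j}g)(0)\,\rho_j$, the degree-one component $\rho_i g^{(0)}=g(0)\rho_i$ contributes $g(0)\langle\rho_i\rangle=0$, and all components of degree $\ge 3$ lie in $I_\rho$. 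Hence
\[
\langle\rho_i g\rangle=\Big\langle\sum_j(\tfrac{\partial}{\partial\rho_j}g)(0)\,\rho_i\rho_j\Big\rangle=\sum_j M_{ij}\,\frac{\partial}{\partial\rho_j}g(\rho)\Big\vert_{\rho=0}.
\]

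There is no real obstacle here; the statement is a short exercise in graded linear algebra once the homogeneity of $I_\rho$ and the truncation $R_k=0$ for $k\ge 3$ are in place. The only point deserving attention is that $\langle-\rangle$ is well defined on $R$ — which is precisely the imposed condition $\langle h(\rho)\rangle=0$ for $h\in I_\rho$ — so the whole computation may be carried out on polynomial representatives throughout; one should also match the combinatorial normalisation $1/2!$ to the convention $M_{ij}=\langle\rho_i\rho_j\rangle$ (symmetric, with no extra factor of $2$), consistently with the Taylor expansion used above.
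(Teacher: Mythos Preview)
Your argument is correct and is precisely the natural unfolding of what the paper leaves implicit: in the paper the lemma is simply asserted to ``follow from the above definitions'' with no further details, and your use of the homogeneity of $I_\rho$, the grading $R=R_0\oplus R_1\oplus R_2$, and Taylor expansion at $\rho=0$ is exactly the routine verification that phrase points to.
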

\noindent If $g(\rho)\in I_{\rho}$, then $\left\langle g(\rho)\right\rangle =\left\langle \rho_{i}g(\rho)\right\rangle =0$
by definition. Therefore we have 
\[
\frac{1}{2!}\sum_{i,j}M_{ij}\frac{\partial\,}{\partial\rho_{i}}\frac{\partial\,}{\partial\rho_{j}}g(\rho)\Big\vert_{\rho=0}=\sum_{j}M_{ij}\frac{\partial\,}{\partial\rho_{j}}g(\rho)\Big\vert_{\rho=0}=0
\]
for $g(\rho)=f_{1}(\rho),...,f_{9}(\rho)\in I_{\rho}$. These relations
explain the form of local solutions in Proposition \ref{prop:Frobenius-local-sol}
(2), which generalizes the classical Frobenius method for hypergeometric
series of one variable. See \cite[Sect.3.3]{HLY} and an example therein
for more detailed analysis. 

The vanishing described in Proposition \ref{prop:Frobenius-local-sol}
(1) depends on the special form of the coefficient $c(n)$: 
\[
c(n)=\frac{1}{\Gamma(\frac{1}{2})^{3}}\frac{\Gamma(n_{1}+\frac{1}{2})\Gamma(n_{2}+\frac{1}{2})\Gamma(n_{3}+\frac{1}{2})}{\Pi_{_{i=1}}^{3}\Gamma(n_{4}-n_{i}+1)\cdot\Pi_{1\leq j<k\leq3}\Gamma(n_{j}+n_{k}-n_{4}+1)}.
\]
Since $c(n)$ vanishes when the $\Gamma$-functions in the denominator
have poles, it is easy to read off the necessary conditions for non-vanishing
$c(n)$ that $n_{4}-n_{i}\geq0$ and $n_{j}+n_{k}-n_{4}\geq0$. From
these conditions, we obtain $n_{4}\geq n_{1},n_{2},n_{3}\geq0$, hence
power series for $\omega_{0}(z)$. However that $\omega_{0}(z)$ is
a power series \textit{does not} guarantee that $\omega_{i}^{(1)}(z)=\frac{\partial\,}{\partial\rho_{i}}\omega_{0}(z,\rho)\Big\vert_{\rho=0}$
is a power series. In the present case (and also for GKZ hypergeometric
series near the LCSLs), we verify that this is the case directly.
We express the derivation $\frac{\partial\,}{\partial\rho_{1}}$,
for example, as
\[
\frac{\partial\,}{\partial\rho_{1}}c(n+\rho)\Big\vert_{\rho=0}=c(n)\Big\{\psi(n_{1}+\frac{1}{2})+\psi(n_{4}-n_{1}+1)-\sum_{k=2,3}\psi(n_{1}+n_{k}-n_{4}+1)\Big\},
\]
 where $\psi(s)=\frac{\Gamma(s)'}{\Gamma(s)}$. In this form we see
that there are possibilities for the poles (of $\Gamma$-functions)
in the denominator of $c(n)$ and the poles in the $\psi$-functions
cancel to result fine contributions. We write such possibilities for
each terms; e.g., we have $n_{4}-n_{1}<0$, $n_{4}-n_{i}\geq0(i=2,3)$
and $n_{j}+n_{k}-n_{4}\geq0(1\leq j<k\leq3)$ for $c(n)\psi(n_{4}-n_{1}+1)$.
From these inequalities, we obtain $n_{1},n_{2},n_{3}\geq0$ and $0\leq n_{4}<n_{1}$
and conclude that they can have non-vanishing contributions only for
$n\in\mathbb{Z}_{\geq0}^{4}$. Doing similar analysis for the other
terms, we conclude that the summation over $n\in\mathbb{Z}_{\geq0}^{4}$
in $\omega_{i}^{(1)}(z)=\frac{\partial\,}{\partial\rho_{i}}\omega_{0}(z,\rho)\Big\vert_{\rho=0}$
stays in the same range, i.e., $n\in\mathbb{Z}_{\geq0}^{4}$. Hence
we obtain the power series solution which is linear in $\log x_{1}$.
Since other cases are similar, although more involved for $\omega^{(2)}(z)$,
we omit the details. 

\noindent \textbf{E2. Laurent series from the Frobenius method. }As
is clear in the above proof, in general, hypergeometric series of
multi-variables can have some negative powers when we apply the Frobenius
method. In this respect, the content of Proposition \ref{prop:Frobenius-local-sol}
(which goes back to the observations made in \cite{HKTY,HLY}) is
that negative powers do not appear for the special boundary points
(LCSLs) coming from GKZ systems.  

To contrast the situation, we present constructions of the local solutions
of $E(3,6)$ system with the affine parameter $x=(x_{1},x_{2},x_{3},x_{4})$
defined by 
\[
\left(\begin{matrix}1 & 0 & 0 & 1 & 1 & 1\\
0 & 1 & 0 & 1 & x_{1} & x_{2}\\
0 & 0 & 1 & 1 & x_{3} & x_{4}
\end{matrix}\right).
\]
Yoshida et al \cite[Prop.1.6.1]{YoshidaEtal} studies the $E(3,6)$
system in this coordinate. Near the origin, $x=0$, there is only
one power series \cite[Prop.1.6.2]{YoshidaEtal} which we translate
to $\omega_{0}(x)=\sum_{n\in\mathbb{Z}_{\geq0}^{4}}c(n)x^{n}$ with
\[
c(n)=\frac{\Gamma(n_{1}+n_{3}+\frac{1}{2})\Gamma(n_{2}+n_{4}+\frac{1}{2})\Gamma(n_{1}+n_{2}+\frac{1}{2})\Gamma(n_{3}+n_{4}+\frac{1}{2})}{\Gamma(n_{1}+n_{2}+n_{3}+n_{4}+\frac{3}{2})\Gamma(n_{1}+1)\Gamma(n_{2}+1)\Gamma(n_{3}+1)\Gamma(n_{4}+1)}.
\]
By explicit calculations, we can verify that all other solutions contains
logarithms, $\log x_{i}$ and $(\log x_{i})(\log x_{j})\,(1\leq i,j\leq4)$.
For example, the solutions linear in $\log x_{1}$ is given by the
naive application of the Frobenius method  $\frac{\partial\,}{\partial\rho_{1}}\omega_{0}(x,\rho)\Big\vert_{\rho=0}.$
However this becomes a Laurent series as we can deduce from 
\[
\frac{\partial\,}{\partial\rho_{1}}c(n_{1},0,0,0)=\frac{\Gamma(n_{1}+\frac{1}{2})^{2}\Gamma(\frac{1}{2})^{2}}{\Gamma(n_{1}+\frac{3}{2})\Gamma(n_{1}+1)}\big\{2\psi(n_{1}+\frac{1}{2})-\psi(n_{1}+\frac{3}{2})-\psi(n_{1}+1)\big\}.
\]
By explicit calculation, we verify that the Laurent series 
\[
\frac{\partial\,}{\partial\rho_{1}}\omega_{0}(x,\rho)\Big\vert_{\rho=0}=\sum_{n\in\mathbb{Z}^{4}}\frac{\partial\,}{\partial\rho_{1}}c(n+\rho)\Big\vert_{\rho=0}x^{n}+\omega_{0}(x)\log x_{1}
\]
satisfies the $E(3,6)$ system given in Yoshida et al \cite[Prop.1.6.1]{YoshidaEtal}.
Similar calculations works for the other logarithmic solutions as
well. 

~ 

~

\specialsection{\textbf{Picard-Fuchs operators for $\widetilde{\mathcal{X}}^{+}\to\mathcal{X}$
\label{sec:App.PF-equation-Bo1}}}

There are three LCSLs $o_{i}^{+}(i=1,2,3)$ in the flipped resolution
$\widetilde{\mathcal{X}}^{+}\to\mathcal{X}$. We introduce the local
affine coordinates $\tilde{z}_{k}:=z_{k}(o_{1}^{+}),\,\tilde{z}_{k}':=z_{k}(o_{2}^{+})$
and $\tilde{z}_{k}'':=z_{k}(o_{3}^{+})$ whose origins are the LCSLs
$o_{i}^{+}$. Here, following Lemma 3.3 and Definition 3.5 of \cite{HLTYpartI},
we describe the coordinate $\tilde{z}_{k}:=z_{k}(o_{1}^{+})$ and
also Picard-Fuchs differential operators $\tilde{\mathcal{D}}$ explicitly.
Following the notation in \cite{HLTYpartI}, we first note that the
cone $(\sigma_{1}^{(2)})^{\vee}\cap L$ is generated by 


\begin{equation}
\begin{matrix}\tilde{\ell}^{(1)}=(-1,\;\;\,0,\;\;\,0,\;\;\,1,\;\;\,0,\;\;\,0,\;\;\,0,\;\;\,1,-1)=\ell^{(1)},\qquad\;\;\,\,\\
\tilde{\ell}^{(2)}=(-1,\,\,\,\,0,\;\;\,0,\;\;\,0,\;\;\,1,-1,\;\;\,1,\;\;\,0,\;\;\,0)=\ell^{(1)}+\ell^{(4)},\\
\tilde{\ell}^{(3)}=(\;\;\,1,-1,\;\;\,0,\;\;\;0,-1,\;\;\,1,\;\;\,0,-1,\;\;\,1)=\ell^{(2)}-\ell^{(1)},\\
\tilde{\ell}^{(4)}=(\;\;\,1,\;\;0,-1,\;-1,\;\;0,\;\;\,1,-1,\;\;\,0,\;\;\,1)=\ell^{(3)}-\ell^{(1)}.
\end{matrix}\label{eq:tilde-ell-by-ell}
\end{equation}
The coordinates $\tilde{z}_{k}:=z_{k}(o_{1}^{+})$ follow from these
by $\tilde{z}_{i}:=\mathtt{a}^{\tilde{\ell}^{(i)}}$, i.e., 
\[
\tilde{z}_{1}=-\frac{a_{1}c_{1}}{a_{0}c_{2}},\;\tilde{z}_{2}=-\frac{a_{2}b_{2}}{a_{0}b_{1}},\;\tilde{z}_{3}=\frac{a_{0}b_{1}c_{2}}{a_{2}b_{0}c_{1}},\;\tilde{z}_{4}=\frac{a_{0}b_{1}c_{2}}{a_{1}b_{2}c_{0}}.
\]
 It turns out that a complete set of differential operators $\tilde{\mathcal{D}}_{\ell}$
are given by the following $\tilde{\ell}$'s (cf. \cite[Appendix C]{HLTYpartI}):
\[
\begin{matrix}\tilde{\ell}^{(1)},\;\tilde{\ell}^{(2)},\;\tilde{\ell}^{(2)}+\tilde{\ell}^{(3)},\;\tilde{\ell}^{(2)}+\tilde{\ell}^{(4)},\;\tilde{\ell}^{(1)}+\tilde{\ell}^{(4)},\;\tilde{\ell}^{(1)}+\tilde{\ell}^{(3)}\\
\tilde{\ell}^{(1)}+\tilde{\ell}^{(2)}+\tilde{\ell}^{(3)},\;\tilde{\ell}^{(1)}+\tilde{\ell}^{(2)}+\tilde{\ell}^{(4)},\;\tilde{\ell}^{(1)}+\tilde{\ell}^{(2)}+\tilde{\ell}^{(3)}+\tilde{\ell}^{(4)}
\end{matrix}
\]
Setting $\tilde{\theta}_{i}:=\tilde{z}_{i}\frac{\partial\;}{\partial\tilde{z}_{i}}$,
the operators take the following forms:
\[
\begin{matrix}\tilde{\mathcal{D}}_{1}=(\tilde{\theta}_{1}-\tilde{\theta}_{3})(\tilde{\theta}_{1}-\tilde{\theta}_{4})+\tilde{z}_{1}(\tilde{\theta}_{1}-\tilde{\theta}_{3}-\tilde{\theta}_{4})(\tilde{\theta}_{1}+\tilde{\theta}_{2}-\tilde{\theta}_{3}-\tilde{\theta}_{4}+\frac{1}{2}),\\
\tilde{\mathcal{D}}_{2}=(\tilde{\theta}_{2}-\tilde{\theta}_{3})(\tilde{\theta}_{2}-\tilde{\theta}_{4})+\tilde{z}_{2}(\tilde{\theta}_{2}-\tilde{\theta}_{3}-\tilde{\theta}_{4})(\tilde{\theta}_{1}+\tilde{\theta}_{2}-\tilde{\theta}_{3}-\tilde{\theta}_{4}+\frac{1}{2}),\\
\tilde{\mathcal{D}}_{3}=(\tilde{\theta}_{1}-\tilde{\theta}_{4})(\tilde{\theta}_{2}-\tilde{\theta}_{3}-\tilde{\theta}_{4})+\tilde{z}_{1}\tilde{z}_{3}(\tilde{\theta}_{2}-\tilde{\theta}_{3})(\tilde{\theta}_{3}+\frac{1}{2}),\qquad\qquad\quad\;\\
\tilde{\mathcal{D}}_{4}=(\tilde{\theta}_{1}-\tilde{\theta}_{3})(\tilde{\theta}_{2}-\tilde{\theta}_{3}-\tilde{\theta}_{4})+\tilde{z}_{1}\tilde{z}_{4}(\tilde{\theta}_{2}-\tilde{\theta}_{4})(\tilde{\theta}_{4}+\frac{1}{2}),\qquad\qquad\quad\;\\
\tilde{\mathcal{D}}_{5}=(\tilde{\theta}_{2}-\tilde{\theta}_{4})(\tilde{\theta}_{1}-\tilde{\theta}_{3}-\tilde{\theta}_{4})+\tilde{z}_{2}\tilde{z}_{3}(\tilde{\theta}_{1}-\tilde{\theta}_{3})(\tilde{\theta}_{3}+\frac{1}{2}),\qquad\qquad\quad\;\\
\tilde{\mathcal{D}}_{6}=(\tilde{\theta}_{2}-\tilde{\theta}_{3})(\tilde{\theta}_{1}-\tilde{\theta}_{3}-\tilde{\theta}_{4})+\tilde{z}_{2}\tilde{z}_{4}(\tilde{\theta}_{1}-\tilde{\theta}_{4})(\tilde{\theta}_{4}+\frac{1}{2}),\qquad\qquad\quad\;\\
\tilde{\mathcal{D}}_{7}=(\tilde{\theta}_{1}-\tilde{\theta}_{4})(\tilde{\theta}_{2}-\tilde{\theta}_{4})-\tilde{z}_{1}\tilde{z}_{2}\tilde{z}_{3}(\tilde{\theta}_{3}+\frac{1}{2})(\tilde{\theta}_{1}+\tilde{\theta}_{2}-\tilde{\theta}_{3}-\tilde{\theta}_{4}+\frac{1}{2}),\;\\
\tilde{\mathcal{D}}_{8}=(\tilde{\theta}_{1}-\tilde{\theta}_{3})(\tilde{\theta}_{2}-\tilde{\theta}_{3})-\tilde{z}_{1}\tilde{z}_{2}\tilde{z}_{4}(\tilde{\theta}_{4}+\frac{1}{2})(\tilde{\theta}_{1}+\tilde{\theta}_{2}-\tilde{\theta}_{3}-\tilde{\theta}_{4}+\frac{1}{2}),\;\\
\tilde{\mathcal{D}}_{9}=(\tilde{\theta}_{1}-\tilde{\theta}_{3}-\tilde{\theta}_{4})(\tilde{\theta}_{2}-\tilde{\theta}_{3}-\tilde{\theta}_{4})-\tilde{z}_{1}\tilde{z}_{2}\tilde{z}_{3}\tilde{z}_{4}(\tilde{\theta}_{3}+\frac{1}{2})(\tilde{\theta}_{4}+\frac{1}{2}).\quad\;
\end{matrix}
\]
The radical $\sqrt{dis}$ of the discriminant is given by

\[
\begin{matrix}\begin{aligned}\tilde{z}_{1}\tilde{z}_{2}\tilde{z}_{3}\tilde{z}_{4}\times\prod_{i=1}^{2}(1+\tilde{z}_{i})\times\prod_{{i=1,2\atop j=3,4}}(1+\tilde{z}_{i}\tilde{z}_{j})\times\prod_{j=3,4}(1-\tilde{z}_{1}\tilde{z}_{2}\tilde{z}_{j})\qquad\\
\;\;\times(1-\tilde{z}_{1}\tilde{z}_{2}\tilde{z}_{3}\tilde{z}_{4})\times\big(1-\tilde{z}_{1}\tilde{z}_{2}(\tilde{z}_{3}+\tilde{z}_{4}+\tilde{z}_{3}\tilde{z}_{4})-(\tilde{z}_{1}+\tilde{z}_{2})\tilde{z}_{1}\tilde{z}_{2}\tilde{z}_{3}\tilde{z}_{4}\big).
\end{aligned}
\end{matrix}
\]
The pairing $\tilde{M}_{ij}:=\langle\tilde{\theta}_{i}\tilde{\theta}_{j}\rangle$
from the quotient ring $\mathbb{Q}[\tilde{\theta}_{1},\cdots,\tilde{\theta}_{4}]/Ind(\tilde{D})$
is determined as 
\[
\left(\tilde{M}_{ij}\right)=2\times\left(\begin{matrix}1 & 2 & 1 & 1\\
2 & 1 & 1 & 1\\
1 & 1 & 0 & 1\\
1 & 1 & 1 & 0
\end{matrix}\right)=2\times\,^{t}\tilde{T}\left(\begin{matrix}0 & 1 & 0 & 0\\
1 & 0 & 0 & 0\\
0 & 0 & -1 & 0\\
0 & 0 & 0 & -1
\end{matrix}\right)\tilde{T}
\]
with $\tilde{T}=\left(\begin{smallmatrix}1 & 1 & 0 & 1\\
1 & 1 & 1 & 0\\
0 & 1 & 0 & 0\\
1 & 0 & 0 & 0
\end{smallmatrix}\right)$ and fixing the normalization $\tilde{d}$ by $2$ (cf. Proposition
\ref{prop:Ind-ring-Mij}). As in Proposition \ref{prop:Frobenius-local-sol},
we have local solutions around $\tilde{z}_{i}=0\,(i=1,..,4)$ via
the Frobenius method using the above $\tilde{M}_{ij}$. The corresponding
quadratic relation also holds with $M_{ij}$ replaced by $\tilde{M}_{ij}$
in Proposition \ref{prop:period-relation-Mij}

~As a unique (up to constant) solution, we obtain 
\[
\omega_{0}(\tilde{z})=\sum_{n_{1},n_{2},n_{3},n_{4}\geq0}\tilde{c}(n_{1},n_{2},n_{3},n_{4})\tilde{z}_{1}^{n_{1}}\tilde{z}_{2}^{n_{2}}\tilde{z}_{3}^{n_{3}}\tilde{z}_{4}^{n_{4}}
\]
with $\tilde{c}(n)=\tilde{c}(n_{1},n_{2},n_{3},n_{4})$ given by 
\[
\tilde{c}(n):=\frac{1}{\Gamma(\frac{1}{2})^{3}}\frac{\Gamma(n_{1}+n_{2}-n_{3}-n_{4}+\frac{1}{2})\Gamma(n_{3}+\frac{1}{2})\Gamma(n_{4}+\frac{1}{2})}{\Pi_{i=1,2}\Pi_{j=3,4}\Gamma(n_{i}-n_{j}+1)\cdot\Pi_{i=1,2}\Gamma(n_{3}+n_{4}-n_{i}+1)}.
\]
The following proposition follows from constructing the integral generators
of the semi-groups $(\sigma_{i}^{(2)})^{\vee}\cap L$ $(i=1,2,3)$
described in Lemma 3.3 of \cite{HLTYpartI}.
\begin{prop}
The other two affine coordinates $\tilde{z}_{k}':=z_{k}(o_{2}^{+})$
and $\tilde{z}_{k}'':=z_{k}(o_{3}^{+})$ are related to $\tilde{z}_{k}$
by 
\begin{equation}
\begin{aligned}\tilde{z}_{1}' & =\tilde{z}_{1}\tilde{z}_{4}, &  &  & \tilde{z}_{2}' & =\tilde{z}_{2}\tilde{z}_{4}, &  &  & \tilde{z}_{3}' & =\frac{\tilde{z}_{3}}{\tilde{z}_{4}}, &  &  & \tilde{z}_{4}' & =\frac{1}{\tilde{z_{4}}},\\
\tilde{z}_{1}'' & =\tilde{z}_{1}\tilde{z}_{3}, &  &  & \tilde{z}_{2}'' & =\tilde{z}_{2}\tilde{z}_{3}, &  &  & \tilde{z}_{3}'' & =\frac{\tilde{z}_{4}}{\tilde{z}_{3}}, &  &  & \tilde{z}_{4}'' & =\frac{1}{\tilde{z_{3}}}.
\end{aligned}
\label{eq:zo2plus-zo3plus}
\end{equation}

\end{prop}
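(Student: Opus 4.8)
The plan is to prove this exactly as the analogous relation \eqref{eq:z-o2-by-o1} between $z_k(o_2)$ and $z_k(o_1)$ was obtained, namely by a direct toric computation in the fan of the flipped resolution $\widetilde{\mathcal{X}}^{+}\to\mathcal{X}$. Recall from \cite{HLTYpartI} (Lemma~3.3, Definition~3.5) that the three zero-dimensional strata $o_i^{+}$ $(i=1,2,3)$ correspond to the three maximal cones $\sigma_i^{(2)}$ of that fan, and that the affine coordinate functions centered at $o_i^{+}$ are precisely the monomials $z_k(o_i^{+})=\mathtt{a}^{\tilde{\ell}}$ as $\tilde{\ell}$ runs over the minimal (Hilbert-basis) generators of the semigroup $(\sigma_i^{(2)})^{\vee}\cap L$. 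For $\sigma_1^{(2)}$ these generators are the four vectors $\tilde{\ell}^{(1)},\dots,\tilde{\ell}^{(4)}$ listed in \eqref{eq:tilde-ell-by-ell}; since $o_i^{+}$ is the transversal intersection of four divisors, each $\sigma_i^{(2)}$ is simplicial (and smooth, as witnessed by the four coordinate functions $\tilde{z}_k$), so each dual semigroup has exactly four generators, forming a $\mathbb{Z}$-basis of $L$.

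First I would record, from the fan data of \cite[Sect.~3]{HLTYpartI}, the primitive ray generators of $\sigma_2^{(2)}$ and $\sigma_3^{(2)}$; the three cones are pairwise adjacent, so each of $\sigma_2^{(2)},\sigma_3^{(2)}$ is obtained from $\sigma_1^{(2)}$ by replacing a single ray across a common facet. Then I would compute the Hilbert bases of $(\sigma_2^{(2)})^{\vee}\cap L$ and $(\sigma_3^{(2)})^{\vee}\cap L$, exactly as in \eqref{eq:tilde-ell-by-ell}, obtaining vectors $\tilde{\ell}'^{(k)}$ and $\tilde{\ell}''^{(k)}$, and express each as a $\mathbb{Z}$-linear combination of $\tilde{\ell}^{(1)},\dots,\tilde{\ell}^{(4)}$. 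Because $\tilde{\ell}\mapsto\mathtt{a}^{\tilde{\ell}}$ is multiplicative, every such additive relation in $L$ translates verbatim into the corresponding (Laurent) monomial relation among the coordinate functions, and matching these against \eqref{eq:zo2plus-zo3plus} concludes. Concretely, I expect the computation to reduce to the standard toric wall-crossing rule: if the wall between $\sigma_1^{(2)}$ and $\sigma_2^{(2)}$ is the facet opposite the ray dual to $\tilde{\ell}^{(4)}$, with wall relation $v_4+v_4'=\sum_i a_i v_i$, then $\tilde{\ell}'^{(4)}=-\tilde{\ell}^{(4)}$ and $\tilde{\ell}'^{(i)}=\tilde{\ell}^{(i)}+a_i\tilde{\ell}^{(4)}$ for $i\neq4$; the coefficients $(a_1,a_2,a_3)=(1,1,-1)$ recover the first line of \eqref{eq:zo2plus-zo3plus}, and the analogous wall opposite $\tilde{\ell}^{(3)}$ (with the new ray relabeled into position~$4$) recovers the second line.

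Alternatively one can sidestep the Hilbert-basis computation: it suffices to check directly that the four monomials appearing on each right-hand side of \eqref{eq:zo2plus-zo3plus} — e.g.\ $\mathtt{a}^{\tilde{\ell}^{(1)}+\tilde{\ell}^{(4)}}$, $\mathtt{a}^{\tilde{\ell}^{(2)}+\tilde{\ell}^{(4)}}$, $\mathtt{a}^{\tilde{\ell}^{(3)}-\tilde{\ell}^{(4)}}$, $\mathtt{a}^{-\tilde{\ell}^{(4)}}$ and their $o_3^{+}$ counterparts — give vectors that lie in the relevant dual cone $(\sigma_i^{(2)})^{\vee}$, are primitive, and span $L$ over $\mathbb{Z}$; a maximality/dimension count then forces them to be the Hilbert basis of $(\sigma_i^{(2)})^{\vee}\cap L$. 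In either route the main obstacle is not any hard estimate but the bookkeeping: one must reconstruct the three maximal cones of the flipped fan and their shared walls correctly from \cite{HLTYpartI} and keep the labeling of the four generators consistent across the three charts, since a relabeling would scramble the indices in \eqref{eq:zo2plus-zo3plus}. Everything past that is the mechanical dictionary between additive relations in $L$ and multiplicative relations among the $\mathtt{a}^{\tilde{\ell}}$, entirely parallel to the proof of \eqref{eq:z-o2-by-o1}.
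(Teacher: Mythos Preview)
Your proposal is correct and matches the paper's own argument: the paper simply states that the proposition follows from constructing the integral generators of the semigroups $(\sigma_i^{(2)})^{\vee}\cap L$ $(i=1,2,3)$ described in Lemma~3.3 of \cite{HLTYpartI}, which is exactly the toric computation you outline. In fact you give considerably more detail than the paper does, including the wall-crossing interpretation and the alternative verification route.
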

By using these relations, it is straightforward to express the Picard-Fuchs
operators $\widetilde{\mathcal{D}}_{i}(i=1,...,9)$ in the coordinates
$\tilde{z}_{k}':=z_{k}(o_{2}^{+})$ and also $\tilde{z}_{k}'':=z_{k}(o_{3}^{+})$.
We leave it to the reader to see that the set of operators $\left\{ \widetilde{\mathcal{D}}_{i}\right\} $
has the same form for these three coordinates. 

\vspace{0.5cm}

{\footnotesize{}Department of Mathematics, Gakushuin University, }{\footnotesize \par}

{\footnotesize{}Mejiro, Toshima-ku, Tokyo 171-8588, Japan }{\footnotesize \par}

{\footnotesize{}e-mail: hosono@math.gakushuin.ac.jp}{\footnotesize \par}

~

{\footnotesize{}Department of Mathematics, Brandeis University, }{\footnotesize \par}

{\footnotesize{}Waltham MA 02454, U.S.A. }{\footnotesize \par}

{\footnotesize{}e-mail: lian@brandeis.edu}{\footnotesize \par}

~

{\footnotesize{}Department of Mathematics, Harvard University, }{\footnotesize \par}

{\footnotesize{}Cambridge MA 02138, U.S.A. }{\footnotesize \par}

{\footnotesize{}e-mail: yau@math.harvard.edu}{\footnotesize \par}

~

~

~

\begin{thebibliography}{10}
\bibitem{Ao}K. Aomoto, \textit{On the structure of integrals of power
products of linear functions}, Sci. Papers, Coll. Gen. Education,
Univ. Tokyo, 27(1977), 49--61.

\bibitem{Batyrev}V. Batyrev, \textit{Dual polyhedra and mirror symmetry
for Calabi-Yau hypersurfaces in toric varieties}, J. Algebraic Geom.,
3(3):493\textendash 535, 1994. 

\bibitem{Batyrev-Borisov}V. Batyrev and L. Borisov, \textit{On Calabi-Yau
complete intersections in toric varieties}, in ``Higher-dimensional
complex varieties (Trento, 1994)'', pages 39\textendash 65. de Gruyter,
Berlin, 1996. 

\bibitem{DoOrt} I. Dolgachev and D. Ortland, Points Sets in Projective
Space and Theta Functions, Ast\'erisque, vol.165, 1989. 

\bibitem{DoMs} I. Dolgachev, \textit{Mirror symmetry for lattice
polarized K3 surfaces}, Algebraic geometry, 4. J. Math. Sci. 81 (1996)
2599--2630. 

\bibitem{DHT} C. Doran, A. Harder, A. Thompson, \textit{Mirror symmetry,
Tyurin degenerations and fibrations on Calabi-Yau manifolds}, in ``String-Math
2015'', pages 93\textendash 131, Proc. Sympos. Pure Math., 96, Amer.
Math. Soc., Providence, RI, 2017.

\bibitem{GelfandGraev} I.M. Gel'fand and M.I. Graev, Hypergeometric
functions associated with the Grassmannian $G_{3,6}$, Soviet Math.
Dokl. 35 (1987) 298--303.

\bibitem{GKZ1}I.M. Gel'fand, A. V. Zelevinski, and M.M. Kapranov,
\textit{Equations of hypergeometric type and toric varieties}, Funktsional
Anal. i. Prilozhen. 23 (1989), 12--26; English transl. Functional
Anal. Appl. 23(1989), 94--106. 

\bibitem{GS1}M. Gross and B. Siebert, \textit{Mirror symmetry via
logarithmic degeneration data I}. Journal of Differential Geometry,
72(2):169\textendash 338, 2006. 

\bibitem{GS2} M. Gross and B. Siebert, \textit{Mirror symmetry via
logarithmic degeneration data II}, Journal of Algebraic Geometry,
19(4):679\textendash 780, 2010. 

\bibitem{GW}M. Gross, P.M.H. Wilson, \textit{Large complex structure
limits of K3 surfaces}, Journal of Differential Geometry, 55(3):475\textendash 546,
2000. 

\bibitem{Hos}S. Hosono, \textit{Local Mirror Symmetry and Type IIA
Monodromy of Calabi-Yau Manifolds}, Adv. Theor. Math. Phys. 4 (2000),
335--376. 

\bibitem{HKTY}S. Hosono, A. Klemm, S. Theisen and S.-T. Yau, \textit{Mirror
Symmetry, Mirror Map and Applications to complete Intersection Calabi-Yau
Spaces}, Nucl. Phys. B433(1995)501--554. 

\bibitem{HLY}S. Hosono, B.H. Lian and S.-T. Yau, \textit{GKZ-Generalized
hypergeometric systems in mirror symmetry of Calabi-Yau hypersurfaces},
Commun. Math. Phys. 182 (1996) 535--577. 

\bibitem{HLY2}S. Hosono, B.H. Lian and S.-T. Yau, \textit{Maximal
Degeneracy Points of GKZ Systems}, J. of Amer. Math. Soc. 10 (1997),
427--443.

\bibitem{HLLY}S. Hosono, B.H. Lian, T.-J. Lee and S.-T. Yau, work
in progress.

\bibitem{HLTYpartI}S. Hosono,B.H. Lian, H. Takagi and S.-T. Yau,
K3 surfaces from configurations of six lines in $\mathbb{P}^{2}$
and mirror symmetry I, arXiv:1810.00606v1.

\bibitem{Hunt} B. Hunt, The geometry of some special arithmetic quotients,
Lect. Notes in Math. 1637 (1996) Springer-Verlag, Berlin Heidelberg. 

\bibitem{Matsu93} K. Matsumoto, Theta functions on the bounded symmetric
domain of type $I_{2,2}$ and the period map of a 4-parameter family
of K3 surfaces, Math. Ann. 295 (1993) 383--409.

\bibitem{YoshidaEtal}K. Matsumoto, T. Sasaki and M. Yoshida, \textit{The
monodromy of the period map of a 4-parameter family of K3 surfaces
and the hypergeometric function of type $E(3,6)$}, Internat. J. Math.
vol.3, No.1 (1992) 1 --164.

\bibitem{Reuv}E. Reuvers, \textit{Moduli spaces of configurations},
Ph.D. thesis (Radboud University, 2006), http://www.ru.nl/imapp/research\_0/ph\_d\_research/ 

\bibitem{SYZ}A. Strominger, S.-T. Yau and E. Zaslow, \textit{Mirror
symmetry is T-Duality}, Nucl. Phys. B479 (1996) 243\textendash 259. 

\bibitem{vanDerGeer}G. van der Geer, On the Geometry of a Siegel
Modular Threefold, Math. Ann. 260 (1982) 317--350.

\bibitem{YoshidaBook}M. Yoshida,\textit{ Hypergeometric Functions,
My Love -- Modular Interpretations of Configuration Spaces--}, Springer,
1997.

\bibitem{Zagier}D. Zagier, \textit{Elliptic Modular Froms and Their
Applications}, in ``The 1-2-3 of Modular Forms'' by J.Bruiner, G.
van der Geer, G. Harder and D. Zagier eds, Springer, 2008. 

\end{thebibliography}
\end{document}